\documentclass[11pt,a4paper]{article}

\usepackage[utf8]{inputenc}
\usepackage{amssymb}
\usepackage{amsmath}
\usepackage{mathtools}
\usepackage{amsfonts}
\usepackage{amsthm}
\usepackage{bbm}
\usepackage{bm}
\usepackage{hyperref}
\usepackage{physics}
\usepackage{graphicx}

\usepackage{color}
\definecolor{red}{rgb}{1,0,0}

\definecolor{gre}{rgb}{0,0.7,0}

\definecolor{blu}{rgb}{0,0,1}

\definecolor{bla}{rgb}{0,0,0}

\usepackage{enumerate}

\usepackage[dvips, left=2.4cm, right=2.4cm , top=2.4cm, bottom=3.4cm]{geometry}

\numberwithin{equation}{section}

\newcommand{\e}[0]{\ensuremath{\mathrm{e}}}

\makeatletter
\newcommand{\pushright}[1]{\ifmeasuring@#1\else\omit\hfill$\displaystyle#1$\fi\ignorespaces}
\newcommand{\pushleft}[1]{\ifmeasuring@#1\else\omit$\displaystyle#1$\hfill\fi\ignorespaces}
\makeatother

\newtheorem{theorem}{Theorem}[section]
\newtheorem{corollary}[theorem]{Corollary}
\newtheorem{lemma}[theorem]{Lemma}
\newtheorem{proposition}[theorem]{Proposition}

\theoremstyle{definition}

\theoremstyle{remark}

\def\EE{{\mathbb E}}

\def\HH{{\mathbb H}}

\def\PP{{\mathbb P}}
\def\RR{{\mathbb R}}
\def\TT{{\mathbb T}}

\def\ZZ{{\mathbb Z}}

\def\scrN{{\mathcal N}}

\def\scrS{{\mathcal S}}

\def\ii{{\mathrm i}}
\def\ee{{\mathrm e}}

\def\pmod{\bmod}

\usepackage{autonum}

\begin{document}

\title{Fine-scale distribution of roots of quadratic congruences\footnote{Research supported by EPSRC grant EP/S024948/1}}
\author{Jens Marklof and Matthew Welsh}
\date{School of Mathematics, University of Bristol, Bristol BS8 1UG, U.K.\\[10pt] 25 July 2022}

\maketitle

\begin{abstract}
  We establish limit laws for the distribution in small intervals of the roots of the quadratic congruence $\mu^2 \equiv D \bmod m$, with $D > 0$ square-free and $D\not\equiv 1 \bmod 4$. This is achieved by translating the problem to convergence of certain geodesic random line processes in the hyperbolic plane. This geometric interpretation allows us in particular to derive an explicit expression for the pair correlation density of the roots.
\end{abstract}

\tableofcontents

\section{Introduction}
\label{sec:introduction}

The object of this paper is the fine-scale distribution of solutions (the ``roots'') $\mu$ of the quadratic congruence $\mu^2\equiv D\pmod m$, where $D$ is a fixed non-zero integer and $m$ runs through the positive integers. For each $m$, we consider the set of normalised roots in $\TT=\RR/\ZZ$, 
\begin{equation}
Q_D(m) = \bigg\{ \frac{\mu}{m} \in \TT :  \mu^2 \equiv D \pmod m \bigg\} .
\end{equation}
We order the points in $Q_D(m)$ in a specified (but arbitrary) finite sequence, and then construct an infinite sequence $(\xi_j)_{j=1}^\infty$ in $\TT$ by first listing the points in $Q_D(1)$, then those in $Q_D(2)$, and so on. The first significant result in this setting is due to Hooley \cite{Hooley1963}, who proved uniform distribution modulo one if $D$ is not a perfect square. That is, for any $0\leq a < b\leq 1$ we have that
\begin{equation}
\lim_{N\to\infty} \frac{1}{N} \#\big\{ j\leq N :  \xi_j \in [a,b)+\ZZ \big\}  = b-a.
\end{equation}
Hooley later extended this result to general polynomial congruences of degree two or higher \cite{Hooley1964}.
His method has since been generalised and extended, see \cite{KowalskiSoundararajan2020} and \cite{Zehavi2020}.

Having established uniform distribution, it is natural to ask for finer measures that capture the pseudo-random properties of the sequence. The simplest of these is the pair (or two-point) correlation measure $R_{2,N}$, which for a finite interval $I$ is defined by
\begin{equation}\label{pairco}
R_{2,N}(I) = \frac{1}{N} \#\big\{ i, j\leq N :  i\neq j,\; \xi_i-\xi_j \in N^{-1} I +\ZZ \big\} .
\end{equation}
Our first principal result is the following. 

\begin{theorem}\label{thm1}
Assume $D>0$ is square-free and $D\not\equiv 1\pmod 4$. Then there is an even and continuous function $w_D:\RR\to\RR_{\geq 0}$, such that 
for every finite interval $I$ we have
\begin{equation}
\lim_{N\to\infty} R_{2,N}(I) = \int_I w_D(v) \dd v .
\end{equation}
\end{theorem}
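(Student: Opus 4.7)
My plan is to translate the number-theoretic pair correlation into an equidistribution statement for a random line process in the hyperbolic plane, following the strategy indicated in the abstract.

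First, I would install the geometric dictionary. Given a root $\mu/m$ of $\mu^2\equiv D\pmod m$, the integer triple $(m,-2\mu,(\mu^2-D)/m)$ defines a primitive integral binary quadratic form of discriminant $4D$, and the two real roots of the corresponding quadratic are $(\mu\pm\sqrt{D})/m$. Attaching to each $(m,\mu)$ the hyperbolic geodesic $g_{m,\mu}\subset\HH$ with these endpoints, one obtains a semicircle with apex $\mu/m$ at height $\sqrt{D}/m$. Because there are only finitely many $\mathrm{PSL}(2,\ZZ)$-equivalence classes of such forms, the collection $\{g_{m,\mu}\}$ decomposes into finitely many orbits of a suitable finite-index subgroup $\Gamma<\mathrm{PSL}(2,\ZZ)$. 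This realises the roots as the apex-projections of a random line process in $\HH$, as promised in the abstract.

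Second, I would smooth $\mathbbm 1_I$ by a test function $\phi\in C_c^\infty(\RR\setminus\{0\})$ and write
\begin{equation*}
R_{2,N}(\phi) = \frac{1}{N}\sum_{j\leq N}\sum_{i\neq j}\phi\bigl(N(\xi_i-\xi_j)\bigr).
\end{equation*}
For fixed $j$ with $\xi_j=\mu/m$, the Möbius map $z\mapsto m(z-\mu/m)$ sends $g_{m,\mu}$ to a fixed base geodesic and converts the inner sum into a count of $\Gamma$-translates of base geodesics whose apex lies in an $N$-independent window determined by $\phi$ and the relative scale $m/N$. Averaging over $j$ then amounts to averaging over closed geodesic orbits on $\Gamma\backslash\HH$ of length growing with $N$.

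Third, I would invoke equidistribution. As $N\to\infty$ the outer average becomes an average over longer and longer closed orbits, and by equidistribution of long closed horocycles (equivalently, of translated closed geodesics) on the finite-volume surface $\Gamma\backslash\HH$, the whole double sum converges to the Haar integral of an associated geometric kernel. This kernel is continuous in the window and manifestly symmetric under the reflection $\mu\mapsto-\mu$ sending each $g_{m,\mu}$ to $g_{m,-\mu}$, so the limit takes the form $\int_\RR\phi(v)w_D(v)\dd v$ with $w_D$ continuous and even. The passage from $\phi$ back to $\mathbbm 1_I$ will be handled by a sandwiching argument combined with a uniform a priori upper bound on $R_{2,N}$, established by an elementary divisor-type count.

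The main obstacle, as I see it, is carrying out the equidistribution step with sufficient uniformity to justify interchanging the inner limit with the outer average; the natural route is mixing of the geodesic flow, but one must control the error uniformly in the window, which in the hyperbolic picture sits near the cusp. A secondary but technically important difficulty is that pairs with $m=m'$, or with $|m-m'|$ bounded, correspond to geodesics of comparable height and cannot be treated by the generic equidistribution argument; these have to be separated out and shown to contribute negligibly, presumably again via divisor-type bounds for the number of near-coincidences among roots.
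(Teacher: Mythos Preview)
Your overall architecture---the form/geodesic dictionary of step one, the affine normalisation of step three, and the intensity-measure interpretation of the limit---matches the paper (Theorem~\ref{theorem:topsandroots}, identity~\eqref{eq:keyid}, Proposition~\ref{lemma:Faverage2}). But the equidistribution step has a genuine gap. After normalising at $\xi_j$, the outer average is a \emph{discrete} average of the geometric count over the finite set $\{\Gamma n(\xi_j)a(y):j\le N\}\subset\Gamma\backslash G$; this is neither an average along a closed geodesic of growing length (the $\bm{c}_l$ have fixed length $2\log\varepsilon_l$) nor directly a horocycle integral, and ``long closed horocycles'' and ``translated closed geodesics'' are not equivalent equidistribution inputs. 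What is missing is a mechanism to pass from this discrete sampling to a continuous orbit average. The paper supplies it by recognising that the points $\Gamma n(\xi_j)a(y)$ are precisely the intersections of the closed horocycle at height $y$ with a Poincar\'e section $\widetilde S_{\alpha,\beta}$ transverse to the horocycle flow (Lemma~\ref{lemma:identify}); a thickening argument then reduces the discrete average to a bona fide horocycle integral and yields equidistribution with respect to a measure $\nu_{\alpha,\beta}$ supported on the section (Theorem~\ref{theorem:surfaceequidistribution0}). The limiting pair correlation is therefore an integral over this codimension-one section rather than a Haar integral over $\Gamma\backslash G$, and the density is the explicit double-coset sum~\eqref{eq:paircorrelation2}. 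Your uniformity concern is handled not by geodesic-flow mixing but by the escape-of-mass estimate of Lemma~\ref{lemma:measurebound4321}.

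Your second anticipated difficulty---separating out pairs with $m=m'$ or $|m-m'|$ bounded---does not arise in this framework. All pairs are treated on the same footing: the true diagonal $i=j$ corresponds to the coset $\gamma\in\Gamma_{\bm{c}_l}$ and produces the $\delta_0$ term in~\eqref{eq:Faverage22}, while near-equal heights simply correspond to nearby values of the section variable $v=\Im(z)/y$ and are absorbed into the integral of Proposition~\ref{lemma:Faverage2} without any separate extraction.
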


In other words, the pair correlation measure $R_{2,N}$ converges vaguely to a limit with bounded continuous density $w_D$. Our geometric approach will allow us to derive an explicit formula for $w_D$; three illustrations are given in figures \ref{fig:paircorrelation1}--\ref{fig:paircorrelation3}. Compare this with the case of independent and uniformly distributed random variables in $\TT$ where the limit density is $w(v)=1$; or with the eigenphases of large random unitary matrices where the limiting pair correlation is $w(v)=1- (\sin(\pi v)/\pi v)^2$. 

The conditions that $D$ is squarefree and $D \not\equiv 1\pmod 4$ in Theorem \ref{thm1} ensure that the associated quadratic order $\mathbb{Z}[\sqrt{D}]$ is maximal, which will provide some simplifications in the proofs.

\begin{figure}[t]
  \centering
  \includegraphics[width=\textwidth]{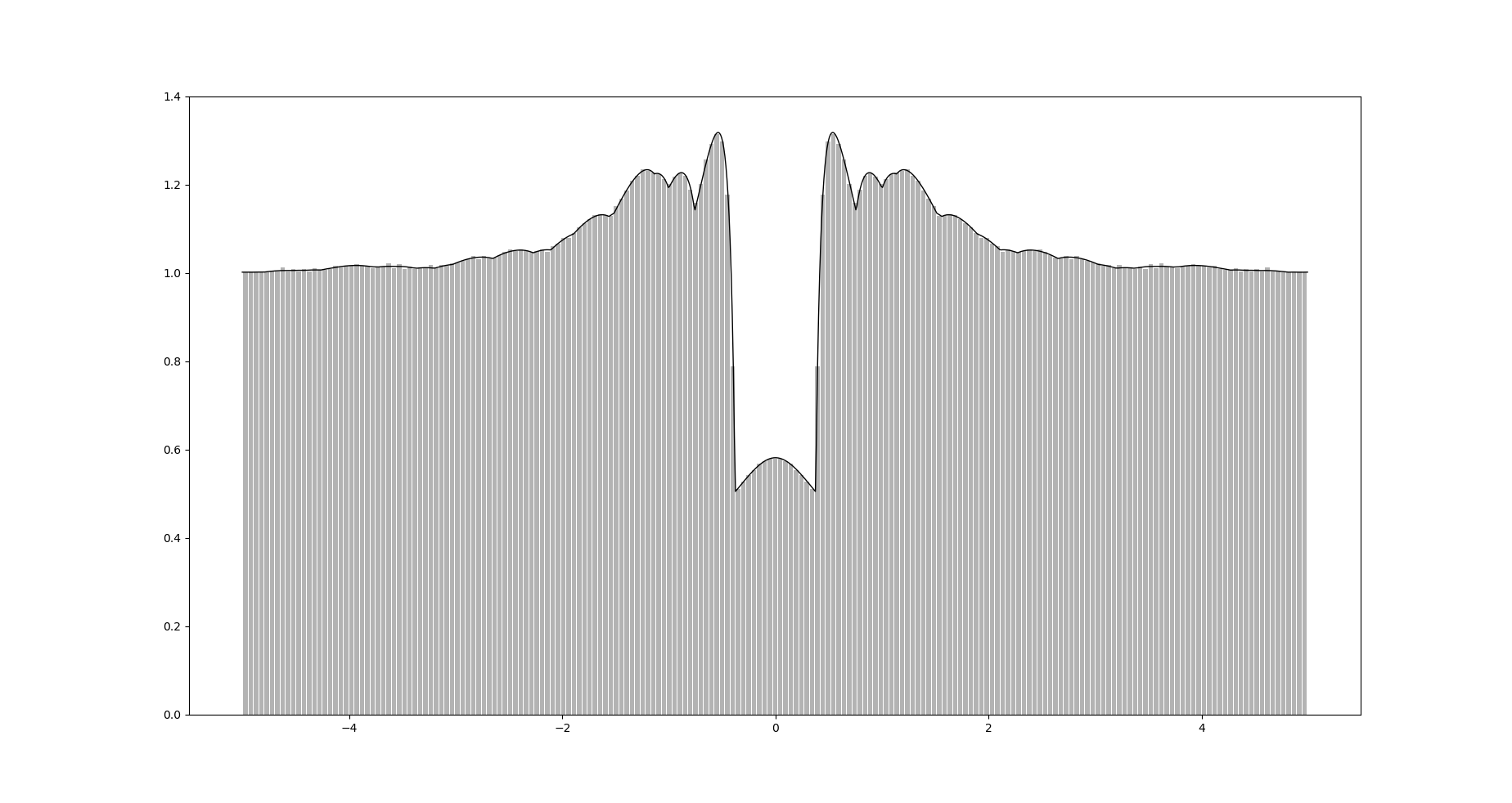}
  \caption{Pair correlation density $w_2$ compared to experiments with $N=10^6$. The ring of integers $\mathbb{Z}[\sqrt{2}]$ has both class and narrow class number $1$.}
  \label{fig:paircorrelation1}
\end{figure}

\begin{figure}[t]
  \centering
  \includegraphics[width=\textwidth]{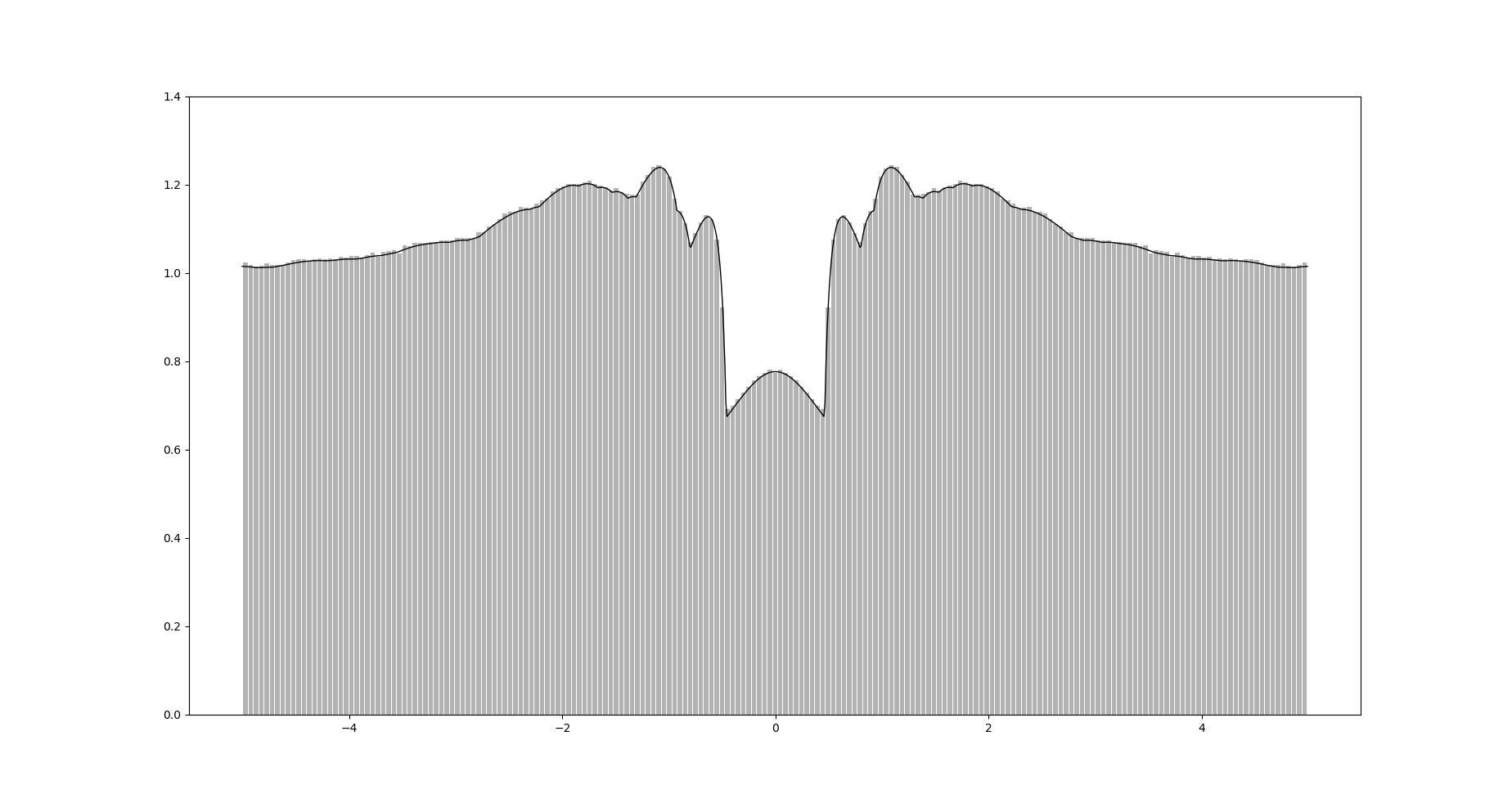}
  \caption{Pair correlation density $w_3$ compared to experiments with $N=10^6$. The ring of integers $\mathbb{Z}[\sqrt{3}]$ has class number $1$ and narrow class number $2$.}
  \label{fig:paircorrelation2}
\end{figure}

\begin{figure}[t]
  \centering
  \includegraphics[width=\textwidth]{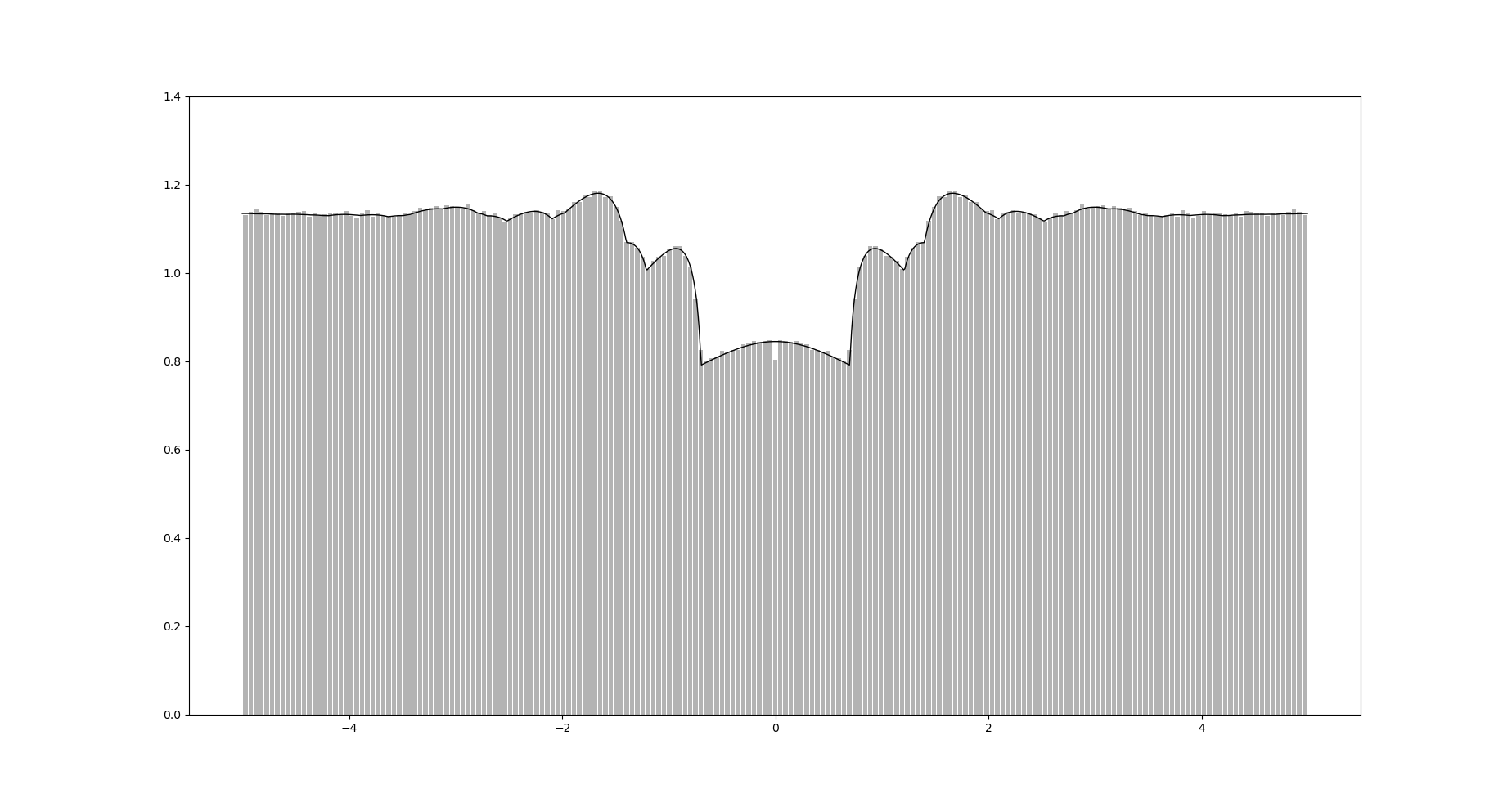}
  \caption{Pair correlation density $w_{10}$ compared to experiments with $N=10^6$. The ring of integers $\mathbb{Z}[\sqrt{10}]$ has both class and narrow class number $2$.}
  \label{fig:paircorrelation3}
\end{figure}

Our techniques are in fact strong enough to produce results for all higher-order correlation measures. Let $\xi$ be a random variable in $\TT$ which is distributed according to a Borel probability measure $\lambda$, and consider the random counting measure (random point process)
\begin{equation}
\Xi_{N,\lambda} = \sum_{j=1}^N \sum_{k\in\ZZ} \delta_{N(\xi_j-\xi+k)} 
\end{equation}
on $\RR$. Here $\delta_x$ denotes the Dirac mass at $x\in\RR$. For example, for a given interval $I\subset\RR$ and integer $k$, we have
\begin{equation}
\PP( \Xi_{N,\lambda}(I) =k ) =  \lambda( \{ x\in\TT : \scrN_I(x,N) = k \} )
\end{equation}
with 
\begin{equation}
\scrN_I(x,N) =\#\{ j \leq N : \xi_j \in x+N^{-1} I+\ZZ \} .
\end{equation}
The scaling of the interval by $N^{-1}$ ensures that we expect a finite number of points for typical $x$. The following theorem describes the precise distribution.

\begin{theorem}\label{thm2}
Assume $D>0$ is square-free and $D\not\equiv 1\pmod 4$. Then there exists a random point process $\Xi$ depending only on $D$ so that, for every Borel probability measure $\lambda$ on $\TT$ that is absolutely continuous with respect to the Lebesgue measure, we have convergence 
$\Xi_{N,\lambda}  \to \Xi$ in distribution as $N\to\infty$. 

Specifically, for all $k_1,\ldots,k_r\in\ZZ_{\geq 0}$ and finite intervals $I_1,\ldots,I_r\subset\RR$, we have that
\begin{equation}
\lim_{N\to\infty} 
\lambda\big(\big\{ x\in\TT : \scrN_{I_i}(x,N) = k_i \;\forall i \big\}\big) 
= \PP\big( \Xi(I_i)= k_i \; \forall i \big)
\end{equation}
and the limit is a continuous function of the endpoints of $I_i$.
\end{theorem}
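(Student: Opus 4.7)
The plan is to implement the geometric translation advertised in the abstract. Each normalized root $\mu/m \in Q_D(m)$ corresponds, via the classical bijection with integer binary quadratic forms $mX^2 - 2\mu XY + \frac{\mu^2-D}{m}Y^2$ of discriminant $4D$, to the geodesic in the upper half-plane $\HH$ with endpoints $(\mu \pm \sqrt{D})/m \in \RR$; its apex sits at $\mu/m + \ii\sqrt{D}/m$. Thus counting normalized roots in $x + N^{-1}I \pmod 1$ with $j \leq N$ is, up to the choice of an appropriate $m$-cutoff, the same as counting apices in the $\mathrm{PSL}(2,\ZZ)$-orbit of a fixed geodesic in a horocyclic window above $x$ of horizontal width $\asymp N^{-1}$ and vertical extent $\asymp N^{-1}$. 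This identifies $\Xi_{N,\lambda}$ with a random point process on $\RR$ parametrized by a point in a homogeneous space $\Gamma\backslash G$, where $G = \mathrm{PSL}(2,\RR)$ and $\Gamma$ is a congruence subgroup attached to $D$; the randomness comes from sampling $x$ according to $\lambda$.

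I would establish convergence in distribution by proving convergence of joint factorial moments
\begin{equation*}
\EE_\lambda \!\left[\prod_{i=1}^r \binom{\scrN_{I_i}(\,\cdot\,,N)}{k_i}\right].
\end{equation*}
Each such quantity, after unfolding the sum over $\mu$ against the $\Gamma$-orbit, can be written as the integral of a compactly supported function on $\Gamma\backslash G$ against a translated horocyclic measure, averaged in the translation parameter $x$ against $\lambda$. This is where absolute continuity of $\lambda$ enters: the density $\mathrm{d}\lambda/\mathrm{d}x$ plays the role of a test function along the horocycle. Equidistribution of long translated closed horocycles to Haar measure on $\Gamma\backslash G$ -- in the tradition of Sarnak, Eskin--McMullen and Str\"ombergsson -- produces deterministic limits that depend only on $D$, the $I_i$ and the $k_i$. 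Reading these limits as factorial moments realises $\Xi$ intrinsically, built from a random point on $\Gamma\backslash G$ chosen according to Haar measure, and a standard method-of-moments argument upgrades joint factorial moment convergence to convergence in distribution of the processes. Continuity of the limiting probabilities in the endpoints of the $I_i$ follows from the smoothness of Haar measure and is the higher-correlation analogue of the continuity of $w_D$ in Theorem~\ref{thm1}.

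The main obstacle, as is typical for such horocycle equidistribution arguments, is the non-compactness of $\Gamma\backslash G$: geodesic apices high in a cusp correspond to roots $\mu/m$ unusually close to rationals of small denominator, and a naive application of equidistribution loses mass at infinity. The remedy is a uniform tightness estimate -- effectively, an upper bound on $\sum_{m \leq M}\#\{\mu \in Q_D(m) : \mu/m \in J\}$ for small intervals $J$ centred at rationals of low height -- that can be extracted from Weil/Sali\'e bounds for the associated Kloosterman-type sums, or from direct geometric control of the $\Gamma$-orbit of apices. A secondary bookkeeping point is the decomposition of the orbit into pieces indexed by the narrow ideal classes of $\ZZ[\sqrt{D}]$; here the hypotheses that $D$ is squarefree and $D \not\equiv 1\pmod 4$ guarantee that $\ZZ[\sqrt{D}]$ is the maximal order, so that a single homogeneous space carries the relevant orbit without conductor complications.
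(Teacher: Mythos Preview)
Your geometric translation is exactly the one the paper uses: roots $\mu/m$ are the real parts of the tops $z_{\gamma\bm c_l}=\mu/m+\ii\sqrt D/m$ of the $\mathrm{SL}(2,\ZZ)$-translates of finitely many closed geodesics $\bm c_1,\dots,\bm c_h$ (one per narrow ideal class of $\ZZ[\sqrt D]$), and the relevant randomness is realised by a point $\Gamma n(x)a(y)$ on a long closed horocycle in $\Gamma\backslash G$ with $\Gamma=\mathrm{SL}(2,\ZZ)$. (A small correction: $\Gamma$ is not a congruence subgroup ``attached to $D$''; the $D$-dependence enters only through the choice of geodesics.)

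Where you diverge is in the mechanism. The paper does \emph{not} go through factorial moments to get convergence in distribution. It observes that the event $\{\scrN_{I_i,\alpha_i,\beta_i}(x,y)=k_i\ \forall i\}$ is the pullback under $x\mapsto\Gamma n(x)a(y)$ of a set $S\subset\Gamma\backslash G$, and then applies horocycle equidistribution directly to $\chi_S$. The only thing needed is $\mu_\Gamma(\partial S)=0$, which follows from a one-line Siegel-type mean value formula $\int_{\Gamma\backslash G}\scrN_B\,\dd\mu_\Gamma=\kappa_\Gamma\,\mathrm{vol}_\HH(B)$ (proved by an explicit change of variables with a surprisingly clean Jacobian). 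Continuity in the endpoints also drops out of this boundary-measure-zero statement. Moment convergence is established separately and is not used for the distributional limit.

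Your method-of-moments route would also succeed, since the paper does prove exponential tails (lemma~\ref{lemma:measurebound}) and convergence of all mixed moments, but two of your supporting claims need repair. First, after unfolding, the relevant functions on $\Gamma\backslash G$ are \emph{not} compactly supported or even bounded: $\scrN_B(g)$ grows like $\log$ into the cusp (lemma~\ref{lemma:FIbound}), and its powers are worse. This is precisely the escape-of-mass you flag in the next paragraph, so the two sentences are in tension; equidistribution alone gives the moments only after you truncate and control the tail. Second, that tail control does not require Weil or Sali\'e; an elementary comparison with hyperbolic lattice-point counting (each top $z_{\gamma\bm c_l}$ lies within a fixed hyperbolic distance of some $\gamma'g_l\ii$) already gives the uniform bound $\scrN_{I,1,\infty}(x,y)\ll\log y^{-1}$, which is enough. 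In short: your plan works, but the paper's route via $\mu_\Gamma(\partial S)=0$ is shorter and sidesteps the moment problem entirely.
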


We furthermore establish the uniform upper bound $\scrN_I(x,N)\ll \log N$, where the applied constant is independent of $x\in\TT$ and $N$. This generalises Fouvry and Iwaniec's bound \cite{FouvryIwaniec1997} in the case $D=-1$.

We provide an explicit description of  the limit process  $\Xi$ in terms of tops of random geodesics in the hyperbolic plane, as well as an alternative description as entry times for a certain Poincar\'e section of the horocycle flow. A striking feature is that $\Xi$ is independent of the choice of $\lambda$. Theorem \ref{thm2} implies the convergence of various popular fine-scale statistics of the sequence $(\xi_j)_{j=1}^\infty$, including the gap distribution. Furthermore, we will prove the convergence of moments and higher correlation densities.

The above theorems in fact remain true (with different limit distributions) if we further restrict the roots to $m \equiv 0 \pmod n$, and $\mu \equiv \nu \pmod n$, for fixed $n > 0$ and $\nu \pmod n$ such that $\nu^2 \equiv D \pmod n$.

The results of this paper also hold for negative $D$. Here the proofs reduce to existing results \cite{MarklofVinogradov2018,RisagerRudnick2009} on the fine-scale distribution of the real parts of hyperbolic lattice points, rather than the tops of geodesics. The studies of real parts of hyperbolic lattice points are closely related to the distribution of angles in hyperbolic lattices \cite{BocaPasolPopaZaharescu2014,BocaPopaZaharescu2014,KelmerKontorovich2015,MarklofVinogradov2018,RisagerSodergren2017}.
We also remark that results on counting the tops of geodesics can be found in \cite{ParkkonenPaulin2012, ParkkonenPaulin2016} as well as applications of a similar flavor.
Their methods also apply in a general setting of negative curvature, where the notion of `top' is naturally replaced by that of a common perpendicular between a fixed horosphere and geodesic.

\subsection{Previous work on roots}
\label{sec:background}

In this section we review previous techniques used to study the sequence of roots $\frac{\mu}{m}$ and the results produced.
As previously mentioned, Hooley \cite{Hooley1963} proved the equidistribution of this sequence.
In fact, he produced a power-saving bound on the Weyl sums
\begin{equation}
  \label{eq:weylsum}
  \sum_{m\leq M} \sum_{\mu^2 \equiv D (m)} e \left( \frac{h\mu}{m} \right), \qquad e(x):=\ee^{2\pi\ii x},
\end{equation}
for non-zero integer $h$.
These results are obtained by transforming (\ref{eq:weylsum}) into a sum of Kloosterman sums, which is then estimated by an application of the Weil bound.

In \cite{Iwaniec1978}, Iwaniec follows a similar strategy to bound the Weyl sum (\ref{eq:weylsum}) with $D = -1$ but with various additional arithmetic constraints on $m$ and $\mu$, most notably the restriction to $m \equiv 0 \pmod n$.
Iwaniec obtains enough uniformity in $n$ so that a sieve can be applied, resulting in a proof that $l^2 +1$ is infinitely often a product of at most two primes.

In light of the appearance of a sum of Kloosterman sums in these works, \cite{Hooley1963} and \cite{Iwaniec1978}, one could in retrospect already predict the relevance of the spectral theory of $\mathrm{SL}(2)$ automorphic forms.
However, the first application to analysing the sequence of $\frac{\mu}{m}$, found in the work of Bykovski\u{\i}, \cite{Bykovskii1984}, proceeds by a more direct method and only applies to negative $D$.
Roughly speaking, Bykovski\u{\i}'s method exploits the classical connection between the roots $\mu \mod m$ and the $\mathrm{SL}(2, \mathbb{Z})$ orbits of Heegner points in the Poincar\'e upper half-plane $\mathbb{H}$. 
For example, the images of $\ii \in \mathbb{H}$ under $\gamma \in \mathrm{SL}(2,\mathbb{Z})$ have the form
\begin{equation}
  \label{eq:imaginaryorbit}
  \gamma \ii = \frac{\mu}{m} + \frac{\ii}{m},
\end{equation}
where $\mu$ satisfies $\mu^2 \equiv -1 \pmod {m}$ (see section \ref{sec:negative} for more details).
In this way, Bykovski\u{\i} turns the Weyl sum (\ref{eq:weylsum}) into a finite sum of $\mathrm{SL}(2, \mathbb{Z})$-Poincar\'e series, which, after a spectral expansion, can be controlled using estimates for sums of Fourier coefficients of automorphic forms. These spectral techniques were extended substantially by Good \cite{Good1983}, {and similar problems were considered in a very general context by Parkkonen and Paulin \cite{ParkkonenPaulin2012}, \cite{ParkkonenPaulin2016},} although no reference to roots of quadratic congruences are made. 

In \cite{DukeFriedlanderIwaniec1995}, Duke, Friedlander, and Iwaniec extend Bykovski\u{\i}'s approach to apply to the Weyl sums (\ref{eq:weylsum}) restricted to $m \equiv 0 \pmod n$, analogously to Iwaniec's earlier extension of Hooley's work.
Here Duke, Friedlander, and Iwaniec obtain enough uniformity in $h$ and $n$ to apply a sieve method and conclude the spectacular result that the sequence $\frac{\mu}{m}$ with $m$ restricted to be prime is still equidistributed modulo one.

The first use of automorphic forms to understand the sequence of $\frac{\mu}{m}$ in the case with $D > 0$ came in the work of Hejhal, \cite{Hejhal1986}.
Hejhal approximates the Weyl sum (\ref{eq:weylsum}) by an integral of a fixed automorphic form (related to the automorphic Green's function) over some closed geodesics in $\mathrm{SL}(2,\mathbb{Z}) \backslash \mathbb{H}$.
The results obtained in \cite{Hejhal1986}  only apply for select $D$ and are not as strong as those previously mentioned, perhaps due to a lack of a clear geometric interpretation of the roots.
Nevertheless, in \cite{Toth1997} Toth manages to extend the results of Duke, Friedlander, and Iwaniec, \cite{DukeFriedlanderIwaniec1995} to the positive $D$ setting.
Here the geometric question is avoided by returning to the original methods of Hooley and Iwaniec, which transform the Weyl sum directly into a sum of Kloosterman sums.

It is also worth mentioning the approach to this equidistribution question outlined by Sarnak in \cite{Sarnak1990}.
Sarnak suggests relating the Weyl sum (\ref{eq:weylsum}) to the Fourier coefficients of certain half integral weight automorphic forms by means of Salie sums.
As shown by Duke, Friedlander, and Iwaniec in \cite{DukeFriedlanderIwaniec2012}, this kind of method is capable of producing savings in the discriminant of the polynomial $F$ itself.
None of the previously mentioned methods, nor the new method we develop below, seem to be capable of this.

Other than equidistribution, the only statistics about the sequence $\frac{\mu}{m}$ that appear in the literature, as far as we are aware, are upper bounds for the number that can lie in very short intervals.
For $D = -1$, Fouvry and Iwaniec \cite{FouvryIwaniec1997} found that the number of $\frac{\mu}{m}$ with $m \leq M$ in an interval of length $\frac{1}{M}$ is $O(\log M)$. Our approach reproduces this bound for general $D$, cf.\ lemma \ref{lemma:FIbound}.
In \cite{FouvryIwaniec1997} and in Friedlander and Iwaniec's breakthrough \cite{FriedlanderIwaniec1998}, this bound was a key input into a sieve method for producing primes of the form $l^2 + p^2$ and $l^2 + k^4$, respectively.

\subsection{Outline of paper}
\label{sec:outline}

Let $\Gamma$ be a discrete subgroup of the group of isometries $G$ acting on the complex upper half-plane $\HH$, and let $\bm{c}_1,\ldots, \bm{c}_h$ be a finite collection of geodesics in $\HH$ that project to closed geodesics on the hyperbolic surface $\Gamma\backslash\HH$. In {\bf section \ref{sec:geodesic}} we study the distribution of the geodesic lines $g^{-1}\gamma \bm{c}_l$ where $\gamma$ runs through $\Gamma$ and $l=1,\ldots,h$. This becomes a geodesic random line process if $g$ is taken to be a random isometry with respect to a suitable probability measure. In {\bf section \ref{sec:convergence}} we prove limit theorems for sequences of random geodesics that are randomly translated, followed by the proof of convergence of moments in  {\bf section \ref{sec:moments}}. Key ingredients of the proofs are the equidistribution of long horocycles, a Siegel-style mean value formula and no-escape-of-mass estimates to control the moments. A convenient parametrisation of a geodesic is given by its ``top'', i.e., the point on the geodesics with largest imaginary part. In {\bf section \ref{sec:1dpp}} we project the tops of the random geodesics to the real line, which produces one-dimensional point processes. The convergence of these processes to a limit will eventually lead to the proof of theorem \ref{thm1} for the roots of quadratic congruences. 

The objective of {\bf section \ref{sec:discrete}} is to prepare the ground for geodesic line process conditioned so that at least one geodesic has a top in the imaginary axis. To this end, we introduce in {\bf section \ref{sec:surface}} a Poincar\'e section of the horocycle flow, and prove various equidistribution results for the intersection points of horocycles with the section, with an interesting variant in {\bf section \ref{sec:intersection}} that considers the intersection points reduced to a closed geodesic.

These equidistribution results are then applied in {\bf section \ref{sec:conditioned}} to establish limit theorems for the conditioned geodesic line processes. In analogy with section \ref{sec:geodesic} we prove convergence in distribution in {\bf section \ref{sec:convergence2}}, work out out a highly non-trivial Siegel-type volume formula in {\bf section \ref{section:intensity23}}, and prove convergence of moments in {\bf section \ref{sec:moments2}}. Projecting the tops to the real line as in section \ref{sec:1dpp} yields a point process $\Xi^0$ which is distributed according the Palm distribution of $\Xi$.\ The corresponding limit theorems are stated in {\bf section \ref{sec:palm}}. We give an alternative description of the processes $\Xi$ and $\Xi^0$ in terms of the entry and return times for the horocycle flow in {\bf section \ref{sec:entry}}.

The final {\bf section \ref{sec:roots}} of this paper expresses the roots $\mu$ of quadratic congruences in terms of tops of geodesics and hence makes the connection with the distribution results established thus far. This is detailed in {\bf section \ref{sec:roots2}} for the setting described in the introduction, where the relevant discrete subgroup $\Gamma$ is the modular group $\mathrm{SL}(2, \mathbb{Z})$. In {\bf section \ref{sec:congruence}} we extend the setting to allow additional restrictions $\mu \equiv \nu \pmod n$, which leads us to the congruence subgroups $\Gamma=\Gamma_0(n)$. In {\bf section \ref{sec:negative}} we indicate how our results can be modified in the case of negative $D$, leading to random hyperbolic lattices, rather than geodesic line processes.

{\bf Acknowledgement.}
We would like to thank Zonglin Li for his helpful comments, particularly on section \ref{sec:roots}, and the referees for their careful reading and suggestions.

\section{Geodesic line processes}
\label{sec:geodesic}

We recall that $G = \mathrm{SL}(2, \mathbb{R})$ acts on the complex upper half-plane $\mathbb{H}=\{ x + \ii y : y > 0\}$ by fractional linear transformations and that $\mathrm{PSL}(2, \mathbb{R})=G/\{\pm I\}$ is identified with the unit tangent bundle of $\mathbb{H}$ via
\begin{equation}
  \label{eq:GT1H}
  g =
  \begin{pmatrix}
    a & b \\
    c & d
  \end{pmatrix}
  \mapsto \left( \frac{a\ii + b}{c\ii + d}, \frac{\ii}{(c\ii + d)^2} \right),
\end{equation}
where we have written the unit tangent bundle of $\mathbb{H}$ as $(z, w)$ with $z\in \mathbb{H}$ and $w \in \mathbb{C}$, $|w| = \mathrm{Im}(z)$.
This identification is nicely expressed in terms of the Iwasawa decomposition of $G$: if
\begin{equation}
  \label{eq:Iwasawa}
  g =
  \begin{pmatrix}
    1 & x \\
    0 & 1
  \end{pmatrix}
  \begin{pmatrix}
    y^{\frac{1}{2}} & 0 \\
    0 & y^{-\frac{1}{2}}
  \end{pmatrix}
  \begin{pmatrix}
    \cos \frac{\theta}{2} & -\sin \frac{\theta}{2} \\
    \sin \frac{\theta}{2} & \cos \frac{\theta}{2}
  \end{pmatrix}
  ,
\end{equation}
then $\pm g$ is identified with $(x + \ii y, \ii\ee^{-\ii\theta} y)$.
We note that multiplying $g$ by $-I$ corresponds to changing $\theta$ in (\ref{eq:Iwasawa}) to $\theta + 2\pi$, and so does not change the point $(x + \ii y, \ii \ee^{-\ii\theta} y)$.
In what follows we work with $G$ directly rather than $G / \{\pm I\}$, and we only emphasise the difference when there is a chance of confusion.

Given $g\in G$, we set
\begin{equation}
  \label{eq:cdef}
  \bm{c}=\bm{c}(g) = \left\{ g
  \begin{pmatrix}
    t & 0 \\
    0 & t^{-1}
  \end{pmatrix}
  \ii : t > 0 \right\} \subset \mathbb{H},
\end{equation}
which defines, via the identification of $G/\{\pm I\}$ with the unit tangent bundle of $\mathbb{H}$, an oriented geodesic in $\mathbb{H}$.
Every geodesic has two limit points on the boundary $\partial\mathbb{H} = \mathbb{R} \cup \{\infty\}$. 
We say that $\bm{c}$ has positive orientation if both endpoints are in $\mathbb{R}$ and the endpoint corresponding to $t \to 0$ is less than the endpoint corresponding to $t \to \infty$. This means that $\bm{c}$ is a semicircle traced from left to right. (Of course we could also assign an orientation to geodesics with one endpoint at $\infty$, but it is convenient not to include them in this definition.)  For a positively oriented geodesic $\bm{c}$ we define $z_{\bm{c}} \in \mathbb{H}$, the top of the geodesic $\bm{c}$, to be the point in $\mathbb{H}$ on $\bm{c}$ with largest imaginary part.
That is, the point on $\bm{c}$ closest (with respect to the hyperbolic metric) to a horocyclic neighborhood of the boundary point $\infty \in \partial \mathbb{H}$.
We see that for every $z\in\mathbb{H}$ there is a unique positively oriented geodesic $\bm{c}$ such that $z=z_{\bm{c}}$. (Note that this would fail for geodesics with endpoint $\infty$). The aim is now study an infinite set of geodesics in $\mathbb{H}$ generated by a discrete subgroup $\Gamma$.

Let $\Gamma$ be a lattice in $G$ which is not co-compact, and we assume (without loss of generality) that $\Gamma$ is scaled so that $\Gamma \backslash \mathbb{H}$ has a cusp of width one at $\infty$, and also that $-I\in\Gamma$.
This means that 
\begin{equation}
  \label{eq:Gammainfty}
  \Gamma_\infty =
  \Gamma\cap  \left\{
  \pm \begin{pmatrix}
    1 & x \\
    0 & 1
  \end{pmatrix}
  : x \in \mathbb{R}\right\}
  = \left\{
  \pm \begin{pmatrix}
    1 & k \\
    0 & 1
  \end{pmatrix}
  : k \in \mathbb{Z}\right\}
\end{equation}
is a subgroup of $\Gamma$; it is the stabiliser in $\Gamma$ of $\infty \in \partial \mathbb{H}$.

In our applications to roots of quadratic congruences, we will work with the modular group $\Gamma = \mathrm{SL}(2,\mathbb{Z})$ and, more generally, the congruence subgroups 
\begin{equation}
  \label{eq:Gamma0ndef}
  \Gamma_0(n) = \left\{
  \begin{pmatrix}
    a & b \\
    c & d
  \end{pmatrix}
  \in \mathrm{SL}(2,\mathbb{Z}) : c \equiv 0 \pmod n \right\}.
\end{equation}

Consider the finite collection of positively oriented geodesics $\bm{c}_1=\bm{c}(g_1),\ldots, \bm{c}_h=\bm{c}(g_h)$ with $g_1, \dots, g_h \in G$ so that no two $\bm{c}_l$ are $\Gamma$-equivalent (i.e. $\Gamma \bm{c}_{l'}\neq \Gamma \bm{c}_l$ for $l'\neq l$) and that each $\bm{c}_l$ projects to a closed geodesic in $\Gamma \backslash \mathbb{H}$. 
This last assumption implies that the stabiliser $\Gamma_{\bm{c}_l}$ of $\bm{c}_l$ in $\Gamma$, 
\begin{equation}
  \label{eq:geodesicstabilizer}
  \Gamma_{\bm{c}_l} = \Gamma \cap \left\{ \pm g_l
    \begin{pmatrix}
      t & 0 \\
      0 & t^{-1}
    \end{pmatrix}
  g_l^{-1} : t > 0 \right\},
\end{equation}
is conjugate to the infinite (projectively) cyclic group 
\begin{equation}
  \label{eq:fundamentalunit}
  g_l^{-1} \Gamma_{\bm{c}_l} g_l =  \left\langle
    \pm \begin{pmatrix}
      \varepsilon_l & 0 \\
      0 & \varepsilon_l^{-1}
    \end{pmatrix}
  \right\rangle
\end{equation}
for suitable $\varepsilon_l > 1$.

Let us now turn to the distribution of the infinite collection of geodesics
\begin{equation}
 \bigcup_{l=1}^h  \bigcup_{\gamma\in\Gamma/\Gamma_{\bm{c}_l}}  \gamma \bm{c}_l  
\end{equation}
by studying their tops, 
\begin{equation}
\scrS = \bigcup_{l=1}^h  \bigcup_{\gamma\in\Gamma/\Gamma_{\bm{c}_l}} z_{\gamma \bm{c}_l}.
\end{equation}
Note that in general $g z_{\bm{c}_l} \neq z_{g \bm{c}_l}$. However we have $\gamma z_{\bm{c}_l} = z_{\gamma \bm{c}_l}$ for $\gamma\in\Gamma_\infty$. Thus for such $\gamma$ we have $\gamma\scrS = \scrS$, i.e. $\scrS$ is invariant under translations by 1, and we can therefore reduce the problem to $\scrS$ modulo one, with $\gamma\in\Gamma_\infty \backslash \Gamma / \Gamma_{\bm{c}_l}$.
Specifically, we are interested in the fine-scale statistics of the real parts of geodesic tops with imaginary part in the interval $[ \alpha y, \beta y)$,
\begin{equation}\label{def:Xab}
X_{\alpha,\beta}(y) = \biguplus_{l=1}^h X_{\alpha,\beta}^l(y) 
\end{equation}
with 
\begin{equation}\label{def:Xabl}
X_{\alpha,\beta}^l(y) = \Big\{ \Re(z_{\gamma \bm{c}_l}) \bmod 1 : \gamma\in \Gamma_\infty \backslash \Gamma / \Gamma_{\bm{c}_l},\; \alpha y \leq \Im (z_{\gamma \bm{c}_l}) < \beta y\Big\} \subset \TT,
\end{equation}
in the limit $y\to 0$. Here $X_{\alpha,\beta}(y)$, $X_{\alpha,\beta}^l(y)$ are defined as multisets, i.e., we list points with their multiplicity. To quantify their distribution, define the number of points in small intervals by
\begin{equation}
\label{eq:NIdef}
\scrN_{I,\alpha,\beta}(x,y) = \# \Big(X_{\alpha,\beta}(y) \cap (x + y I +\ZZ)\Big) ,
\end{equation}
where $I\subset \mathbb{R}$ is a fixed finite interval, and $1\leq \alpha < \beta \leq \infty$. With
\begin{equation}
  \label{eq:Bdef}
  B_{I, \alpha, \beta} = \{ u + \mathrm{i}v \in \mathbb{H} : u \in I, \alpha \leq v < \beta\},
\end{equation}
we have
\begin{equation}
\scrN_{I,\alpha,\beta}(x,y) = \sum_{l=1}^h \# \Big\{ \gamma\in\Gamma_\infty \backslash \Gamma / \Gamma_{\bm{c}_l} : 
z_{\gamma \bm{c}_l} \in x+ y B_{I, \alpha, \beta} +\ZZ \Big\} .
\end{equation}
In fact
\begin{equation}
\begin{split}
\scrN_{I,\alpha,\beta}(x,y) & = \sum_{l=1}^h \# \Big\{ \gamma\in \Gamma / \Gamma_{\bm{c}_l} : 
z_{\gamma \bm{c}_l} \in x+y B_{I, \alpha, \beta} \Big\} \\
& = \sum_{l=1}^h \# \Big\{ \gamma\in \Gamma / \Gamma_{\bm{c}_l} : 
z_{\gamma \bm{c}_l} \in n(x) a(y) B_{I, \alpha, \beta} \Big\} 
\end{split}
\end{equation}
with
\begin{equation}
  \label{eq:andef}
  n(x) =
  \begin{pmatrix}
    1 & x \\
    0 & 1
  \end{pmatrix}
  ,\quad a(y) =
  \begin{pmatrix}
    y^{\frac{1}{2}} & 0 \\
    0 & y^{-\frac{1}{2}}
  \end{pmatrix}
  .
\end{equation}
Define for $g\in G$ and bounded $B\subset\HH$ the counting function
\begin{equation}
  \label{eq:NBdef1}
  \scrN_B(g) = \sum_{l=1}^h \# \{ \gamma \in \Gamma / \Gamma_{\bm{c}_l} : z_{g^{-1}\gamma \bm{c}_l} \in B \}.
\end{equation}
Using the fact that $g z_{\bm{c}} = z_{g\bm{c}}$ for $g=n(x) a(y)$ we see that
\begin{equation}\label{eq:keyid}
 \scrN_{I, \alpha, \beta}(x,y) =  \scrN_B(n(x)a(y))  .  
\end{equation}
where $B=B_{I, \alpha, \beta}$ is given by (\ref{eq:Bdef}). We furthermore note that $\scrN_B(\gamma g)=\scrN_B(g)$ for all $\gamma\in\Gamma$.
This motivates the definition of the following random line processes in $\HH$.

Define 
\begin{equation}
\Theta_{y,\lambda} = \sum_{l=1}^h \sum_{\gamma \in \Gamma / \Gamma_{\bm{c}_l}} \delta_{z_{(n(\xi)a(y))^{-1}\gamma \bm{c}_l}}
\end{equation}
with the random variable $\xi$ distributed according to the Borel probability measure $\lambda$ on $\TT$. 

We also define
\begin{equation}
\Theta = \sum_{l=1}^h \sum_{\gamma \in \Gamma / \Gamma_{\bm{c}_l}} \delta_{z_{g^{-1}\gamma \bm{c}_l}}
\end{equation}
with the random element $g$ distributed with respect to Haar probability measure $\mu_\Gamma$ on $\Gamma\backslash G$ ($\mu_\Gamma$ is the Haar measure of $G$ normalised so that $\mu_\Gamma(\Gamma\backslash G)=1$).
Since $\Theta_{y,\lambda}$ and $\Theta$ describe the distribution of random geodesics in $\HH$, we refer to them as {\em geodesic line processes}. Of course, in the present parametrisation they can also be viewed as point processes describing the location of the geodesic tops in $\HH$.

\subsection{Convergence of geodesic line processes}
\label{sec:convergence}

We will now investigate the convergence of our random line processes, first in distribution, then (in the next section) the convergence of its moments.

\begin{theorem}\label{theorem:convergence}
For every Borel probability measure $\lambda$ on $\TT$ that is absolutely continuous with respect to the Lebesgue measure, we have convergence 
$\Theta_{y,\lambda}  \to \Theta$ in distribution as $y\to 0$. 

In particular, for all $k_1,\ldots,k_r\in\ZZ_{\geq 0}$, finite intervals $I_i$ and $1\leq\alpha_i<\beta_i\leq\infty$, we have that
\begin{equation}
\lim_{y\to 0} \lambda\big(\big\{ x\in\TT : \scrN_{I_i,\alpha_i,\beta_i}(x,y) = k_i \;\forall i \big\}\big) 
= \PP\big( \Theta(B_{I_i,\alpha_i,\beta_i})= k_i \; \forall i \big) 
\end{equation}
and the limit is a continuous function of $\alpha_i$, $\beta_i$ and the endpoints of $I_i$.
\end{theorem}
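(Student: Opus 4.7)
The plan is to translate the convergence of the random geodesic line processes into an equidistribution statement for the long closed horocycle $\{\Gamma n(x) a(y) : x \in \TT\}$ in $\Gamma \backslash G$ as $y \to 0$, and then transfer weak convergence on $\Gamma\backslash G$ to convergence of the integer-valued counts $\scrN_B$. Concretely, I would use the $\Gamma$-invariance $\scrN_B(\gamma g) = \scrN_B(g)$ together with (\ref{eq:keyid}) to note that $(\scrN_{I_i,\alpha_i,\beta_i}(\xi,y))_i$ depends on $\xi$ only through the coset $\Gamma n(\xi)a(y) \in \Gamma \backslash G$, so its joint distribution under $\xi\sim\lambda$ is the pushforward $\lambda_y$ of $\lambda$ via $x \mapsto \Gamma n(x)a(y)$; similarly $\Theta$ is governed by $\mu_\Gamma$. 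The theorem thus reduces to a weak-type convergence $\lambda_y \to \mu_\Gamma$ on $\Gamma \backslash G$ strong enough to handle the functionals $g \mapsto \mathbf{1}_{\{\scrN_{B_i}(g) = k_i \; \forall i\}}$. The key classical input is the equidistribution of long closed horocycles (Sarnak's thesis, Dani--Smillie): for every bounded continuous $F: \Gamma \backslash G \to \RR$ and every $\lambda$ absolutely continuous with respect to Lebesgue,
\[
\int_\TT F(\Gamma n(x) a(y)) \, d\lambda(x) \longrightarrow \int_{\Gamma \backslash G} F \, d\mu_\Gamma \qquad \text{as } y \to 0.
\]

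To pass from continuous $F$ to the indicator of $A = \{g : \scrN_{B_i}(g) = k_i \;\forall i\}$, I would apply the Portmanteau theorem after verifying $\mu_\Gamma(\partial A) = 0$. The boundary $\partial A$ is contained in the countable union over $i, l, \gamma$ of the sets $\{g : z_{g^{-1}\gamma \bm{c}_l} \in \partial B_i\}$. For each fixed $l, \gamma, i$, the map $g \mapsto z_{g^{-1}\gamma\bm{c}_l}$ is smooth and submersive $G \to \HH$, so the preimage of the one-dimensional set $\partial B_i$ is a codimension-one submanifold of $\Gamma \backslash G$, hence $\mu_\Gamma$-negligible. Equivalently, one can bracket $\scrN_{B_i}$ between counts $\scrN_{B_i^{-\eta}}$ and $\scrN_{B_i^{+\eta}}$ for slight shrinkings and enlargements of $B_i$, approximate these by continuous sums of bump functions at the tops, apply equidistribution, and let $\eta \to 0$ using the Siegel mean-value formula of Section \ref{sec:moments} to control the bracketing gap.

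The main obstacle I expect is the case $\beta_i = \infty$, where $B_i$ extends unbounded into the cusp: one must prevent mass from escaping in $\scrN_{B_i}$ along the horocycle orbit, since for $y$ small the measure $\lambda_y$ places positive mass on segments climbing high into the cusp. My plan is to truncate at height $T$, setting $B_i^T = B_i \cap \{\Im z < T\}$, apply the bounded case to obtain $\lambda_y(\scrN_{B_i^T} = k_i\;\forall i) \to \mu_\Gamma(\scrN_{B_i^T} = k_i\;\forall i)$, and control the truncation error. The Siegel first-moment bound gives $\EE_{\mu_\Gamma}[\scrN_{B_i \setminus B_i^T}] \to 0$ as $T \to \infty$; I would upgrade this to a uniform-in-$y$ no-escape-of-mass estimate for $\lambda_y$ by applying the equidistribution step to a nonnegative continuous cutoff dominating the height-$\geq T$ contribution. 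A standard $\varepsilon/3$ argument exchanging the limits $y \to 0$ and $T \to \infty$ then completes the proof, and continuity of the limit in $(\alpha_i, \beta_i, I_i)$ follows from nonatomicity of the top distributions under $\mu_\Gamma$, again a consequence of the Siegel formula.
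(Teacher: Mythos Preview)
Your strategy is essentially the paper's: push the problem forward along $x\mapsto \Gamma n(x)a(y)$, invoke equidistribution of long closed horocycles on $\Gamma\backslash G$ (the paper cites Theorem~5.6 of \cite{MarklofStrombergsson2010}), and then pass to the indicator of $A=\{g:\scrN_{B_i}(g)=k_i\;\forall i\}$ once you know $\mu_\Gamma(\partial A)=0$. Two implementation points are worth comparing. First, for the null-boundary step you argue via submersivity of $g\mapsto z_{g^{-1}\gamma\bm c_l}$; the paper instead uses the Siegel-type first-moment formula (Proposition~\ref{lemma:Faverage}) together with Markov's inequality, observing that $\partial A\subset\bigcup_i\{g:\scrN_{\partial B_i}(g)\ge 1\}$ and $\mu_\Gamma(\scrN_{\partial B_i}\ge 1)\le \kappa_\Gamma\,\mathrm{vol}_\HH(\partial B_i)=0$ (Lemma~\ref{lemma:boundaryestimate}). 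The paper's route is shorter and, more importantly, the mean-value formula is needed anyway for moments, the intensity of $\Xi$, and the continuity statement, so your submersion argument would be redundant machinery. Second, your truncation programme for $\beta_i=\infty$ is unnecessary: $B_{I,\alpha,\infty}$ already has \emph{finite} hyperbolic volume $\mathrm{length}(I)/\alpha$, so the first-moment bound applies directly, the indicator of $A$ is bounded regardless, and the equidistribution theorem for continuity sets handles this case without any cutoff or separate no-escape-of-mass argument. The continuity of the limit in the parameters likewise falls out of the same first-moment bound applied to thin strips, just as you indicate at the end.
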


As an example of theorem \ref{theorem:convergence}, let us consider the special case of the void distribution, where $r=1$ and $k_1=0$. Here, for $1\leq\alpha<\beta\leq\infty$ we have that for any finite interval $I$
\begin{equation}
\lim_{y\to 0} \lambda\big(\big\{ x\in\TT : \scrN_{I,\alpha,\beta}(x,y) = 0 \big\}\big) 
= \PP\big( \Theta(B_{I,\alpha,\beta})= 0 \big)  ,
\end{equation}
with the formula
\begin{equation}
\PP\big( \Theta(B)= 0 \big) 
= \int_{\Gamma\backslash G} \bigg( \prod_{l=1}^h \prod_{\substack{\gamma \in \Gamma / \Gamma_{\bm{c}_l}}} \Big( 1- \chi_{B}( z_{g^{-1}\gamma \bm{c}_l} ) \Big)\bigg) \dd\mu_\Gamma(g),
\end{equation}
where $\chi_B$ denotes the characteristic function of the set $B = B_{I, \alpha, \beta}$.

Our proof of theorem \ref{theorem:convergence} 
follows similar lines as the arguments in section 4 of \cite{MarklofVinogradov2018}.
The main difference here is that the points $z_{\gamma \bm{c}_l}$ do not form a $\Gamma$-orbit in $\mathbb{H}$.
Both here and in \cite{MarklofVinogradov2018} the main ingredient for the proof is the equidistribution of long horocycles.
The precise statement we use here can be found in theorem 5.6 of \cite{MarklofStrombergsson2010}, which is based on ideas going back to \cite{Margulis2004} and \cite{EskinMcMullen1993}.


We first need to compute the average of the functions $\scrN_B(g)$, which gives the intensity measure $\EE\Theta$ of the process $\Theta$. The analogue of this calculation in \cite{MarklofVinogradov2018} is just a straightforward application of the unfolding technique, but here it is more involved due to the fact that the $z_{\gamma \bm{c}}$ are not an orbit of a point.
This obstacle is overcome by an appropriate change of variables from the Iwasawa coordinates (\ref{eq:Iwasawa}) which has a notably simple Jacobian as seen in the proof below.
Set 
\begin{equation}\label{def:eta}
\kappa_\Gamma = \frac{\ell}{2\pi\,\mathrm{vol}_{\mathbb{H}}(\Gamma \backslash \mathbb{H})} ,\qquad \ell = 2\sum_{l=1}^h \log \varepsilon_l ,
\end{equation}
where $2\log \varepsilon_l$ and $\ell$ represent the individual and total lengths of the geodesics $\bm{c}_1,\ldots,\bm{c}_h$ in $\Gamma\backslash\HH$.

\begin{proposition}
  \label{lemma:Faverage}
  For any Borel set $B\subset\HH$, 
  \begin{equation}
    \label{eq:Faverage}
    \EE\Theta(B) = \int_{\Gamma \backslash G} \scrN_{B}(g) \dd\mu_\Gamma(g) =  \kappa_\Gamma\, \mathrm{vol}_{\mathbb{H}}(B). 
  \end{equation}
\end{proposition}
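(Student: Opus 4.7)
The plan is to unfold the $\Gamma$-sum defining $\scrN_B$, straighten the geodesic $\bm{c}_l$ to the imaginary axis, and then introduce coordinates adapted to geodesic tops in which Haar measure disintegrates explicitly.

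First, since $F_l(g) = \sum_{\gamma\in\Gamma/\Gamma_{\bm{c}_l}}\chi_B(z_{g^{-1}\gamma\bm{c}_l})$ is left $\Gamma$-invariant, the usual unfolding identity gives
\begin{equation*}
\int_{\Gamma\backslash G} F_l(g)\dd\mu_\Gamma(g) = \int_{\Gamma_{\bm{c}_l}\backslash G}\chi_B(z_{g^{-1}\bm{c}_l})\dd\mu_\Gamma(g).
\end{equation*}
Substituting $g = g_l h$, the image $g^{-1}\bm{c}_l = h^{-1}\bm{c}_0$ becomes a transform of the imaginary axis $\bm{c}_0 = \{\ii t : t>0\}$, and the stabiliser conjugates to $\Lambda_l := g_l^{-1}\Gamma_{\bm{c}_l}g_l = \langle \pm \begin{pmatrix}\varepsilon_l & 0\\0 & \varepsilon_l^{-1}\end{pmatrix}\rangle$ by (\ref{eq:fundamentalunit}). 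Thus we are reduced to evaluating $\int_{\Lambda_l\backslash G}\chi_B(z_{h^{-1}\bm{c}_0})\dd\mu_\Gamma(h)$.

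Second, for generic $h$ the geodesic $h^{-1}\bm{c}_0$ is positively oriented with top $u_0 + \ii v_0\in\mathbb{H}$; let $s\in\mathbb{R}$ be the signed hyperbolic arc length on $h^{-1}\bm{c}_0$ from this top to $h^{-1}\cdot\ii$. Parametrising the semicircle as $u_0 + v_0(\tanh s + \ii\operatorname{sech} s)$ and extracting the Iwasawa form (\ref{eq:Iwasawa}) of $h^{-1}=n(x)a(y)k(\theta)$ yields
\begin{equation*}
x = u_0 + v_0\tanh s,\qquad y = v_0\operatorname{sech} s,\qquad \theta = \tfrac{\pi}{2} + \arctan(\sinh s).
\end{equation*}
A short exterior-differential computation, in which the identity $\operatorname{sech}^2 s + \tanh^2 s = 1$ supplies the cancellation, gives the notably simple Jacobian
\begin{equation*}
\frac{\dd x\,\dd y\,\dd\theta}{y^2} = \frac{\dd u_0\,\dd v_0\,\dd s}{v_0^2}.
\end{equation*}
Invoking bi-invariance of Haar on $G$, this exhibits the Haar measure on $h$ as $\dd\mathrm{vol}_{\mathbb{H}}(u_0+\ii v_0)\,\dd s$: hyperbolic area on the space of geodesic tops, times arc length along the geodesic.

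Third, in these coordinates the integrand $\chi_B(z_{h^{-1}\bm{c}_0}) = \chi_B(u_0 + \ii v_0)$ is independent of $s$, while left multiplication of $h$ by $\begin{pmatrix}\varepsilon_l & 0\\0 & \varepsilon_l^{-1}\end{pmatrix}$ fixes the top of $h^{-1}\bm{c}_0$ and translates $s$ by $\pm 2\log\varepsilon_l$ (the length of the closed geodesic in $\Gamma\backslash\mathbb{H}$). Hence a fundamental domain for $\Lambda_l$ on the $s$-axis has length $2\log\varepsilon_l$, and
\begin{equation*}
\int_{\Lambda_l\backslash G}\chi_B(z_{h^{-1}\bm{c}_0})\,\frac{\dd x\,\dd y\,\dd\theta}{y^2} = 2\log\varepsilon_l\cdot\mathrm{vol}_{\mathbb{H}}(B).
\end{equation*}
Summing over $l$, using $\ell = 2\sum_l\log\varepsilon_l$, and dividing by the total Haar mass $2\pi\,\mathrm{vol}_{\mathbb{H}}(\Gamma\backslash\mathbb{H})$ of $\Gamma\backslash G$ yields $\EE\Theta(B) = \kappa_\Gamma\,\mathrm{vol}_{\mathbb{H}}(B)$, as required.

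The main obstacle is identifying the correct change of variables in the second step; once one recognises Haar measure on $G$ as hyperbolic area on geodesic tops times arc length along the geodesic foliation induced by $\bm{c}_0$, the rest is routine unfolding and bookkeeping. The coefficient $\kappa_\Gamma$ appears transparently as (arc length per unit volume) summed over the $h$ closed geodesics.
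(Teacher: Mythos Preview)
Your proof is correct and follows essentially the same strategy as the paper: unfold the $\Gamma$-sum to an integral over $\Gamma_{\bm{c}_l}\backslash G$, conjugate by $g_l$ to straighten $\bm{c}_l$ to the imaginary axis $\bm{c}_0$, and then introduce coordinates in which the top $z_{h^{-1}\bm{c}_0}$ and the position along the geodesic separate, so that Haar measure factors as $\dd\mathrm{vol}_{\mathbb{H}}$ times an arc-length measure on which the cyclic stabiliser acts by translation. The only difference is cosmetic: the paper parametrises the third coordinate multiplicatively via $g^{-1}=n(u)a(v)k(\tfrac{\pi}{2})a(s)^{-1}$, obtaining the measure $\dd s/s$ and fundamental domain $[1,\varepsilon_l^2)$, whereas you use hyperbolic arc length $s$ along the geodesic (related to the paper's parameter by a logarithm), obtaining $\dd s$ and fundamental domain of length $2\log\varepsilon_l$; the resulting integrals are identical.
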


\begin{proof}
  We have
  \begin{equation}
    \label{eq:FBdef1}
    \scrN_B(g) = \sum_{l=1}^h\sum_{\gamma \in \Gamma / \Gamma_{\bm{c}_l}} \chi_{S_B}(g^{-1}\gamma g_l),
  \end{equation}
  where $\chi_{S_B}$ is the characteristic function of the set 
  \begin{equation}
    \label{eq:SBdef}
    S_B = \{ g \in G : z_{g \bm{c}_0} \in B \}
  \end{equation}
  with
  \begin{equation}
    \label{eq:c0def}
    \bm{c}_0 = \left\{
      \begin{pmatrix}
        t & 0 \\
        0 & t^{-1}
      \end{pmatrix} \ii
      : t > 0 \right\} 
  \end{equation}
  defining an upward oriented geodesic. 
  Unfolding shows that
  \begin{equation}
    \label{eq:unfolded}
    \int_{\Gamma \backslash G} \scrN_B(g) \dd \mu_\Gamma(g) = \sum_{l=1}^h \int_{\Gamma_{\bm{c}_l} \backslash G} \chi_{S_B}(g^{-1} g_l)\dd \mu_\Gamma(g). 
  \end{equation}
  Changing variables $g \gets g_l g$, the region of integration changes from $\Gamma_{\bm{c}_l} \backslash G$ to
  \begin{equation}
    \label{eq:newintregion}
    (g_l^{-1} \Gamma_{\bm{c}_l} g_l) \backslash G = \left\langle
      \pm\begin{pmatrix}
        \varepsilon_l & 0 \\
        0 & \varepsilon_l^{-1}
      \end{pmatrix}
      \right\rangle \backslash G,
  \end{equation}
  and so the integral becomes
  \begin{equation}
    \label{eq:Faverage1}
    \sum_{l=1}^h \int_{\mathcal{F}_l} \chi_{S_B}(g^{-1}) \dd \mu_\Gamma(g),
  \end{equation}
  where $\mathcal{F}_l$ is a fundamental domain for the action of $\left\langle
      \pm\begin{pmatrix}
        \varepsilon_l & 0 \\
        0 & \varepsilon_l^{-1}
      \end{pmatrix}
      \right\rangle $ on $G$.

To compute this integral, we introduce a coordinate chart on $G$ in which both $S_B$ and a fundamental domain $\mathcal{F}_l$ are easy to describe. 
The region of $G$ where this chart applies is the open set of $g$ such that $g^{-1}\bm{c}_0$ is positively oriented. 
Clearly $S_B$ is a subset of this region, as we only define $z_{\bm{c}}$ for positively oriented geodesics, and in what follows we only consider the subset of $\mathcal{F}_l$ in this region.

The choice of coordinates we use is explicitly 
\begin{equation}
  \label{eq:coordinates}
  g^{-1} = n(u)a(v) k(\tfrac{\pi}{2}) a(s)^{-1},
\end{equation}
where
\begin{equation}
  \label{eq:kdef}
  k(\theta) =
  \begin{pmatrix}
    \cos \frac{\theta}{2} & - \sin \frac{\theta}{2} \\
    \sin \frac{\theta}{2} & \cos \frac{\theta}{2}
  \end{pmatrix}
\end{equation}
and $u, v, s \in \mathbb{R}$ with $v, s > 0$. 
Since $k(\tfrac{\pi}{2}) a(s)^{-1} \bm{c}_0$ is the positively oriented geodesic with top $i$, we see that in these coordinates $z_{g^{-1} \bm{c}_0} = u + \mathrm{i}v$.
Moreover, we observe that in this coordinate chart the action of $
\pm\begin{pmatrix}
  \varepsilon_l & 0 \\
  0 & \varepsilon_l^{-1}
\end{pmatrix}
$ is simply to multiply $s$ by $\varepsilon_l^2$,
and so
\begin{equation}
  \label{eq:fundamentaldomaindef}
  \mathcal{F}_l = \{ a(s) k(-\tfrac{\pi}{2}) a(v^{-1}) n(u)^{-1} : 1 \leq s < \varepsilon_l^2, v > 0, u \in \mathbb{R}\}
\end{equation}
is a valid region for use in the integration (\ref{eq:Faverage1}). 

We now express the Haar measure on $G$ with respect to the coordinate chart (\ref{eq:coordinates}). 
To this end, we write the matrix on the right in (\ref{eq:coordinates}) in Iwasawa coordinates. 
We have
\begin{equation}
  \label{eq:coordinates1}
  g^{-1} = 
  \begin{pmatrix}
    \frac{1}{\sqrt{2sv}}(u + v) & \frac{\sqrt{s}}{\sqrt{2v}}(u - v) \\
    \frac{1}{\sqrt{2sv}} & \frac{\sqrt{s}}{\sqrt{2v}}
  \end{pmatrix}
  ,
\end{equation}
and so we find that the Iwasawa coordinates (\ref{eq:Iwasawa}) of $g^{-1}$ are
\begin{flalign}
  \label{eq:Iwasawa1}
  x & = \left( \frac{1}{2sv}(u + v) + \frac{s}{2v}(u - v) \right)\left( \frac{1}{2sv} + \frac{s}{2sv} \right)^{-1} = u - \frac{s^2 - 1}{s^2 + 1} v \\ \nonumber 
  y & = \left(\frac{1}{2sv} + \frac{s}{2v}\right)^{-1} = \frac{2sv}{s^2 + 1} \\ \nonumber
  \theta & = 2 \arctan (s^{-1}). 
\end{flalign}
We note that the Jacobian matrix of this change of coordinates, 
\begin{equation}
  \label{eq:Jacobian}
  J = 
  \begin{pmatrix}
    \pdv{x}{u} & \pdv{x}{v} & \pdv{x}{s} \\
    \pdv{y}{u} & \pdv{y}{v} & \pdv{y}{s} \\
    \pdv{\theta}{u} & \pdv{\theta}{v} & \pdv{\theta}{s}
  \end{pmatrix}
  ,
\end{equation}
is upper triangular, and so we only need to compute the diagonal entries.
We have
\begin{equation}
  \label{eq:Jdiagonalentries}
  \pdv{x}{u} = 1, \quad \pdv{y}{v} = \frac{2s}{s^2 + 1},\quad \pdv{\theta}{s} = - \frac{2}{s^2 + 1},
\end{equation}
and so
\begin{equation}
  \label{eq:newHaar}
  \dd x \frac{\dd y}{y^2} \dd \theta = \dd u  \frac{\dd v}{v^2} \frac{\dd s}{s}. 
\end{equation}
Including the normalizing factor that makes $\mu_\Gamma$ a probability measure on $\Gamma \backslash G$, we have that the integral (\ref{eq:Faverage1}) is
\begin{equation}
  \label{eq:Faverage2}
  \frac{1}{2 \pi \mathrm{vol}_{\mathbb{H}}(\Gamma \backslash \mathbb{H})} \int_1^{\varepsilon_l^2} \underset{u + \mathrm{i}v \in B}{\int\int} \dd u \frac{\dd v}{v^2} \frac{\dd s}{s},
\end{equation}
and (\ref{eq:Faverage}) follows. 
\end{proof}

Alhtough not strictly necessary for our purposes, we note that we can extend proposition \ref{lemma:Faverage} to the following mean value formula.

\begin{proposition}
For every measurable $f:\HH \to \RR_{\geq 0}$ (or $f\in L^1(\HH,\mathrm{vol}_\HH)$),
\begin{equation}
\int_{\Gamma\backslash G} \bigg(\sum_{l=1}^h \sum_{\gamma \in \Gamma / \Gamma_{\bm{c}_l}} f(z_{g^{-1}\gamma \bm{c}_l})\,\bigg) \dd\mu_\Gamma(g)
= \kappa_\Gamma \int_\HH f(z) \dd\mathrm{vol}_\HH(z).
\end{equation}
\end{proposition}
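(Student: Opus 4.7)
The proposition is a restatement of Proposition \ref{lemma:Faverage} lifted from characteristic functions of Borel sets to general non-negative measurable (or integrable) functions, so the plan is to run the standard measure-theoretic extension procedure.

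First, I would observe that both sides of the claimed identity define Borel measures on $\HH$ when viewed as functions of $B$ via $f = \chi_B$. On the left, positivity and $\sigma$-additivity follow from monotone convergence applied to the interchange of sum and integral (all summands are non-negative), together with the fact that $B \mapsto \scrN_B(g)$ is itself a counting measure for each fixed $g$. On the right, $\kappa_\Gamma\,\mathrm{vol}_\HH$ is obviously a measure. Proposition \ref{lemma:Faverage} asserts that these two measures agree on all Borel sets, hence the identity in the proposition holds for $f = \chi_B$.

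Next, by linearity of both sides in $f$, the identity extends to non-negative simple functions $f = \sum_{i=1}^N c_i \chi_{B_i}$ with $c_i \geq 0$ and $B_i$ Borel. For an arbitrary measurable $f : \HH \to \RR_{\geq 0}$, choose an increasing sequence of non-negative simple functions $f_n \uparrow f$ pointwise. On the right-hand side, the monotone convergence theorem gives $\int_\HH f_n\,\dd\mathrm{vol}_\HH \to \int_\HH f\,\dd\mathrm{vol}_\HH$. On the left, for each fixed $g$ the inner double sum $\sum_{l,\gamma} f_n(z_{g^{-1}\gamma\bm{c}_l})$ is monotone increasing in $n$ (all terms are non-negative), so monotone convergence applied termwise shows it converges to $\sum_{l,\gamma} f(z_{g^{-1}\gamma\bm{c}_l})$; a second application of monotone convergence with respect to $\mu_\Gamma$ then passes the limit through the outer integral. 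Equality of the two limits yields the identity for all non-negative measurable $f$.

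Finally, for $f \in L^1(\HH,\mathrm{vol}_\HH)$ (real- or complex-valued), decompose $f = f_+ - f_-$ (and, if complex, also its imaginary part) into its non-negative and non-positive parts. The already-established identity applied to $|f|$ shows that both sides of the desired equation are absolutely convergent, and linearity then gives the full formula. There is no real obstacle here beyond bookkeeping; the only subtle point is ensuring that the inner double sum is itself measurable in $g$, which follows because each summand $g \mapsto f(z_{g^{-1}\gamma\bm{c}_l})$ is measurable (the map $g \mapsto z_{g^{-1}\bm{c}_l}$ is continuous on the open set where $g^{-1}\bm{c}_l$ is positively oriented, and $f$ is measurable) and countable sums of non-negative measurable functions are measurable.
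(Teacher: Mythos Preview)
Your argument is correct and is precisely the monotone class argument the paper invokes: both reduce to Proposition~\ref{lemma:Faverage} for characteristic functions and then pass to general $f$ by the standard simple-functions/monotone-convergence/$L^1$-decomposition ladder. Your write-up is just a more explicit version of the paper's one-line proof.
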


\begin{proof}
  As $\mathrm{vol}_{\mathbb{H}}$ is a Borel measure, we may reduce to the case when $f$ is the characteristic function of a Borel set, which is exactly the content of proposition \ref{lemma:Faverage}.
\end{proof}

Following the technique in \cite{MarklofVinogradov2018}, we derive from proposition \ref{lemma:Faverage} the following estimates.

\begin{lemma}
  \label{lemma:boundaryestimate}
  We have
  \begin{equation}
    \label{eq:boundaryestimate}
    \mu_\Gamma\left( \{ g \in \Gamma \backslash G : \scrN_B(g) \geq 1 \} \right) \leq \kappa_\Gamma\, \mathrm{vol}_{\mathbb{H}}(B).
  \end{equation}
  In particular, if $\mathrm{vol}_{\mathbb{H}}(\partial B) = 0$, then
  \begin{equation}
    \label{eq:boundaryestimate1}
    \mu_\Gamma \left( \partial \{ g \in \Gamma \backslash G : \scrN_B(g) = k \} \right) = 0.
  \end{equation}
\end{lemma}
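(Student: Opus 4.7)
The first inequality is essentially a Markov-type bound. Since $\scrN_B(g)$ takes values in $\mathbb{Z}_{\geq 0} \cup \{\infty\}$, we have the pointwise estimate $\chi_{\{\scrN_B \geq 1\}}(g) \leq \scrN_B(g)$. Integrating both sides against $\mu_\Gamma$ and invoking proposition \ref{lemma:Faverage} gives
\begin{equation*}
\mu_\Gamma(\{g : \scrN_B(g) \geq 1\}) \leq \int_{\Gamma \backslash G} \scrN_B(g) \dd \mu_\Gamma(g) = \kappa_\Gamma\, \mathrm{vol}_{\mathbb{H}}(B).
\end{equation*}
As a byproduct, $\scrN_{\overline{B}}(g) < \infty$ for $\mu_\Gamma$-almost every $g$ whenever $\overline{B}$ has finite hyperbolic volume.

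For the second claim, the plan is to show that, up to a null set, the boundary of $\{g : \scrN_B(g) = k\}$ consists of points $g$ for which some top $z_{g^{-1}\gamma \bm{c}_l}$ lands on $\partial B$. To this end, fix $g_0$ with $\scrN_{\overline{B}}(g_0) < \infty$; by definition this means only finitely many pairs $(l,\gamma) \in \{1,\ldots,h\} \times (\Gamma/\Gamma_{\bm{c}_l})$ satisfy $z_{g_0^{-1}\gamma \bm{c}_l} \in \overline{B}$. For every other pair, the continuity of the map $g \mapsto z_{g^{-1}\gamma \bm{c}_l}$ (where defined) and the boundedness of $B$ ensure that $z_{g^{-1}\gamma \bm{c}_l} \notin B$ for all $g$ in some neighbourhood of $g_0$. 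Consequently, in such a neighbourhood $\scrN_B$ is a finite sum of indicator functions, each of which can only be discontinuous at $g_0$ if the corresponding top $z_{g_0^{-1}\gamma \bm{c}_l}$ lies on $\partial B$. This yields the inclusion
\begin{equation*}
\partial \{ g \in \Gamma \backslash G : \scrN_B(g) = k\} \subseteq \{g : \scrN_{\partial B}(g) \geq 1\} \cup \{g : \scrN_{\overline{B}}(g) = \infty\}.
\end{equation*}

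It then remains to observe that the right-hand side is $\mu_\Gamma$-null. The second piece has measure zero by proposition \ref{lemma:Faverage} applied to $\overline{B}$, since a non-negative $L^1$ function is almost everywhere finite. For the first piece, applying the inequality just established to the set $\partial B$ gives
\begin{equation*}
\mu_\Gamma(\{g : \scrN_{\partial B}(g) \geq 1\}) \leq \kappa_\Gamma\, \mathrm{vol}_{\mathbb{H}}(\partial B) = 0
\end{equation*}
by the assumption $\mathrm{vol}_{\mathbb{H}}(\partial B) = 0$. The main point to handle carefully is the local-finiteness step leading to the inclusion above, but this is a standard consequence of the discreteness of $\Gamma$ together with the fact that $B$ is bounded in $\mathbb{H}$, so no serious obstacle is expected.
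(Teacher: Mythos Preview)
Your proof is correct and follows the same route as the paper's: Markov's inequality combined with proposition \ref{lemma:Faverage} for \eqref{eq:boundaryestimate}, and then the inclusion of the boundary of $\{\scrN_B = k\}$ into $\{\scrN_{\partial B}\ge 1\}$ for \eqref{eq:boundaryestimate1}. The paper is terser---it simply asserts that $g' \in \partial\{\scrN_B=k\}$ implies $\scrN_{\partial B}(g')\ge 1$---whereas you spell out the local-finiteness reasoning behind this and add the safety set $\{\scrN_{\overline B}=\infty\}$; the latter is not actually needed once you invoke local finiteness (which does hold, via the bounded-distance comparison with the $\Gamma$-orbit points $\gamma g_l \ii$), but it is harmless.
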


\begin{proof}
  Markov's inequality implies
  \begin{equation}
    \label{eq:markov}
    \mu_\Gamma \left( \{ g \in \Gamma \backslash G : \scrN_B(g) \geq 1 \} \right) \leq \int_{\Gamma \backslash G} \scrN_B(g) \dd \mu_\Gamma(g),
  \end{equation}
  and so proposition \ref{lemma:Faverage} implies (\ref{eq:boundaryestimate}).
  We now observe that $g' \in \partial \{ g \in \Gamma \backslash G: \scrN_B(g) = k \}$ implies $\scrN_{\partial B}(g') \geq 1$, and so (\ref{eq:boundaryestimate1}) follows from (\ref{eq:boundaryestimate}).
\end{proof}

\begin{proof}[Proof of theorem \ref{theorem:convergence}]
    We now have all the ingredients to apply the equidistribution of long horocycles, theorem 5.6 of \cite{MarklofStrombergsson2010}.
This theorem states in particular that, for $\lambda$ absolutely continuous with respect to Lebesgue measure, if $S \subset \Gamma \backslash G$ has boundary of $\mu_\Gamma$-measure $0$, then
\begin{equation}
  \label{eq:bigtheoremrecall}
  \lim_{y\to 0} \int_{\mathbb{T}} \chi_S( n(x)a(y) ) \dd \lambda(x) = \mu_\Gamma(S),
\end{equation}
where $\chi_S$ is the indicator function of $S$.
Now (\ref{eq:boundaryestimate1}) shows that the relevant set $S$ for our theorem \ref{theorem:convergence} does indeed have boundary of $\mu_\Gamma$-measure zero. This furthermore implies that the limit distribution is continuous as stated.
\end{proof}

\subsection{Moments}\label{sec:moments}

We now turn to the moments of the counting functions $\scrN_{I, \alpha, \beta}(x,y)$.
For $y > 0$, $\bm{s} = (s_1, \dots, s_r) \in \mathbb{C}^r$, $\mathcal{I} = I_1 \times \cdots \times I_r$, $I_j \subset \mathbb{R}$ a finite interval, and $\bm{\alpha} = (\alpha_1, \dots, \alpha_r)$, $\bm{\beta} = ( \beta_1, \dots, \beta_r)$ satisfying $1\leq \alpha_j < \beta_j \leq \infty$, 
define the probabilities (cf.~theorem \ref{theorem:convergence})
\begin{equation}
  \label{eq:Pdef}
  P_{\mathcal{I}, \bm{\alpha}, \bm{\beta}}^\lambda(\bm{k}, y) = \lambda( \{ x \in \mathbb{T} : \scrN_{I_j, \alpha_j, \beta_j}(x, y) = k_j \;\forall  j\}),
\end{equation}
\begin{equation}
P_{\mathcal{I}, \bm{\alpha}, \bm{\beta}}(\bm{k}) 
= \PP\big( \Theta(B_{I_i,\alpha_i,\beta_i})= k_i \; \forall i \big),
\end{equation}
where $\bm{k} = (k_1, \dots, k_r)$ with $k_j$ non-negative integers;
the moments
\begin{equation}
  \label{eq:Momentsdef}
  M_{\mathcal{I}, \bm{\beta}, \bm{\alpha}}^\lambda(\bm{\kappa}, y) = \int_{\mathbb{T}} \prod_{1\leq j \leq r} \scrN_{I_j, \alpha_j, \beta_j}(x, y)^{\kappa_j} \dd \lambda (x) = \underset{k_1, \dots, k_r \geq 0}{\sum} k_1^{\kappa_1} \cdots k_r^{\kappa_r} P_{\mathcal{I}, \bm{\alpha}, \bm{\beta}}^\lambda(\bm{k},y) ,
\end{equation}
and
\begin{equation}
  \label{eq:Mdef1}
  M_{\mathcal{I}, \bm{\alpha}, \bm{\beta}}(\bm{\kappa}) = \int_{\Gamma \backslash G} \prod_{1\leq j \leq r} \scrN_{I_j, \alpha_j, \beta_j}(g)^{\kappa_j} \dd \mu_\Gamma(g) = \underset{k_1, \dots, k_r \geq 0}{\sum} k_1^{\kappa_1} \cdots k_r^{\kappa_r} P_{\mathcal{I}, \bm{\alpha}, \bm{\beta}}(\bm{k}),; 
\end{equation}
where $\bm{\kappa} = (\kappa_1, \dots, \kappa_r) \in \mathbb{R}^r$ with $\kappa_j \geq 0$;
and the moment generating functions 
\begin{equation}
  \label{eq:Gdef}
   \begin{split}
  G_{\mathcal{I}, \bm{\alpha}, \bm{\beta}}^\lambda(\bm{s}, y) & = \int_{\mathbb{T}} \exp( \sum_{1\leq j \leq r} s_j \scrN_{I_j, \alpha_j, \beta_j}(x, y)) \dd \lambda(x) \\
  & = \underset{k_1, \dots, k_r \geq 0}{\sum} \exp( \sum_{1\leq j\leq r} k_j s_j ) P^\lambda_{\mathcal{I}, \bm{\alpha}, \bm{\beta}}(\bm{k}, y) ,
  \end{split}
\end{equation}
and
\begin{equation}
  \label{eq:Gdef1}
  \begin{split}
  G_{\mathcal{I}, \bm{\alpha}, \bm{\beta}}(\bm{s}) & = \int_{\Gamma \backslash G} \exp( \sum_{1\leq j\leq r} s_j \scrN_{I_j, \alpha_j, \beta_j}(g)) \dd \mu_\Gamma(g) \\
  & = \underset{k_1, \dots, k_r \geq 0}{\sum} \exp( \sum_{1\leq j \leq r} k_j s_j) P_{\mathcal{I}, \bm{\alpha}, \bm{\beta}}(\bm{k}).
  \end{split}
\end{equation}

\begin{theorem}
  \label{theorem:moments}
  Let $\lambda$ be a probability measure on $\mathbb{T}$ with bounded density (with respect to Lebesgue measure), and $\mathcal{I}$, $\bm{\alpha}$, $\bm{\beta}$ as above.
  Then there is a constant $c_0 > 0$ such that for $\bm{s} = (s_1, \dots, s_r) \in \mathbb{C}^r$ satisfying $ \sum_j \max\{ \mathrm{Re}(s_j), 0 \} < c_0 $ the function $G_{\mathcal{I}, \bm{\alpha}, \bm{\beta}}(\bm{s})$ is analytic and we have
  \begin{equation}
    \label{eq:momentconvergence}
    \lim_{y \to 0} G^\lambda_{\mathcal{I}, \bm{\alpha}, \bm{\beta}}(\bm{s}, y) = G_{\mathcal{I}, \bm{\alpha}, \bm{\beta}}(\bm{s}).
  \end{equation}
\end{theorem}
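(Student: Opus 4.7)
The plan is to deduce Theorem \ref{theorem:moments} from Theorem \ref{theorem:convergence} by the method of moments, combined with a uniform tail estimate for the exponential series. Expanding the exponential, write
\begin{equation*}
G^\lambda_{\mathcal{I},\bm{\alpha},\bm{\beta}}(\bm{s}, y) = \sum_{\bm n \in \ZZ_{\geq 0}^r} \prod_{j=1}^r \frac{s_j^{n_j}}{n_j!}\, M^\lambda_{\mathcal{I},\bm{\alpha},\bm{\beta}}(\bm n, y),
\end{equation*}
and similarly for the limit process. The theorem then reduces to two claims: termwise convergence $M^\lambda(\bm n, y) \to M(\bm n)$ as $y\to 0$, and a uniform-in-$y$ bound of the form $M^\lambda(\bm n, y) \leq C_0^{|\bm n|} \prod_j n_j!$. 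The latter simultaneously establishes analyticity of $G_{\mathcal{I},\bm{\alpha},\bm{\beta}}(\bm s)$ on the polydisk $\sum_j \max\{\Re s_j, 0\} < c_0$ and justifies exchanging the limit with the sum, since $|\exp(\sum_j s_j N_j)| \leq \exp(\sum_j \max\{\Re s_j,0\} N_j)$.

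Termwise convergence follows from the uniform moment bound by a standard uniform-integrability argument: if the $(n+1)$-th moments are uniformly bounded, distributional convergence from Theorem \ref{theorem:convergence} upgrades to convergence of the $n$-th moment. Thus the substantive task is the uniform bound. For this I would expand
\begin{equation*}
\scrN_{I_j,\alpha_j,\beta_j}(x,y)^{n_j} = \sum_{(l_i,\gamma_i)_{i=1}^{n_j}} \prod_{i=1}^{n_j} \chi_{B_j}\!\big(z_{(n(x)a(y))^{-1} \gamma_i \bm c_{l_i}}\big),
\end{equation*}
group the ordered tuples according to which cosets $\gamma_i \in \Gamma/\Gamma_{\bm c_{l_i}}$ coincide, and for each resulting product of $k$ distinct characteristic functions apply the equidistribution of long horocycles (theorem 5.6 of \cite{MarklofStrombergsson2010}) to transfer the integral over $\lambda$ to an integral over $\Gamma\backslash G$. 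Each such integral is then controlled by a $k$-fold iteration of the unfolding and Iwasawa change of coordinates in Proposition \ref{lemma:Faverage}, producing a bound of the form $(\kappa_\Gamma \mathrm{vol}_\HH(B_j))^k$; the count of partitions of the $n_j$ indices into $k$ classes supplies the factorial factor.

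The main obstacle is the case $\beta_j = \infty$, where $\mathrm{vol}_\HH(B_j) = \infty$ and the naive Siegel bound diverges, while the counting function itself is almost surely finite. I would split $B_j$ into a finite-area bulk part (handled directly by the iterated Siegel formula) and a thin cusp part, and control the cusp contribution via a no-escape-of-mass argument exploiting the fact that geodesic tops in a deep cusp neighbourhood project to very short intervals on the real axis. This is closely related to the pointwise bound $\scrN_I(x,N) \ll \log N$ mentioned after Theorem \ref{thm2}, but establishing the geometric version as a \emph{moment} estimate—rather than merely as a pointwise bound, which would be insufficient since $\exp(c\log(1/y)) = y^{-c}$ diverges as $y\to 0$—is the technically most delicate step of the plan.
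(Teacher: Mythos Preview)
Your strategy and the paper's diverge at the level of which series representation of $G$ you expand, and—more importantly—how the uniform tail control is obtained.

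The paper does \emph{not} Taylor-expand in $\bm s$ and chase moment bounds. It writes
\[
G^\lambda_{\mathcal I,\bm\alpha,\bm\beta}(\bm s,y)=\sum_{\bm k\in\ZZ_{\ge0}^r}\exp\Big(\sum_j k_j s_j\Big)\,P^\lambda_{\mathcal I,\bm\alpha,\bm\beta}(\bm k,y)
\]
and proves the uniform exponential tail bound $P^\lambda_{\mathcal I,\bm\alpha,\bm\beta}(\bm k,y)\ll\exp(-c_0\max_j k_j)$ (Lemma~\ref{lemma:measurebound}). With this in hand the argument is routine: split at $\max_j k_j=K$, apply Theorem~\ref{theorem:convergence} termwise for $k<K$, and use the tail bound to kill $k\ge K$. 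The tail bound itself is obtained by an ``upgrade'' trick: the pointwise dyadic estimate of Lemma~\ref{lemma:FIbound} shows that at most $O(\log Y)$ tops can have height in $[y,Yy]$, so if $\scrN_{I_j,1,\infty}(x,y)\ge k_j$ then at least one top has height $\ge C y\,e^{c_0k_j}$; now apply Markov with the \emph{first} moment (not a higher one) and the count bound~\eqref{eq:ybound}. No combinatorics, no iterated Siegel, and the case $\beta_j=\infty$ is exactly the one handled first.

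Your plan is not wrong in outline, but the step you flag as ``most delicate'' is the whole difficulty, and your sketch for it does not close. Using equidistribution to ``transfer the integral over $\lambda$ to an integral over $\Gamma\backslash G$'' produces limits, not uniform-in-$y$ bounds; and an iterated Siegel estimate on $\Gamma\backslash G$ gives powers of $\mathrm{vol}_\HH(B_j)$, which is infinite when $\beta_j=\infty$. The paper's upgrade-plus-Markov argument is precisely the missing ingredient that replaces your proposed bulk/cusp splitting: it converts the question into a first-moment estimate at a shifted height, where the total count bound $N_{1,\infty}(0,1,y)\ll y^{-1}$ is already available. Once you have that exponential tail on the probabilities, your moment bounds $M^\lambda(\bm n,y)\le C_0^{|\bm n|}\prod_j n_j!$ follow immediately, so either series expansion then finishes the job.
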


By a standard argument, theorem \ref{theorem:moments} implies the convergence as $y \to 0$ of the mixed moments $M^\lambda_{\mathcal{I}, \bm{\alpha}, \bm{\beta}}(\bm{\kappa}, y)$.

\begin{corollary}
  \label{corollary:moments}
  Let $\lambda$ be a probability measure on $\mathbb{T}$ with bounded density, and $\mathcal{I}$, $\bm{\alpha}$, $\bm{\beta}$ as above.
  Then for all $\bm{\kappa} = (\kappa_1, \dots, \kappa_r) \in \mathbb{R}^r$ with $\kappa_j \geq 0$, we have $M_{\mathcal{I}, \bm{\alpha}, \bm{\beta}}(\bm{\kappa})$ is finite and
  \begin{equation}
    \label{eq:momentconvergence1}
    \lim_{y \to \infty} M^\lambda_{\mathcal{I}, \bm{\alpha}, \bm{\beta}}(\bm{\kappa}, y) = M_{\mathcal{I}, \bm{\alpha}, \bm{\beta}}(\bm{\kappa}). 
  \end{equation}
\end{corollary}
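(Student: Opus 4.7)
The plan is to deduce Corollary \ref{corollary:moments} from Theorem \ref{theorem:moments} by the standard two-step route: first, convergence of the moment generating functions on a complex neighbourhood of $\bm{s}=0$ forces convergence of all integer mixed moments; second, the extension to general $\bm{\kappa}\in\mathbb{R}_{\geq 0}^r$ follows by a uniform-integrability comparison with an integer moment of slightly higher order.

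For $\bm{\kappa}\in\mathbb{Z}_{\geq 0}^r$ one has $M^\lambda(\bm{\kappa},y)=\partial_{\bm{s}}^{\bm{\kappa}}G^\lambda(\bm{s},y)\big|_{\bm{s}=0}$ and likewise for $M(\bm{\kappa})$. By Theorem \ref{theorem:moments}, $G^\lambda(\cdot,y)\to G$ pointwise on the open set $U=\{\bm{s}\in\mathbb{C}^r:\sum_j\max(\mathrm{Re}\,s_j,0)<c_0\}$, with both functions analytic. I would then establish that the family $\{G^\lambda(\cdot,y)\}_y$ is locally uniformly bounded on $U$; once this is in hand, the Vitali--Porter theorem (or equivalently Cauchy's integral formula for $\partial_{\bm{s}}^{\bm{\kappa}}$ on a small polydisc about $0$) promotes pointwise to uniform convergence on compacta and forces convergence of all derivatives at the origin, yielding $M^\lambda(\bm{k},y)\to M(\bm{k})$ for every integer $\bm{k}$.

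The local uniform bound is the one step that requires a short argument, and it rests on the non-negativity of $\scrN_j$. For any compact $K\subset U$, openness of $U$ plus compactness of $K$ let one cover $K$ by finitely many open sets $W_i$, each equipped with a real vector $\bm{s}_0^{(i)}\in U$ satisfying $s_{0,j}^{(i)}\geq\max(\mathrm{Re}\,s_j,0)$ for all $j$ and all $\bm{s}\in W_i$. Since $\scrN_j\geq 0$, this gives $|G^\lambda(\bm{s},y)|\leq G^\lambda(\bm{s}_0^{(i)},y)$ on $W_i$, and the right-hand side converges by Theorem \ref{theorem:moments} to the finite quantity $G(\bm{s}_0^{(i)})$ and is therefore bounded in $y$. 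The maximum over the finite cover delivers the desired bound on $K$.

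For general $\bm{\kappa}\in\mathbb{R}_{\geq 0}^r$, the elementary inequality $n^\kappa\leq 1+n^K$ for $n\in\mathbb{Z}_{\geq 0}$ and integer $K\geq\kappa$, applied coordinatewise with $K_j=\lceil\kappa_j(1+\epsilon)\rceil$ for a small fixed $\epsilon>0$, bounds $\prod_j\scrN_j^{\kappa_j(1+\epsilon)}$ by a finite sum of integer mixed moments. By the integer case these are convergent and hence $y$-uniformly bounded, so Markov's inequality gives uniform integrability of $\prod_j\scrN_j^{\kappa_j}$ under $\lambda$, and the same comparison at $\epsilon=0$ gives finiteness of $M(\bm{\kappa})$. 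Since Theorem \ref{theorem:convergence} already supplies convergence in distribution of the integer-valued vector $(\scrN_1,\dots,\scrN_r)$ under $\lambda$, uniform integrability upgrades this to convergence of expectations, i.e.\ $M^\lambda(\bm{\kappa},y)\to M(\bm{\kappa})$, which is the claim. The one substantive obstacle is the local uniform MGF bound; the remainder is routine packaging.
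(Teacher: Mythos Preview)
Your argument is correct. The paper itself does not spell out a proof: it simply says ``By a standard argument, theorem~\ref{theorem:moments} implies the convergence~\dots'', so there is no detailed proof to compare against.

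That said, your route is more circuitous than what the authors most likely had in mind. The proof of Theorem~\ref{theorem:moments} itself already works directly with the series
\[
G^\lambda(\bm{s},y)=\sum_{\bm{k}\geq 0}\exp\Big(\sum_j k_j s_j\Big)P^\lambda(\bm{k},y),
\]
splitting according to $\max_j k_j$ and invoking the uniform exponential tail bound $P^\lambda(\bm{k},y)\ll e^{-c_0\max_j k_j}$ of Lemma~\ref{lemma:measurebound}. The identical argument applies verbatim with $\exp(\sum_j k_j s_j)$ replaced by $\prod_j k_j^{\kappa_j}$: the tail bound gives $\sum_{\bm{k}} \prod_j k_j^{\kappa_j} e^{-c_0 \max_j k_j}<\infty$ for every $\bm{\kappa}\in\mathbb{R}_{\geq 0}^r$, which both proves finiteness of $M(\bm{\kappa})$ and furnishes a dominating sequence so that termwise convergence $P^\lambda(\bm{k},y)\to P(\bm{k})$ (Theorem~\ref{theorem:convergence}) yields $M^\lambda(\bm{\kappa},y)\to M(\bm{\kappa})$. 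This handles all real $\bm{\kappa}\geq 0$ in one stroke, without passing through integer moments, Vitali--Porter, or uniform integrability. Your approach, by contrast, rederives the tail control indirectly from the analyticity it was used to prove; it works, but it is the long way around.
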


We also record the following corollary, which is simply an application of corollary \ref{corollary:moments} to $r = 1$ and $\kappa = 1$ together with the calculation in proposition \ref{lemma:Faverage}.

For $0\leq a< b\leq 1$ and $y>0$, define the counting function for the distribution of the real parts of $z_{\gamma \bm{c}_l}$ modulo one,
\begin{equation}
N_{\alpha,\beta}(a,b,y) =  \# \Big( X_{\alpha,\beta}(y) \cap ( [a, b]+\ZZ) \Big) .
\end{equation}

\begin{corollary}[Equidistribution modulo one]
  \label{corollary:totalcount}
  For a finite interval $I \subset \mathbb{R}$, $1\leq \alpha < \beta \leq \infty$ and probability measure $\lambda$ with bounded density, we have
  \begin{equation}
    \label{eq:totalcount}
    \lim_{y\to 0} \int_{\mathbb{T}} \scrN_{I, \alpha, \beta} (x, y) \dd \lambda(x) = \kappa_\Gamma \bigg(\frac1\alpha- \frac1\beta\bigg) \mathrm{length}(I)
  \end{equation}
  and, for $0\leq a< b\leq 1$,
  \begin{equation}
    \label{eq:equidistribution}
    \lim_{y\to 0}  y N_{\alpha,\beta}(a,b,y)
    = \kappa_\Gamma \bigg(\frac1\alpha- \frac1\beta\bigg) (b-a) . 
  \end{equation}
\end{corollary}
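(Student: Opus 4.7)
The plan is to obtain (\ref{eq:totalcount}) as an immediate consequence of the $r=1$, $\kappa_1=1$ case of Corollary \ref{corollary:moments}, and to deduce (\ref{eq:equidistribution}) from (\ref{eq:totalcount}) by a Fubini and sandwich argument.

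First, for (\ref{eq:totalcount}): by the key identity (\ref{eq:keyid}) together with the $\Gamma$-invariance $\scrN_B(\gamma g) = \scrN_B(g)$, the limiting first moment $M_{I,\alpha,\beta}(1)$ from (\ref{eq:Mdef1}) equals $\int_{\Gamma \backslash G} \scrN_{B_{I,\alpha,\beta}}(g)\dd\mu_\Gamma(g)$, which Proposition \ref{lemma:Faverage} evaluates to $\kappa_\Gamma \mathrm{vol}_{\mathbb{H}}(B_{I,\alpha,\beta})$. From the definition (\ref{eq:Bdef}) we compute directly
\begin{equation*}
\mathrm{vol}_{\mathbb{H}}(B_{I,\alpha,\beta}) = \mathrm{length}(I) \int_\alpha^\beta \frac{\dd v}{v^2} = \mathrm{length}(I) \bigg(\frac{1}{\alpha} - \frac{1}{\beta}\bigg),
\end{equation*}
and the $y \to 0$ convergence of first moments provided by Corollary \ref{corollary:moments} yields (\ref{eq:totalcount}).

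For (\ref{eq:equidistribution}), I would take $I = [0, 1]$ and integrate $\scrN_{[0, 1], \alpha, \beta}(x,y)$ in $x$ over $[a, b]$. By Fubini, each point $\xi \in X_{\alpha, \beta}(y)$ (lifted periodically to $\RR$) contributes the one-dimensional measure of those $x \in [a, b]$ for which $\xi \in x + [0, y] + \ZZ$; this measure equals $y$ when $\xi$ lies at distance at least $y$ from the endpoints of $[a, b]$ modulo $1$, and lies in $[0, y]$ otherwise. This yields the sandwich
\begin{equation*}
y\, N_{\alpha, \beta}(a + y, b, y) \;\leq\; \int_a^b \scrN_{[0, 1], \alpha, \beta}(x, y) \dd x \;\leq\; y\, N_{\alpha, \beta}(a, b + y, y).
\end{equation*}

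Next, applying (\ref{eq:totalcount}) with $\lambda$ equal to the normalised Lebesgue measure on a slightly shrunken interval $[a+\epsilon, b-\epsilon]$ and on a slightly enlarged interval $[a-\epsilon, b+\epsilon]$, together with the obvious monotonicity $N_{\alpha,\beta}(a+\epsilon+y, b-\epsilon, y) \leq N_{\alpha,\beta}(a, b, y) \leq N_{\alpha,\beta}(a-\epsilon, b+\epsilon+y, y)$ valid for $y \leq \epsilon$, sandwiches $y\, N_{\alpha, \beta}(a, b, y)$ asymptotically between $(b - a - 2\epsilon)\kappa_\Gamma(1/\alpha - 1/\beta)$ and $(b - a + 2\epsilon)\kappa_\Gamma(1/\alpha - 1/\beta)$. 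Sending $\epsilon \to 0$ completes the proof. The only real obstacle is the boundary contribution near $a$ and $b$, which is precisely what the $\epsilon$-sandwich is designed to absorb; no further machinery is needed.
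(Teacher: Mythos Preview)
Your proof is correct and follows the same approach as the paper: (\ref{eq:totalcount}) via Corollary~\ref{corollary:moments} and Proposition~\ref{lemma:Faverage}, and (\ref{eq:equidistribution}) via a sandwich between integrals of $\scrN_{I,\alpha,\beta}$ over slightly shrunk and enlarged intervals. The paper streamlines the second step by taking the symmetric interval $I=[-\tfrac12,\tfrac12]$, which gives directly
\[
\int_{a+\epsilon}^{b-\epsilon}\scrN_{I,\alpha,\beta}(x,y)\,\dd x \;\leq\; y\,N_{\alpha,\beta}(a,b,y) \;\leq\; \int_{a-\epsilon}^{b+\epsilon}\scrN_{I,\alpha,\beta}(x,y)\,\dd x
\]
for $0<y<\epsilon$, without the extra monotonicity layer. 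One bookkeeping point: with your choice $I=[0,1]$, the monotonicity inequalities you wrote do not chain with the halves of your sandwich you implicitly paired them with. For the lower bound use the \emph{right} half of your sandwich on $[a+\epsilon,b-\epsilon]$ together with $N_{\alpha,\beta}(a+\epsilon, b-\epsilon+y, y)\leq N_{\alpha,\beta}(a,b,y)$, and for the upper bound use the \emph{left} half on $[a-\epsilon,b+\epsilon]$ together with $N_{\alpha,\beta}(a,b,y)\leq N_{\alpha,\beta}(a-\epsilon+y,b+\epsilon,y)$; this recovers exactly the paper's displayed inequality.
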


\begin{proof}
  As mentioned above, (\ref{eq:totalcount}) follows from corollary \ref{corollary:moments} and proposition \ref{lemma:Faverage}.
  Equation (\ref{eq:equidistribution}) follows from the observation that given any $\epsilon>0$, we have for $0<y<\epsilon$ and $I=[-\tfrac12,\tfrac12]$
  \begin{equation}
\int_{a+\epsilon}^{b-\epsilon} \scrN_{I, \alpha, \beta} (x, y) \dd x 
\leq
y N_{\alpha,\beta}(a,b,y)
\leq
\int_{a-\epsilon}^{b+\epsilon} \scrN_{I, \alpha, \beta} (x, y) \dd x .
\end{equation}
Indeed, from (\ref{eq:NIdef}), the left side (resp. right side) counts each point in $\xi \in X_{\alpha, \beta}(y)$ with $\xi \in [a, b] + \mathbb{Z}$ with weight at most (resp. at least) $y \mathrm{length}(I)$.
In view of \eqref{eq:totalcount} (choose $\lambda$ to be the uniform measure on the intervals $[a + \epsilon, b - \epsilon]$ and $[a - \epsilon, b + \epsilon]$, respectively), the limits of the left and right hand side exist for all $\epsilon>0$. Taking $\epsilon\to 0$, we obtain the right hand side of \eqref{eq:equidistribution}. 
\end{proof}

Our proof of theorem \ref{theorem:moments} follows the strategy of the proof of theorem 8 in \cite{MarklofVinogradov2018}.
As in the proof of theorem \ref{theorem:convergence}, our setting is somewhat complicated by the fact that our points $z_{\gamma \bm{c}_l}$ do not form a $\Gamma$-orbit of a point in $\mathbb{H}$.
However there is a point in $w_l \in \mathbb{H}$ such that for any of our points $z_{\gamma \bm{c}_l}$, there is a $\gamma' \in \Gamma$ such that $z_{\gamma \bm{c}_l}$ is a bounded distance from $\gamma' w_l$ (and in fact $\gamma' \Gamma_{\bm{c}_l} = \gamma \Gamma_{\bm{c}_l}$).
This fact gives us sufficient control over the points $z_{\gamma \bm{c}_l}$ for the purposes of proving theorem \ref{theorem:moments}, as manifested in lemmas \ref{lemma:FIbound} and \ref{lemma:measurebound}.

\begin{lemma}
  \label{lemma:FIbound}
 For every $\delta>1$, there is a constant $C_\delta$ such that or any finite interval $I$, $0\leq\alpha<\beta\leq \delta\alpha<\infty$, we have that
 \begin{enumerate}[{\rm (i)}]
\item  for $x\in\TT$, $y>0$,
    \begin{equation}
    \label{eq:xybounds}
    \scrN_{I,\alpha,\beta}(x,y) < C_\delta  \bigg(1+\frac{\mathrm{length}(I)}{\alpha}\bigg);
  \end{equation}
\item for $g\in G$ and $B = B_{I, \alpha, \beta}$ as in (\ref{eq:Bdef}),
    \begin{equation}
    \label{eq:xybounds1}
    \scrN_B(g) < C_\delta  \bigg(1+\frac{\mathrm{length}(I)}{\alpha}\bigg) ;
  \end{equation}
\item for $x\in\TT$, $0<y\leq \e^{-1}$,
    \begin{equation}
    \label{eq:xybounds2}
    \scrN_{I,1,\infty}(x,y) < C_2 \big( 1+ \mathrm{length}(I) \big) \log y^{-1} ;
  \end{equation}
\item for $0\leq a<b\leq 1$, $0<y\leq 1$,
  \begin{equation}
    \label{eq:ybound}
    N_{1,\infty}(a,b,y)  <  4C_2\, y^{-1} .
   \end{equation}
\end{enumerate}
(The constant $C_\delta$ only depends on $\delta$, $\Gamma$ and the geodesics $\bm{c}_l$.)
 \end{lemma}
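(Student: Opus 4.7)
The plan is to reduce each bound to a count of lattice points of a $\Gamma$-orbit $\Gamma w_l \subset \HH$ in an appropriate neighborhood of the region $B$. Fix $w_l = z_{\bm{c}_l}$ for each $l$. Because $\Gamma_{\bm{c}_l}$ acts on $\bm{c}_l$ by hyperbolic translation of step $2\log\varepsilon_l$, the set $\gamma\Gamma_{\bm{c}_l}w_l$ is a $2\log\varepsilon_l$-spaced subset of $\gamma\bm{c}_l$, so the top $z_{\gamma\bm{c}_l}$ lies within hyperbolic distance $\log\varepsilon_l$ of some $\gamma'w_l$ with $\gamma'\Gamma_{\bm{c}_l}=\gamma\Gamma_{\bm{c}_l}$. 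Distinct cosets yield distinct geodesics $\gamma\bm{c}_l$ and hence distinct witnesses $\gamma'$. Writing $B_l^+$ for the $\log\varepsilon_l$-hyperbolic neighborhood of $B$, this gives the pointwise bound
\[
\scrN_B(g) \le \sum_{l=1}^h \#\{\gamma\in\Gamma : g^{-1}\gamma w_l \in B_l^+\}.
\]

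For (ii) (and hence (i) via (\ref{eq:keyid})), the box $B_{I,\alpha,\beta}$ with $\beta\le\delta\alpha$ has hyperbolic volume at most $\mathrm{length}(I)/\alpha$, vertical hyperbolic width $\log(\beta/\alpha)\le\log\delta$, and horizontal hyperbolic extent $\ll_\delta 1+\mathrm{length}(I)/\alpha$; the same holds, up to bounded errors, for $B_l^+$. Covering $gB_l^+$ by hyperbolic balls of radius strictly less than the (positive) injectivity radius at $w_l$ requires $\ll_\delta 1+\mathrm{length}(I)/\alpha$ balls, each containing at most one orbit point of $\Gamma w_l$. This yields (ii).

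For (iii) and (iv), the essential additional ingredient is a uniform upper bound
\[
\Im(z_{\gamma\bm{c}_l}) \le C_0 \qquad (\gamma\in\Gamma,\; l=1,\dots,h).
\]
Since $g=n(x)a(y)$ fixes $\infty$, one has $g^{-1}z_{\gamma\bm{c}_l}=z_{g^{-1}\gamma\bm{c}_l}$, and so the tops counted in $\scrN_B(n(x)a(y))$ have imaginary parts in $[1,C_0/y]$. I would dyadically decompose this range into $K+1\ll 1+\log y^{-1}$ scales $[2^k,2^{k+1}]$, apply (i) with $\delta=2$ to each scale, and sum. For (iii) with $I$ of fixed length this immediately gives the stated $(1+\mathrm{length}(I))\log y^{-1}$ bound. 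For (iv) I would apply the same decomposition with $I$ of length $(b-a)/y\le 1/y$, producing a sum bounded by $C_2(K+1)+2C_2(b-a)/y$; since $y\log y^{-1}\le 1/\e$ for $y\le 1$, the logarithmic term $C_2(K+1)$ is absorbed (after possibly enlarging the constant) into the second term, yielding $<4C_2/y$.

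The main obstacle is the uniform upper bound on $\Im(z_{\gamma\bm{c}_l})$. Geometrically it follows from the compactness of the closed geodesic $\Gamma\bm{c}_l\subset\Gamma\backslash\HH$: this geodesic stays at bounded quotient-height, and since $\sup_{\gamma'}\Im(\gamma'z_{\gamma\bm{c}_l})$ majorises $\Im(z_{\gamma\bm{c}_l})$, the latter is bounded uniformly in $\gamma$. In the concrete modular setting with $\bm{c}_l$ the geodesic joining $\pm\sqrt{D}$, a direct computation gives $\Im(z_{\gamma\bm{c}_l})=\sqrt{D}/|d^2-c^2 D|$ for $\gamma=\begin{pmatrix}a&b\\c&d\end{pmatrix}$, and $|d^2-c^2D|\ge 1$ since $D$ is not a perfect square, so $C_0=\sqrt{D}$ works. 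The lattice point count near the cusp is the other delicate point, but remains tame because $w_l$ is chosen in the bulk, where the injectivity radius is bounded below.
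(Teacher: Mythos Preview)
Your proposal is correct and follows essentially the same strategy as the paper: reduce the count of tops $z_{\gamma\bm c_l}$ to a count of points in the genuine $\Gamma$-orbit $\Gamma w_l$ (the paper uses $w_l=g_l\ii$ rather than $z_{\bm c_l}$, but this is immaterial), exploit that each top is within $\log\varepsilon_l$ of some orbit point, and then use discreteness of the orbit. For (iii) both you and the paper use the uniform bound $\Im(z_{\gamma\bm c_l})\le C_0$ coming from compactness of the closed geodesic, followed by a dyadic decomposition in $\Im z$.

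Two minor differences are worth noting. For (ii), the paper avoids the ball-covering argument by the rescaling identity
\[
\widetilde\scrN_{[t,t+\alpha],\alpha,\delta\alpha}(g)=\widetilde\scrN_{[0,1],1,\delta}\big(g\,n(t)a(\alpha)\big),
\]
which reduces everything to a single uniform bound $\widetilde\scrN_{[0,1],1,\delta}(g)\le C_\delta$; your covering-by-injectivity-radius-balls achieves the same end. For (iv), the paper does not write $N_{1,\infty}(a,b,y)$ as a single $\scrN_I$ and dyadically decompose; instead it uses periodicity to convert the dyadic pieces into integrals,
\[
N_{1,2}(0,1,2^j y)=(2^jy)^{-1}\int_0^1\scrN_{[0,1],1,2}(x,2^j y)\,\dd x\le 2C_2\,(2^jy)^{-1},
\]
and sums the resulting geometric series to get exactly $4C_2 y^{-1}$. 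Your route also yields $O(y^{-1})$, but the additive term $C_2(K+1)\asymp C_2\log(C_0/y)$ from the dyadic scales is not literally bounded by $2C_2/y$ when $y$ is close to $1$ and $C_0$ is large, so your constant is $C_\delta'$ depending on $C_0$ rather than precisely $4C_2$; since the lemma only claims a constant depending on $\Gamma$ and the $\bm c_l$, this is harmless.
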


\begin{proof}
Consider the number of hyperbolic lattice points in $B_{I,\alpha,\beta}$,
\begin{equation}
\widetilde\scrN_{I,\alpha,\beta}(g)= \sum_{l=1}^h \#\{ \gamma\in \Gamma: g^{-1} \gamma g_l \ii\in B_{I,\alpha,\beta}  \}.
\end{equation}
The set $g^{-1} \Gamma g_l$ is uniformly discrete, and the minimum distance between any two points is independent of $g$, since $G$ acts by isometries. Thus there is a constant $C_\delta$ such that $\widetilde\scrN_{[0,1],1,\delta}(g)\leq C_\delta$ for all $g\in G$. Note that 
\begin{equation}
\widetilde\scrN_{[t,t+\alpha],\alpha,\beta}(g)\leq \widetilde\scrN_{[t,t+\alpha],\alpha,\delta\alpha}(g) =\widetilde\scrN_{[0,1],1,\delta}(g n(t) a(\alpha))  \leq C_\delta.
\end{equation}
This bound is evidently uniform in $t,\alpha,\beta,g$. A covering argument shows that 
\begin{equation}
\widetilde\scrN_{I,\alpha,\beta}(g) \leq C_\delta \bigg(1+\frac{\mathrm{length}(I)}{\alpha}\bigg).
\end{equation}
We now show that this bound implies \eqref{eq:xybounds1}. Since $z_{\gamma \bm{c}_l}$ depends only on the coset $\gamma \Gamma_{\bm{c}_l}$, we claim that we can choose $\gamma$ appropriately so that $\gamma g_l \ii$ is at most a bounded hyperbolic distance from $z_{\gamma \bm{c}_l}$.
  Indeed, $\gamma g_l \ii$ is on the geodesic $\gamma \bm{c}_l$, and $\gamma \Gamma_l\gamma^{-1}$ is generated by the hyperbolic motion along this geodesic moving points a distance of $2 \log \varepsilon_l$.
  Hence $\gamma$ can be chosen so that $\gamma g_l i$ is a distance at most $\log \varepsilon_l$ from $z_{\gamma\bm{c}_l}$. Therefore \eqref{eq:xybounds1} follows by adjusting $C_\delta$ with a constant depending on $\varepsilon_l$.
Next, \eqref{eq:xybounds} follows trivially from \eqref{eq:xybounds1} in view of \eqref{eq:keyid}.

Again using the fact that there is a $\gamma$ such that $\gamma g_l \ii$ is a bounded distance from $z_{\gamma \bm{c}_l}$, it follows that there is a constant $Y > 0$ depending only on $g_l$ and $\Gamma$ such that $\mathrm{Im}(z_{\gamma \bm{c}_l}) \leq Y$ for all $\gamma$ (recall that since $\Gamma \backslash \mathbb{H}$ has a cusp at $\infty$, the maximum height of any orbit is bounded). We then have for $y<Y$ (the bound for $y\geq Y$ is obvious)
\begin{equation}
\begin{split}
\scrN_{I,1,\infty}(x,y)  & = \sum_{0\leq j \leq \log_2(Y/y)} \scrN_{I,2^j,2^{j+1}}(x,y) \\
&\leq \sum_{0\leq j \leq \log_2(Y/y)} C_2  \big(1+\mathrm{length}(I)\big) ,
\end{split}
\end{equation}
using \eqref{eq:xybounds}. This establishes \eqref{eq:xybounds2}.

The final bound \eqref{eq:ybound} follows from a similar argument. For the upper bound we may take $[a,b]=[0,1]$. Then using periodicity in $x$, we have
\begin{equation}
    \label{eq:ybound12344}
    \begin{split}
     N_{1,\infty}(0,1,y) & = \sum_{j=0}^\infty N_{1,2}(0,1,2^j y) \\
    & = y^{-1} \sum_{j=0}^\infty 2^{-j} \int_0^1  \scrN_{[0,1],1,2}(x,2^j y) \dd x ,
    \end{split}
  \end{equation}
which in view of \eqref{eq:xybounds} is bounded by $4C_2\, y^{-1}$. This proves \eqref{eq:ybound}.
\end{proof}

\begin{lemma}
  \label{lemma:measurebound}
  Let $\lambda$ be a measure with bounded density, $\mathcal{I}$, $\bm{\alpha}$, $\bm{\beta}$ as above, and $\bm{k} = (k_1, \dots, k_r)$ with $k_j \geq 0$ integers.
  Then there exists $c_0 > 0$ such that
  \begin{equation}
    \label{eq:measurebound}
    \sup_{y>0} P^\lambda_{\mathcal{I}, \bm{\alpha}, \bm{\beta}}(\bm{k}, y) 
    \leq \sup_{y>0}  \lambda\big(\big\{ x\in\TT : \scrN_{I_i,\alpha_i,\beta_i}(x,y) \geq k_i \;\forall i \big\}\big) 
    \ll \exp( - c_0 \max_{1\leq j\leq r} k_j)  
  \end{equation}
  and
 \begin{equation}
    \label{eq:measurebound1}
    P_{\mathcal{I}, \bm{\alpha}, \bm{\beta}}(\bm{k}) 
    \leq \PP\big( \Theta(B_{I_i,\alpha_i,\beta_i})\geq  k_i \; \forall i \big)  \ll \exp( -c_0 \max_{1\leq j \leq r} k_j ). 
  \end{equation}
\end{lemma}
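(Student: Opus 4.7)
The plan is to reduce to a single-coordinate bound and then apply the $n$-th moment Markov inequality. Since $\{\scrN_{I_i,\alpha_i,\beta_i}(x,y)=k_i \; \forall i\} \subset \{\scrN_{I_{j^\ast},\alpha_{j^\ast},\beta_{j^\ast}}(x,y)\geq k_{j^\ast}\}$ where $j^\ast$ is the index maximising $k_j$, and the analogous containment holds for the event involving $\Theta$, both halves of the lemma reduce to the one-dimensional statements
\[
\sup_{y>0}\lambda\bigl(\{x\in\TT : \scrN_{I,\alpha,\beta}(x,y)\geq k\}\bigr) \ll \exp(-c_0 k), \qquad \PP\bigl(\Theta(B_{I,\alpha,\beta})\geq k\bigr) \ll \exp(-c_0 k).
\]

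The core step is to show that $\sup_{y>0}\EE_\lambda[\scrN_{I,\alpha,\beta}(x,y)^n] \leq C^n$ for some $C=C(I,\alpha,\beta,\lambda)$; Markov's inequality $\lambda(\scrN\geq k) \leq C^n/k^n$ optimised at $n\asymp k/(eC)$ then yields $\lambda(\scrN\geq k)\ll e^{-c_0 k}$ with $c_0=1/(eC)$, the small-$k$ regime being handled by absorbing a multiplicative constant into $\ll$. To produce the moment bound, I would first treat the case $\beta\leq 2\alpha$: Lemma \ref{lemma:FIbound}(i) supplies the deterministic bound $\scrN\leq M:=C_2(1+L/\alpha)$ with $L=\mathrm{length}(I)$, and unfolding the $x$-integration gives
\[
\int_0^1 \scrN_{I,\alpha,\beta}(x,y)\,\dd x = yL\cdot N_{\alpha,\beta}(0,1,y) \ll \frac{L}{\alpha}
\]
uniformly in $y$, where the last bound uses Lemma \ref{lemma:FIbound}(iv) after a trivial scaling in $y$. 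Bounded density of $\lambda$ then gives $\EE_\lambda[\scrN]\ll L/\alpha$, and $\EE_\lambda[\scrN^n]\leq M^{n-1}\EE_\lambda[\scrN]\ll M^n$. For general $1\leq\alpha<\beta\leq\infty$, I would dyadically decompose $[\alpha,\beta)$ as $\bigsqcup_{j\geq 0}[\alpha 2^j, \alpha 2^{j+1})$, truncated via the uniform upper bound on the heights of the $z_{\gamma\bm{c}_l}$ established in the proof of Lemma \ref{lemma:FIbound}. Applying the previous step to each dyadic slice $\scrN_j$ and exploiting the geometric decay $\EE_\lambda[\scrN_j]\ll L/(\alpha 2^j)$, I would bound the moment generating function $\EE_\lambda[e^{s\scrN(x,y)}]$ uniformly in $y$ for $s$ sufficiently small, from which the Chernoff bound yields the desired tail.

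For the limit process, Theorem \ref{theorem:convergence} supplies the termwise convergence $\lambda(\{x : \scrN_{I,\alpha,\beta}(x,y) = k'\}) \to \PP(\Theta(B_{I,\alpha,\beta})=k')$ for every $k'$, since integer values are continuity points of the distribution function. The uniform tail bound just proved provides the summable majorant $\exp(-c_0 k')$, so dominated convergence permits summing over $k'\geq k$ to obtain $\PP(\Theta(B)\geq k) = \lim_{y\to 0}\lambda(\scrN(x,y)\geq k) \ll \exp(-c_0 k)$. The main obstacle is the case $\beta=\infty$, where $\scrN$ is not uniformly bounded in $y$: here one must verify that correlations between distinct dyadic height blocks do not inflate the $n$-th moment beyond $C^n$. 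I expect this to follow from the quasi-independence of horocyclic slices already implicit in Lemma \ref{lemma:FIbound}, in line with the strategy of \cite{MarklofVinogradov2018}.
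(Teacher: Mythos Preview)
Your reduction to a single index $j$ with $k_j=\max_i k_i$ matches the paper, but thereafter the paper takes a much shorter route that sidesteps the correlation issue you flag as ``the main obstacle.'' The key observation is that the logarithmic bound of Lemma~\ref{lemma:FIbound} (parts (i)--(iii), iterated dyadically) gives
\[
\sum_{l}\#\{\gamma : \Re z_{\gamma\bm c_l}\in x+yI_j,\ y\leq \Im z_{\gamma\bm c_l}\leq Yy\}\ \ll\ \log Y,
\]
so if $\scrN_{I_j,1,\infty}(x,y)\geq k_j$ then at least one top must lie \emph{above} height $Yy$ with $Y=C\exp(c_0k_j)$; in other words $\{x:\scrN_{I_j,1,\infty}(x,y)\geq k_j\}\subset\{x:\scrN_{I_j,Y,\infty}(x,y)\geq 1\}$. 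A single first-moment Markov inequality, using $\int_{\TT}\scrN_{I_j,Y,\infty}(x,y)\,\dd x\ll Y^{-1}$ from Lemma~\ref{lemma:FIbound}(iv), then gives the bound $\ll\exp(-c_0k_j)$ directly. No higher moments, no moment generating function, no correlation analysis.

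Your route via $\EE_\lambda[e^{s\scrN}]$ is not wrong in principle, but the step you defer---controlling $\EE_\lambda\bigl[\prod_j e^{s\scrN_j}\bigr]$ uniformly in $y$ when the number of nonempty dyadic layers grows like $\log y^{-1}$---is a genuine gap as written, and ``quasi-independence'' is not what Lemma~\ref{lemma:FIbound} supplies (it gives only deterministic pointwise bounds plus first moments). The gap can in fact be closed using just those ingredients: from $\scrN_j\leq M$ one has $e^{s\scrN_j}\leq 1+a\scrN_j$ with $a=(e^{sM}-1)/M$, and expanding the product and bounding $\EE_\lambda\bigl[\prod_{j\in S}\scrN_j\bigr]\leq M^{|S|-1}\EE_\lambda[\scrN_{\max S}]\ll M^{|S|-1}2^{-\max S}$ yields a convergent sum for $aM<1$. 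But this is considerably more work than the paper's one-line height argument, and your invocation of independence suggests you had not yet found it.
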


\begin{proof}
  The inequalities ``$\leq$'' are trivial. As to the upper bounds ``$\ll$'', it is sufficient to establish them for $[\alpha_i,\beta_i)=[1,\infty)$. Let $j$ be so that $k_j = \max_i k_i$. 
    We may assume that $k_j \geq 1$, since (\ref{eq:measurebound}) is otherwise evident.
  Now recalling that
  \begin{equation}
    \label{eq:FIrecall}
    \scrN_{I_j, 1, \infty}(x, y) = \sum_{l=1}^h \# \{ \gamma \in \Gamma / \Gamma_{\bm{c}_l} : \mathrm{Re}(z_{\gamma \bm{c}_l}) \in x + y I_j, \mathrm{Im}(z_{\gamma \bm{c}_l}) \geq y \}
  \end{equation}
  and that by lemma \ref{lemma:FIbound}
  \begin{equation}
    \label{eq:Ytruncate}
    \sum_{l=1}^h \# \{ \gamma \in \Gamma / \Gamma_{\bm{c}_l} : \mathrm{Re}(z_{\gamma \bm{c}_l}) \in x + y I_j, y \leq \mathrm{Im}(z_{\gamma \bm{c}_l}) \leq Yy \} \ll \log Y
  \end{equation}
  for $Y > 1$ (note that this is not the same $Y$ that appears in the proof of lemma \ref{lemma:FIbound}), we conclude that for $x$ such that $\scrN_{I_j, 1,\infty}(x, y) \geq k_j$,
  \begin{equation}
    \label{eq:Ytruncate1}
    \sum_{l=1}^h \# \{ \gamma \in \Gamma / \Gamma_{\bm{c}_l} : \mathrm{Re}(z_{\gamma \bm{c}_l}) \in x + y I_j, \mathrm{Im}(z_{\gamma \bm{c}_l}) \geq C y \exp( c_0 k_j) \} \geq 1
  \end{equation}
  for some positive $C, c_0$ independent of $x$ and $y$.
  Setting $Y = C \exp( c_0 k_j)$ we have that
  \begin{equation}
    \label{eq:onejbound1}
    \{ x \in \mathbb{T} : \scrN_{I_j, 1, \infty}(x, y) \geq k_j \} \subset \{ x\in \mathbb{T} : \scrN_{ I_j, Y, \infty}(x, y) \geq 1 \}. 
  \end{equation}
  By Markov's inequality, the $\lambda$ measure of the set on the right of (\ref{eq:onejbound1}) is at most
  \begin{equation} \begin{split}
    \label{eq:markovbound}
    & \int_{\mathbb{T}} \scrN_{I_j, Y, \infty} (x, y) \dd \lambda(x) \ll \int_{\mathbb{T}} \scrN_{I_j, Y, \infty} (x, y) \dd x \\
    & \ll y\; \mathrm{length}(I_j) \sum_{l=1}^h \# \{ \gamma \in \Gamma / \Gamma_{\bm{c}_j} : 0\leq \mathrm{Re}(z_{\gamma \bm{c}_l}) \leq 1, \mathrm{Im}(z_{\gamma \bm{c}_l}) \geq yY \},
  \end{split} \end{equation}
  where the first inequality follows from $\lambda$ having bounded density.
  It follows from lemma \ref{lemma:FIbound} that the quantity on the right of (\ref{eq:markovbound}) is
  \begin{equation}
    \label{eq:goodbound}
    \ll Y^{-1} \ll \exp( - c_0 k_j)
  \end{equation}
  with the implied constant independent of $y$ (but depending on $I_j$, the geodesics $\bm{c}_l$, etc.).
  This establishes (\ref{eq:measurebound}), and (\ref{eq:measurebound1}) follows immediately from (\ref{eq:measurebound}) and theorem \ref{theorem:convergence}. 
\end{proof}

\begin{proof}[Proof of theorem \ref{theorem:moments}]
  Having lemma \ref{lemma:measurebound}, the proof of theorem \ref{theorem:moments} is easy to finish.
  To verify that $G_{\mathcal{I}, \bm{\alpha}, \bm{\beta}}(\bm{s})$ is analytic in the set of $\bm{s} = (s_1, \dots, s_r) \in \mathbb{C}^r$ satisfying $\sum_j \max\{ \mathrm{Re}(s_j), 0\} < c_0$, we start by arranging
  \begin{equation}
    \label{eq:GsI}
    G_{\mathcal{I}, \bm{\alpha}, \bm{\beta}}(\bm{s}) = \sum_{k \geq 0} \underset{ \substack{ k_1, \dots, k_r \geq 0 \\ \max_j k_j = k}}{\sum} \exp( \sum_{1\leq j \leq r} k_j s_j) P_{\mathcal{I}, \bm{\alpha}, \bm{\beta}}(\bm{k}).
  \end{equation}
  Applying (\ref{eq:measurebound1}) we find that
  \begin{equation} \begin{split}
    \label{eq:tailbound}
    & \sum_{k \geq K} \underset{ \substack{ k_1, \dots, k_r \geq 0 \\ \max_j k_j = k}}{\sum} \bigg| \exp ( \sum_{1\leq j \leq r} k_j s_j ) P_{\mathcal{I}, \bm{\alpha}, \bm{\beta}}(\bm{k}) \bigg| \\
    & \ll \sum_{k \geq K} (k+1)^r \exp( -\left( c_0 - \sum_{1\leq j \leq r} \max\{ \mathrm{Re}(s_j), 0\}\right) k ).
  \end{split} \end{equation}
  This bound clearly goes to $0$ uniformly as $K \to \infty$ for $\bm{s}$ in compact subsets of the region $\sum_j \max\{ \mathrm{Re}(s_j), 0\} < c_0$, and so $G(\bm{s}, \mathcal{I})$ is analytic in this region.

  To verify that
  \begin{equation}
    \label{eq:Gylimit}
    \lim_{y\to 0} G_{\mathcal{I}, \bm{\alpha}, \bm{\beta}}(\bm{s}, y) = G_{\mathcal{I}, \bm{\alpha}, \bm{\beta}}(\bm{s}),
  \end{equation}
  we recall that
  \begin{equation}
    \label{eq:Gyrecall}
    G_{\mathcal{I}, \bm{\alpha}, \bm{\beta}}(\bm{s}, y) = \underset{ k_1, \dots, k_r \geq 0 }{\sum} \exp( \sum_{1\leq j \leq r} k_j s_j ) P^\lambda_{\mathcal{I}, \bm{\alpha}, \bm{\beta}}(\bm{k}, y).
  \end{equation}
  Proceeding as above, we arrange this sum according to $k = \max_j k_j$, and split the sum into $k < K$ and $k \geq K$.

  For the terms $k < K$, we apply theorem \ref{theorem:convergence} to obtain
  \begin{equation} \begin{split}
    \label{eq:smallkterms}
    &\lim_{y\to \infty} \sum_{0\leq k < K} \underset{ \substack{ k_1, \dots, k_r \geq 0 \\ \max_j k_j = k}}{\sum} \exp( \sum_{1\leq j \leq r} k_js_j) P^\lambda_{\mathcal{I}, \bm{\alpha}, \bm{\beta}}(\bm{k}, y) \\
    &= \sum_{0\leq k < K} \underset{ \substack{ k_1, \dots, k_r \geq 0 \\ \max_j k_j = k}}{\sum} \exp( \sum_{1\leq j \leq r} k_js_j) P_{\mathcal{I}, \bm{\alpha}, \bm{\beta}}(\bm{k}). 
  \end{split} \end{equation}

Using (\ref{eq:tailbound}), we see that this is equal to $G_{\mathcal{I}, \bm{\alpha}, \bm{\beta}}(\bm{s})$ up to an error that goes to $0$ as $K \to \infty$ since we are assuming $\sum_j \max\{ \mathrm{Re}(s_j), 0\} < c_0$. 
For the terms $k \geq K$, we apply (\ref{eq:measurebound}) to obtain
\begin{equation} \begin{split}
  \label{eq:largekterms}
  &\sum_{k \geq K} \underset{ \substack{ k_1, \dots, k_r \geq 0 \\ \max_j k_j = k}}{\sum} | \exp( \sum_{1\leq j \leq r} k_js_j)P^\lambda_{\mathcal{I}, \bm{\alpha}, \bm{\beta}}(\bm{k})|  \\
  &\ll \sum_{k \geq K} (k+1)^r  \exp( - \left( c_0 - \sum_{1\leq j \leq r} \max \{ \mathrm{Re}(s_j), 0\} \right) k).
\end{split} \end{equation}
As before, this goes to $0$ as $K \to \infty$ since $\sum_j \max\{ \mathrm{Re}(s_j), 0\} < c_0$, and so we obtain (\ref{eq:Gylimit}).
\end{proof}

\subsection{A one-dimensional point process}
\label{sec:1dpp}

We will now restate theorem \ref{theorem:convergence} (in the case $\alpha=1$, $\beta=\infty$) in a form closer to the setting of the introduction. The family of multisets 
\begin{equation}\label{def:X1oo}
X_{1,\infty}(y) = \biguplus_{l=1}^h \big\{ \Re(z_{\gamma \bm{c}_l}) \bmod 1 : \gamma\in\Gamma_\infty \backslash \Gamma / \Gamma_{\bm{c}_l},\; \Im (z_{\gamma \bm{c}}) \geq y\big\} \subset \TT,
\end{equation}
forms a nested subsequence as $y\to 0$. We list its elements (with multiplicity) as the sequence $(\xi_j)_{j=1}^\infty$, where we fix any ordering such that if $\xi_i\in X_{1,\infty}(y')$, $\xi_j\in X_{1,\infty}(y)$ with $i<j$, then $y'\geq y$. So in particular, the elements in $X_{1,\infty}(y)$ are listed as $\xi_1,\ldots,\xi_N$ with $N=N(y)=N_{1,\infty}(0,1,y)$. As in the introduction, we are interested in the number $\scrN_{I}(x,N)=\scrN_{I_i,1,\infty}(x,N)$ of points in the interval $x+N^{-1} I+\ZZ$. We define the corresponding random point process
\begin{equation}
\Xi_{N,\lambda} = \sum_{j=1}^N \sum_{k\in\ZZ} \delta_{N(\xi_j-\xi+k)} 
\end{equation}
on $\RR$, with $\xi\in\TT$ distributed according to $\lambda$.
Define the one-dimensional point process process $\Xi$ by
\begin{equation}
\Xi(I) =\Theta(B_{\kappa_\Gamma^{-1} I,1,\infty})
\end{equation}
with $\kappa_\Gamma$ as in \eqref{def:eta}.
With this scaling,
  \begin{equation}
    \label{eq:Faverage_eta}
    \EE\Xi(I) =  \EE\Theta(B_{\kappa_\Gamma^{-1} I,1,\infty}) = \kappa_\Gamma\, \mathrm{vol}_{\mathbb{H}}(B_{\kappa_\Gamma^{-1} I,1,\infty})
    = \mathrm{length}(I), 
  \end{equation}
and hence the intensity measure of $\Xi$ is the normalised Lebesgue measure on $\RR$.
Furthermore, since the distribution of $\Theta$ is invariant under translations $z\mapsto z+t$ of $\HH$, the process $\Xi$ is a (translation-) stationary point process in $\RR$.

We have the following limit theorem.

\begin{theorem}\label{theorem:convergencealt}
For every Borel probability measure $\lambda$ on $\TT$ that is absolutely continuous with respect to the Lebesgue measure, we have convergence 
$\Xi_{N,\lambda}  \to \Xi$ in distribution as $N\to \infty$. 

Specifically, for all $k_1,\ldots,k_r\in\ZZ_{\geq 0}$, finite intervals $I_i$, we have that
\begin{equation}
\lim_{N\to \infty} \lambda\big(\big\{ x\in\TT : \scrN_{I_i}(x,N) = k_i \;\forall i \big\}\big) 
= \PP\big( \Xi(I_i)= k_i \; \forall i \big) .
\end{equation}
\end{theorem}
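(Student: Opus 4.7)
The plan is to reduce Theorem \ref{theorem:convergencealt} to Theorem \ref{theorem:convergence} by reparametrising the discrete scale $N$ in terms of a height $y_N \to 0$. By the ordering convention on $(\xi_j)_{j=1}^\infty$, for each $N$ there is a $y_N > 0$ with $\{\xi_1,\ldots,\xi_N\} = X_{1,\infty}(y_N)$ and $N = N_{1,\infty}(0,1,y_N)$. Unwinding the definitions gives
\[
\scrN_I(x,N) = \scrN_{J_N, 1, \infty}(x, y_N), \qquad J_N = (N y_N)^{-1} I.
\]
Corollary \ref{corollary:totalcount} applied to the uniform measure on $[0,1]$ yields $y\,N_{1,\infty}(0,1,y) \to \kappa_\Gamma$ as $y \to 0$, so $N y_N \to \kappa_\Gamma$ and therefore the endpoints of $J_N$ converge to those of $\kappa_\Gamma^{-1} I$.

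The main technical point is that Theorem \ref{theorem:convergence} applies to fixed intervals, while $J_N$ varies with $N$. To handle this, fix $\epsilon > 0$ and choose intervals $J_i^-(\epsilon) \subset \kappa_\Gamma^{-1} I_i \subset J_i^+(\epsilon)$ whose endpoints each differ from those of $\kappa_\Gamma^{-1} I_i$ by at most $\epsilon$. For all $N$ sufficiently large $J_i^-(\epsilon) \subset J_{N,i} \subset J_i^+(\epsilon)$, producing the sandwich
\[
\scrN_{J_i^-(\epsilon), 1, \infty}(x, y_N) \leq \scrN_{I_i}(x, N) \leq \scrN_{J_i^+(\epsilon), 1, \infty}(x, y_N).
\]
Let $A_\epsilon(x, y_N)$ be the event that $\scrN_{J_i^+(\epsilon) \setminus J_i^-(\epsilon), 1, \infty}(x, y_N) = 0$ for every $i$; on $A_\epsilon$ all three counts above coincide. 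Applying Theorem \ref{theorem:convergence} to each gap interval together with Markov's inequality and Proposition \ref{lemma:Faverage} yields
\[
\limsup_{N \to \infty} \lambda(A_\epsilon^c) \leq \sum_{i=1}^r \PP\bigl(\Theta(B_{J_i^+(\epsilon) \setminus J_i^-(\epsilon), 1, \infty}) \geq 1\bigr) \leq \kappa_\Gamma \sum_{i=1}^r \mathrm{length}(J_i^+(\epsilon) \setminus J_i^-(\epsilon)) = O(r \kappa_\Gamma \epsilon).
\]

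A direct application of Theorem \ref{theorem:convergence} to the fixed intervals $J_i^\pm(\epsilon)$, combined with the continuity of the limit in the endpoints, gives
\[
\lim_{\epsilon \to 0} \lim_{N \to \infty} \lambda\bigl(\bigl\{x \in \TT : \scrN_{J_i^\pm(\epsilon),1,\infty}(x, y_N) = k_i \;\forall i\bigr\}\bigr) = \PP\bigl(\Theta(B_{\kappa_\Gamma^{-1} I_i, 1, \infty}) = k_i \;\forall i\bigr) = \PP\bigl(\Xi(I_i) = k_i \;\forall i\bigr).
\]
Combining the sandwich with the $O(\epsilon)$ bound on $\lambda(A_\epsilon^c)$ and sending $N \to \infty$ followed by $\epsilon \to 0$ yields the claimed convergence. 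I do not anticipate any serious obstruction beyond this $\epsilon$-bookkeeping, as all required inputs (the fixed-interval limit, continuity in endpoints, and the intensity bound) have already been established.
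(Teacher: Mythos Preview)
Your proposal is correct and follows exactly the route the paper takes: reduce to Theorem~\ref{theorem:convergence} via the asymptotic $Ny_N\to\kappa_\Gamma$ from \eqref{eq:equidistribution} together with the continuity of the limit distribution in the interval endpoints; the paper's own proof is a two-line sketch of precisely this, and your $\epsilon$-sandwich is the natural way to make that continuity step explicit. One small caveat worth noting: not every integer $N$ is exactly $N_{1,\infty}(0,1,y)$ for some $y$ (several tops can share a height), so strictly speaking $\{\xi_1,\dots,\xi_N\}$ need not equal any $X_{1,\infty}(y_N)$, but the discrepancy is $o(N)$ and is absorbed by the same continuity/sandwich argument.
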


\begin{proof}
This follows directly from theorem \ref{theorem:convergence}. The only difference is the scaling by $N$ rather than $y^{-1}$. The equivalence of the convergence in the two scalings follows from the asymptotics $yN(y)\to \kappa_\Gamma$ from \eqref{eq:equidistribution} and the continuity of the limit distribution. 
\end{proof}

\section{Equidistribution along discrete orbits}
\label{sec:discrete}

Theorems \ref{theorem:convergence} and \ref{theorem:moments} already give the existence of limits for many interesting fine-scale distributions of the the real parts of $z_{\gamma \bm{c}_l}$ mod 1. There are standard combinatorial arguments that allow the extension of the results to other statistics, such as gap distribution and pair correlation. See for instance \cite{MarklofVinogradov2018}, where the pair correlation is computed from second mixed moments. Here we will use a geometric approach, the surface of section method, from which the gap distribution and pair correlation follow more directly.
This is similar to the approach in \cite{AthreyaCheung2014} for the statistics of Farey sequences, but with a different section, see section \ref{sec:entry} for details.
We refer the reader to \cite{Marklof2017} for more background on the connection between statistics of entry and return times for a given Poincar\'e section on one hand, and point processes and their Palm distributions on the other.

\subsection{A Poincar\'e section for the horocycle flow}
\label{sec:surface}

For $1 \leq \alpha < \beta \leq \infty$, define the two-dimensional section
\begin{equation}
  \label{eq:Sdef}
  S_{\alpha, \beta} = \{ a(s)k(-\tfrac{\pi}{2}) a(v^{-1})   : s > 0, \alpha \leq v < \beta \} \subset  G.
\end{equation}
We will show below that for $1\leq \alpha<\beta\leq\infty$ the sets
\begin{equation}
\widetilde S_{\alpha,\beta}^l =\Gamma \backslash \Gamma g_l S_{\alpha,\beta} = \Gamma \backslash \Gamma g_l \{ a(s)k(-\tfrac{\pi}{2}) a(v^{-1})   : s\in[1,\varepsilon_l^2), \alpha \leq v < \beta \}
\end{equation}
and
\begin{equation}
\widetilde S_{\alpha,\beta} = \bigcup_{l=1}^h \widetilde S_{\alpha,\beta}^l 
\end{equation}
are measurable subsets of $\Gamma \backslash G$. 
Define the finite Borel measures $\nu_{\alpha,\beta}$, $\nu_{\alpha,\beta}^l$ on $\Gamma \backslash G$ by
\begin{equation}
\int_{\Gamma \backslash G} f(g) \,\dd \nu_{\alpha, \beta}^l(g) =  \frac{1}{2\pi \mathrm{vol}_{\mathbb{H}}(\Gamma \backslash \mathbb{H})} 
\int_\alpha^\beta \int_1^{\varepsilon_l^2} f(\Gamma g_la(s)k(-\tfrac{\pi}{2})a(v^{-1})) \;\frac{\dd s}{s}\; \frac{\dd v}{v^2} ,
\end{equation}
and furthermore
\begin{equation}
\nu_{\alpha, \beta} =   \sum_{l=1}^h  \nu_{\alpha, \beta}^l,
\end{equation}
so that the support of $\nu_{\alpha,\beta}$ is $\widetilde S_{\alpha,\beta}$, and that of $\nu_{\alpha,\beta}^l$ is $\widetilde S_{\alpha,\beta}^l$.

Recall the definition of the multiset $X_{\alpha,\beta}(y)$ in \eqref{def:Xab}.

\begin{theorem}
\label{theorem:surfaceequidistribution0}
  For $1 \leq \alpha < \beta \leq \infty$ and $f : \mathbb{T} \times \Gamma \backslash G \to  \mathbb{C}$ bounded continuous, we have
  \begin{equation}
    \label{eq:surfaceequidistribution}
    \lim_{y\to 0} y 
    \sum_{\xi\in X_{\alpha,\beta}(y)} f(\xi, \Gamma n(\xi)a(y)) = \int_{\mathbb{T}} \int_{\Gamma \backslash G} f(x,g) \dd \nu_{\alpha, \beta}(g) \dd x .
  \end{equation}
\end{theorem}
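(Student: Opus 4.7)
The left-hand side of (\ref{eq:surfaceequidistribution}) is a weighted sum over the intersections of the closed horocycle $\{\Gamma n(x)a(y):x\in\TT\}$ with the section $\widetilde{S}_{\alpha,\beta}$, so the plan is to apply horocycle equidistribution to a flow-box thickening of the section. To align the two pictures, I would reuse the $(u,v,s)$ chart from the proof of Proposition \ref{lemma:Faverage}: here $g_lS_{\alpha,\beta}$ is the locus $\{u=0,\ \alpha\leq v<\beta\}$, the horocycle flow $g\mapsto gn(t)$ acts as $u\mapsto u-t$, the Haar measure factors as $\dd u\cdot \dd v/v^2\cdot \dd s/s$, and the identity $z_{g^{-1}\bm{c}_l}=u+\ii v$ already established there identifies the horocycle/section intersections with the multiset $X_{\alpha,\beta}^l(y)$: the intersection labelled $\xi$ occurs at the horocycle point $\Gamma n(\xi)a(y)$.

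By linearity and Stone--Weierstrass it suffices to treat $f(x,g)=\phi(x)\psi(g)$ with $\phi\in C(\TT)$ and $\psi\in C_b(\Gamma\backslash G)$. Assume first $\beta<\infty$, so $\widetilde{S}_{\alpha,\beta}$ is precompact and there exists $\epsilon_0>0$ such that for all $\epsilon\in(0,\epsilon_0]$ the flow-box map $\Phi:\widetilde{S}_{\alpha,\beta}\times[0,\epsilon]\to\Gamma\backslash G$, $(g',t)\mapsto g'n(t)$, is injective. Fix a smooth $\rho\geq 0$ supported in $[0,\epsilon]$ with $\int\rho=1$ and define $F_\rho(g'n(t))=\rho(t)\psi(g')$ on the image of $\Phi$ (zero elsewhere); in the chart above $\Phi$ has unit Jacobian, giving $\int_{\Gamma\backslash G}F_\rho\,\dd\mu_\Gamma=\int\psi\,\dd\nu_{\alpha,\beta}$. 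Using $n(x)a(y)n(-t)=n(x-ty)a(y)$ and the first paragraph, each $x\in\TT$ in the horocycle's intersection with the flow-box corresponds to a unique pair $(\xi,t)$ with $\xi\in X_{\alpha,\beta}(y)$, $t\in[0,\epsilon]$ and $x=\xi+ty$, so
\[
F_\rho(\Gamma n(x)a(y))=\sum_{\xi\in X_{\alpha,\beta}(y)}\rho\!\left(\tfrac{x-\xi}{y}\right)\psi(\Gamma n(\xi)a(y)).
\]
Integrating against $\phi$, substituting $t=(x-\xi)/y$, and using the uniform continuity of $\phi$ together with the uniform bound $y\cdot\#X_{\alpha,\beta}(y)\ll 1$ from Corollary \ref{corollary:totalcount} gives
\[
\int_\TT\phi(x)F_\rho(\Gamma n(x)a(y))\,\dd x=y\sum_{\xi\in X_{\alpha,\beta}(y)}\phi(\xi)\psi(\Gamma n(\xi)a(y))+o(1)\qquad(y\to 0).
\]
Applying horocycle equidistribution (Theorem 5.6 of \cite{MarklofStrombergsson2010}) to $F_\rho$ against the absolutely continuous density $\phi$ then produces $\int_\TT\phi\,\dd x\int\psi\,\dd\nu_{\alpha,\beta}$, which is the right-hand side for tensor $f$; Stone--Weierstrass closes the argument for general $f\in C_b(\TT\times\Gamma\backslash G)$.

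The main obstacle is the case $\beta=\infty$, where $\widetilde{S}_{\alpha,\infty}$ extends into the cusp, the horocycle-injectivity radius tends to zero as $v\to\infty$, and the flow-box cannot be embedded globally for any fixed $\epsilon$. I would treat this by truncation: apply the preceding argument with $\beta$ replaced by a large finite $V$, and then let $V\to\infty$. The tail on the right-hand side is $\nu_{V,\infty}(\widetilde{S}_{V,\infty})=\kappa_\Gamma/V$, directly from the explicit form of $\nu_{\alpha,\beta}$. The corresponding tail on the left-hand side, $y\sum_{\xi\in X_{V,\infty}(y)}|\phi(\xi)\psi(\Gamma n(\xi)a(y))|$, is bounded uniformly in $y$ by $\|f\|_\infty\cdot y\cdot\#X_{V,\infty}(y)\ll \|f\|_\infty/V$, obtained by adapting the dyadic argument in the proof of Lemma \ref{lemma:FIbound}(iv) to the range $v\geq V$. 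Letting $V\to\infty$ after $y\to 0$ then yields (\ref{eq:surfaceequidistribution}).
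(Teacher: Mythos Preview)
Your approach is essentially the paper's: thicken the section into a horocycle flow-box, apply equidistribution of long closed horocycles, and handle $\beta=\infty$ by truncation with a tail bound. The cosmetic differences (reduction to tensor functions via Stone--Weierstrass, a smooth bump $\rho$ in place of the paper's indicator) do not matter.

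There is, however, one genuine gap. You assert that for any finite $\beta$ the section $\widetilde S_{\alpha,\beta}$ is precompact and hence admits an embedded flow-box $\Phi:\widetilde S_{\alpha,\beta}\times[0,\epsilon]\to\Gamma\backslash G$ for small $\epsilon$. Precompactness is fine, but injectivity of $\Phi$ requires first that the parameterization
\[
\bigcup_l\ \{(l,s,v):s\in[1,\varepsilon_l^2),\ v\in[\alpha,\beta)\}\ \longrightarrow\ \Gamma\backslash G
\]
itself be injective, i.e.\ that the section have no self-intersections in $\Gamma\backslash G$. This is precisely the content of Lemma~\ref{lemma:Sdisjoint}, and that lemma only yields it for $\beta<\delta\alpha$ with $\delta>1$ close to $1$; for larger ratios $\beta/\alpha$ different $\Gamma$-translates of $g_lS_{\alpha,\beta}$ can genuinely overlap. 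At such a self-intersection your definition $F_\rho(g'n(t))=\rho(t)\psi(g')$ is ill-posed (there are two values of $g'$), and the displayed identity equating $F_\rho(\Gamma n(x)a(y))$ with the \emph{multiset} sum over $X_{\alpha,\beta}(y)$ fails: the multiset counts the intersection with multiplicity while a single-valued $F_\rho$ does not. The paper deals with this by first partitioning $[\alpha,\beta)$ into subintervals $[\alpha_j,\alpha_{j+1})$ with $\alpha_{j+1}<\delta\alpha_j$, invoking Lemmas~\ref{lemma:Sdisjoint} and~\ref{lemma:identify} on each piece, and summing. Inserting that dyadic decomposition at the start of your ``$\beta<\infty$'' case makes the rest of your argument go through unchanged.
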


The plan for the proof of this statement is to show that $\widetilde{S}_{\alpha, \beta}$ is a Poincar\'e section for the horocycle flow $\Gamma g\mapsto \Gamma g n(t)$ on $\Gamma \backslash G$ such that the return times for the periodic orbit $\{ \Gamma a(y) n(t): t\in\TT\}$ have the form $ y^{-1} \mathrm{Re}(z_{\gamma \bm{c}_l})$. We then introduce an $\epsilon$-thickening of $\widetilde{S}_{\alpha, \beta}$ in the direction of the horocycle flow, and theorem \ref{theorem:surfaceequidistribution0} then follows by applying the equidistribution of long horocycles in $\Gamma \backslash G$. 

Let us first investigate the structure of $\widetilde S_{\alpha,\beta}^l$.
Recall $\{\Gamma g_l a(s): s > 0\}$ is a closed geodesic in $\Gamma\backslash G$ for each $l$, and in particular the curves are not self-intersecting and are disjoint for different values of $l$.
Therefore $\{\Gamma g_l  a(s)   : s > 0\} k(-\tfrac{\pi}{2}) a(v^{-1})$ are also closed curves in $\Gamma\backslash G$, which are not self-intersecting and are disjoint for different values of $l$. In fact, they do not intersect even after a slight thickening, as the following lemma shows.

\begin{lemma}
  \label{lemma:Sdisjoint}
  For $1\leq \alpha < \beta < \infty$ and $\epsilon > 0$ define
  \begin{equation}
    \label{eq:Slepsilondef}
    S^{\epsilon}_{\alpha, \beta} = \{ a(s) k(-\tfrac{\pi}{2}) a(v^{-1}) n(u) : s> 0, \alpha \leq v < \beta, |u| <  \epsilon \} \subset  G.
  \end{equation}
  Then there is $\delta>1$ sufficiently small (depending only on $\Gamma$ and the $g_l$) and $\epsilon>0$ sufficiently small (depending only on $\delta$, $\Gamma$ and the $g_l$), such that for $\beta<\delta\alpha$  and  $\gamma \in \Gamma$ and $1\leq l_1, l_2 \leq h$, we have that
  \begin{equation}
    \label{eq:Sdisjoint}
    \gamma g_{l_1} S^{\epsilon}_{\alpha, \beta} \cap g_{l_2} S^{\epsilon}_{\alpha, \beta} = \emptyset 
  \end{equation}
  unless $l_1 = l_2$ and $\gamma \in \Gamma_{\bm{c}_{l_1}}$. 
\end{lemma}

\begin{proof}
  Suppose that $\gamma \in \Gamma $ is such that
  \begin{equation}
    \label{eq:Sintersect}
    \gamma g_{l_1} a(s_1) k(-\tfrac{\pi}{2}) a(v_1)^{-1} n(u_1) = g_{l_2} a(s_2) k(-\tfrac{\pi}{2}) a(v_2)^{-1} n(u_2),
  \end{equation}
 for some $s_1,s_2>0$, $\alpha \leq v_1, v_2 < \beta$, and $|u_1|, |u_2| < \epsilon$. Replacing $\gamma$ with $\gamma_2 \gamma \gamma_1$ for suitable $\gamma_i\in\Gamma_{\bm{c}_{i}}$,
 we may assume furthermore that $1\leq s_1  < \varepsilon_{l_1}^2$, $1\leq s_2 < \varepsilon_{l_2}^2$. 
  This implies that
  \begin{equation}
    \label{eq:gammacondition}
    g_{l_2}^{-1} \gamma g_{l_1} =
    \begin{pmatrix}
      s^{\frac{1}{2}} & 0 \\
      0 & s^{-\frac{1}{2}}
    \end{pmatrix}
    + O( \beta \epsilon + ( \alpha^{-\frac{1}{2}}\beta^{\frac{1}{2}} - \alpha^{\frac{1}{2}} \beta^{-\frac{1}{2}}))
  \end{equation}
  where $s = s_2/s_1$, and so $\varepsilon_{l_1}^{-2} < s < \varepsilon_{l_2}^2$.
  The condition that (\ref{eq:gammacondition}) holds for some $\varepsilon_{l_1}^{-2} < s < \varepsilon_{l_2}^2$  constrains $\gamma$ to a fixed compact subset of $G$.
  The discreteness of $\Gamma$ then implies that for $\beta/\alpha<\delta$ sufficiently close to $1$ and $\beta \epsilon$ sufficiently small, if (\ref{eq:gammacondition}), then in fact
  \begin{equation}
    \label{eq:gammacondition1}
    g_{l_2}^{-1} \gamma g_{l_1} =
    \begin{pmatrix}
      s^{\frac{1}{2}} & 0 \\
      0 & s^{-\frac{1}{2}}
    \end{pmatrix}
  \end{equation}
  for some $\varepsilon_{l_1}^{-2} \leq s \leq \varepsilon_{l_2}^2$. 
  Since the geodesics $\bm{c}_{l}$ are not $\Gamma$-equivalent, \eqref{eq:gammacondition1} implies $l_1 = l_2$, and so then $\gamma \in \Gamma_{\bm{c}_{l_1}}$.  
\end{proof}

The lemma implies that $\widetilde S_{\alpha,\beta}^l$ are measurable subsets and $\nu_{\alpha,\beta}$ well defined as Borel measures. This observation will be key in the application of the surface-of-section method below.

\begin{lemma}
  \label{lemma:identify}
 Let $1\leq \alpha < \beta < \delta\alpha< \infty$ with $\delta$ as in lemma \ref{lemma:Sdisjoint}. Then the map
 \begin{equation}\label{eq:identify}
X_{\alpha,\beta}(y)\to \widetilde S_{\alpha,\beta}, \qquad \xi\mapsto \Gamma n(\xi) a(y)
\end{equation}
is injective, and we have
\begin{equation}
X_{\alpha,\beta}(y) = \{ \xi\in\TT : \Gamma n(\xi) a(y) \in \widetilde S_{\alpha,\beta} \}.
\end{equation}
\end{lemma}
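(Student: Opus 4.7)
The plan is to unwind everything via the coordinate chart
\[
g^{-1} = n(u)\,a(v)\,k(\tfrac{\pi}{2})\,a(s)^{-1}, \qquad u\in\RR,\ v,s>0,
\]
introduced in the proof of proposition~\ref{lemma:Faverage}, which covers exactly the open set where $g^{-1}\bm{c}_0$ is positively oriented and in which $z_{g^{-1}\bm{c}_0} = u + \ii v$. The other ingredient is that $\bm{c}_0 = \{a(t)\ii : t>0\}$ is stabilised setwise by $\{a(s):s>0\}$, so right multiplication of $g^{-1}$ by $a(s)$ alters $g^{-1}$ but not $g^{-1}\bm{c}_0$. Together these facts give the freedom to rearrange matrix identities into exactly the form appearing in $\widetilde S_{\alpha,\beta}$.

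First I would prove the forward inclusion $X_{\alpha,\beta}(y) \subseteq \{\xi\in\TT : \Gamma n(\xi)a(y)\in\widetilde S_{\alpha,\beta}\}$. For $\xi\in X_{\alpha,\beta}(y)$ witnessed by some $l$ and a coset $\gamma\Gamma_{\bm{c}_l}$, after replacing $\gamma$ by $\gamma_0\gamma$ for a suitable $\gamma_0\in\Gamma_\infty$ I may assume $z_{\gamma\bm{c}_l}=\tilde\xi + \ii vy$ for a chosen lift $\tilde\xi$ of $\xi$ and some $v\in[\alpha,\beta)$. Since $\gamma g_l \bm{c}_0 = \gamma\bm{c}_l$ is positively oriented, the chart then gives a unique $s>0$ with $\gamma g_l = n(\tilde\xi)\,a(vy)\,k(\tfrac{\pi}{2})\,a(s)^{-1}$. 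Using $a(vy)=a(y)a(v)$ (diagonal matrices commute), this rearranges to
\[
n(\tilde\xi)\,a(y) = \gamma g_l\, a(s)\, k(-\tfrac{\pi}{2})\, a(v^{-1}),
\]
so $\Gamma n(\xi)a(y) = \Gamma n(\tilde\xi)a(y) \in \widetilde S_{\alpha,\beta}^l\subseteq\widetilde S_{\alpha,\beta}$.

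For the reverse inclusion, if $\Gamma n(\xi)a(y)\in\widetilde S_{\alpha,\beta}^l$ then $n(\xi)a(y) = \gamma g_l a(S)k(-\tfrac{\pi}{2})a(v^{-1})$ for some $\gamma\in\Gamma$, $S>0$, $v\in[\alpha,\beta)$. Multiplying on the right by $a(v)k(\tfrac{\pi}{2})$ and using $a(y)a(v)=a(vy)$ gives $n(\xi)a(vy)k(\tfrac{\pi}{2}) = \gamma g_l a(S)$. Applying both sides to $\bm{c}_0$ and using $a(S)\bm{c}_0 = \bm{c}_0$, the right-hand side becomes $\gamma\bm{c}_l$, while by the chart (with $s=1$) the left-hand geodesic has top $\xi + \ii vy$. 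Hence $z_{\gamma\bm{c}_l} = \xi + \ii vy$ with $vy\in[\alpha y,\beta y)$, proving $\xi\in X_{\alpha,\beta}(y)$.

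Finally, injectivity follows from the fact that if $\Gamma n(\xi_1)a(y) = \Gamma n(\xi_2)a(y)$ then $n(\xi_1-\xi_2)\in\Gamma$, whence $\xi_1-\xi_2\in\ZZ$ by \eqref{eq:Gammainfty} and so $\xi_1=\xi_2$ in $\TT$. It remains to see that each $\xi$ appears with multiplicity one in the multiset $X_{\alpha,\beta}(y)$; this is where the assumption $\beta<\delta\alpha$ enters. If two pairs $(l_i,\gamma_i\Gamma_{\bm{c}_{l_i}})$, $i=1,2$, both realise the same $\xi$, then by the forward step $n(\xi)a(y) \in \gamma_1 g_{l_1}S_{\alpha,\beta}\cap\gamma_2 g_{l_2}S_{\alpha,\beta}$, so $\gamma_2^{-1}\gamma_1 g_{l_1}S_{\alpha,\beta}\cap g_{l_2}S_{\alpha,\beta}\neq\emptyset$; since $S_{\alpha,\beta}\subset S^\epsilon_{\alpha,\beta}$ for every $\epsilon>0$, lemma~\ref{lemma:Sdisjoint} forces $l_1=l_2$ and $\gamma_2^{-1}\gamma_1\in\Gamma_{\bm{c}_{l_1}}$, i.e.\ the same double coset in $\Gamma_\infty\backslash\Gamma/\Gamma_{\bm{c}_{l_1}}$. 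The main obstacle is nothing deeper than careful bookkeeping in the chart: the freedom $a(s)$ built into $g^{-1} = n(u)a(v)k(\tfrac{\pi}{2})a(s)^{-1}$ is exactly what lets the factor $a(y)$ be isolated on the left, so that the right-hand side sits cleanly inside $g_l S_{\alpha,\beta}$.
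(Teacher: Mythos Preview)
Your proof is correct and takes essentially the same approach as the paper: both rely on the coordinate identity $\gamma g_l = n(u)\,a(v)\,k(\tfrac{\pi}{2})\,a(s)^{-1}$ (equivalently, the Iwasawa relation \eqref{eq:horizontal}) to pass between tops of geodesics and membership in $\widetilde S_{\alpha,\beta}$, and both invoke lemma~\ref{lemma:Sdisjoint} for the multiplicity-one part. If anything your write-up is more complete, since you argue the reverse inclusion explicitly and cleanly separate the two ingredients of injectivity (that $\xi\mapsto\Gamma n(\xi)a(y)$ is injective on $\TT$, and that each $\xi$ occurs with multiplicity one in the multiset).
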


\begin{proof}
Recall that $z_{\gamma\bm{c}_l}$ is the point on $\gamma\bm{c}_l$ where the tangent points directly to the right. The Iwasawa decomposition \eqref{eq:Iwasawa} hence tells us that
\begin{equation}
  \label{eq:horizontal}
  n(\mathrm{Re}(z_{\gamma\bm{c}_l})) a(\mathrm{Im}(z_{\gamma\bm{c}_l})) k( \tfrac{\pi}{2}) = \gamma g_l a(s)
\end{equation}
for a unique $s > 0$.  
Therefore if $\alpha y \leq \mathrm{Im}(z_{\gamma \bm{c}_l}) < \beta y $, then
\begin{equation}
  \label{eq:Stops}
 n( \mathrm{Re}(z_{\gamma \bm{c}_l})) a(y) = \gamma g_l a(s) k(-\tfrac{\pi}{2}) a(v^{-1}) \in \gamma g_l S_{\alpha, \beta}
\end{equation}
with $v=y^{-1}  \mathrm{Im}(z_{\gamma \bm{c}_l})$,
and so 
\begin{equation}
  \label{eq:Stops222}
 \Gamma n( \mathrm{Re}(z_{\gamma \bm{c}_l})) a(y) \in \widetilde S_{\alpha,\beta}^l = \Gamma \backslash\Gamma g_l S_{\alpha, \beta} .
\end{equation}
This shows that the map \eqref{eq:identify} is well defined. In view of lemma \ref{lemma:Sdisjoint}, the value of $v\in[\alpha,\beta)$ is in fact unique. 
Together with the uniqueness of $s\in[1,\varepsilon_l^2)$ this implies the injectivity of \eqref{eq:identify}.
\end{proof}

Note that the previous results require $\beta<\infty$. The following result shows that restricting to finite intervals $[\alpha,\beta)$ only produces a small error.

\begin{lemma}
  \label{lemma:uppbd}
 For $\alpha>0$ and $f : \mathbb{T} \times \Gamma \backslash G \to  \mathbb{C}$ bounded, we have
  \begin{equation}
    \label{eq:surfaceequidistributionuppbd}
    \limsup_{y\to 0}  \bigg| y \sum_{\xi\in X_{\alpha,\infty}(y)} f(\xi, \Gamma n(\xi)a(y)) \bigg| \leq \kappa_\Gamma\, \alpha^{-1} \sup|f|  .
  \end{equation}
\end{lemma}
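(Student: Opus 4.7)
The plan is to reduce the statement to the trivial bound $|f|\le \sup|f|$ together with an asymptotic count of the multiset $X_{\alpha,\infty}(y)$. Indeed, pulling $\sup|f|$ out of the sum gives
\[
\bigg| y \sum_{\xi\in X_{\alpha,\infty}(y)} f(\xi, \Gamma n(\xi)a(y)) \bigg|
\le (\sup |f|)\; y\, \# X_{\alpha,\infty}(y),
\]
so it suffices to show that $\limsup_{y\to 0} y\,\#X_{\alpha,\infty}(y) \le \kappa_\Gamma\,\alpha^{-1}$.

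The key observation is a rescaling identity: unpacking the definition \eqref{def:Xabl}, both $X_{\alpha,\infty}(y)$ and $X_{1,\infty}(\alpha y)$ consist of the real parts $\Re(z_{\gamma\bm{c}_l})\bmod 1$ of the geodesic tops satisfying $\Im(z_{\gamma\bm{c}_l}) \ge \alpha y$, so
\[
X_{\alpha,\infty}(y) = X_{1,\infty}(\alpha y).
\]
This reduces an arbitrary $\alpha>0$ to the case $\alpha=1$, which is already covered by the results of the previous subsection.

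Now apply Corollary \ref{corollary:totalcount}, specifically \eqref{eq:equidistribution}, with the parameters $\alpha=1$, $\beta=\infty$, $a=0$, $b=1$, and with $y$ replaced by $\alpha y$ (which still tends to $0$ as $y\to 0$). Since $\#X_{1,\infty}(\alpha y) = N_{1,\infty}(0,1,\alpha y)$, we obtain
\[
\lim_{y\to 0} y\, \# X_{\alpha,\infty}(y)
= \alpha^{-1}\lim_{y\to 0} (\alpha y)\, \# X_{1,\infty}(\alpha y)
= \alpha^{-1}\kappa_\Gamma,
\]
and combining this with the bound in the first paragraph yields \eqref{eq:surfaceequidistributionuppbd}.

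There is no real obstacle in this argument: the only potentially non-trivial ingredient, namely the horocycle equidistribution underlying Corollary \ref{corollary:totalcount}, has already been established, and the rescaling identity $X_{\alpha,\infty}(y) = X_{1,\infty}(\alpha y)$ is immediate from the definition of the multiset. The lemma is essentially a packaging of the total-count corollary in a form usable for controlling the tail $[\alpha,\infty)$ when one later wants to pass from $\beta<\infty$ (as in theorem \ref{theorem:surfaceequidistribution0}) to $\beta=\infty$.
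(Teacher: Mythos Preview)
Your proof is correct and follows essentially the same route as the paper, which simply cites corollary~\ref{corollary:totalcount}. The paper points to \eqref{eq:totalcount} whereas you use the equivalent \eqref{eq:equidistribution}; your rescaling $X_{\alpha,\infty}(y)=X_{1,\infty}(\alpha y)$ is a nice touch that covers the stated range $\alpha>0$ rather than just $\alpha\ge 1$, but otherwise the argument is the same.
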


\begin{proof}
Apply corollary \ref{corollary:totalcount}, eq.~\eqref{eq:totalcount}.
\end{proof}

\begin{proof}[Proof of theorem \ref{theorem:surfaceequidistribution0}]
Since the sum over $l$ is finite, we may assume without loss of generality that we only have one term, i.e., $h=1$.
We write $[\alpha,\beta)$ as a disjoint union of intervals $[\alpha_j,\alpha_{j+1})$, $j=1,\ldots, J$, with $\alpha_1=\alpha$ and $\alpha_{J+1}=\beta$. If $\beta=\infty$, we can choose $\alpha_J$ sufficiently large to make the contribution of the interval $[\alpha_J,\infty)$ to both sides of \eqref{eq:surfaceequidistribution} as small as we wish; recall lemma \ref{lemma:uppbd}. We may therefore assume without loss of generality that $\beta<\infty$. Assume furthermore $\alpha_{j+1}<\delta \alpha_j$ with $\delta>1$ as in lemma \ref{lemma:Sdisjoint}. We will now calculate the contribution to \eqref{eq:surfaceequidistribution} of each finite interval $[\alpha_j,\alpha_{j+1})$. For simplicity of notation, let us rename $[\alpha_j,\alpha_{j+1})$ as $[\alpha,\beta)$. We have thus reduced the proof of the lemma to the statement for $1\leq\alpha< \beta<\delta\alpha<\infty$ with $\delta>1$ as in lemma \ref{lemma:Sdisjoint}.

What now remains to be shown is that for $f : \mathbb{T} \times \widetilde S_{\alpha,\beta}^l \to  \mathbb{C}$ bounded continuous, we have
  \begin{equation}
    \label{eq:surfaceequidistribution1234}
    \lim_{y\to 0} y 
    \sum_{\xi\in X_{\alpha,\beta}^l(y)} f(\xi, \Gamma n(\xi)a(y)) 
    = \int_{\mathbb{T}} \int_{\Gamma \backslash G} f(x,g) \dd \nu_{\alpha, \beta}^l(g) \dd x .
\end{equation}
To this end, we define for given $\epsilon>0$ and $f$ bounded continuous, the function $F_\epsilon: \mathbb{T} \times \Gamma \backslash G \to  \mathbb{C}$ by
\begin{equation}
F_\epsilon(x,\Gamma g) = 
\begin{cases}
f(x,\Gamma g_la(s)k(-\tfrac{\pi}{2})a(v^{-1})) & \text{if $g=g_la(s)k(-\tfrac{\pi}{2})a(v^{-1})n(u)\in \Gamma g_l S_{\alpha,\beta}^\epsilon$}\\
0 & \text{if $g\notin \Gamma g_l S_{\alpha,\beta}^\epsilon$.}
\end{cases}
\end{equation}
By lemma \ref{lemma:Sdisjoint} and the assumptions on $\alpha,\beta$, the function $F_\epsilon$ is well defined, is bounded and has compact support; it is continuous except at points with $u=\pm\epsilon$, which form a set of Haar measure zero, recall the parametrisation \eqref{eq:newHaar}.   
The key observation is now that in view of \eqref{eq:horizontal}
\begin{equation}\label{eq:rhs111}
\frac{1}{2\epsilon}\int_{\mathbb{T}} F_\epsilon(x, \Gamma n(x)a(y)) \dd x = y 
    \sum_{\xi\in X_{\alpha,\beta}^l(y)} f(\xi, \Gamma n(\xi)a(y)) .
\end{equation}
The equidistribution of closed horocycles (as stated in theorem 5.6 of \cite{MarklofStrombergsson2010}) implies that
$$
\lim_{y\to 0} \frac{1}{2\epsilon}\int_{\mathbb{T}} F_\epsilon(x, \Gamma n(x)a(y)) \dd x
= \frac{1}{2\epsilon} \int_{\TT\times\Gamma\backslash G} F_\epsilon(x, g) \,\dd x\, \dd\mu_\Gamma(g).
$$
Using the formula $\eqref{eq:newHaar}$ for $\mu_\Gamma$ with the correct normalisation factor, we work out that
\begin{equation} \begin{split}\label{eq:rhs111limit}
& \frac{1}{2\epsilon} \int_{\TT\times\Gamma\backslash G} F_\epsilon(x, g) \,\dd x\, \dd\mu_\Gamma(g) \\
& = \frac{1}{2 \pi \mathrm{vol}_{\mathbb{H}}(\Gamma \backslash \mathbb{H})}  \int_{\mathbb{T}}  
\int_\alpha^\beta \int_1^{\varepsilon_l^2} f(x,\Gamma g_la(s)k(-\tfrac{\pi}{2})a(v^{-1})) \;\frac{\dd s}{s}\; \frac{\dd v}{v^2} \; \dd x.
\end{split} \end{equation}
In conclusion, the right hand side of \eqref{eq:rhs111} converges to the right hand side of \eqref{eq:rhs111limit}, as required.
\end{proof}

\subsection{Equidistribution of intersection points on a closed geodesic}
\label{sec:intersection}

The following corollary of the discussion in the previous section may be of independent interest. It concerns the joint equidistribution of the points $\mathrm{Re}(z_{\gamma \bm{c}_l}) +\ZZ\in \mathbb{T}$ (with respect to Lebesgue measure) and the points $\Gamma z_{\gamma \bm{c}_l}$ on the closed geodesic $\Gamma \backslash \Gamma \bm{c}_l$ (with respect to the arc length measure). 
To be precise, for $\gamma \in \Gamma_\infty \backslash \Gamma / \Gamma_{\bm{c}_l}$, we define $t_l(\gamma)$ as the unique $t\in[0,2\log\varepsilon_l)$ such that
\begin{equation}
  \label{eq:slgammadef}
  \Gamma_{\bm{c}_l} \gamma^{-1}n(\mathrm{Re}(z_{\gamma \bm{c}_l})) a( \mathrm{Im}(z_{\gamma \bm{c}_l})) = \Gamma_{\bm{c}_l} g_l a(\e^t)k(-\tfrac{\pi}{2})
\end{equation}
holds; recall \eqref{eq:Stops} and $2\log\varepsilon_l$ is the total length of $\bm{c}_l$.
We remark that in the context of the roots $\mu^2 \equiv D \pmod m$, the natural harmonics on the closed geodesics are Hecke characters for $\mathbb{Q}(\sqrt{D})$. 

\begin{proposition}
  \label{corollary:curveequidistribution}
  For $1\leq \alpha < \beta \leq \infty$ and  $f : \TT \times [0, 2\log \varepsilon_l] \to \mathbb{C}$ bounded continuous, we have
  \begin{equation}
    \label{eq:curveequidistribution}
    \lim_{y\to \infty} y \sum_{\substack{\gamma \in \Gamma_{\infty} \backslash \Gamma / \Gamma_{\bm{c}_l} \\ \alpha y \leq \mathrm{Im}(z_{\gamma \bm{c}_l}) < \beta y}} f(\mathrm{Re}(z_{\gamma \bm{c}_l}), t_l(\gamma))
    = \frac{\alpha^{-1}-\beta^{-1}}{2 \pi \mathrm{vol}_{\mathbb{H}}(\Gamma \backslash \mathbb{H})}  
    \int_0^{2\log\varepsilon_l} \int_{\mathbb{T}}  f(x, t) \,\dd x\, \dd t.
  \end{equation}
\end{proposition}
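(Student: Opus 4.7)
The plan is to deduce the proposition directly from Theorem \ref{theorem:surfaceequidistribution0} by choosing a suitably crafted test function. The key observation, already implicit in the proof of Lemma \ref{lemma:identify}, is that the coordinates $(s,v) \in [1,\varepsilon_l^2) \times [\alpha,\beta)$ appearing in the parametrization $\Gamma g_l a(s) k(-\tfrac{\pi}{2}) a(v^{-1})$ of $\widetilde S_{\alpha,\beta}^l$ are precisely $s = \e^{t_l(\gamma)}$ and $v = y^{-1}\Im(z_{\gamma\bm{c}_l})$, where $\gamma \in \Gamma_\infty \backslash \Gamma / \Gamma_{\bm{c}_l}$ is the coset attached to $\xi = \Re(z_{\gamma \bm{c}_l}) \in X_{\alpha,\beta}^l(y)$. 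The identity $n(\Re z_{\gamma\bm{c}_l}) a(\Im z_{\gamma\bm{c}_l}) = \gamma g_l a(s) k(-\tfrac{\pi}{2})$ from \eqref{eq:horizontal}, combined with the definition \eqref{eq:slgammadef} of $t_l(\gamma)$ and the action of $\Gamma_{\bm{c}_l}$, identifies $\log s$ modulo $2\log\varepsilon_l$ with $t_l(\gamma)$.

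Concretely, I would define $\tilde f : \TT \times \Gamma \backslash G \to \CC$ by
\begin{equation}
\tilde f\bigl(\xi,\, \Gamma g_l a(s) k(-\tfrac{\pi}{2}) a(v^{-1})\bigr) = f(\xi, \log s) \quad \text{for } s \in [1,\varepsilon_l^2),\; v \in [\alpha,\beta),
\end{equation}
and $\tilde f \equiv 0$ off $\widetilde S_{\alpha,\beta}^l$ (this is unambiguous by Lemma \ref{lemma:Sdisjoint}). For $\gamma \in \Gamma_\infty \backslash \Gamma / \Gamma_{\bm{c}_l}$ with $\alpha y \leq \Im(z_{\gamma \bm{c}_l}) < \beta y$ one has $\tilde f(\xi, \Gamma n(\xi) a(y)) = f(\Re(z_{\gamma \bm{c}_l}), t_l(\gamma))$ by construction, and for contributions from $l' \neq l$ the function $\tilde f$ vanishes. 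Thus the left-hand side of \eqref{eq:surfaceequidistribution} applied to $\tilde f$ is exactly $y$ times the sum on the left-hand side of \eqref{eq:curveequidistribution}.

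Because $\tilde f$ is discontinuous at the boundary of $\widetilde S_{\alpha,\beta}^l$, Theorem \ref{theorem:surfaceequidistribution0} does not apply verbatim; I would remedy this by a standard sandwiching argument. For any $\eta > 0$, produce bounded continuous functions $\tilde f_\eta^{\pm}$ with $\tilde f_\eta^- \leq \tilde f \leq \tilde f_\eta^+$ and $\int (\tilde f_\eta^+ - \tilde f_\eta^-)\,\dd x\, \dd\nu_{\alpha,\beta} < \eta$ (this is possible since the boundary of $\widetilde S_{\alpha,\beta}^l$ has $\nu_{\alpha,\beta}$-measure zero), apply the theorem to each, and let $\eta \to 0$. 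Unwinding the definition of $\nu_{\alpha,\beta}^l$, the right-hand side of \eqref{eq:surfaceequidistribution} becomes
\begin{equation}
\frac{1}{2\pi \mathrm{vol}_\HH(\Gamma\backslash\HH)} \int_\TT \int_\alpha^\beta \int_1^{\varepsilon_l^2} f(x,\log s)\,\frac{\dd s}{s}\,\frac{\dd v}{v^2}\,\dd x,
\end{equation}
and the substitution $t = \log s$ (giving $\dd t = \dd s/s$, $t \in [0, 2\log\varepsilon_l)$) together with $\int_\alpha^\beta v^{-2}\,\dd v = \alpha^{-1} - \beta^{-1}$ produces exactly the right-hand side of \eqref{eq:curveequidistribution}.

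The only genuine obstacle is the discontinuity of the natural test function $\tilde f$, but this is a purely technical point handled by the sandwiching outlined above; alternatively, one may take a monotone class / density argument as in the proof of the second proposition following Proposition \ref{lemma:Faverage}. The substantive content is entirely carried by Theorem \ref{theorem:surfaceequidistribution0}; the conceptual step is recognising that the arc-length coordinate along the closed geodesic $\Gamma \backslash \Gamma \bm{c}_l$ coincides with the $\log s$ coordinate in the parametrization of $\widetilde S_{\alpha,\beta}^l$, so that the Poincar\'e section already knows about the joint distribution of the real parts and the projections to the geodesic.
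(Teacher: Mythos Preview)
Your approach is essentially the paper's: define a test function on the section that reads off the $\log s$ coordinate, apply Theorem \ref{theorem:surfaceequidistribution0}, and change variables $t=\log s$. The one step you omit but the paper includes is the preliminary reduction to $1\le\alpha<\beta<\delta\alpha<\infty$ (partitioning $[\alpha,\beta)$ and using Lemma \ref{lemma:uppbd} for the tail when $\beta=\infty$), without which Lemma \ref{lemma:Sdisjoint} does not apply and your $\tilde f$ is not guaranteed to be well-defined; conversely, your sandwiching to handle the discontinuity of $\tilde f$ is more explicit than the paper's direct invocation of the theorem.
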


\begin{proof}
  As above, we may assume without loss of generality that $1\leq\alpha< \beta<\delta\alpha<\infty$ with $\delta>1$ as in lemma \ref{lemma:Sdisjoint}.
  We define $\tilde{f}: \mathbb{T} \times \tilde{S}^l_{\alpha, \beta} \to \mathbb{C}$ by
  \begin{equation}
    \label{eq:tildefdef}
    \tilde{f}(x, \Gamma g_l a(s) k( - \frac{\pi}{2}) a(v^{-1})) = \chi_{[\alpha, \beta)}(v) f(x, \log s).
  \end{equation}
  This is well-defined by lemma \ref{lemma:Sdisjoint}, and we observe from (\ref{eq:slgammadef}) that
  \begin{equation}
    \label{eq:ftilderewrite}
    \lim_{y\to \infty} y \sum_{\substack{\gamma \in \Gamma_{\infty} \backslash \Gamma / \Gamma_{\bm{c}_l} \\ \alpha y \leq \mathrm{Im}(z_{\gamma \bm{c}_l}) < \beta y}} f(\mathrm{Re}(z_{\gamma \bm{c}_l}), t_l(\gamma)) = \lim_{y \to \infty} y \sum_{\substack{ \gamma \in \Gamma_\infty \backslash \Gamma / \Gamma_{\bm{c}_l} \\ \alpha y \leq \mathrm{Im}(z_{\gamma \bm{c}_l}) < \beta y}} \tilde{f}( \mathrm{Re}(z_{\gamma \bm{c}_l}), \Gamma n( \mathrm{Re}(z_{\bm{c}_l})) a(y)).
  \end{equation}
  Applying theorem \ref{theorem:surfaceequidistribution0}, the right side becomes
  \begin{equation}
    \label{eq:ftildelimit}
    \int_{\mathbb{T}} \int_{\Gamma \backslash G} \tilde{f}(x, g) \dd \nu_{\alpha, \beta}(g) \dd x = \frac{\alpha^{-1} - \beta^{-1}}{2\pi \mathrm{vol}_{\mathbb{H}}(\Gamma \backslash \mathbb{H})} \int_1^{\varepsilon_l^2} \int_{\mathbb{T}} f(x, \log s) \dd x \frac{\dd s}{s}.
  \end{equation}
  Equation (\ref{eq:curveequidistribution}) follows after changing variables $t = \log s$. 
\end{proof}

\section{Conditioned geodesic line processes}
\label{sec:conditioned}

In section \ref{sec:convergence} we considered the number of points $\scrN_{I,\alpha,\beta}(x,y)$ in small intervals \eqref{eq:NIdef}, where $x$ is distributed according to an absolutely continuous Borel probability measure on $\TT$. In order to capture gap and nearest neighbour statistics, as well as pair and higher correlation measures, we now evaluate $\scrN_{I,\alpha,\beta}(\eta,y)$ for $\eta$ randomly drawn from the sequence itself. That is, the random variable $\eta$ is uniformly from the finite multiset $X_{\alpha,\beta}(y)$ as defined in \eqref{def:Xab}, with fixed $1\leq \alpha<\beta\leq \infty$. 
As in section \ref{sec:convergence}, we define the corresponding geodesic line processes by
\begin{equation}
\Theta_{y,\alpha,\beta}^0 = \sum_{l=1}^h \sum_{\gamma\in\Gamma/\Gamma_{\bm{c}_l}} \delta_{z_{(n(\eta)a(y))^{-1}\gamma \bm{c}_l}} 
\end{equation}
and
\begin{equation}
\Theta_{\alpha,\beta}^0 = \sum_{l=1}^h \sum_{\gamma\in\Gamma/\Gamma_{\bm{c}_l}} \delta_{z_{g^{-1}\gamma \bm{c}_l}}
\end{equation}
with the random element $g$ distributed with respect to the Borel probability measure $\widehat\nu_{\alpha,\beta}$ on $\Gamma\backslash G$, which is $\nu_{\alpha,\beta}$ normalised as a probability measure. That is,
\begin{equation}
\widehat\nu_{\alpha,\beta} = \kappa_\Gamma^{-1} \bigg(\frac1\alpha-\frac1\beta\bigg)^{-1} \nu_{\alpha,\beta}.
\end{equation}
We may think of $\Theta_{y,\alpha,\beta}^0$ and $\Theta_{\alpha,\beta}^0$ as the processes $\Theta_{y,\lambda}$ resp.\ $\Theta$ (defined in section \ref{sec:convergence}) conditioned to have a point on the line $\{ z\in\HH : \Re z=0,\; \alpha\leq \Im z<\beta\}$.

\subsection{Convergence in distribution}
\label{sec:convergence2}

The key result of this section is the following analogue of theorem \ref{theorem:convergence}.

\begin{theorem}\label{theorem:convergence2}
For $1\leq\alpha<\beta\leq\infty$, $0\leq a<b\leq 1$, we have convergence 
$\Theta_{y,\alpha,\beta}^0 \to \Theta_{\alpha,\beta}^0$ in distribution as $y\to 0$. 

In particular, for all $k_1,\ldots,k_r\in\ZZ_{\geq 0}$, finite intervals $I_i$, $0\leq a<b\leq 1$,  and $1\leq\alpha_i<\beta_i\leq\infty$, we have that
\begin{equation} \begin{split}
&\lim_{y\to 0} y \#\big\{ \xi \in X_{\alpha,\beta}(y) \cap ([a,b)+\ZZ)  : 
\scrN_{I_i,\alpha_i,\beta_i}(\xi,y) = k_i \;\forall i  \big\} \\
&= \kappa_\Gamma (b-a) \bigg(\frac1\alpha-\frac1\beta\bigg) \PP\big( \Theta_{\alpha,\beta}^0(B_{I_i,\alpha_i,\beta_i})= k_i \; \forall i \big) 
\end{split} \end{equation}
and the limit is a continuous function of $\alpha_i$, $\beta_i$ and the endpoints of $I_i$.
\end{theorem}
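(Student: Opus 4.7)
The plan is to reduce the stated counting asymptotic to a single application of Theorem~\ref{theorem:surfaceequidistribution0}, in exact analogy with the way Theorem~\ref{theorem:convergence} was derived from Proposition~\ref{lemma:Faverage}, but now on the Poincar\'e section $\widetilde S_{\alpha,\beta}$ rather than on all of $\Gamma\backslash G$. Writing $E_i=\{\Gamma g\in\Gamma\backslash G:\scrN_{B_{I_i,\alpha_i,\beta_i}}(g)=k_i\}$ and using the identity \eqref{eq:keyid}, the counting quantity on the left-hand side of the theorem equals $y\sum_{\xi\in X_{\alpha,\beta}(y)} F(\xi,\Gamma n(\xi)a(y))$ with
\begin{equation}
F(x,\Gamma g)=\chi_{[a,b)+\ZZ}(x)\prod_{i=1}^r \chi_{E_i}(\Gamma g).
\end{equation}
If Theorem~\ref{theorem:surfaceequidistribution0} applied directly to $F$, the limit would be $(b-a)\,\nu_{\alpha,\beta}(E_1\cap\cdots\cap E_r)$; unwinding the normalisation $\nu_{\alpha,\beta}=\kappa_\Gamma(\alpha^{-1}-\beta^{-1})\widehat\nu_{\alpha,\beta}$ and using that $\Theta^0_{\alpha,\beta}(B_{I_i,\alpha_i,\beta_i})=\scrN_{B_{I_i,\alpha_i,\beta_i}}(g)$ with $g$ distributed as $\widehat\nu_{\alpha,\beta}$, this is exactly the asserted right-hand side.

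The obstacle is that $F$ is a bounded Borel, not continuous, function. I would bracket it in the standard way, $F_-\leq F\leq F_+$, by bounded continuous $F_\pm$ differing only on an arbitrarily small neighbourhood of the discontinuity set of $F$, and then apply Theorem~\ref{theorem:surfaceequidistribution0} to $F_\pm$ separately before shrinking the neighbourhood. The discontinuities in $x$ are confined to $\{a,b\}$, which is Lebesgue-null; those in $\Gamma g$ lie in $\bigcup_i\{\Gamma g:\scrN_{\partial B_{I_i,\alpha_i,\beta_i}}(g)\geq 1\}$. The sandwich therefore closes provided
\begin{equation}\label{eq:needed}
\nu_{\alpha,\beta}\bigl(\{\Gamma g:\scrN_{\partial B_{I_i,\alpha_i,\beta_i}}(g)\geq 1\}\bigr)=0\quad\text{for each }i,
\end{equation}
which is the exact analogue of Lemma~\ref{lemma:boundaryestimate} with $\mu_\Gamma$ replaced by $\nu_{\alpha,\beta}$. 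By Markov this is a consequence of a Siegel-type upper bound
\begin{equation}
\int_{\Gamma\backslash G}\scrN_C(g)\,\dd\nu_{\alpha,\beta}(g)\ll \mathrm{vol}_\HH(C)
\end{equation}
for Borel $C\subset\HH$. This is the hard part of the argument, and it is precisely the substantive content of the Siegel-type volume formula announced for Section~\ref{section:intensity23}; I would take it as an input.

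The case in which some $\beta_i$ or $\beta$ equals $\infty$ is then handled by truncation: one restricts to $\beta_i'<\infty$, applies the finite case, and sends $\beta_i'\to\infty$. On the prelimit side the tail is controlled by Lemma~\ref{lemma:FIbound}(iii), and on the limit side by Lemma~\ref{lemma:uppbd} applied via the same Siegel-type formula. Continuity of the limit in $a,b,\alpha_i,\beta_i$ and the endpoints of $I_i$ follows from \eqref{eq:needed} applied with the parameters perturbed, since the $\nu_{\alpha,\beta}$-measure of each $E_i$ then depends continuously on them. Convergence $\Theta^0_{y,\alpha,\beta}\to\Theta^0_{\alpha,\beta}$ in distribution is equivalent to the convergence of all such finite-dimensional distributions, so the counting statement just established delivers it.
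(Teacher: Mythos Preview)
Your overall strategy—reduce to Theorem~\ref{theorem:surfaceequidistribution0} and then sandwich the indicator by continuous functions, with the sandwich closing once the discontinuity set is $\nu_{\alpha,\beta}$-null—is exactly the paper's route. The difference lies in how you propose to establish the null-measure condition~\eqref{eq:needed}. The paper does this by a short direct argument (Lemma~\ref{lemma:regu}): in the coordinates $(s,v)$ on the section, each set $\{(s,v):\Re z_{a(v)k(\pi/2)a(s)g_{l_0}^{-1}\gamma\bm{c}_l}=c\}$ or $\{\ldots:\Im\ldots=c\}$ is, for fixed $\gamma$, a one-dimensional curve (hence measure zero), except for the ``self'' term $l=l_0$, $\gamma\in\Gamma_{\bm{c}_l}$, $c=0$, which gives the whole section. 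This yields~\eqref{eq:needed} when $0\notin\partial I_i$, and shows that for $0\in\partial I_i$ there is \emph{exactly one} boundary point almost surely.

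Your proposed shortcut via a Siegel-type bound $\int\scrN_C\,\dd\nu_{\alpha,\beta}\ll\mathrm{vol}_{\HH}(C)$ has a genuine gap: the bound is \emph{false} as stated. By construction every $g\in\widetilde S_{\alpha,\beta}$ has a geodesic top on the positive imaginary axis (the self-geodesic), so for $C$ a segment of that axis one has $\scrN_C(g)\ge 1$ on a set of positive $\nu_{\alpha,\beta}$-measure while $\mathrm{vol}_{\HH}(C)=0$. This is exactly the $\delta_0$ term in Proposition~\ref{lemma:Faverage2}. Thus Markov plus the Siegel formula gives~\eqref{eq:needed} only when $0\notin\partial I_i$; the case $0\in\partial I_i$ (and the continuity claim at such endpoints) still needs the explicit self-term analysis, which is precisely what Lemma~\ref{lemma:regu} supplies. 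A secondary concern: the paper's proof of Proposition~\ref{lemma:Faverage2} invokes Lemma~\ref{lemma:measurebound4321}, whose part~\eqref{eq:measurebound1000} in turn cites Theorem~\ref{theorem:convergence2}, so taking Section~\ref{section:intensity23} as a black-box input risks circularity; you would need to argue the finiteness of $\int\scrN_B\,\dd\nu_{\alpha,\beta}$ independently (e.g.\ from the prelimit bound~\eqref{eq:measurebound000} and Fatou).
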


In the special case of the void distribution (recall theorem \ref{theorem:convergence}) we have for instance that for any finite interval $I$ (or finite unions of finite intervals),
\begin{equation}
\lim_{y\to 0} y \#\big\{ \xi \in X_{\alpha,\beta}(y)   : 
\scrN_{I,\alpha_1,\beta_1}(\xi,y) = 0  \big\}
= \kappa_\Gamma \bigg(\frac1\alpha-\frac1\beta\bigg) \PP\big( \Theta_{\alpha,\beta}^0(B_{I,\alpha_1,\beta_1})= 0 \big) 
\end{equation}
with the formula
\begin{equation}
\begin{split}
& \PP\big( \Theta_{\alpha,\beta}^0(B)= 0 \big) 
= \int_{\Gamma\backslash G} \bigg( \prod_{l=1}^h \prod_{\gamma\in\Gamma/\Gamma_{\bm{c}_l}} \Big( 1- \chi_{B}( z_{g^{-1}\gamma \bm{c}_l} ) \Big)\bigg) \dd\widehat\nu_{\alpha,\beta}(g) \\
& = \kappa_\Gamma \bigg(\frac1\alpha-\frac1\beta\bigg) \sum_{l_0=1}^h \int_{\alpha}^{\beta}  \int_1^{\varepsilon_{l_0}^2}  \bigg( \prod_{l=1}^h \prod_{\gamma\in\Gamma/\Gamma_{\bm{c}_l}} \Big( 1- \chi_{S_B}(a(v)k(\tfrac{\pi}{2})a(s) g_{l_0}^{-1} \gamma g_{l}) \Big)\bigg)  \frac{\dd s}{s}\; \frac{\dd v}{v^2} .
\end{split}
\end{equation}
In particular, the choice $I=(0,t]$ yields the probability that the gap between consecutive elements is greater than $t$, and $I=[-t,0)\cup(0,t]$ yields the probability that the distance to the nearest neighbour is greater than $t$.

The proof of theorem \ref{theorem:convergence2} follows the same route as that of theorem \ref{theorem:convergence}, except that, rather than equidistribution of closed long horocycles, we now use the discrete averages established in theorem \ref{theorem:surfaceequidistribution0}. We will not repeat the proof, other than to establish the following lemma which guarantees the regularity of the limit distribution. 

\begin{lemma}
  \label{lemma:regu}  
For any finite interval $I$, $1\leq\alpha<\beta\leq\infty$, $1\leq\alpha_1<\beta_1\leq\infty$, we have
\begin{equation}\label{PP1}
\PP\big( \Theta_{\alpha,\beta}^0(B_{\partial I,\alpha_1,\beta_1})\geq 1 \big) = 0  \qquad \text{if $0\notin\partial I$}, 
\end{equation}
\begin{equation}\label{PP2}
\PP\big( \Theta_{\alpha,\beta}^0(B_{\partial I,\alpha_1,\beta_1})=1 \big) = 1  \qquad \text{if $0\in\partial I$}, 
\end{equation}
and
\begin{equation}\label{PP3}
\PP\big( \Theta_{\alpha,\beta}^0(B_{I,\alpha_1,\alpha_1})\geq 1 \big) = 0 .
\end{equation}
\end{lemma}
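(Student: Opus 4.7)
The plan is to work in the parametrization $g = g_{l_0} a(s) k(-\tfrac{\pi}{2}) a(v^{-1})$ of $\widetilde S^{l_0}_{\alpha,\beta}$, under which $\widehat\nu_{\alpha,\beta}$ pulls back to a measure absolutely continuous with respect to Lebesgue on $\{1,\ldots,h\} \times [1,\varepsilon_{l_0}^2) \times [\alpha,\beta)$. Since $g^{-1} = a(v) k(\tfrac{\pi}{2}) a(s^{-1}) g_{l_0}^{-1}$, a direct computation shows that $g^{-1}$ acts on $\HH \cup \partial\HH$ as the M\"obius map $z \mapsto v(g_{l_0}^{-1}(z)-s)/(g_{l_0}^{-1}(z)+s)$. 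Hence for each coset representative $\gamma$ and each $l$, the geodesic $g^{-1}\gamma\bm{c}_l = g^{-1}\gamma g_l\bm{c}_0$ has endpoints $v(P-s)/(P+s)$ and $v(Q-s)/(Q+s)$ in $\partial\HH$, where $P,Q$ are the endpoints of $h\bm{c}_0$ with $h = g_{l_0}^{-1}\gamma g_l$.

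The first step is to single out the distinguished coset. When $h$ stabilises $\bm{c}_0$ setwise, i.e.\ $h = \pm a(t)$ for some $t>0$, one has $h\bm{c}_0 = \bm{c}_0$ and a direct calculation gives $z_{g^{-1}\gamma\bm{c}_l} = \mathrm{i}v$ independently of $s$. By the assumed pairwise $\Gamma$-inequivalence of the $\bm{c}_l$ and the definition of $\Gamma_{\bm{c}_{l_0}}$, this occurs precisely when $l = l_0$ and $\gamma \in \Gamma_{\bm{c}_{l_0}}$, contributing a single point equal to $\mathrm{i}v$.

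For every other coset $P \neq Q$, and an elementary computation yields
\begin{equation*}
  z_{g^{-1}\gamma\bm{c}_l} = v\phi(s) + \mathrm{i}v\psi(s), \qquad \phi(s) = \frac{PQ-s^2}{(P+s)(Q+s)}, \qquad \psi(s) = \frac{s|Q-P|}{|(P+s)(Q+s)|},
\end{equation*}
with the natural limits when $P$ or $Q$ is $\infty$. Linear independence of $\phi$ and $\psi$ as rational functions of $s$ follows by comparing the coefficients of $1$, $s$, $s^2$ in the numerators of any nontrivial linear combination, so the Jacobian of $(s,v) \mapsto v\phi(s) + \mathrm{i}v\psi(s)$ equals $v(\phi\psi' - \phi'\psi)$ up to sign and vanishes only on a discrete set. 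Consequently, the map $\Phi_{l_0,\gamma,l}:(s,v) \mapsto z_{g^{-1}\gamma\bm{c}_l}$ is a local diffeomorphism off a Lebesgue-null set and the preimage under $\Phi_{l_0,\gamma,l}$ of any Lebesgue-null Borel subset of $\HH$ is itself Lebesgue-null in $(s,v)$-space.

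With this in hand the three assertions follow. Each set $B$ appearing in \eqref{PP1}--\eqref{PP3} is a finite union of line segments in $\HH$, hence of two-dimensional Lebesgue measure zero; summed over countably many non-distinguished cosets $(\gamma,l)$, the $\widehat\nu_{\alpha,\beta}$-probability that any of them places a point in $B$ is zero. For \eqref{PP1} the distinguished point $\mathrm{i}v$ has real part $0 \notin \partial I$, so it misses $B_{\partial I,\alpha_1,\beta_1}$. For \eqref{PP2}, when $0 \in \partial I$ the vertical segment $\{\mathrm{i}v' : \alpha_1 \leq v' < \beta_1\}$ is contained in $B_{\partial I,\alpha_1,\beta_1}$ and the distinguished point $\mathrm{i}v$ lies on it almost surely (in the regime $[\alpha,\beta) \subseteq [\alpha_1,\beta_1)$ in which the statement is informative), yielding exactly one point. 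For \eqref{PP3}, $\mathrm{i}v$ lands on the horizontal line $\mathrm{Im}\,z = \alpha_1$ only if $v = \alpha_1$, a Lebesgue-null condition on $v \in [\alpha,\beta)$.

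The main obstacle is the Jacobian / linear-independence check in Step~2, together with the bookkeeping needed when $P$ or $Q$ is $\infty$ or when $PQ = 0$; these cases are routine but essential to guarantee that all non-distinguished maps $\Phi_{l_0,\gamma,l}$ are genuinely two-to-two and send Lebesgue-null sets in $\HH$ to zero-probability events under $\widehat\nu_{\alpha,\beta}$.
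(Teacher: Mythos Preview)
Your argument is correct and follows the same overall set-up as the paper: work in the $(s,v)$-coordinates on $\widetilde S^{l_0}_{\alpha,\beta}$, single out the distinguished coset $l=l_0$, $\gamma\in\Gamma_{\bm{c}_{l_0}}$ (which contributes the point $\ii v$), and show that each remaining coset lands its top on a given line segment only on a Lebesgue-null set of $(s,v)$. The difference is in how this last step is carried out. You compute the Jacobian of $(s,v)\mapsto(v\phi(s),v\psi(s))$, verify it is non-vanishing almost everywhere via the linear independence of $\phi$ and $\psi$, and then invoke that local diffeomorphisms pull back null sets to null sets. The paper instead observes directly that $z_{g^{-1}\gamma\bm{c}_l}=v\cdot z_{k(\pi/2)a(s)g_{l_0}^{-1}\gamma\bm{c}_l}$, so a constraint of the form $\Im z=\alpha_1$ or $\Re z=\epsilon\neq0$ determines, for each $s$, at most one value of $v$; the exceptional set is therefore a graph over $s$ and null by Fubini, with no Jacobian computation and no separate bookkeeping for $P,Q\in\{0,\infty\}$ required. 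For $\Re z=0$ the condition reduces to $\phi(s)=0$, which for a non-distinguished coset has only isolated solutions in $s$. Your Jacobian route is more general---it would handle an arbitrary null target in $\HH$, not just line segments---but is heavier than what the situation demands; the paper's scaling shortcut is the minimal argument. Your parenthetical remark on \eqref{PP2} is apt: the distinguished point $\ii v$ lies in $B_{\partial I,\alpha_1,\beta_1}$ only when $v\in[\alpha_1,\beta_1)$, so the assertion as stated really requires $[\alpha,\beta)\subset[\alpha_1,\beta_1)$; the paper's own proof treats only the real-part constraint and leaves this implicit.
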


\begin{proof}
To prove \eqref{PP3}, it is sufficient to show that the set 
\begin{equation}\label{countun}
\bigcup_{1\leq l_0\leq h} \bigcup_{\gamma\in\Gamma/\Gamma_{\bm{c}_l}} \Big\{ (s,v) : \Im z_{a(v)k(\frac{\pi}{2})a(s) g_{l_0}^{-1} \gamma \bm{c}_l}  = \alpha_1 \Big\} \subset \RR_{>0}^2
\end{equation}
has measure zero with respect to $\frac{\dd s}{s}\,\frac{\dd v}{v^2}$. Note that
$z_{a(v)k(\frac{\pi}{2})a(s) g_{l_0}^{-1} \gamma \bm{c}_l} = v z_{k(\frac{\pi}{2})a(s) g_{l_0}^{-1} \gamma \bm{c}_l}$,
which implies that the set $\{ (s,v) : \Im z_{a(v)k(\frac{\pi}{2})a(s) g_{l_0}^{-1} \gamma \bm{c}_l}  = \alpha_1 \}$ has measure zero for every $\gamma$, $l_0$, and so does the countable union \eqref{countun}. This proves \eqref{PP3}. Analogously, to prove \eqref{PP1}, we need to show that
\begin{equation}\label{countun2}
\bigcup_{1\leq l_0\leq h} \bigcup_{\gamma\in\Gamma/\Gamma_{\bm{c}_l}} \Big\{ (s,v) : \Re z_{g(s, v) \gamma \bm{c}_l}  = \epsilon \Big\} \subset \RR_{>0}^2,
\end{equation}
where $g(s, v) = g(s, v, l_0) = a(v)k(\frac{\pi}{2}) a(s) g_{l_0}^{-1}$,
has measure zero for $\epsilon\neq 0$. This follows from the same argument as above. For \eqref{PP2}, we need to investigate
\eqref{countun2} in the case $\epsilon=0$. Here we have
\begin{equation}\label{setzero}
\Big\{ (s,v) : \Re z_{g(s, v) \gamma \bm{c}_l} = 0 \Big\}
= \Big\{ (s,v) : \Re z_{g(s, 1) \gamma \bm{c}_l} = 0 \Big\}.
\end{equation}
If $l\neq l_0$, or if $l=l_0$ and $\gamma\neq \Gamma_{\bm{c}_l}$, there is a unique $s>0$ such that $\Re z_{g(s,1) \gamma \bm{c}_l} = 0$, which shows \eqref{setzero} has measure zero in these cases. The remaining case is $l=l_0$, $\gamma= \Gamma_{\bm{c}_l}$, which yields $\Re z_{g(s,1) \gamma \bm{c}_l} = \Re z_{k(\frac{\pi}{2})\bm{c}_0} = 0$ for all $v,s$ and hence corresponds to the probability one event.
\end{proof}

\subsection{The intensity measure}
\label{section:intensity23}

We now turn to the intensity measure of the process $\Theta_{\alpha,\beta}^0$, defined by
\begin{equation}\label{def:intensity}
\EE\Theta_{\alpha,\beta}^0(B) = \int_{\Gamma \backslash G} \scrN_B(g) \dd\widehat\nu_{\alpha,\beta}(g),
\end{equation}
and prove an analogue of proposition \ref{lemma:Faverage}. We first prove the following lemma that shows \eqref{def:intensity} is finite (and which will be useful to prove the convergence of moments in the next section).

\begin{lemma}
  \label{lemma:measurebound4321}
  Let $\mathcal{I}$, $\bm{\alpha}$, $\bm{\beta}$ as in section \ref{sec:moments}, and $\bm{k} = (k_1, \dots, k_r)$ with $k_j \geq 0$ integers.
  Then there exists $c_1 > 0$ such that
  \begin{equation}
    \label{eq:measurebound000}
    \sup_{y>0}  y \#\big\{ \xi \in X_{\alpha_0,\beta_0}(y)  : 
\scrN_{I_i,\alpha_i,\beta_i}(\xi,y) \geq k_i \;\forall i  \big\} \ll \exp( - c_1 \max_{1\leq j\leq r} k_j)  
  \end{equation}
  and
 \begin{equation}
    \label{eq:measurebound1000}
    \PP\big( \Theta_{\alpha,\beta}^0(B_{I_i,\alpha_i,\beta_i})\geq k_i \; \forall i \big) 
 \ll \exp( -c_1 \max_{1\leq j \leq r} k_j ). 
  \end{equation}
\end{lemma}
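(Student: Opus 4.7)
The plan is to follow the template of the proof of Lemma \ref{lemma:measurebound}, replacing the Lebesgue-type measure $\lambda$ on $\TT$ with the Palm measure $\widehat\nu_{\alpha,\beta}$ on $\Gamma\backslash G$ for \eqref{eq:measurebound1000} and with the weighted counting measure $y\sum_{\xi\in X_{\alpha,\beta}(y)}\delta_\xi$ for \eqref{eq:measurebound000}. Monotonicity of $\scrN_{I,\alpha,\beta}(\xi,y)$ in $[\alpha,\beta)$ immediately reduces us to the case $[\alpha_i,\beta_i)=[1,\infty)$ for every $i$. Choosing $j$ so that $k_j=\max_i k_i\geq 1$, Lemma \ref{lemma:FIbound}(iii) gives the inclusion $\{\xi:\scrN_{I_j,1,\infty}(\xi,y)\geq k_j\}\subset\{\xi:\scrN_{I_j,Y,\infty}(\xi,y)\geq 1\}$ with $Y:=C\exp(c_0 k_j)$, and Markov's inequality reduces the lemma to the uniform first-moment bounds
\[
\sup_{y>0}\ y\sum_{\xi\in X_{\alpha,\beta}(y)}\scrN_{I_j,Y,\infty}(\xi,y)\ \ll\ Y^{-1}\qquad\text{and}\qquad \int \scrN_{I_j,Y,\infty}(g)\,d\widehat\nu_{\alpha,\beta}(g)\ \ll\ Y^{-1}.
\]

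My main tool for the first-moment estimate is a thickening of the Poincar\'e section in the horocycle direction. Fix $\epsilon>0$ sufficiently small, and let $I_j^\epsilon$ denote the $\epsilon$-thickening of $I_j$. Since $B_{I_j,Y,\infty}\subset u+B_{I_j^\epsilon,Y,\infty}$ for $|u|<\epsilon$, we have the pointwise inequality $\scrN_{I_j,Y,\infty}(g)\leq\scrN_{I_j^\epsilon,Y,\infty}(gn(u))$; averaging over $|u|<\epsilon$ and integrating against $\nu_{\alpha,\beta}$ yields
\[
2\epsilon\int\scrN_{I_j,Y,\infty}\,d\nu_{\alpha,\beta}\ \leq\ \int_{\widetilde S^\epsilon_{\alpha,\beta}}\scrN_{I_j^\epsilon,Y,\infty}\,d\mu_\Gamma\ \leq\ \int\scrN_{I_j^\epsilon,Y,\infty}\,d\mu_\Gamma\ =\ \kappa_\Gamma(\mathrm{length}(I_j)+2\epsilon)\,Y^{-1},
\]
where the first inequality uses \eqref{eq:newHaar} together with the disjointness in Lemma \ref{lemma:Sdisjoint} to identify the pushforward of $d\nu_{\alpha,\beta}\otimes du$ with $\mu_\Gamma$ on the thickened image, and the final equality is Proposition \ref{lemma:Faverage} applied to $B_{I_j^\epsilon,Y,\infty}$. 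Dividing by $\nu_{\alpha,\beta}(\Gamma\backslash G)=\kappa_\Gamma(\alpha^{-1}-\beta^{-1})$ produces the Palm estimate. The sum-version is obtained in parallel: one applies Theorem \ref{theorem:surfaceequidistribution0} to the $\epsilon$-regularised indicator of $\widetilde S^\epsilon_{\alpha,\beta}$ (whose boundary has $\mu_\Gamma$-measure zero) multiplied by $\scrN_{I_j^\epsilon,Y,\infty}$, and combines this with the same thickening inequality.

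I expect the delicate point to be the case $\beta=\infty$ (and more generally $\beta/\alpha$ larger than the admissible ratio of Lemma \ref{lemma:Sdisjoint}), where the thickened section is no longer a Haar-injective lift and a naive dyadic decomposition $[\alpha,\infty)=\bigsqcup_{m\geq 0}[\delta^m\alpha,\delta^{m+1}\alpha)$ gives per-slice bounds whose sum is not evidently uniform in~$y$; a naive swap $y\sum_{\xi\in X_{\alpha,\beta}(y)}\scrN_{I_j,Y,\infty}(\xi,y)=y\sum_{\xi'\in X_{Y,\infty}(y)}\scrN_{-I_j,\alpha,\beta}(\xi',y)$ combined with Lemma \ref{lemma:FIbound} only produces a factor of $\log y^{-1}$ for $\beta=\infty$. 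I plan to address this by carrying out the thickening argument on each $\delta$-adic slice separately (with $\delta$ in the admissible range of Lemma \ref{lemma:Sdisjoint}) and summing the bounds, exploiting that the integrand $\scrN_{I_j^\epsilon,Y,\infty}(g)$ is supported on $g$ whose orbit carries a point of height at least $Y$, so that the per-slice contributions to $\int \scrN d\nu$ are in fact a shared geometric quantity of order $Y^{-1}$ rather than accumulating slice by slice. Once the first-moment bound is secured, \eqref{eq:measurebound1000} follows immediately from Markov's inequality, and \eqref{eq:measurebound000} either follows from the sum-version of the thickening estimate or, more cleanly, is deduced from \eqref{eq:measurebound1000} via Theorem \ref{theorem:convergence2} together with the continuity of the limit distribution.
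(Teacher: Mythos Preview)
Your initial reductions (to $[\alpha_i,\beta_i)=[1,\infty)$, picking $j$ with $k_j=\max_i k_i$, and invoking Lemma~\ref{lemma:FIbound} to force a high point) are fine, but from that point on you work much harder than the paper does, and the hard part of your plan---the $\beta_0=\infty$ case of the first-moment sum bound---is left unresolved. Your proposed fix (``the per-slice contributions \ldots\ are a shared geometric quantity'') is vague; without knowing that the thickened slices $\widetilde S^\epsilon_{\delta^m\alpha,\delta^{m+1}\alpha}$ are disjoint in $\Gamma\backslash G$ (Lemma~\ref{lemma:Sdisjoint} does not assert this across slices, and its $\epsilon$ may depend on the height), the sum over $m$ does not obviously collapse. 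Moreover, your final suggestion of deducing \eqref{eq:measurebound000} from \eqref{eq:measurebound1000} via Theorem~\ref{theorem:convergence2} cannot work: Theorem~\ref{theorem:convergence2} gives only a limit as $y\to 0$, not the required $\sup_{y>0}$.

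The paper bypasses all of this with a one-line trick: thicken on $\TT$, not on $\Gamma\backslash G$, and reduce directly to the already-proved Lemma~\ref{lemma:measurebound}. Concretely, fix $I_\epsilon=[-\tfrac\epsilon2,\tfrac\epsilon2]$ and observe that every $\xi\in X_{\alpha_0,\beta_0}(y)$ with $\scrN_{I_i,1,\infty}(\xi,y)\ge k_i$ is counted once by $\scrN_{I_\epsilon,1,\infty}(x,y)$ for every $x$ in an interval of Lebesgue length $y\epsilon$ around $\xi$, on which moreover $\scrN_{I_i',1,\infty}(x,y)\ge k_i$ for the $\epsilon$-enlarged $I_i'$. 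Hence
\[
y\,\#\{\xi\in X_{\alpha_0,\beta_0}(y):\scrN_{I_i}(\xi,y)\ge k_i\ \forall i\}
\ \le\ \frac{1}{\epsilon}\sum_{k\ge 1} k\,\lambda\big(\{x:\scrN_{I_\epsilon,1,\infty}(x,y)=k,\ \scrN_{I_i',1,\infty}(x,y)\ge k_i\ \forall i\}\big),
\]
and Lemma~\ref{lemma:measurebound} bounds each $\lambda$-probability by $\exp(-c_0\max\{k,k_j\})$, so the sum is $\ll\exp(-\tfrac12 c_0 k_j)$. This gives \eqref{eq:measurebound000} uniformly in $y$ with no case analysis on $\alpha_0,\beta_0$ at all; \eqref{eq:measurebound1000} then follows from \eqref{eq:measurebound000} and Theorem~\ref{theorem:convergence2} (this direction does work, since the limit of a uniformly bounded sequence inherits the bound). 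Your section-thickening route would eventually prove a version of Proposition~\ref{lemma:Faverage2} along the way, which is more than what is needed here.
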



\begin{proof}
We assume without loss of generality that $[\alpha_i,\beta_i)=[1,\infty)$ and label $j$ so that $k_j=\max_{1\leq i\leq r} k_i$.
Take $\lambda$ to be the uniform probability measure on $[0,1]$, and $I_\epsilon=[-\frac{\epsilon}{2},\frac{\epsilon}{2}]$ for some fixed $\epsilon>0$. Then
\begin{equation} \begin{split}
&\#\big\{ \xi \in X_{\alpha_0,\beta_0}(y)  : 
\scrN_{I_i,\alpha_i,\beta_i}(\xi,y) \geq k_i \;\forall i  \big\}  \\
&\leq \frac{1}{\epsilon}
\sum_{k=1}^\infty k\; \lambda\big(\big\{ x \in \TT  : \scrN_{I_\epsilon,1,\infty}(x,y) = k ,\; 
\scrN_{I_i,1,\infty}(x,y) \geq k_i \;\forall i  \big\}\big) .
\end{split} \end{equation}
Lemma \ref{lemma:measurebound} tells us that this is bounded above by
\begin{equation}
\ll \sum_{k=1}^\infty k \exp( - c_0 \max\{k,k_j\})  \ll \exp( - \tfrac12 c_0 k_j) . 
\end{equation}
This proves \eqref{eq:measurebound000}, which in turn, in view of theorem \ref{theorem:convergence2}, implies the bound \eqref{eq:measurebound1000}.
\end{proof}

We will now work out the expectation measure $\EE\Theta_{\alpha,\beta}^0$. By linearity, it is sufficient to consider the case $\beta=\infty$ and test sets of the form $B=B_{I,\alpha_2,\infty}$.

\begin{proposition}
  \label{lemma:Faverage2}
  Let $\alpha_1,\alpha_2\in[1,\infty)$ and $I$ a finite interval. Then
  \begin{equation}
    \label{eq:Faverage22}
    \EE\Theta_{\alpha_1,\infty}^0(B_{I,\alpha_2,\infty}) =  \min\bigg\{1,\frac{\alpha_1}{\alpha_2}\bigg\}\; \delta_0(I) + 
     \int_I W_{\alpha_1,\alpha_2}(v) \dd v 
  \end{equation}
where $\delta_0$ denotes the delta mass at the orgin and $W_{\alpha_1,\alpha_2}:\RR\to \RR_{\geq 0}$ is the even and continuous function given by
  \begin{equation}
    \label{eq:paircorrelation}
    W_{\alpha_1,\alpha_2}(v) = \frac{\alpha_1}{\ell v^2} \sum_{l_1,l_2=1}^h \sum_{\substack{\gamma \in \Gamma_{\bm{c}_{l_1}} \backslash \Gamma / \Gamma_{\bm{c}_{l_2}} \\ \gamma \bm{c}_{l_2} \neq \bm{c}_{l_1}, \overline{\bm{c}_{l_1}}}} 
    H_{\mathrm{sign}(g_{l_1}^{-1} \gamma g_{l_2}(0))}(q(\gamma, l_1, l_2), v \alpha_1^{-1}, v\alpha_2^{-1}),
  \end{equation}
  where $q(\gamma, l_1, l_2) = \frac{r + 1}{r -1}$ with $r$ the cross-ratio
  \begin{equation}
    \label{eq:rdef}
    r = \frac{((\gamma \bm{c}_{l_2})^+ - \bm{c}_{l_1}^-)((\gamma \bm{c}_{l_2})^- - \bm{c}_{l_1}^+)}{((\gamma \bm{c}_{l_2})^+ - \bm{c}_{l_1}^+)((\gamma \bm{c}_{l_2})^- - \bm{c}_{l_1}^-)},
  \end{equation}
  and
  \begin{multline}\label{def:H+}
    H_+(q,v_1,v_2) \\
    = \int_{-1}^1 \chi_{[1,\infty)}\left( 2v_1 \frac{s + q}{s^2 + 2qs + 1}\right) \chi_{[1, \infty)} \left( v_2  \frac{ 1 - s^2}{s^2 + 2qs + 1}\right)  \left| \frac{s^2 + 2qs + 1}{(s + q)(s^2 - 1)} \right| \dd s ,
  \end{multline}
  \begin{multline}\label{def:H-}
    H_-(q,v_1,v_2) = \left( \int_{-\infty}^{-1} + \int_1^\infty \right) \\
    \chi_{[1, \infty)}\left( 2v_1 \frac{s + q}{s^2 + 2qs + 1}\right)  \chi_{[1, \infty)} \left( v_2 \frac{ 1 - s^2}{s^2 + 2qs + 1}\right) \left| \frac{s^2 + 2qs + 1}{(s + q)(s^2 - 1)} \right| \dd s.
  \end{multline}
In the case $v_1=v_2=v$, we have explicitly
  \begin{equation}
    \label{eq:F+qvdef}
    H_+(q,v,v) =
    \begin{cases}
      0 & \mathrm{if\ } q < -1 \\
      0 & \mathrm{if\ } -1 < q < 1 \mathrm{\ and\ } v < \sqrt{2 - 2q} \\
      h_q(s_1(q, v)) - h_q(s_2(q, v)) & \mathrm{if\ } -1 < q < 1 \mathrm{\ and\ } v > \sqrt{2 - 2q} \\
      h_q(s_1(q, v)) - h_q(-q + \sqrt{q^2 - 1})  & \mathrm{if\ } q > 1, \\ 
    \end{cases}
  \end{equation}
  \begin{equation}
    \label{eq:F-qvdef}
    H_-(q,v,v) =
    \begin{cases}
      0 & \mathrm{if\ } q < -1 \mathrm{\ and\ } |v| < \sqrt{2 - 2q} \\
      h_q(s_1(q, v)) - h_q(s_2(q, v)) & \mathrm{if\ } q < -1 \mathrm{\ and\ } |v| > \sqrt{2 - 2q} \\
      h_q(s_1(q, v)) - h_q(s_2(q, v)) & \mathrm{if\ } -1 < q < 1 \mathrm{\ and\ } v < - \sqrt{2 - 2q} \\
      0 & \mathrm{if\ } - 1 < q < 1 \mathrm{\ and\ } v > - \sqrt{2 - 2q} \\
      h_q(- q - \sqrt{q^2 - 1}) - h_q(s_2(q, v))& \mathrm{if\ } q > 1,
    \end{cases}
  \end{equation}
  with
  \begin{equation}
    \label{eq:fdef1}
    h_q(s) = \log \frac{s + q}{1 - s^2},
  \end{equation}
  \begin{equation}
    \label{eq:s1def1}
    s_1(q, v) = \frac{ -q +\sqrt{v^2 + q^2 - 1}}{v + 1},
  \end{equation}
  and
  \begin{equation}
    \label{eq:s2def1}
    s_2(q, v) = v - q - \sqrt{v^2 + q^2 - 1}.
  \end{equation}
\end{proposition}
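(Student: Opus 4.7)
The plan is to evaluate
\begin{equation*}
\EE\Theta^0_{\alpha_1,\infty}(B_{I,\alpha_2,\infty})
=\int_{\Gamma\backslash G}\scrN_{B_{I,\alpha_2,\infty}}(g)\,\dd\widehat\nu_{\alpha_1,\infty}(g)
\end{equation*}
directly, starting from the explicit parametrisation $g=g_{l_1}a(s_1)k(-\tfrac{\pi}{2})a(v_1^{-1})$, $(s_1,v_1)\in[1,\varepsilon_{l_1}^2)\times[\alpha_1,\infty)$, of the support of $\widehat\nu_{\alpha_1,\infty}$, together with the double-sum expansion of $\scrN_{B_{I,\alpha_2,\infty}}(g)$ in $l_2$ and $\gamma\in\Gamma/\Gamma_{\bm{c}_{l_2}}$. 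I would regroup the $\gamma$-sum by double cosets $\Gamma_{\bm{c}_{l_1}}\backslash\Gamma/\Gamma_{\bm{c}_{l_2}}$; for cosets on which $\Gamma_{\bm{c}_{l_1}}$ acts freely, left multiplication by $\gamma_1\in\Gamma_{\bm{c}_{l_1}}$ on $g$ shifts $s_1\mapsto s_1\varepsilon_{l_1}^{\pm 2}$ and unfolds the $s_1$-range to $(0,\infty)$, while for the sole non-free (``diagonal'') coset the unfolding is absent.

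The diagonal coset $(l_1,l_2,\gamma)=(l_1,l_1,\mathrm{id})$ yields $g^{-1}\bm{c}_{l_1}=a(v_1)k(\tfrac{\pi}{2})\bm{c}_0$, the positively oriented semicircle with top $\ii v_1$, independent of $s_1$. The integrand thus reduces to $\delta_0(I)\chi_{[\alpha_2,\infty)}(v_1)$, and integrating $\dd s_1/s_1$ on $[1,\varepsilon_{l_1}^2)$ and $\dd v_1/v_1^2$ on $[\max\{\alpha_1,\alpha_2\},\infty)$ gives $2\log\varepsilon_{l_1}/\max\{\alpha_1,\alpha_2\}$. Summing in $l_1$ and multiplying by the normalisation $\alpha_1/\ell$ arising from $\widehat\nu_{\alpha_1,\infty}$ collapses to $\min\{1,\alpha_1/\alpha_2\}\delta_0(I)$. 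The potential ``flipped'' diagonal $\gamma\bm{c}_{l_2}=\overline{\bm{c}_{l_1}}$ produces a geodesic whose $(t\to 0)$-endpoint $v_1$ exceeds its $(t\to\infty)$-endpoint $-v_1$, hence is not positively oriented and is excluded from $\scrN_{B_{I,\alpha_2,\infty}}$.

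For each non-diagonal double coset I would put $\bm{c}_{l_1}$ in standard position by applying $g_{l_1}^{-1}$: letting $\tilde\gamma=g_{l_1}^{-1}\gamma g_{l_2}$ and $a=\tilde\gamma(0)$, $b=\tilde\gamma(\infty)$, explicit computation of the action of $a(v_1)k(\tfrac{\pi}{2})a(s_1^{-1})$ on the endpoints $0,\infty$ of $\bm{c}_0$ followed by $\tilde\gamma$ shows that $g^{-1}\gamma\bm{c}_{l_2}$ has top
\begin{equation*}
\Re z=\frac{v_1(ab-s_1^2)}{(a+s_1)(b+s_1)},\qquad \Im z=\frac{v_1 s_1|b-a|}{|(a+s_1)(b+s_1)|}.
\end{equation*}
By M\"obius invariance of the cross-ratio, $(a+b)/(b-a)$ equals $q=(r+1)/(r-1)$, and $\mathrm{sign}(a)=\mathrm{sign}(g_{l_1}^{-1}\gamma g_{l_2}(0))$ selects $H_+$ versus $H_-$. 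A M\"obius substitution $s_1\mapsto s$ is then chosen so that $(a+s_1)(b+s_1)$ becomes a constant multiple of $s^2+2qs+1$; correspondingly, $v_1=v\cdot 2(s+q)/(s^2+2qs+1)$ with $v=\Re z$ and $\Im z=v\cdot(1-s^2)/(s^2+2qs+1)$. The constraints $v_1\geq\alpha_1$ and $\Im z\geq\alpha_2$ rearrange exactly to the two indicator conditions in the integrand of $H_\pm(q,v/\alpha_1,v/\alpha_2)$, and the Jacobian of the change of variables combined with $\dd s_1\dd v_1/(s_1 v_1^2)$ produces the weight $|(s^2+2qs+1)/[(s+q)(s^2-1)]|/v^2$. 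After pulling out the outer $\dd v$ against $\chi_I$ and the overall normalisation $\alpha_1/\ell$, the non-diagonal contribution is $\int_I W_{\alpha_1,\alpha_2}(v)\,\dd v$ in the stated form. Continuity of $W_{\alpha_1,\alpha_2}$ then follows from continuity of each $H_\pm$ in its arguments together with local finiteness of the double-coset sum, inherited from the finiteness of the intensity measure on bounded sets (cf.~lemma~\ref{lemma:measurebound4321} applied with $r=1$, $k_1=1$).

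The explicit piecewise formulas \eqref{eq:F+qvdef}--\eqref{eq:F-qvdef} for $H_\pm(q,v,v)$ fall out of setting $v_1=v_2=v$ in \eqref{def:H+}--\eqref{def:H-}: the two indicator inequalities then coalesce and cut out a single $s$-interval whose endpoints are the real roots $s_1(q,v)$, $s_2(q,v)$ of the resulting quadratic (together with $\pm 1$ and $-q\pm\sqrt{q^2-1}$ whenever these lie in the integration range), while direct differentiation verifies that $h_q(s)=\log[(s+q)/(1-s^2)]$ is an antiderivative of $(s^2+2qs+1)/[(s+q)(s^2-1)]$; the case split by the sign of $q$ and by $v$ relative to $\sqrt{2-2q}$ is then routine bookkeeping. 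The main technical obstacle is the M\"obius substitution in the non-diagonal step, where one has to track the sign of $(a+s_1)(b+s_1)$ (which dictates when $g^{-1}\gamma\bm{c}_{l_2}$ is positively oriented and hence counted) and the possibly two-to-one nature of the map $(s_1,v_1)\mapsto(\Re z,\Im z)$, visible in the quadratic $v_1^2-2qv_1\Im z+(\Im z)^2-(\Re z)^2=0$ whose two roots $v_1=q\Im z\pm\sqrt{(\Re z)^2+(q^2-1)(\Im z)^2}$ correspond to the two transverse crossings of the section by a generic geodesic.
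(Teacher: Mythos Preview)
Your overall architecture matches the paper's proof: write the intensity as an integral of $\scrN_B$ against $\widehat\nu_{\alpha_1,\infty}$, separate the diagonal coset (giving the $\delta_0$ term), decompose the remainder into double cosets $\Gamma_{\bm{c}_{l_1}}\backslash\Gamma/\Gamma_{\bm{c}_{l_2}}$, unfold the $s$-integral, and change variables to land on the $H_\pm$ integrals. Your endpoint formulae for $\Re z$ and $\Im z$ and the identification $q=(a+b)/(b-a)$ are correct, and the final $v$-substitution is exactly the paper's.

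There are, however, two genuine gaps. First, your continuity argument is wrong: the double-coset sum is \emph{not} locally finite. From the explicit formulae (or from lemma~\ref{lemma:originbehaviour}), for every $q>1$ and every $v\neq 0$ the term $H_\pm(q,v/\alpha_1,v/\alpha_2)$ is strictly positive, and by lemma~\ref{lemma:doublecosetcount} there are infinitely many such $q$. Finiteness of the intensity measure on bounded sets (lemma~\ref{lemma:measurebound4321}) only gives $L^1$-integrability of $W$, not continuity. The paper instead proves uniform convergence of the series: away from $v=0$ by monotonicity of $H_\pm$ in $|v|$, and in a neighbourhood of $0$ via the bound $H_\pm(q,\cdot,\cdot)=O(v^2/q^2)$ (lemma~\ref{lemma:originbehaviour}) combined with $\#\{|q|\leq Q\}=O(Q)$ (lemma~\ref{lemma:doublecosetcount}).

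Second, your ``M\"obius substitution $s_1\mapsto s$'' step is too vague to do the work you assign it. The paper carries this out in two concrete moves: first rescale $s$ so that the backward endpoint of $a(s_\gamma)g_{l_1}^{-1}\gamma\bm{c}_{l_2}$ becomes $\pm 1$ (the sign being $\mathrm{sign}(g_{l_1}^{-1}\gamma g_{l_2}(0))$), then apply $s\mapsto \pm(-s+1)/(s+1)$, which sends $(0,\infty)$ bijectively to $(-1,1)$ in the $+$ case and to $\RR\setminus[-1,1]$ in the $-$ case. Only with this explicit map do the integration ranges for $H_+$ versus $H_-$ emerge, and the Jacobian $\dd s_1/s_1 \to 2\,\dd s/|s^2-1|$ is where the factor $|s^2-1|$ in the weight comes from. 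Your ``two-to-one'' concern in the final paragraph is a red herring: on the relevant domains these substitutions are bijective, so no multiplicity enters.

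Finally, the case $v_1=v_2=v$ is not ``routine bookkeeping''. The paper splits into eight subintegrals $I_1^\pm,I_2^\pm,I_3,I_4,I_5^\pm$ according to the sign of $q$ relative to $\pm 1$ and the location of the roots of $s^2+2qs+1$, determines on each piece which of the two constraints $v\geq v_1(s)$, $v\geq v_2(s)$ binds, and only then evaluates via the antiderivative $h_q$. The threshold $\sqrt{2-2q}$ arises as the common value of $v_1,v_2$ at $s=-1\pm\sqrt{2-2q}$, which is not immediate from your description.
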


\begin{proof}
With the notation as in the proof of proposition \ref{lemma:Faverage}, and $B=B_{I,\alpha_2,\infty}$,
  \begin{equation} \begin{split}
    \label{eq:unfolded12}
     &\int_{\Gamma \backslash G} \scrN_{I,\alpha_2,\infty}(g) \dd \widehat\nu_{\alpha_1,\infty}(g) \\
    &= \frac{\alpha_1}{\ell}  \sum_{l_1,l_2=1}^h
\sum_{\gamma\in\Gamma/\Gamma_{\bm{c}_{l_2}}} \int_{\alpha_1}^{\infty} \int_1^{\varepsilon_{l_1}^2}   \chi_{S_B}(a(v)k(\tfrac{\pi}{2})a(s) g_{l_1}^{-1} \gamma g_{l_2}) \;\frac{\dd s}{s}\; \frac{\dd v}{v^2} .
\end{split} \end{equation}
Recall that $\chi_{S_B}$ is the characteristic function of the set $S_B = \{ g \in G : z_{g \bm{c}_0} \in B \}$.  In view of lemma \ref{lemma:measurebound4321}, the series is absolutely convergent.

The contribution to \eqref{eq:unfolded12} of terms with $l_1 = l_2 = l$ and $\gamma =\Gamma_{\bm{c}_{l_2}}$ is
 \begin{equation}
 \label{eq:diagonalcontribution}
\frac{\alpha_1}{\ell}  \sum_{l=1}^h 
\int_{\alpha_1}^{\infty} \int_1^{\varepsilon_l^2}   \chi_{S_B}( a(v)k(\tfrac{\pi}{2})a(s)) \;\frac{\dd s}{s}\; \frac{\dd v}{v^2} 
= \min\bigg\{1,\frac{\alpha_1}{\alpha_2}\bigg\}\; \delta_0(I),
\end{equation}
since $a(v)k(\tfrac{\pi}{2})a(s)\in S_B$ if and only if $0\in I$ and $v\geq \alpha_2$. This explains the first term on the right hand side of \eqref{eq:Faverage22}.

To evaluate the remaining terms, we break the sum over $\gamma \in \Gamma / \Gamma_{\bm{c}_{l_2}}$ ($\gamma \neq \Gamma_{\bm{c}_{l_2}}$) into a sum over $\Gamma_{\bm{c}_{l_1}}$ and a sum over double cosets $\Gamma_{\bm{c}_{l_1}} \backslash \Gamma / \Gamma_{\bm{c}_{l_2}}$.
  To this end, we first note that we have the partition
  \begin{equation}
    \label{eq:doublecosetsinglecoset}
    \Gamma_{\bm{c}_{l_1}} \gamma \Gamma_{\bm{c}_{l_2}} = \bigcup_{\gamma' \in \Gamma_{\bm{c}_{l_1}}} \gamma' \gamma \Gamma_{\bm{c}_{l_2}},
  \end{equation}
  with a disjoint union, if and only if $\gamma^{-1} \Gamma_{\bm{c}_{l_1}} \gamma \cap \Gamma_{\bm{c}_{l_2}}$ is trivial.
  Now if there were an element in this intersection, then it would fix the endpoints of $\bm{c}_{l_2}$ and $\gamma^{-1} \bm{c}_{l_1}$, and so if this element were nontrivial we would have $\gamma \bm{c}_{l_2} = \bm{c}_{l_1}$ or $\gamma \bm{c}_{l_2} = \overline{\bm{c}_{l_1}}$, where $\overline{\bm{c}_{l_1}}$ is the geodesic $\bm{c}_{l_1}$ with the opposite orientation.
  In the first case we would have $l_1 = l_2 = l$ and $\gamma \in \Gamma_{\bm{c}_l}$, which we have already considered. In the second case $\gamma \bm{c}_{l_{2}} = \overline{\bm{c}_{l_1}}$, the positive orientation condition implicit in the definition of the $z_{\bm{c}}$ implies that these terms give no contribution.

For all other terms we can use the double coset decomposition to unfold the integral over $s$, and their total contribution is
\begin{equation}
\begin{split}
\label{eq:mixedsecondmoment2}
&  \sum_{l_1,l_2=1}^h 
 \sum_{\substack{\gamma \in \Gamma / \Gamma_{\bm{c}_{l_2}}\\ \gamma\notin \Gamma_{\bm{c}_{l_2}}}} \int_{\alpha_1}^{\infty}  \int_1^{\varepsilon_{l_1}^2}  \chi_{S_B}(a(v)k(\tfrac{\pi}{2})a(s) g_{l_1}^{-1} \gamma g_{l_2}) \;\frac{\dd s}{s}\; \frac{\dd v}{v^2} \\
&    
=  \sum_{l_1,l_2=1}^h \sum_{\substack{\gamma \in \Gamma_{\bm{c}_{l_1}} \backslash \Gamma / \Gamma_{\bm{c}_{l_2}} \\ \gamma \bm{c}_{l_2} \neq \bm{c}_{l_1}, \overline{\bm{c}_{l_1}}}}
\int_{\alpha_1}^{\infty}  \int_0^\infty   \chi_{S_B}( a(v)k(\tfrac{\pi}{2})a(s) g_{l_1}^{-1} \gamma g_{l_2}) \;\frac{\dd s}{s}\; \frac{\dd v}{v^2}\\
& =  \sum_{l_1,l_2=1}^h \sum_{\substack{\gamma \in \Gamma_{\bm{c}_{l_1}} \backslash \Gamma / \Gamma_{\bm{c}_{l_2}} \\ \gamma \bm{c}_{l_2} \neq \bm{c}_{l_1}, \overline{\bm{c}_{l_1}}}} \int_0^\infty \int_{\alpha_1}^{\infty}  \chi_{I}( \mathrm{Re}(z_{g^{-1} \gamma \bm{c}_{l_2}})) \chi_{[\alpha_2,\infty)}(\mathrm{Im}(z_{g^{-1} \gamma \bm{c}_{l_2}})) \; \frac{\dd v}{v^2} \;\frac{\dd s}{s},
\end{split}
\end{equation}
where $g = g_{l_1} a(s) k( -\tfrac{\pi}{2}) a(v)^{-1}$. 
  We now replace $s \gets s s_\gamma^{-1}$ with $s_\gamma$ chosen so that the backward endpoint of the geodesic $a(s_\gamma) g_{l_1}^{-1} \gamma \bm{c}_{l_2}$ is $\pm 1$, the $\pm$ determined by whether the image of $0$ under $g_{l_1}^{-1} \gamma g_{l_2}$ is positive or negative. 
 The forward endpoint of the geodesic $a(s_\gamma) g_{l_1}^{-1} \gamma \bm{c}_{l_2}$ is then $\pm r$, where $r = r(\gamma, l_1, l_2)$ is the cross ratio of the points $\bm{c}_{l_1}^-, \bm{c}_{l_1}^+, (\gamma \bm{c}_{l_2})^+, (\gamma \bm{c}_{l_2})^-$, where we use $\bm{c}^{\pm}$ to denote the forward and backward endpoints of a geodesic $\bm{c}$. 
  We note that $r$ clearly does not depend on the choice of representative of the double coset $\Gamma_{\bm{c}_{l_1}} \gamma \Gamma_{\bm{c}_{l_2}}$, but that on the other hand $r(\gamma_1) = r(\gamma_2)$ only implies $G_{\bm{c}_{l_1}} \gamma_1 G_{\bm{c}_{l_2}} = G_{\bm{c}_{l_1}} \gamma_2 G_{\bm{c}_{l_2}}$, not necessarily $\Gamma_{\bm{c}_{l_1}} \gamma_1 \Gamma_{\bm{c}_{l_2}} = \Gamma_{\bm{c}_{l_1}} \gamma_{2} \Gamma_{\bm{c}_{l_2}}$.
  Here $G_{\bm{c}}$ is the stabilizer in $G$ of a geodesic $\bm{c}$. 

  With this change in $s$, we now have for $g = g_{l_1} a(s) k(-\tfrac{\pi}{2}) a(v)^{-1}$,
  \begin{equation}
    \label{eq:realz}
    \mathrm{Re}(z_{g^{-1} \bm{c}_{l_2}}) = \frac{v}{2} \left( \frac{-s \pm r}{s \pm r} - \frac{-s \pm 1}{s \pm 1}\right),
  \end{equation}
  and
  \begin{equation}
    \label{eq:imagz}
    \mathrm{Im}(z_{g^{-1} \bm{c}_{l_2}}) =  \frac{v}{2}  \left( \frac{-s \pm r}{s \pm r} + \frac{-s \pm 1}{s \pm 1} \right).
  \end{equation}
  We change variables
  \begin{equation}
    \label{eq:ttos}
    s \gets \pm \frac{-s + 1}{s + 1},\ \frac{\dd s}{s} \gets 2 \frac{\dd s}{ | s^2 - 1|},
  \end{equation}
  and the range of integration of $s$ is $\int_{-1}^1$ in the $+$ case and $\int_{-\infty}^{-1} + \int_{1}^\infty$ in the $-$ case.
  We have in both cases that
  \begin{align}
    \label{eq:RezImz}
    \mathrm{Re}(z_{g^{-1} \gamma \bm{c}_{l_2}}) & = \frac{v}{2}\frac{(r + 1)s + (r-1)}{(r-1)s + (r+1)} + s = \frac{v}{2} \frac{s^2 + 2qs +1}{s + q}, \\ \nonumber
    \mathrm{Im}(z_{g^{-1} \gamma \bm{c}_{l_2}}) & = \frac{v}{2} \frac{(r + 1)s + (r-1)}{(r-1)s + (r+1)} - s = \frac{v}{2} \frac{ 1 - s^2}{s + q},
  \end{align}
  where $ q = \frac{r + 1}{r -1}$.

  The integral in \eqref{eq:mixedsecondmoment2} becomes
  \begin{equation}
    \label{eq:Hgamma}
    2 \int_{-1}^1 \int_{\alpha_1}^{\infty}  \chi_{I} \left( \frac{v}{2} \frac{s^2 + 2qs + 1}{s + q}\right)\chi_{[\alpha_2, \infty)}\left( \frac{v}{2} \frac{ 1 - s^2}{s + q}\right) \frac{\dd v}{v^2}\frac{\dd s}{1 - s^2} 
  \end{equation}
  in the $+$ case and
  \begin{equation}
    \label{eq:Hgamma1}
    2 \left( \int_{-\infty}^{-1} + \int_1^\infty \right)\int_{\alpha_1}^{\infty}  \chi_{I}\left( \frac{v}{2} \frac{s^2 + 2qs + 1}{s + q}\right)\chi_{[\alpha_2, \infty)} \left( \frac{v}{2} \frac{ 1 - s^2}{s + q} \right) \frac{\dd v}{v^2} \frac{\dd s}{s^2 - 1}
  \end{equation}
  in the $-$ case.
  We change variables
  \begin{equation}
    \label{eq:vchange}
    v \gets 2 \frac{s + q}{s^2 + 2qs + 1} v,\ \frac{\dd v}{v^2} \gets  \frac{1}{2} \left| \frac{s^2 + 2qs + 1}{s + q} \right| \frac{\dd v}{v^2}
  \end{equation}
  to obtain
  \begin{equation}
    \label{eq:Hgamma2}
     \int_{I} H_{\pm}(q, v \alpha_1^{-1},v \alpha_2^{-1}) \frac{\dd v}{v^2} ,
  \end{equation}
  with $H_{\pm}(q, v_1,v_2)$ as in \eqref{def:H+} and \eqref{def:H-} as claimed. Note here that, as we have only positive terms in the above rearrangements, the series  
    \begin{equation}
    \label{eq:paircorrelation45600}
    \sum_{l_1,l_2=1}^h \sum_{\substack{\gamma \in \Gamma_{\bm{c}_{l_1}} \backslash \Gamma / \Gamma_{\bm{c}_{l_2}} \\ \gamma \bm{c}_{l_2} \neq \bm{c}_{l_1}, \overline{\bm{c}_{l_1}}}} 
    \int_I H_{\mathrm{sign}(g_{l_1}^{-1} \gamma g_{l_2}(0))}(q(\gamma, l_1, l_2), v\alpha_1^{-1},v \alpha_2^{-1}) \,\frac{\dd v}{v^2},
  \end{equation}
is still absolutely convergent. Thus for given $\alpha_1,\alpha_2$, the series 
 \begin{equation}
    \label{eq:paircorrelation456}
    \sum_{l_1,l_2=1}^h \sum_{\substack{\gamma \in \Gamma_{\bm{c}_{l_1}} \backslash \Gamma / \Gamma_{\bm{c}_{l_2}} \\ \gamma \bm{c}_{l_2} \neq \bm{c}_{l_1}, \overline{\bm{c}_{l_1}}}} 
    \frac1{v^2} \, H_{\mathrm{sign}(g_{l_1}^{-1} \gamma g_{l_2}(0))}(q(\gamma, l_1, l_2), v \alpha_1^{-1},v \alpha_2^{-1}) ,
  \end{equation}
  converges absolutely for almost every $v$. This implies, by the monotonicity 
  \begin{equation}
H_\pm(q,v' \alpha_1^{-1},v' \alpha_2^{-1})\leq H_\pm(q,v \alpha_1^{-1},v \alpha_2^{-1})\quad \text{if $0<|v'|\leq |v|$}, 
\end{equation}
that the series \eqref{eq:paircorrelation456}  converges for all $v'$ uniformly on compacta not containing the origin.
Lemmas \ref{lemma:doublecosetcount} and \ref{lemma:originbehaviour} below establish the uniform convergence of (\ref{eq:paircorrelation456}) for $v$ in a neighbourhood of $0$, and so (\ref{eq:paircorrelation456}) is continuous as a function of $v$ as each term in the series is continuous.
  
  We now specialise to the case $\alpha_1=\alpha_2=1$. 
  We note that in the range of integration of $H_+(q,v)$, $1 - s^2 \geq 0$, and so the condition
  \begin{equation}
    \label{eq:condition1}
    v\frac{1 - s^2}{s^2 + 2qs + 1}  \geq 1
  \end{equation}
  forces $v$ and $s^2 + 2qs + 1$ to have the same sign, and so the condition
  \begin{equation}
    \label{eq:condition2}
    2v \frac{s + q}{s^2 + 2qs + 1}  \geq 1
  \end{equation}
  forces $s + q \geq 0$.
  Similarly, since $1 - s^2 \leq 0$ in the range of integration of $H_-(q,v)$, $v$ and $s^2 + 2qs + 1$ have opposite signs, and so $s + q \leq 0$ in this case.

  We split into three cases depending on whether $1 < q$, $-1 < q < 1$, or $q < -1$ (note that $q$ is finite and not $\pm 1$ since $r$, being a cross-ratio, is none of $0$, $1$, $\infty$).
  In the case when $q > 1$, $s + q\geq 0$ does not affect the range of integration for $H_+$, $\int_{-1}^1$, but $s + q \leq 0$ restricts the range of integration for $H_-$ to $\int_{-\infty}^{-q}$.
  In the case when $-1 < q < 1$, the range of integration for $H_+$ is restricted to $\int_{-q}^1$, while that of $H_-$ is restricted to $\int_{-\infty}^{-1}$.
  Finally, in the case when $q < -1$, the condition $s + q \geq 0$ forces $H_+= 0$, while $s + q \leq 0$ restricts the range of integration for $H_-$ to $\int_{-\infty}^{-1} + \int_1^{-q}$.

  We note that if $|q| < 1$, then $s^2 + 2qs + 1 > 0$ for all $s$, while if $|q| > 1$, $s^2 + 2qs + 1$ has roots at $s = -q \pm \sqrt{q^2 - 1}$.
  We note that when $ q < -1$, $0 < -q - \sqrt{q^2 - 1} < 1 < -q + \sqrt{q^2 -1}$, while if $q > 1$, then $-q - \sqrt{q^2 - 1} < -1 < -q + \sqrt{q^2 - 1} < 0$.
  In the cases $ q > 1 $ and $q < -1$, we subdivide the range of integration to keep track of the sign of $s^2 + 2qs + 1$.
  In the case $q > 1$ we write $H_+ = I_1^+ + I_2^+$ with $I_1^+$, $I_2^+$ having ranges of integration $\int_{-1}^{-q + \sqrt{q^2 - 1}}$, $\int_{-q + \sqrt{q^2 -1}}^1$ respectively, and we write $H_- = I_1^- + I_2^- $ with $I_1^-$, $I_2^-$ having ranges of integration $\int_{-\infty}^{-q - \sqrt{q^2 - 1}}$, $\int_{-q - \sqrt{q^2 - 1}}^{-q}$ respectively.
  In the case $q < -1$, we write $H_- = I_3 + I_4$ with $I_3$, $I_4$ having ranges of integration $\int_{-\infty}^{-1}$, $\int_{1}^{-q}$ respectively.
  Writing $I_5^\pm$ for $H_\pm(q, v)$ in the case $-1 < q < 1$, we thus have $8$ integrals, $I_1^\pm, I_2^\pm, I_3, I_4, I_5^\pm$, to analyze.

  We start by considering the functions
  \begin{equation}
    \label{eq:v1def}
    v_1(s) = \frac{s^2 + 2qs + 1}{1 - s^2}
  \end{equation}
  and
  \begin{equation}
    \label{eq:v2def}
    v_2(s) = \frac{1}{2} \frac{s^2 + 2qs + 1}{s + q},
  \end{equation}
  which are clearly related to the conditions \eqref{eq:condition1}, \eqref{eq:condition2}.
  We note that $v_1(s)$ is increasing for all $s$ if $q > 1$ and decreasing for all $s$ if $q < -1$.
  When $0 < q < 1$, $v_1(s)$ is decreasing for $- \frac{1}{q} - \sqrt{\frac{1}{q^2} - 1} < s < - \frac{1}{q} + \sqrt{\frac{1}{q^2} - 1}$ and increasing otherwise, and when $ -1 < q < 0$, $v_1(s)$ is increasing for $- \frac{1}{q} - \sqrt{\frac{1}{q^2} - 1} < s < - \frac{1}{q} + \sqrt{\frac{1}{q^2} - 1}$ and decreasing otherwise.
  As for $v_2(s)$, it is increasing for all $s$ if $|q| > 1$, and when $|q| < 1$, $v_2(s)$ is decreasing for $-q - \sqrt{1 - q^2} < s < -q + \sqrt{1 - q^2}$ and is increasing otherwise.

  We also define the functions
  \begin{equation}
    \label{eq:s1def}
    s_1(v) = \frac{ -q +\sqrt{v^2 + q^2 - 1}}{v + 1}
  \end{equation}
  and
  \begin{equation}
    \label{eq:s2def}
    s_2(v) = v - q - \sqrt{v^2 + q^2 - 1}
  \end{equation}
  for all $v$ if $|q| > 1$ and $|v| > \sqrt{1 - q^2}$ if $|q| < 1$.
  We note that $s_2(v)$ is the smaller solution to $v = v_2(s)$ and that $s_1(v)$ is the smaller solution to $v = v_1(s)$ when $v < -1$ and the larger solution when $v > -1$.

  Finally, we define, for $s$ in the range of the integrals $I_1^\pm, I_2^\pm, I_3, I_4, I_5^\pm$,
  \begin{equation}
    \label{eq:fdef45}
    h_q(s) = \log \frac{s + q}{1 - s^2},
  \end{equation}
  so that
  \begin{equation}
    \label{eq:fprime}
    h_q'(s) = \frac{s^2 + 2qs + 1}{(s + q)(1 - s^2)}.
  \end{equation}
  We note that $|h_q'(s)| = h_q'(s)$ in the range of integration for $I_2^+, I_1^-, I_3, I_5^\pm$ and $|h_q'(s)| = -h_q'(s)$ in the range of integration for $I_1^+, I_2^-, I_4$.

  For $I_1^+$ we note that the integral is $0$ unless $v < 0$, and the conditions \eqref{eq:condition1}, \eqref{eq:condition2} become $ v \leq  v_1(s)$, $ v \leq v_2(s)$.
  We observe that in this case $v_1(s) \leq v_2(s)$ for all $s$ in the range of integration, and thus
  \begin{equation}
    \label{eq:I1+}
    I_1^+ =
    \begin{cases}
      h_q(s_1(v)) - h_q(-q + \sqrt{q^2 - 1}) & \mathrm{if\ } v < 0 \\
      0 & \mathrm{if\ } v > 0.
    \end{cases}
  \end{equation}
  Similarly, for $I_2^+$ we note that the integral is $0$ unless $v > 0$, and the conditions \eqref{eq:condition1}, \eqref{eq:condition2} become $v \geq  v_1(s)$, $v \geq  v_2(s)$.
  Here we find $v_1(s) \geq v_2(s)$ for all $s$ in the range of integration and hence 
 \begin{equation}
    \label{eq:I2+}
    I_2^+ =
    \begin{cases}
      0 & \mathrm{if\ } v < 0 \\
      h_q(s_1(v)) - h_q( - q + \sqrt{q^2 - 1}) & \mathrm{if\ } v > 0.
    \end{cases}
  \end{equation}

  For $I_1^-$, we note that the integral is $0$ unless $v < 0$, and the conditions \eqref{eq:condition1}, \eqref{eq:condition2} become $ v \leq v_1(s)$ and $ v \leq v_2(s)$.
  In this case we have $v_2(s) \leq v_1(s)$ for all $s$ in the range of integration, and we find
  \begin{equation}
    \label{eq:I1-}
    I_1^- =
    \begin{cases}
      h_q( - q - \sqrt{q^2 - 1}) - h_q(s_2(v)) & \mathrm{if\ } v < 0 \\
      0 & \mathrm{if\ } v > 0.
    \end{cases}
  \end{equation}
  Similarly, for $I_2^-$ we note that the integral is $0$ unless $ v > 0$, and the condition \eqref{eq:condition1}, \eqref{eq:condition2} become $v \geq v_1(s)$ and $ v \geq v_2(s)$.
  Here we have $v_2(s) \geq v_1(s)$ for all $s$ in the range of integration, and 
    \begin{equation}
    \label{eq:I2-}
    I_2^- =
    \begin{cases}
      0 & \mathrm{if\ } v < 0 \\
      h_q(- q - \sqrt{q^2 - 1}) - h_q(s_2(v)) & \mathrm{if\ } v > 0.
    \end{cases}
  \end{equation}

  For $I_3$ we note that the integral is $0$ unless $v < 0$, and the conditions \eqref{eq:condition1}, \eqref{eq:condition2} become $v \leq v_1(s)$, $v \leq v_2(s)$.
  In the range of integration, $s = -1 - \sqrt{2 - 2q}$ is the unique point at which $v_1(s) = v_2(s)$, and their value at this point is $ -\sqrt{2 - 2q}$.
  Since $v_1(s)$ is decreasing and $v_2(s)$ is increasing, we obtain
  \begin{equation}
    \label{eq:I3}
    I_3 =
    \begin{cases}
      h_q(s_1(v)) - h_q(s_2(v)) & \mathrm{if\ } v < - \sqrt{2 - 2q} \\
      0 & \mathrm{if\ }  v > - \sqrt{2 - 2q}.
    \end{cases}
  \end{equation}
  Similarly, for $I_4$ we note that the integral is $0$ unless $v > 0$ and the conditions \eqref{eq:condition1}, \eqref{eq:condition2} become $ v \geq v_1(s)$, $ v \geq v_2(s)$.
  In the range of integration, $s = - 1 + \sqrt{2 - 2q}$ is the unique point at which $v_1(s) = v_2(s)$, and their value at this point is $\sqrt{2 - 2q}$.
  Since $v_1(s)$ is decreasing and $v_2(s)$ is increasing, we obtain
  \begin{equation}
    \label{eq:I4}
    I_4 =
    \begin{cases}
      0 & \mathrm{if\ }  v < \sqrt{2 - 2q} \\
      h_q(s_1(v)) - h_q(s_2(v)) & \mathrm{if\ } v > \sqrt{2 - 2q}.
    \end{cases}
  \end{equation}

  For $I_5^+$ we note that the integral is $0$ unless $v > 0$ and the conditions \eqref{eq:condition1}, \eqref{eq:condition2} become $v \geq v_1(s)$, $ v \geq v_2(s)$.
  In the range of integration, $s = -1 + \sqrt{2 - 2q}$ is the unique point at which $v_1(s) = v_2(s)$, and their value at this point is $\sqrt{2 - 2q}$.
  We find that $v_1(s)$ is increasing on $ -1 + \sqrt{2 - 2q} < s < 1$ and that $v_2(s)$ is decreasing on $-q < s < -1 + \sqrt{2 - 2q}$, and so
  \begin{equation}
    \label{eq:I5+}
    I_5^+ =
    \begin{cases}
      0 & \mathrm{if\ }  v < \sqrt{2 - 2q} \\
      h_q(s_1(v)) - h_q(s_2(v)) & \mathrm{if\ } v > \sqrt{2 - 2q}.
    \end{cases}
  \end{equation}
  Similarly, for $I_5^-$ we note that the integral is $0$ unless $v < 0$ and the conditions \eqref{eq:condition1}, \eqref{eq:condition2} become $ v \leq v_1(s)$, $ v \leq v_2(s)$.
  In the range of integration, $s = - 1 - \sqrt{2 - 2q}$ is the unique point at which $v_1(s) = v_2(s)$, and their value at this point is $- \sqrt{2 - 2q}$.
  We find that $v_1(s)$ is decreasing on $- 1 - \sqrt{2 - 2q} < s < -1$ and that $v_2(s)$ is increasing on $-\infty < s < -1 - \sqrt{2 - 2q}$, and so
  \begin{equation}
    \label{eq:I5-}
    I_5^- =
    \begin{cases}
      h_q(s_1(v)) - h_q(s_2(v)) & \mathrm{if\ }  v < - \sqrt{2 - 2q} \\
      0 & \mathrm{if\ }  v > - \sqrt{2 - 2q}.
    \end{cases}
  \end{equation}
  Putting these calculations together, we obtain \eqref{eq:F+qvdef} and \eqref{eq:F-qvdef}.
\end{proof}

We remark that the cross-ratio $r$ in (\ref{eq:rdef}) is a classical quantity and $q = \frac{r + 1}{r -1}$ appears very naturally when counting geodesics, see for example the results by Good \cite{Good1983} proving asymptotic formula for the number of $\gamma \in \Gamma_{\bm{c}_{l_1}} \backslash \Gamma / \Gamma_{\bm{c}_{l_2}}$ with $q(\gamma, l_1, l_2) \leq Q$ as $Q \to \infty$.
In what follows we need an upper bound for this count of the correct order of magnitude, and in the interest of being self-contained, we give a proof of this bound despite \cite{Good1983} providing much stronger information.

\begin{lemma}
  \label{lemma:doublecosetcount}
  For $1 \leq l_1, l_2 \leq h$ and $Q > 1$, the number of $\gamma \in \Gamma_{\bm{c}_{l_1}} \backslash \Gamma / \Gamma_{\bm{c}_{l_2}}$ such that $| q(\gamma, l_1, l_2)| \leq Q$ is $O(Q)$ with implied constant depending on $\Gamma$ and the geodesics $\bm{c}_{l_1}$, $\bm{c}_{l_2}$. 
\end{lemma}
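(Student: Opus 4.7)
The plan is to translate the algebraic condition $|q(\gamma,l_1,l_2)| \leq Q$ into a hyperbolic-geometric statement about the configuration of the geodesics $\bm{c}_{l_1}$ and $\gamma\bm{c}_{l_2}$, and then invoke a standard hyperbolic lattice point count in a tube. By the geometric interpretation of $q$ noted just before the lemma, $|q| < 1$ means $\bm{c}_{l_1}$ and $\gamma\bm{c}_{l_2}$ intersect (with $|q| = |\cos\theta|$), while $|q| \geq 1$ means they are disjoint with hyperbolic distance $d(\bm{c}_{l_1}, \gamma\bm{c}_{l_2}) = \cosh^{-1}|q|$. Thus $|q| \leq Q$ amounts to the two geodesics either intersecting or being at distance at most $\cosh^{-1}Q = \log Q + O(1)$.

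Next, for each double coset contributing to the count, I would select a canonical representative $\tilde\gamma \in \Gamma_{\bm{c}_{l_1}}\gamma\Gamma_{\bm{c}_{l_2}}$ so that some orbit point of a fixed base point lies in a compact region. Let $F_{l_1} \subset \bm{c}_{l_1}$ be a fundamental arc for $\Gamma_{\bm{c}_{l_1}}$, of length $2\log\varepsilon_{l_1}$, and let $P_\gamma$ denote the intersection point of $\bm{c}_{l_1}$ with $\gamma\bm{c}_{l_2}$ (if they meet) or the foot on $\bm{c}_{l_1}$ of the common perpendicular (if they do not). Left multiplication by $\Gamma_{\bm{c}_{l_1}}$ translates $P_\gamma$ along $\bm{c}_{l_1}$, so I can choose $\gamma_1 \in \Gamma_{\bm{c}_{l_1}}$ with $P_{\gamma_1\gamma} \in F_{l_1}$. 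Right multiplication by $\Gamma_{\bm{c}_{l_2}}$ fixes $\gamma\bm{c}_{l_2}$ setwise but translates $\gamma g_{l_2}\mathrm{i}$ along that geodesic; choose $\gamma_2 \in \Gamma_{\bm{c}_{l_2}}$ so that $\tilde\gamma g_{l_2}\mathrm{i} := \gamma_1\gamma\gamma_2 g_{l_2}\mathrm{i}$ is within hyperbolic distance $\log\varepsilon_{l_2}$ of the corresponding foot or intersection point on $\gamma\bm{c}_{l_2}$.

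With this representative, $\tilde\gamma g_{l_2}\mathrm{i}$ lies in the tube $T_Q := \{z \in \HH : d(z, F_{l_1}) \leq \log Q + C\}$, where $C$ depends only on $\Gamma$ and $g_{l_1}, g_{l_2}$. The hyperbolic volume of this tube satisfies $\mathrm{vol}_\HH(T_Q) \ll \mathrm{length}(F_{l_1})\sinh(\log Q + C) \ll Q$. Since the orbit $\Gamma \cdot g_{l_2}\mathrm{i}$ is uniformly discrete in $\HH$, the number of its points in $T_Q$ is $O(Q)$. To conclude, I would check that distinct double cosets yield distinct (up to bounded multiplicity) points $\tilde\gamma g_{l_2}\mathrm{i}$: the choice of $\gamma_1$ is unique once $F_{l_1}$ is fixed as a half-open arc, the choice of $\gamma_2$ is unique up to passing to the opposite foot on $\gamma\bm{c}_{l_2}$, and the finite stabiliser of $g_{l_2}\mathrm{i}$ in $\Gamma$ introduces only a bounded ambiguity.

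The main technical point will be handling this last uniqueness/multiplicity issue cleanly: one must verify that the map from double cosets with $|q| \leq Q$ to lattice points in $T_Q$ is at most boundedly many-to-one, accounting for the finite overlap between $\Gamma_{\bm{c}_{l_1}}\gamma\Gamma_{\bm{c}_{l_2}}$ and the fibres of $\gamma \mapsto \tilde\gamma g_{l_2}\mathrm{i}$, and for degeneracies when $P_\gamma$ lies on $\partial F_{l_1}$. Once this is in hand the volume bound $\mathrm{vol}_\HH(T_Q) \ll Q$ directly delivers the claimed $O(Q)$ bound.
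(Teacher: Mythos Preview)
Your proposal is correct and follows essentially the same route as the paper: translate $|q|\le Q$ into a hyperbolic distance bound between $\bm{c}_{l_1}$ and $\gamma\bm{c}_{l_2}$, use elements of $\Gamma_{\bm{c}_{l_1}}$ and $\Gamma_{\bm{c}_{l_2}}$ to pick a representative whose orbit point lies in a region of hyperbolic area $O(Q)$, and then invoke a lattice point count. The only cosmetic differences are that the paper uses a ball about a fixed base point $z_1\in\bm{c}_{l_1}$ rather than your tube about a fundamental arc (equivalent, since the arc has bounded length), and the paper sidesteps your multiplicity discussion entirely by counting elements $\gamma'\in\Gamma$ with $d(z_1,\gamma' z_2)\le \cosh^{-1}Q+O(1)$ rather than orbit points---distinct double cosets automatically yield distinct group elements, so no injectivity check is needed.
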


\begin{proof}
  We recall from the properties of the cross-ratio $r = r(\gamma, l_1, l_2)$, (\ref{eq:rdef}), that there is $g \in G$ such that $g \bm{c}_{l_1}$ is the geodesic from $0$ to $\infty$ and $g \gamma \bm{c}_{l_2}$ is the geodesic from $\pm 1$ to $\pm r$, the sign being the same as that of $g_{l_1}^{-1} \gamma g_{l_2} (0)$.
It follows that when $g_{l_1}^{-1} \gamma g_{l_2} (0) > 0$ there is $g \in G$ such that $ g\bm{c}_{l_1}$ is the geodesic from $-1$ to $1$ and $g \gamma \bm{c}_{l_2}$ is the geodesic from $\infty$ to $-q$, and when $g_{l_1}^{-1} \gamma g_{l_2} (0) < 0$ there is $g \in G$ such that $g\bm{c}_{l_1}$ is the geodesic from $1$ to $-1$ and $g \gamma \bm{c}_{l_2}$ is the geodesic from $\infty$ to $q$.
In either case we see that $\bm{c}_{l_1}$ intersects $\gamma \bm{c}_{l_2}$ if and only if $ -1 < q < 1$, and when this occurs $|q| = |\cos \theta|$ with $\theta$ the intersection angle between $\bm{c}_{l_1}$ and $\gamma \bm{c}_{l_2}$.  We note that there are only finitely many $q = q(\gamma, l_1, l_2)$ with $|q | < 1$.
  Indeed, these correspond to intersections between $\bm{c}_{l_1}$ and $\gamma \bm{c}_{l_2}$ in $\mathbb{H}$, or intersections between the geodesics $\bm{c}_{l_1}$ and $\bm{c}_{l_2}$ in $\Gamma \backslash \mathbb{H}$.
  As these distinct geodesics are closed, there can only be finitely many such intersections.

When $|q| > 1$ we find $g \in G$ such that $g \bm{c}_{l_1}$ is the geodesic from $-1$ to $1$ (or $1$ to $-1$) and $g \gamma \bm{c}_{l_2}$ is a geodesic connecting $\pm \log d$, $d$ consequently being the distance between the geodesics.
Expressing $d$ in terms of $q$, we find that $|q| = \cosh d$.

Let us fix points $z_1 \in \bm{c}_{l_1}$ and $z_2 \in \bm{c}_{l_2}$.
  Now let $z_1' \in \bm{c}_{l_1}$ and $z_2' \in \bm{c}_{l_2}$ be such that the distance between $\bm{c}_{l_1}$ and $\gamma \bm{c}_{l_2}$, namely $\cosh^{-1}(|q|)$, is the hyperbolic distance between $z_1'$ and $\gamma z_2'$.
  Let $\gamma_1\in \Gamma_{\bm{c}_{l_1}}$ and $\gamma_2 \in \Gamma_{\bm{c}_{l_2}}$ be such that the distance between $\gamma_1 z_1$ and $z_1'$ is at most $\log \varepsilon_{l_1}$ and the distance between $\gamma_2 z_2$ and $z_2'$ is at most $\log \varepsilon_{l_2}$.
  Allowing the implied constant to depend on $\bm{c}_{l_1}, \bm{c}_{l_2}$, it follows that the distance between $z_1$ and $\gamma_1^{-1} \gamma \gamma_2 z_2 $ is $\cosh^{-1}(|q|) + O(1)$.

  Since $\gamma_1^{-1} \gamma \gamma_2 \in \Gamma_{\bm{c}_{l_1}} \gamma \Gamma_{\bm{c}_{l_2}}$, the above shows that
  \begin{equation}
    \label{eq:doublecosetbound}
    \# \{ \gamma \in \Gamma_{\bm{c}_{1}} \backslash \Gamma / \Gamma_{\bm{c}_{l_2}} : 1 < |q(\gamma, l_1, l_2)| \leq Q \} \leq \# \{ \gamma \in \Gamma : \mathrm{d}_{\mathbb{H}}(z_1, \gamma z_2) \leq \cosh^{-1}(Q) + O(1)\}
  \end{equation}
  where $\mathrm{d}_{\mathbb{H}}$ is the hyperbolic metric.
  As the set of $z\in \mathbb{H}$ such that $\mathrm{d}_{\mathbb{H}}(z_1, z) \leq \cosh^{-1}(Q) + O(1)$ has area $O(Q)$, the lemma follows. 
\end{proof}

\begin{lemma}
  \label{lemma:originbehaviour}
  For $q = q(\gamma, l_1, l_2)$ and $\frac{v}{\alpha_1}$, $\frac{v}{\alpha_2}$ sufficiently small, we have
  \begin{equation}
    \label{eq:originbound}
    H_{\pm}(q, \alpha_1^{-1}v, \alpha_2^{-1} v) =
    \begin{cases}
      0 & \mathrm{if\ } q < 1 \\
      O(\frac{v^2}{q^2}) & \mathrm{if\ } q > 1,
    \end{cases}
  \end{equation}
  where the implied constants depend only on the group $\Gamma$ and the geodesics $\bm{c}_l$.
  In particular (\ref{eq:paircorrelation456}) converges absolutely and uniformly for $v$ near $0$.

  Moreover, we have
  \begin{equation}
    \label{eq:originasymptotic}
    H_{\pm}(q, v, v) =
    \begin{cases}
      0 & \mathrm{if\ } q < 1 \\
      \frac{q - \sqrt{q^2 -1}}{2\sqrt{q^2 -1}}v^2 + O(\frac{v^3}{q^2})& \mathrm{if\ } q > 1,
    \end{cases}
  \end{equation}
  and thus
  \begin{equation}
    \label{eq:densityat0}
    W_{1,1}(v) = \frac{1}{\ell} \sum_{l_1, l_1 = 1}^h \sum_{\substack{q = q(\gamma, l_1, l_2) > 1\\ \gamma \in \Gamma_{\bm{c}_{l_1}} \backslash \Gamma / \Gamma_{\bm{c}_{l_2}} \\ \gamma \bm{c}_{l_2} \neq \bm{c}_{l_1}, \overline{\bm{c}_{l_1}}}} \frac{q - \sqrt{q^2 -1}}{2\sqrt{q^2 -1}} + O(v). 
  \end{equation}
\end{lemma}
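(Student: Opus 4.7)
The plan is to exploit the explicit formulas \eqref{eq:F+qvdef}--\eqref{eq:F-qvdef} for $H_\pm(q, v, v)$. For the general arguments $v_1 = \alpha_1^{-1}v$, $v_2 = \alpha_2^{-1}v$ with $\alpha_1, \alpha_2 \geq 1$, a monotonicity observation reduces to the case $v_1 = v_2 = v$: since $v_1, v_2$ share the sign of $v$ and the support of each characteristic function in \eqref{def:H+}--\eqref{def:H-} shrinks as $|v_i|$ decrease within a fixed sign regime, the product $\chi_1\chi_2$ is dominated by its value at $(v, v)$, giving $H_\pm(q, \alpha_1^{-1}v, \alpha_2^{-1}v) \leq H_\pm(q, v, v)$. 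Thus it suffices to estimate $H_\pm(q, v, v)$.

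For $q < 1$: only finitely many double cosets yield $|q(\gamma, l_1, l_2)| < 1$, since these correspond to intersections of $\bm{c}_{l_1}$ with the distinct closed geodesic $\gamma\bm{c}_{l_2}$ in $\Gamma\backslash\HH$ (per lemma \ref{lemma:doublecosetcount} and the discussion preceding it). Moreover $|q| = 1$ is excluded by the assumption $\gamma\bm{c}_{l_2} \neq \bm{c}_{l_1}, \overline{\bm{c}_{l_1}}$, which forbids the two geodesics from sharing an endpoint (distinct hyperbolic elements cannot share a fixed point unless one is a power of the other). Hence $\{q : |q| < 1\}$ is a finite subset of $(-1, 1)$ bounded away from $\pm 1$. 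The explicit formulas give $H_\pm(q, v, v) = 0$ whenever $|v| < \sqrt{2-2q}$; $H_+ \equiv 0$ for $q \leq -1$; and $\sqrt{2-2q} \geq 2$ for $q \leq -1$. Choosing $v$ smaller than $\min\sqrt{2-2q}$ across the finite $|q|<1$ set establishes the vanishing uniformly in $q < 1$.

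For $q > 1$: Taylor expand about $v = 0$. Since $s_1(q, 0) = s_0 := -q+\sqrt{q^2-1}$, $s_2(q, 0) = s_0' := -q-\sqrt{q^2-1}$, and $h_q'$ vanishes at both roots $s_0, s_0'$ of $s^2+2qs+1$ by \eqref{eq:fprime}, the linear term drops and
\begin{equation*}
H_+(q,v,v) = \tfrac{1}{2} h_q''(s_0)\bigl(s_1'(q,0)\bigr)^2 v^2 + O(v^3), \quad H_-(q,v,v) = -\tfrac{1}{2} h_q''(s_0')\bigl(s_2'(q,0)\bigr)^2 v^2 + O(v^3).
\end{equation*}
From \eqref{eq:s1def1}--\eqref{eq:s2def1} one computes $s_1'(q, 0) = q - \sqrt{q^2-1}$ and $s_2'(q, 0) = 1$; from the partial-fraction form $h_q'(s) = (s+q)^{-1} + (1-s)^{-1} - (1+s)^{-1}$ one obtains $h_q''(s_0) = (q+\sqrt{q^2-1})/\sqrt{q^2-1}$ and $h_q''(s_0') = -(q-\sqrt{q^2-1})/\sqrt{q^2-1}$. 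In both cases the leading coefficient simplifies to $(q-\sqrt{q^2-1})/(2\sqrt{q^2-1})$, which via the identity $q-\sqrt{q^2-1} = 1/(q+\sqrt{q^2-1}) \asymp 1/q$ is of order $1/q^2$. The cubic remainder is $O(v^3/q^2)$ since $(s_1(q,v) - s_0)^3 \asymp v^3/q^3$ and the relevant derivatives of $h_q$ remain bounded (in fact they decay in $q$ at $s_0'$), proving both \eqref{eq:originbound} and \eqref{eq:originasymptotic}.

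Combining the pointwise bound $O(v^2/q^2)$ with lemma \ref{lemma:doublecosetcount}'s $O(Q)$ count of $\gamma$ with $|q(\gamma)|\leq Q$, a dyadic decomposition gives $\sum_{q(\gamma)>1} 1/q(\gamma)^2 \ll \sum_{j \geq 0} 2^j/4^j < \infty$. Hence \eqref{eq:paircorrelation456} is a uniformly and absolutely convergent sum of continuous functions on a neighbourhood of $0$, whence continuity follows and formula \eqref{eq:densityat0} is obtained by dividing \eqref{eq:originasymptotic} by $\ell v^2$ and summing termwise, with the $O(v^3/q^2)$ errors aggregating to $O(v)$ via the same dyadic bound. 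The main technical delicacy is uniformity of the cubic remainder as $q \to \infty$, especially for $H_-$ where the expansion point $s_0'$ itself escapes to $-\infty$; this is handled by explicit verification that the derivatives of $h_q$ at $s_0'$ acquire additional powers of $q^{-1}$, which more than compensates for the drift.
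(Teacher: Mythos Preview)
Your proof is correct and takes a genuinely different route from the paper's. The paper returns to the integral definitions, splits into the eight cases $I_1^\pm,\dots,I_5^\pm$, and for $q>1$ derives upper bounds by dropping one of the two indicator conditions (noting these become equalities when $\alpha_1=\alpha_2$); it then carries out an explicit expansion of the logarithm and the square root $\sqrt{v^2+q^2-1}$ to extract the coefficient $\frac{q-\sqrt{q^2-1}}{2\sqrt{q^2-1}}$ and the uniform error. You instead (i) reduce the off-diagonal case to $\alpha_1=\alpha_2=1$ by the monotonicity $H_\pm(q,\alpha_1^{-1}v,\alpha_2^{-1}v)\le H_\pm(q,v,v)$, and (ii) Taylor-expand the already-computed closed forms \eqref{eq:F+qvdef}--\eqref{eq:F-qvdef}, using the neat observation that $h_q'$ vanishes precisely at the roots $s_0,s_0'$ of $s^2+2qs+1$, so the linear term drops automatically. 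Your approach recovers the leading coefficient with substantially less calculation; the paper's brute-force expansion has the advantage that every step carries an explicit power of $q$, so the uniform $O(v^3/q^2)$ remainder is transparent. Your justification of the remainder is the one place that is thin: for $H_-$ the expansion point $s_0'\sim -2q$ drifts, and you assert (correctly) that the derivatives of $h_q$ at $s_0'$ decay in $q$, but to make this fully rigorous you should record the estimates $h_q^{(k)}(s)=O(q^{-k})$ uniformly for $s$ in a bounded neighbourhood of $s_0'$, which follow from $|s_0'+q|,|1\pm s_0'|\asymp q$.
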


\begin{proof}
  For the proof of this lemma we break into the same cases as in the proof of proposition \ref{lemma:Faverage2}, namely the eight integrals $I_1^\pm, I_2^\pm, I_3, I_4, I_5^\pm$.
  We recall that
  \begin{align}
    \label{eq:Irecall}
    H_+ &= I_1^+ + I_2^+ = \int_{-1}^{-q + \sqrt{q^2 -1}}  + \int_{-q + \sqrt{q^2 -1}}^1  \quad \mathrm{when\ } q > 1, \\\nonumber
    H_- & = I_1^- + I_2^- = \int_{-\infty}^{-q - \sqrt{q^2 -1}} + \int_{-q -\sqrt{q^2 -1}}^{-q}  \quad \mathrm{when\ } q > 1, \\ \nonumber
    H_+ & = 0 \quad \mathrm{when\ } q < -1, \\\nonumber
    H_- & = I_3 + I_4 = \int_{-\infty}^{-1} + \int_{1}^{-q} \quad \mathrm{when\ } q < -1, \\ \nonumber
    H_+ & = I_5^+ = \int_{-q}^1  \quad \mathrm{when\ } -1 < q < 1, \\\nonumber
    H_- & = I_5^+ = \int_{-\infty}^{-1} \quad \mathrm{when\ } -1 < q < 1.
  \end{align}
  We recall that the integrand of all the above integrals is $\chi_1(s,v) \chi_2(s,v)$ where $\chi_1(s,v)$ is the indicator function of the condition
  \begin{equation}
    \label{eq:alpha1bound}
    2v \frac{s + q}{s^2 + 2qs + 1} \geq \alpha_1
  \end{equation}
  and $\chi_2(s, v)$ is the indicator function of the condition
  \begin{equation}
    \label{eq:alpha2bound}
    v \frac{1 - s^2}{s^2 + 2qs + 1} \geq \alpha_2.
  \end{equation}
  We first show that for $v$ sufficiently small, $I_3 = I_4 = I_5^\pm = 0$, thus verifying the first case in (\ref{eq:originbound}). 

  We have that $I_3 = 0$ if $v \geq 0$ and the conditions (\ref{eq:alpha1bound}), (\ref{eq:alpha2bound}) become $v \leq \alpha_1 v_2(s)$ and $v \leq \alpha_2 v_1(s)$.
  We note however that $v_1(s) \leq 1$ for all $s < -1$, and so we conclude that $I_3 = 0$ for all $\frac{v}{\alpha_2}$ sufficiently small.
  Similarly, we have that $I_4 = 0$ if $v \leq 0$ and the conditions (\ref{eq:alpha1bound}), (\ref{eq:alpha2bound}) become $v \geq \alpha_1 v_2(s)$ and $v \geq \alpha_2 v_1(s)$.
  We note however that $v_2(s) \geq 1$ for all $1 < s < -q$, and so $I_4 = 0$ for all $\frac{v}{\alpha_1}$ sufficiently small.

  We have that $I_5^+ = 0$ if $v \leq 0$ and the conditions (\ref{eq:alpha1bound}), (\ref{eq:alpha2bound}) become $v \geq v_2(s) \alpha_1$ and $v \geq \alpha_2 v_1(s)$.
  We note that in this case we have $v_2(s) \geq \sqrt{1 - q^2}$, and since by lemma \ref{lemma:doublecosetcount} there are only finitely many $-1 < q < 1$, it follows that $I_5^+ = 0$ for all $\frac{v}{\alpha_1}$ sufficiently small.
  Similarly, we have that $I_5^- = 0$ if $v \geq 0$ and the conditions (\ref{eq:alpha1bound}), (\ref{eq:alpha2bound}) become $v \leq v_2(s) \alpha_1$ and $v \leq v_1(s) \alpha_2$.
  We note that $v_2(s) \leq -\sqrt{1 - q^2}$, so as before we have $I_5^+ = 0$ for $\frac{v}{\alpha_1}$ sufficiently small.

  It remains to consider the cases $I_1^{\pm}$ and $I_2^{\pm}$, i.e. when $q > 1$.
  We have that $I_1^+ = 0$ if $v\geq 0$ and the conditions (\ref{eq:alpha1bound}), (\ref{eq:alpha2bound}) become $v \leq \alpha_1 v_2(s)$ and $v \leq \alpha_2 v_1(s)$.
  To upper bound $I_1^+$ we only consider the condition $v \leq \alpha_2 v_1(s)$ and we obtain
  \begin{equation}
    \label{eq:I1plusbound}
    I_1^+ \leq h_q(s_1(\alpha_2^{-1}v)) - h_q( - q + \sqrt{q^2 -1}). 
  \end{equation}
  Similarly, we have that $I_2^+ = 0$ if $v \leq 0$ and the conditions (\ref{eq:alpha1bound}), (\ref{eq:alpha2bound}) become $v \leq \alpha_1 v_2(s)$ and $v \leq \alpha_2 v_1(s)$.
  By dropping the first condition we obtain the same bound as before,
  \begin{equation}
    \label{eq:I2plusbound}
    I_2^+ \leq h_q( s_1( \alpha_2^{-1} v)) - h_q( -q + \sqrt{ q^2 - 1}).
  \end{equation}

  We have that $I_1^- = 0$ if $v \geq 0$ and the conditions (\ref{eq:alpha1bound}), (\ref{eq:alpha2bound}) become $v \leq \alpha_1 v_2(s)$ and $v \leq \alpha_2 v_1(s)$.
  Dropping the second condition we obtain the upper bound
  \begin{equation}
    \label{eq:I1minusbound}
    I_1^- \leq h_q( - q - \sqrt{q^2 -1}) - h_q( s_2( \alpha_1^{-1} v)).
  \end{equation}
  Similarly, we have that $I_2^- = 0$ if $v \leq 0$ and the conditions (\ref{eq:alpha1bound}), (\ref{eq:alpha2bound}) become $v \geq \alpha_1 s_2(v)$ and $v \geq \alpha_2 s_1(v)$.
  Dropping the second condition we obtain the same bound
  \begin{equation}
    \label{eq:I2minusbound}
    I_2^- \leq h_q( - q - \sqrt{q^2 -1}) - h_q( s_2( \alpha_1^{-1} v)).
  \end{equation}
  We remark that, by analysing when $v_1(s) \leq v_2(s)$, $v_1(s) \geq v_2(s)$ for all $s$ in the range of integration of $I_1^\pm, I_2^\pm$, these upper bounds are in fact equalities for $q > \frac{\alpha_1^2 + \alpha_2^2}{2\alpha_1\alpha_2}$.
  By lemma \ref{lemma:doublecosetcount} this condition is satisfied for all but finitely many double cosets.

  We now compute the upper bounds (\ref{eq:I1plusbound}), (\ref{eq:I2plusbound}) and (\ref{eq:I1minusbound}), (\ref{eq:I2minusbound}) asympotically as $v \to 0$.
  This will not only verify (\ref{eq:originbound}) but also (\ref{eq:originasymptotic}) since these bounds are equalities for $\alpha_1 = \alpha_2 =1$.
  We have
  \begin{multline}
    \label{eq:Hplusasymptotic}
    h_q(s_1(v)) - h_q( - q + \sqrt{q^2 - 1}) \\
    = \log (v +1) + \log \frac{qv + \sqrt{v^2 + q^2 - 1}}{\sqrt{q^2 -1}} - \log \frac{v + q\sqrt{v^2 + q^2 -1} - q^2 + 1}{q\sqrt{q^2 -1} - q^2 + 1}
  \end{multline}
  for $q > 1$ and $v$ sufficiently small.
  Using that $q \geq 1 + \eta$ for some $\eta > 0$ depending only on $\Gamma$ and the $\bm{c}_{l}$ by lemma (\ref{lemma:doublecosetcount}), we expand
  \begin{equation}
    \label{eq:sqrtexpand}
    \sqrt{v^2 + q^2  - 1} = \sqrt{q^2 -1} \left( 1 + \frac{v^2}{2(q^2 -1)} + O( \frac{v^4}{q^4})\right).
  \end{equation}
  We find that the first and second term in (\ref{eq:Hplusasymptotic}) is
  \begin{multline}
    \label{eq:secondterm}
    \log ( v + 1) + \log( 1 + \frac{qv}{\sqrt{q^2 -1}} + \frac{v^2}{2(q^2 -1)} + O(\frac{v^4}{q^4}) ) \\
    =  \log( 1 + \bigg(1 + \frac{q}{\sqrt{q^2 -1}}\bigg)v + \bigg(\frac{1}{2(q^2-1)} + \frac{q}{\sqrt{q^2-1}}\bigg)v^2) + O(\frac{v^3}{q^2}). 
  \end{multline}
  We also find that the third term is
  \begin{multline}
    \label{eq:thirdterm}
    \log ( 1 + \frac{v}{q\sqrt{q^2 - 1} - q^2 + 1} + \frac{qv^2}{2\sqrt{q^2-1}(q\sqrt{q^2 -1} - q^2 + 1)} + O( \frac{v^4}{q^2})) \\
    = \log( 1 + \bigg(1 + \frac{q}{\sqrt{q^2 -1}}\bigg) v + \frac{1}{2}\bigg( \frac{q}{\sqrt{q^2-1}} + \frac{q^2}{q^2 -1}\bigg) v^2 ) + O(\frac{v^4}{q^2}). 
  \end{multline}
  We now have
  \begin{equation}
    \label{eq:Hplusasymptotic2}
    h_q(s_1(v)) - h_q( -q + \sqrt{q^2-1})
    = \log \frac{1 + \bigg(1 + \frac{q}{\sqrt{q^2 -1}}\bigg)v + \bigg(\frac{1}{2(q^2-1)} + \frac{q}{\sqrt{q^2-1}}\bigg)v^2}{1 + \bigg(1 + \frac{q}{\sqrt{q^2 -1}}\bigg) v + \frac{1}{2}\bigg( \frac{q}{\sqrt{q^2-1}} + \frac{q^2}{q^2 -1}\bigg) v^2} + O(\frac{v^3}{q^2}).
  \end{equation}
  Writing
  \begin{multline}
    \label{eq:Hplusasymptotic3}
    \frac{1 + \bigg(1 + \frac{q}{\sqrt{q^2 -1}}\bigg)v + \bigg(\frac{1}{2(q^2-1)} + \frac{q}{\sqrt{q^2-1}}\bigg)v^2}{1 + \bigg(1 + \frac{q}{\sqrt{q^2 -1}}\bigg) v + \frac{1}{2}\bigg( \frac{q}{\sqrt{q^2-1}} + \frac{q^2}{q^2 -1}\bigg) v^2} \\
    = 1 + \frac{\frac{q - \sqrt{q^2 -1}}{2\sqrt{q^2-1}}v^2}{1 + \bigg(1 + \frac{q}{\sqrt{q^2 -1}}\bigg) v + \frac{1}{2}\bigg( \frac{q}{\sqrt{q^2-1}} + \frac{q^2}{q^2 -1}\bigg) v^2} = 1 + \frac{q - \sqrt{q^2 -1}}{2\sqrt{q^2-1}} v^2 + O(\frac{v^3}{q^2}),
  \end{multline}
  we obtain
  \begin{equation}
    \label{eq:Hplusasymptotic4}
    h_q(s_1(v)) - h_q( -q + \sqrt{q^2-1}) = \frac{q - \sqrt{q^2 -1}}{2\sqrt{q^2 -1}} v^2 + O( \frac{v^3}{q^2}).
  \end{equation}
  This establishes (\ref{eq:originbound}) and (\ref{eq:originasymptotic}) in the $+$ case.
  
  It remains to analyze
  \begin{multline}
    \label{eq:Hminusasymptotic}
    h_q(-q - \sqrt{q^2 -1}) - h_q(s_2(v)) \\
    = -\log \frac{-v + \sqrt{v^2 + q^2 -1}}{ \sqrt{q^2 -1}} + \log \frac{q^2 -1 -qv + v^2 + (q -v)\sqrt{v^2 + q^2 - 1}}{q^2 -1 + q\sqrt{q^2 -1}}.
  \end{multline}
  Using (\ref{eq:sqrtexpand}), the first term here is
  \begin{equation}
    \label{eq:firstterm}
    \log( 1 - \frac{v}{\sqrt{q^2 -1}} + \frac{v^2}{2(q^2-1)}) + O(\frac{v^4}{q^4}),
  \end{equation}
  and the second term is
  \begin{equation}
    \label{eq:nextterm}
    \log( 1 - \frac{v}{\sqrt{q^2 -1}} + ( \frac{q - \sqrt{q^2 -1}}{\sqrt{q^2 -1}} + \frac{ q^2 - q\sqrt{q^2 -1}}{2(q^2 -1)} ) v^2 ) + O(\frac{v^3}{q^2}).
  \end{equation}
  We now have
  \begin{equation}
    \label{eq:Hminusasymptotic3}
    h_q(-q - \sqrt{q^2 -1}) - h_q(s_1(v)) = \log \frac{1 - \frac{v}{\sqrt{q^2 -1}} + ( \frac{q - \sqrt{q^2 -1}}{\sqrt{q^2 -1}} + \frac{ q^2 - q\sqrt{q^2 -1}}{2(q^2 -1)} ) v^2}{1 - \frac{v}{\sqrt{q^2 -1}} + \frac{v^2}{2(q^2-1)}} + O(\frac{v^3}{q^2}). 
  \end{equation}
  Writing
  \begin{multline}
    \label{eq:Hminusasymptotic4}
    \frac{1 - \frac{v}{\sqrt{q^2 -1}} + ( \frac{q - \sqrt{q^2 -1}}{\sqrt{q^2 -1}} + \frac{ q^2 - q\sqrt{q^2 -1}}{2(q^2 -1)} ) v^2}{1 - \frac{v}{\sqrt{q^2 -1}} + \frac{v^2}{2(q^2-1)}} \\
    = 1 + \frac{ \frac{q - \sqrt{q^2 -1}}{2\sqrt{q^2 -1}} v^2 }{ 1 - \frac{v}{\sqrt{q^2 -1}} + \frac{v^2}{2(q^2-1)}} = 1 + \frac{q - \sqrt{q^2 -1 }}{2\sqrt{q^2 -1}} v^2 + O( \frac{v^3}{q^2}), 
  \end{multline}
  it follows that
  \begin{equation}
    \label{eq:Hminusasymptotic2}
    h_q(-q - \sqrt{q^2 -1}) - h_q(s_1(v)) = \frac{q - \sqrt{q^2 -1}}{2\sqrt{q^2 -1}} v^2  + O(\frac{v^3}{q^2}). 
  \end{equation}
  This is enough to establish (\ref{eq:originbound}) and (\ref{eq:originasymptotic}). 
\end{proof}

 \subsection{Moments}\label{sec:moments2}

Let us return to the moments of the counting functions $\scrN_{I, \alpha, \beta}(\xi,y)$, now averaging $\xi\in X_{\alpha_0,\beta_0}(y)$ over the sequence itself rather than the absolutely continuous measure $\lambda$.
For $y > 0$, $\bm{s} = (s_1, \dots, s_r) \in \mathbb{C}^r$, $\mathcal{I} = I_1 \times \cdots \times I_r$, $I_j \subset \mathbb{R}$ a finite interval so that $0\notin\partial I_j$, and $\bm{\alpha} = (\alpha_0, \dots, \alpha_r)$, $\bm{\beta} = ( \beta_0, \dots, \beta_r)$ satisfying $1\leq \alpha_j < \beta_j \leq \infty$, 
define the moments
\begin{equation}
  \label{eq:Momentsdef0}
  M_{\mathcal{I}, \bm{\beta}, \bm{\alpha}}^{a,b}(\bm{\kappa}, y) =y
    \sum_{\xi\in X_{\alpha_0,\beta_0}(y)\cap ([a,b)+\ZZ)}  \prod_{1\leq j \leq r} \scrN_{I_j, \alpha_j, \beta_j}(\xi, y)^{\kappa_j} ,
\end{equation}
\begin{equation}
  \label{eq:Mdef10}
  M_{\mathcal{I}, \bm{\alpha}, \bm{\beta}}^0(\bm{\kappa}) = \int_{\Gamma \backslash G} \prod_{1\leq j \leq r} \scrN_{I_j, \alpha_j, \beta_j}(g)^{\kappa_j} \dd \widehat\nu_{\alpha_0,\beta_0}(g) ; 
\end{equation}
and the moment generating functions 
\begin{equation}
  \label{eq:Gdef0}
  G_{\mathcal{I}, \bm{\alpha}, \bm{\beta}}^{a,b}(\bm{s}, y)  = y
    \sum_{\xi\in X_{\alpha_0,\beta_0}(y)\cap ([a,b)+\ZZ)} \exp( \sum_{1\leq j \leq r} s_j \scrN_{I_j, \alpha_j, \beta_j}(x, y))  ,
\end{equation}
\begin{equation}
  \label{eq:Gdef10}
    G_{\mathcal{I}, \bm{\alpha}, \bm{\beta}}^0(\bm{s}) = \int_{\Gamma \backslash G} \exp( \sum_{1\leq j\leq r} s_j \scrN_{I_j, \alpha_j, \beta_j}(g)) \dd \widehat\nu_{\alpha_0,\beta_0}(g) .
  \end{equation}

\begin{theorem}
  \label{theorem:moments000}
  Let $0\leq a<b\leq 1$ and $\mathcal{I}$, $\bm{\alpha}$, $\bm{\beta}$ as above.
  Then there is a constant $c_0 > 0$ such that for $\bm{s} = (s_1, \dots, s_r) \in \mathbb{C}^r$ satisfying $ \sum_j \max\{ \mathrm{Re}(s_j), 0 \} < c_0 $ the function $G_{\mathcal{I}, \bm{\alpha}, \bm{\beta}}^0(\bm{s})$ is analytic and we have
  \begin{equation}
    \label{eq:momentconvergence000}
    \lim_{y \to 0} G^{a,b}_{\mathcal{I}, \bm{\alpha}, \bm{\beta}}(\bm{s}, y) = \kappa_\Gamma (b-a) \bigg(\frac1\alpha-\frac1\beta\bigg) G_{\mathcal{I}, \bm{\alpha}, \bm{\beta}}^0(\bm{s}).
  \end{equation}
\end{theorem}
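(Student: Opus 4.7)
The plan is to follow exactly the scheme used to prove Theorem \ref{theorem:moments}, with Theorem \ref{theorem:convergence2} replacing Theorem \ref{theorem:convergence} and Lemma \ref{lemma:measurebound4321} replacing Lemma \ref{lemma:measurebound}. Abbreviate
\[
P^0(\bm{k}) = \PP\bigl(\Theta^0_{\alpha_0,\beta_0}(B_{I_j,\alpha_j,\beta_j}) = k_j \ \forall j\bigr), \qquad P^{a,b}(\bm{k},y) = y\,\#\bigl\{\xi \in X_{\alpha_0,\beta_0}(y) \cap ([a,b)+\ZZ) : \scrN_{I_j,\alpha_j,\beta_j}(\xi,y) = k_j \ \forall j\bigr\},
\]
and take $c_0 = c_1$ from Lemma \ref{lemma:measurebound4321}.

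First, I would expand $G^0_{\mathcal{I},\bm{\alpha},\bm{\beta}}(\bm{s}) = \sum_{\bm{k}} \exp(\sum_j k_j s_j) P^0(\bm{k})$ and regroup the terms by $k = \max_j k_j$. Using the exponential decay \eqref{eq:measurebound1000}, the tail beyond $k \geq K$ is majorised by $\sum_{k \geq K}(k+1)^r \exp\bigl(-(c_0 - \sum_j \max\{\Re s_j,0\})k\bigr)$, which tends to $0$ as $K \to \infty$ uniformly on compacta of $\{\bm{s} \in \CC^r : \sum_j \max\{\Re s_j,0\} < c_0\}$. Thus $G^0_{\mathcal{I},\bm{\alpha},\bm{\beta}}$ is a locally uniform limit of polynomials in the $\exp(s_j)$, hence analytic on that region.

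For the convergence \eqref{eq:momentconvergence000}, I would split $G^{a,b}_{\mathcal{I},\bm{\alpha},\bm{\beta}}(\bm{s},y)$ as a head at $\max_j k_j < K$ plus a tail at $\max_j k_j \geq K$. Theorem \ref{theorem:convergence2} provides the termwise limit
\[
\lim_{y \to 0} P^{a,b}(\bm{k},y) = \kappa_\Gamma (b-a)\Bigl(\tfrac{1}{\alpha_0}-\tfrac{1}{\beta_0}\Bigr) P^0(\bm{k}),
\]
so the head converges, as $y \to 0$ and then $K \to \infty$, to $\kappa_\Gamma (b-a)(\tfrac{1}{\alpha_0}-\tfrac{1}{\beta_0}) G^0_{\mathcal{I},\bm{\alpha},\bm{\beta}}(\bm{s})$ by the previous step. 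The tail is controlled uniformly in $y$ by the same geometric-series estimate, now invoking \eqref{eq:measurebound000} in place of \eqref{eq:measurebound1000}. Choosing $K$ sufficiently large makes both the head error and the tail arbitrarily small, completing the proof.

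There is really no serious obstacle: the genuine work — the Poincar\'e-section equidistribution behind Theorem \ref{theorem:convergence2} and the uniform exponential tail bounds in Lemma \ref{lemma:measurebound4321} — has already been done. What remains is the bookkeeping above, identical in shape to the proof of Theorem \ref{theorem:moments}; the only mild subtlety is keeping track of the normalisation factor $\kappa_\Gamma (b-a)(\tfrac{1}{\alpha_0}-\tfrac{1}{\beta_0})$ that enters from the conversion between sums over $X_{\alpha_0,\beta_0}(y)$ and integrals against the normalised Palm measure $\widehat{\nu}_{\alpha_0,\beta_0}$.
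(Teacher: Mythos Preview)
Your proposal is correct and is exactly the approach the paper takes: the paper's own proof is the single sentence ``The proof of theorem \ref{theorem:moments000} follows from the same argument as in the continuous case, with lemma \ref{lemma:measurebound} replaced by lemma \ref{lemma:measurebound4321},'' and you have spelled out precisely that argument, including the substitution of Theorem \ref{theorem:convergence2} for Theorem \ref{theorem:convergence} and the appearance of the normalising factor $\kappa_\Gamma(b-a)(\alpha_0^{-1}-\beta_0^{-1})$.
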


As in section \ref{sec:moments}, theorem \ref{theorem:moments000} implies the convergence as $y \to 0$ of the mixed moments.

\begin{corollary}
  \label{corollary:moments000}
If $\bm{\kappa} = (\kappa_1, \dots, \kappa_r) \in \mathbb{R}^r$ with $\kappa_j \geq 0$, then $M_{\mathcal{I}, \bm{\alpha}, \bm{\beta}}^0(\bm{\kappa})$ is finite and
  \begin{equation}
    \label{eq:momentconvergence1000}
    \lim_{y \to 0} M^{a,b}_{\mathcal{I}, \bm{\alpha}, \bm{\beta}}(\bm{\kappa}, y) = \kappa_\Gamma (b-a) \bigg(\frac1\alpha-\frac1\beta\bigg) M_{\mathcal{I}, \bm{\alpha}, \bm{\beta}}^0(\bm{\kappa}). 
  \end{equation}
\end{corollary}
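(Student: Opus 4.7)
The plan is to deduce this corollary from Theorem \ref{theorem:moments000} by a standard truncation argument, in close analogy with the derivation of Corollary \ref{corollary:moments} from Theorem \ref{theorem:moments}. Concretely, I would rewrite both moments as sums weighted by the probabilities
\begin{equation}
P^{a,b}_{\mathcal{I}, \bm{\alpha}, \bm{\beta}}(\bm{k}, y) = y \, \#\{ \xi \in X_{\alpha_0, \beta_0}(y) \cap ([a,b) + \ZZ) : \scrN_{I_j, \alpha_j, \beta_j}(\xi, y) = k_j \ \forall j\}
\end{equation}
and $P^0_{\mathcal{I}, \bm{\alpha}, \bm{\beta}}(\bm{k}) = \PP(\Theta^0_{\alpha_0, \beta_0}(B_{I_j, \alpha_j, \beta_j}) = k_j \ \forall j)$, so that
\begin{equation}
M^{a,b}_{\mathcal{I}, \bm{\alpha}, \bm{\beta}}(\bm{\kappa}, y) = \sum_{\bm{k} \geq 0} k_1^{\kappa_1} \cdots k_r^{\kappa_r} P^{a,b}_{\mathcal{I}, \bm{\alpha}, \bm{\beta}}(\bm{k}, y),
\end{equation}
and likewise for $M^0_{\mathcal{I}, \bm{\alpha}, \bm{\beta}}(\bm{\kappa})$.

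For any threshold $K \in \ZZ_{\geq 0}$, I would split each of these sums into the head $\max_j k_j < K$ and the tail $\max_j k_j \geq K$. The finite head is handled term-by-term by Theorem \ref{theorem:convergence2}, which delivers the required factor $\kappa_\Gamma (b-a)(\alpha^{-1} - \beta^{-1})$ when passing from $y P^{a,b}_{\mathcal{I}, \bm{\alpha}, \bm{\beta}}(\bm{k}, y)$ to $P^0_{\mathcal{I}, \bm{\alpha}, \bm{\beta}}(\bm{k})$. For the tail, Lemma \ref{lemma:measurebound4321} supplies the uniform estimates $P^{a,b}_{\mathcal{I}, \bm{\alpha}, \bm{\beta}}(\bm{k}, y), P^0_{\mathcal{I}, \bm{\alpha}, \bm{\beta}}(\bm{k}) \ll \exp(-c_1 \max_j k_j)$; combined with the elementary bound $k_1^{\kappa_1} \cdots k_r^{\kappa_r} \leq (1 + \max_j k_j)^{\sum_j \kappa_j}$ and the fact that there are at most $O((k+1)^{r-1})$ multi-indices with $\max_j k_j = k$, the tail in both cases is bounded by
\begin{equation}
\ll \sum_{k \geq K} (1+k)^{r + \sum_j \kappa_j} \exp(-c_1 k),
\end{equation}
which is independent of $y$ and tends to $0$ as $K \to \infty$. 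This already shows that $M^0_{\mathcal{I}, \bm{\alpha}, \bm{\beta}}(\bm{\kappa})$ is finite, and a standard three-epsilon argument (choose $K$ large to make the two tails small, then take $y \to 0$ in the head) yields (\ref{eq:momentconvergence1000}).

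The main obstacle here is cosmetic rather than substantive: because $\bm{\kappa}$ is real-valued rather than a non-negative integer multi-index, one cannot simply read off the mixed moments as coefficients or derivatives of the analytic function $G^0_{\mathcal{I}, \bm{\alpha}, \bm{\beta}}(\bm{s})$ from Theorem \ref{theorem:moments000}, and must instead work directly with the probability expansion. Once the exponential tail estimate from Lemma \ref{lemma:measurebound4321} is combined with the polynomial bound on $\prod_j k_j^{\kappa_j}$ in terms of $\max_j k_j$, the argument used to derive Corollary \ref{corollary:moments} from Theorem \ref{theorem:moments} transfers essentially verbatim to the conditioned setting.
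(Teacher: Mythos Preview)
Your proposal is correct and follows exactly the route the paper indicates: the paper simply says ``As in section \ref{sec:moments}, theorem \ref{theorem:moments000} implies the convergence \ldots'', i.e.\ it defers to the truncation-plus-tail-bound argument used to derive Corollary \ref{corollary:moments} from Theorem \ref{theorem:moments}, with Lemma \ref{lemma:measurebound4321} replacing Lemma \ref{lemma:measurebound} and Theorem \ref{theorem:convergence2} replacing Theorem \ref{theorem:convergence}. Your write-up makes this explicit and handles the real-exponent case correctly via the polynomial bound on $\prod_j k_j^{\kappa_j}$.
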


We highlight the special case of the first moment $r = 1$ and $\kappa = 1$.

\begin{corollary}
  \label{corollary:totalcount000}
  For any finite interval $I$ such that $0\notin\partial I$, $1\leq \alpha < \beta \leq \infty$ and $0\leq a < b \leq 1$, we have
  \begin{equation}
    \label{eq:totalcount000}
    \lim_{y\to 0} y
    \sum_{\xi\in X_{\alpha_0,\beta_0}(y)\cap ([a,b)+\ZZ)} \scrN_{I, \alpha, \beta} (\xi, y)  = \kappa_\Gamma (b-a) \bigg(\frac1\alpha-\frac1\beta\bigg) \int_{\Gamma \backslash G} \scrN_{I,\alpha,\beta}(g) \dd\widehat\nu_{1,\infty}(g)   .
    \end{equation}
 \end{corollary}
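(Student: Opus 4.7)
The plan is to recognize the claim as the specialisation $r=1$, $\kappa_1=1$ of corollary \ref{corollary:moments000}. Unpacking the definitions, the left-hand side of \eqref{eq:totalcount000} coincides, by \eqref{eq:Momentsdef0}, with the first mixed moment $M^{a,b}_{I,\alpha,\beta}(1,y)$, and the integral appearing on the right-hand side is, by \eqref{eq:Mdef10}, precisely $M^0_{I,\alpha,\beta}(1)$. Hence the asserted limit is nothing more than the case of a single interval in \eqref{eq:momentconvergence1000}, which is itself obtained from theorem \ref{theorem:moments000} by the standard dominated-convergence argument extracting the first moment from the moment generating function (using the tail bound \eqref{eq:measurebound000} of lemma \ref{lemma:measurebound4321} for the uniform integrability needed to pass from convergence of $G^{a,b}_{\mathcal{I},\bm{\alpha},\bm{\beta}}$ to convergence of $M^{a,b}_{\mathcal{I},\bm{\alpha},\bm{\beta}}$).

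The role of the hypothesis $0\notin\partial I$ is to enforce the boundary-regularity assumption built into the setup of section \ref{sec:moments2}. Via lemma \ref{lemma:regu} it ensures that the event $\Theta^0_{\alpha_0,\beta_0}(\partial B_{I,\alpha,\beta})\geq 1$ has probability zero, which is exactly what is required for theorem \ref{theorem:convergence2} to deliver the pointwise convergence of $P^{a,b}_{I,\alpha,\beta}(k,y)$ to $\kappa_\Gamma(b-a)(\alpha_0^{-1}-\beta_0^{-1})P^0_{I,\alpha,\beta}(k)$ that underlies the moment convergence of corollary \ref{corollary:moments000}.

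Consequently there is no substantive obstacle: once corollary \ref{corollary:moments000} is in hand, the present statement is a direct relabelling. The only sanity check worth flagging is that the single choice $s_1=0$ (equivalently $\kappa_1=1$) obviously lies in the analyticity region $\sum_j\max\{\mathrm{Re}(s_j),0\}<c_0$ required by theorem \ref{theorem:moments000}, so that the passage from the generating function statement to the first-moment statement is fully justified.
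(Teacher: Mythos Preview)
Your approach is correct and matches the paper's: the corollary is simply the special case $r=1$, $\kappa_1=1$ of corollary~\ref{corollary:moments000}, exactly as the paper indicates in the sentence preceding the statement. Your additional remarks on the role of the hypothesis $0\notin\partial I$ (via lemma~\ref{lemma:regu}) are accurate and make explicit what the paper leaves implicit. One small wording issue: the parenthetical ``$s_1=0$ (equivalently $\kappa_1=1$)'' is slightly misleading, since the first moment is recovered as $\partial_{s_1}G|_{s_1=0}$ rather than as $G(0)$ itself; what you need, and what you correctly note, is analyticity of the generating function in a neighbourhood of $0$, which holds since $c_0>0$.
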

 
 The proof of theorem \ref{theorem:moments000} follows from the same argument as in the continuous case, with lemma \ref{lemma:measurebound} replaced by lemma \ref{lemma:measurebound4321}.

\subsection{Palm distribution and pair correlation}
\label{sec:palm}

We now return to the setting of a one-dimensional point process discussed in section \ref{sec:1dpp}.
Instead of
\begin{equation}
\Xi_{N,\lambda} = \sum_{j=1}^N \sum_{k\in\ZZ} \delta_{N(\xi_j-\xi+k)} 
\end{equation}
with $\lambda$ distributed at random with respect to $\lambda$, we consider the point process
\begin{equation}
\Xi_{N}^0 = \sum_{j=1}^N \sum_{k\in\ZZ} \delta_{N(\xi_j-\xi_p+k)} 
\end{equation}
where $p$ is a random integer drawn from $\{1,2,\ldots,N\}$ with uniform probability. Note that the expectation measure is given by
\begin{equation}
\EE \Xi_{N}^0 = \frac1N \sum_{j,p=1}^N \sum_{k\in\ZZ} \delta_{N(\xi_j-\xi_p+k)} 
= \delta_0 + R_{2,N} ,
\end{equation}
with the pair correlation measure
\begin{equation}
R_{2,N} = \frac1N \sum_{\substack{ j,p=1\\ j\neq p}}^N \sum_{k\in\ZZ} \delta_{N(\xi_j-\xi_p+k)} .
\end{equation}

We furthermore define the point process process $\Xi^0$ by
\begin{equation}
\Xi^0(I) =\Theta_{1,\infty}^0(B_{\kappa_\Gamma^{-1} I,1,\infty})
\end{equation}
with $\kappa_\Gamma$ as in \eqref{def:eta}. The process $\Xi^0$ is in fact distributed according to the Palm measure of the  process $\Xi$; see section 31 of \cite{Kallenberg2021} for a general discussion of Palm measures, or \cite{Marklof2017} for a context more closely analogous to ours. 
The intensity measure of the Palm measure is in view of proposition \ref{lemma:Faverage2} given by
  \begin{equation}
    \label{eq:Faverage_eta12}
    \EE\Xi^0(I) =  \EE\Theta_{1,\infty}^0(B_{\kappa_\Gamma^{-1} I,1,\infty})     = \delta_0(I) + \int_I  w(v) \dd v ,
  \end{equation}
where  $w(v)=\kappa_\Gamma^{-1} W(\kappa_\Gamma^{-1} v)$, and so
  \begin{equation}
    \label{eq:paircorrelation2}
    w(v) = \frac{1}{v^2 \mathrm{vol}_\HH(\Gamma\backslash\HH)} \sum_{l_1,l_2=1}^h \sum_{\substack{\gamma \in \Gamma_{\bm{c}_{l_1}} \backslash \Gamma / \Gamma_{\bm{c}_{l_2}} \\ \gamma \bm{c}_{l_2} \neq \bm{c}_{l_1}, \overline{\bm{c}_{l_1}}}} 
    H_{\mathrm{sign}(g_{l_1}^{-1} \gamma g_{l_2}(0))}(q(\gamma, l_1, l_2), \kappa_\Gamma^{-1} v).
  \end{equation}
  
This shows that the following statements are special cases of theorem \ref{theorem:convergence2} (convergence in distribution) and  corollary \ref{corollary:totalcount000} (pair correlation). As in the proof of theorem \ref{theorem:convergencealt}, the key observation is that we can move from the scaling by $y^{-1}$ to the scaling by $N$ by the regularity of the limiting distributions.

\begin{theorem}\label{theorem:convergencealt2}
The point processes $\Xi_N^0  \to \Xi^0$ converge in distribution, together with all moments, as $N\to \infty$. 

Specifically, for all $k_1,\ldots,k_r\in\ZZ_{\geq 0}$ and finite intervals $I_i$, we have that
\begin{equation}
\lim_{N\to \infty} \frac1N \#\big(\big\{ p\leq N  : \scrN_{I_i}(\xi_p,N) = k_i \;\forall i \big\}\big) 
= \PP\big( \Xi^0(I_i)= k_i \; \forall i \big) ,
\end{equation}
and for every finite interval $I$
\begin{equation}
\lim_{N\to \infty}  R_{2,N}(I) = \int_I  w(v) \dd v .
\end{equation}
\end{theorem}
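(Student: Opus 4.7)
The plan is to reduce Theorem \ref{theorem:convergencealt2} to the results of section \ref{sec:conditioned} by converting from the height scaling by $y$ to the counting scaling by $N$, exactly as in the proof of Theorem \ref{theorem:convergencealt}. For each $N$ let $y_N > 0$ be chosen so that the first $N$ terms of our ordered sequence form precisely the multiset $X_{1,\infty}(y_N)$, i.e. $N = N_{1,\infty}(0,1,y_N)$. Equation \eqref{eq:equidistribution} of Corollary \ref{corollary:totalcount} then yields $y_N N \to \kappa_\Gamma$ as $N \to \infty$. In particular, the set of $\xi_p$ with $p \leq N$ coincides with $X_{1,\infty}(y_N) \cap ([0,1)+\ZZ)$, and the leading factor $y$ in Theorem \ref{theorem:convergence2} (taken with $\alpha_0 = 1$, $\beta_0 = \infty$, $a=0$, $b=1$) matches $\kappa_\Gamma/N$ up to a factor tending to one.

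For the joint count statistics, the key observation is that $\scrN_{I_i}(\xi_p,N)$ counts points of $X_{1,\infty}(y_N)$ in $\xi_p + N^{-1}I_i + \ZZ$, while Theorem \ref{theorem:convergence2} controls $\scrN_{\kappa_\Gamma^{-1}I_i,1,\infty}(\xi_p,y_N)$, which counts points in $\xi_p + \kappa_\Gamma^{-1}y_N I_i + \ZZ$. Since $N^{-1}/(\kappa_\Gamma^{-1}y_N) \to 1$, for any $\varepsilon > 0$ and all sufficiently large $N$ we have the inclusions
\begin{equation}
 \kappa_\Gamma^{-1}y_N I_i^{-\varepsilon} \;\subseteq\; N^{-1}I_i \;\subseteq\; \kappa_\Gamma^{-1}y_N I_i^{+\varepsilon},
\end{equation}
where $I_i^{\pm\varepsilon}$ denotes $I_i$ with each endpoint moved outward or inward by $\varepsilon$. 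Bracketing $\scrN_{I_i}(\xi_p,N)$ between $\scrN_{\kappa_\Gamma^{-1}I_i^{-\varepsilon},1,\infty}(\xi_p,y_N)$ and $\scrN_{\kappa_\Gamma^{-1}I_i^{+\varepsilon},1,\infty}(\xi_p,y_N)$, applying Theorem \ref{theorem:convergence2} to each bracket, and using $y_N N \to \kappa_\Gamma$ to convert the leading $y_N$-factor into $1/N$, we obtain
\begin{equation}
\PP\!\big(\Xi^0(I_i^{-\varepsilon}) = k_i\;\forall i\big) \;\leq\; \liminf_{N\to\infty} \tfrac{1}{N}\#\{p \leq N : \scrN_{I_i}(\xi_p,N) = k_i\;\forall i\} \;\leq\; \PP\!\big(\Xi^0(I_i^{+\varepsilon}) = k_i\;\forall i\big),
\end{equation}
and similarly for the $\limsup$. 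Letting $\varepsilon \to 0^+$ and invoking the continuity of the limit in the endpoints of $I_i$ asserted in Theorem \ref{theorem:convergence2} yields the first displayed convergence. Convergence of all mixed moments of $\Xi_N^0$ follows by the same bracketing argument applied to Corollary \ref{corollary:moments000}.

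For the pair correlation, decompose
\begin{equation}
R_{2,N}(I) = \frac{1}{N}\sum_{p=1}^N \scrN_I(\xi_p,N) \;-\; \chi_I(0),
\end{equation}
the subtracted term accounting for the $N$ diagonal self-pairs that appear in $\scrN_I(\xi_p,N)$ precisely when $0\in I$. By continuity of $w$ it suffices to treat $I$ with $0 \notin \overline{I}$, so that $\chi_I(0)=0$. Applying Corollary \ref{corollary:totalcount000} to the rescaled interval $\kappa_\Gamma^{-1}I$ after the same bracketing, and using
\begin{equation}
\int_{\Gamma\backslash G} \scrN_{\kappa_\Gamma^{-1}I,1,\infty}(g)\dd\widehat{\nu}_{1,\infty}(g) = \EE\Xi^0(I) = \int_I w(v)\dd v,
\end{equation}
where the $\delta_0$ term in Proposition \ref{lemma:Faverage2} vanishes because $0\notin I$, we conclude $R_{2,N}(I) \to \int_I w(v)\dd v$. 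The main (and essentially only) obstacle is the small discrepancy between the two scalings $N^{-1}$ and $\kappa_\Gamma^{-1}y_N$; this is resolved cleanly by the endpoint continuity of the limiting distributions already established in Lemma \ref{lemma:regu} and Theorem \ref{theorem:convergence2}.
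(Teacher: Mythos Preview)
Your overall strategy matches the paper's exactly: reduce to Theorem~\ref{theorem:convergence2} and Corollary~\ref{corollary:totalcount000} via the asymptotic $y_N N\to\kappa_\Gamma$ and the endpoint continuity of the limit (Lemma~\ref{lemma:regu}), just as in the proof of Theorem~\ref{theorem:convergencealt}. The paper itself gives no more detail than this.

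There is, however, a slip in your displayed sandwich inequality. From
\[
\scrN_{\kappa_\Gamma^{-1}I_i^{-\varepsilon},1,\infty}(\xi_p,y_N)\ \le\ \scrN_{I_i}(\xi_p,N)\ \le\ \scrN_{\kappa_\Gamma^{-1}I_i^{+\varepsilon},1,\infty}(\xi_p,y_N)
\]
you \emph{cannot} deduce that $\tfrac1N\#\{p:\scrN_{I_i}(\xi_p,N)=k_i\}$ is squeezed between the corresponding probabilities for $I_i^{\pm\varepsilon}$: monotonicity of the counts gives no monotonicity for the level sets $\{\scrN=k\}$. The clean fix is to bound the discrepancy rather than sandwich the level sets. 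For large $N$,
\[
\Big|\tfrac1N\#\{p:\scrN_{I_i}(\xi_p,N)=k_i\ \forall i\}-\tfrac1N\#\{p:\scrN_{\kappa_\Gamma^{-1}I_i,1,\infty}(\xi_p,y_N)=k_i\ \forall i\}\Big|
\le \tfrac1N\#\{p:\scrN_{\kappa_\Gamma^{-1}(I_i^{+\varepsilon}\setminus I_i^{-\varepsilon}),1,\infty}(\xi_p,y_N)\ge 1\ \text{for some }i\}.
\]
Apply Theorem~\ref{theorem:convergence2} to the first term on the right of the inequality (after moving it to the other side) and to the error term; the limit of the error is $\PP\big(\Theta_{1,\infty}^0(B_{\kappa_\Gamma^{-1}(I_i^{+\varepsilon}\setminus I_i^{-\varepsilon}),1,\infty})\ge 1\ \text{for some }i\big)$, which tends to $0$ as $\varepsilon\to 0$ by Lemma~\ref{lemma:regu} (or by Markov and the intensity formula~\eqref{eq:Faverage_eta12}). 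The same correction applies to your moment and pair-correlation arguments; once fixed, the proof is complete and agrees with the paper's.
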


The translation-stationarity of $\Xi$ implies cycle-stationarity of the Palm distributed $\Xi^0$. Furthermore the fact that $\Xi(\{0\}) = 1$ almost surely (this follows from \eqref{eq:Faverage_eta12}) implies that $\Xi^0$ and hence $\Xi$ are simple. For further details on this connection see \cite{Marklof2017} and references therein. We will explore a dynamical interpretation of this in the next section.

\subsection{Entry and return times}
\label{sec:entry}

Athreya and Cheung \cite{AthreyaCheung2014} constructed a Poincar\'e section (different from $\widetilde S_{\alpha,\beta}$) for the horocycle flow on $\mathrm{SL}(2,\mathbb{Z}) \backslash G$, so that the return times yield the gaps in the Farey sequence. 
In fact, the return times for the horocycle flow for our section $\widetilde S_{\alpha,\beta}$ generate the gaps between the roots of quadratic congruences when $D>0$. 
To see this, define for  initial condition $g\in G$ the {\em first entry time} for the section $\widetilde S_{\alpha,\beta}$ by the $\Gamma$-invariant function
\begin{equation}
\tau_{\alpha,\beta}(g) = \inf\{ u>0 : \Gamma g n(u) \in\widetilde S_{\alpha,\beta} \} .
\end{equation}
For $\Gamma g\in \widetilde S_{\alpha,\beta}$, this is also called the {\em first return time}.
In view of the discussion in section \ref{sec:surface}, $\tau_{\alpha,\beta}$ is a measurable function. It is a general fact, see \cite{Marklof2017}, that the ergodicity of the horocycle flow $\Gamma g \mapsto \Gamma g n(u)$ implies that
\begin{equation}
\int_{\Gamma \backslash G}
f(g) \dd\mu_\Gamma(g)=
\frac{1}{\overline\tau_{\alpha,\beta}}\int_{\Gamma \backslash G}
\bigg( \int_0^{\tau_{\alpha,\beta}(g)}  f(g n(u)) \,  \dd u\bigg) \dd \nu_{\alpha, \beta}(g) ,
\end{equation}
with the average return time (mean free path length)
\begin{equation}
\overline\tau_{\alpha,\beta} = \int_{\Gamma \backslash G} \tau_{\alpha,\beta}(g) \dd \nu_{\alpha, \beta}(g) .
\end{equation}
Let us define the $j$th entry time recursively by $\tau_{\alpha,\beta}^{(j)}(g)=\tau_{\alpha,\beta}(g n(\tau_{\alpha,\beta}^{(j-1)}(g)))$ with $\tau_{\alpha,\beta}^{(0)}(g)=0$, and $j\in\ZZ$ (i.e., backward and forward in time). For $g$ random, the point process
\begin{equation}
\sum_{j\in\ZZ} \delta_{\tau_j}, \qquad \tau_j:=\frac{\tau_{\alpha,\beta}^{(j)}(g)}{\overline\tau_{\alpha,\beta}},
\end{equation}
then has, by construction, the same distribution as the process $\Xi$ or $\Xi^0$, if $g$ is a random element in $\Gamma\backslash G$ distributed according to $\mu_\Gamma$ or $\widehat\nu_{\alpha,\beta}$, respectively. In summary, the random processes generated by the real parts of the tops of our geodesic line processes are the same as the entry times of the horocycle flow to the Poincar\'e section $\widetilde S_{\alpha,\beta}$.

\section{Geodesics and roots of quadratic congruences}
\label{sec:roots}

The aim of this section is to express the roots $\frac{\mu}{m}$ in terms of the tops of certain geodesics in the Poincar\'e upper half-plane $\mathbb{H}$. The key result is the following theorem \ref{theorem:topsandroots}. It shows that the results of sections \ref{sec:geodesic} and \ref{sec:conditioned} yield the convergence of all fine-scale statistics for the roots of quadratic congruences, including moments, gap distributions and pair correlation measures. In particular, the limit theorems \ref{theorem:convergencealt} and \ref{theorem:convergencealt2} for the one-dimensional point processes, turn into our main theorems \ref{thm1} and \ref{thm2} in the introduction for the special choice $n=1$.

\begin{theorem}
  \label{theorem:topsandroots}
  Let  $D > 0$ be square-free and $D\not\equiv 1 \bmod 4$. Fix $n > 0$ and $\nu \pmod n$ such that $\nu^2 \equiv D \pmod n$.
  Then there exists a finite set of geodesics $\{\bm{c}_1, \dots, \bm{c}_h\}$, all of which are closed and have the same length in $\Gamma_0(n) \backslash \mathbb{H}$, with the following properties:
  \begin{enumerate}[{\rm (i)}]
  \item For any integer $m > 0$ and $\mu \pmod m$ satisfying $\mu^2 \equiv D \pmod m$, $m \equiv 0 \pmod n$, and $\mu \equiv \nu \pmod n$, there is a unique $l$ and double coset $\Gamma_\infty \gamma \Gamma_{\bm{c}_l} \in \Gamma_\infty \backslash \Gamma_0(n) / \Gamma_{\bm{c}_l}$ such that
    \begin{equation}
      \label{eq:topsandroots}
      z_{\gamma \bm{c}_l} \equiv \frac{\mu}{m} + \ii \frac{\sqrt{D}}{m} \pmod {\Gamma_{\infty}}.
    \end{equation}
  \item Conversely, given $l$ and double coset $\Gamma_\infty \gamma \Gamma_{\bm{c}_l} \in \Gamma_\infty \backslash \Gamma_0(n) / \Gamma_{\bm{c}_l}$ with $\gamma \bm{c}_l$ positively oriented, there exist unique $m > 0$ and $\mu \pmod m$ satisfying $\mu^2 \equiv D \pmod m$, $m \equiv 0\pmod n$, and $\mu \equiv \nu \pmod n$ such that \eqref{eq:topsandroots} holds. 
  \end{enumerate}
\end{theorem}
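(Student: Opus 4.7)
The strategy is to translate the arithmetic bijection into one about semicircular geodesics in $\mathbb{H}$. To a pair $(m, \mu)$ satisfying the hypotheses I associate the positively oriented geodesic $\bm{c}_{m,\mu}$ whose endpoints on $\partial\mathbb{H}$ are the roots $\frac{\mu \pm \sqrt{D}}{m}$ of the primitive integral form $mx^2 - 2\mu xy + \frac{\mu^2-D}{m}y^2$ of discriminant $4D$ (primitivity is a standard consequence of $D$ being squarefree and $D \not\equiv 1 \bmod 4$). Its center is $\frac{\mu}{m}$ and its radius is $\frac{\sqrt{D}}{m}$, so by the definition in section \ref{sec:geodesic} its top is precisely $\frac{\mu}{m} + \ii\frac{\sqrt{D}}{m}$, as required. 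The theorem thus reduces to showing that, as $(m, \mu\bmod m)$ ranges over admissible pairs, the geodesics $\bm{c}_{m,\mu}$ partition into finitely many $\Gamma_0(n)$-orbits, each parametrised bijectively by $\Gamma_\infty\backslash \Gamma_0(n) / \Gamma_{\bm{c}_l}$.

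The central computation is the transformation law under $\gamma = \begin{pmatrix}a&b\\c&d\end{pmatrix} \in \Gamma_0(n)$. Rationalising $\gamma\alpha = \frac{a\alpha+b}{c\alpha+d}$ for $\alpha = \frac{\mu+\sqrt{D}}{m}$ gives $\gamma\alpha = \frac{\mu'+\sqrt{D}}{m'}$ with
\begin{equation*}
m' \;=\; c^2\tfrac{\mu^2-D}{m} + 2cd\,\mu + d^2 m, \qquad \mu' \;=\; ac\tfrac{\mu^2-D}{m} + (ad+bc)\mu + bdm,
\end{equation*}
both integers, with $m' > 0$ precisely when $\gamma\bm{c}_{m,\mu}$ is positively oriented. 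A direct check (or the invariance of the discriminant under the induced action on forms) gives $(\mu')^2 \equiv D \pmod{m'}$, and from $n\mid c$ together with $\gcd(d,n) = 1$ (forced by $ad-bc=1$) one reads off $n\mid m'$ and $\mu' \equiv \mu \pmod n$. Thus the hypothesis set is $\Gamma_0(n)$-stable. Left multiplication by $n(k)\in \Gamma_\infty$ leaves $(m, \mu\bmod m)$ fixed (it shifts $\mu$ by $km$), and $\Gamma_{\bm{c}_l}$ on the right merely reparametrises a single geodesic. This yields a well-defined map from double cosets to pairs; injectivity is the observation that $\gamma \bm{c}_l = \gamma' \bm{c}_l$ as oriented geodesics forces $\gamma^{-1}\gamma' \in \Gamma_{\bm{c}_l}$, while coincidence of tops modulo $\mathbb{Z}$ absorbs the $\Gamma_\infty$ left-coset.

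With the action under control, I choose $\bm{c}_1, \ldots, \bm{c}_h$ to be one positively oriented representative from each $\Gamma_0(n)$-orbit of the $\bm{c}_{m,\mu}$. Finiteness of $h$ reduces to finiteness of the narrow class group of $\mathbb{Z}[\sqrt{D}]$ (which is the maximal order under the hypotheses on $D$), further refined by the finite amount of $(\mathbb{Z}/n\mathbb{Z})$-data encoded in $\nu$. Each $\bm{c}_l$ is closed in $\Gamma_0(n)\backslash \mathbb{H}$ because the matrix corresponding to the fundamental unit $\varepsilon_0 > 1$ of $\mathbb{Z}[\sqrt{D}]$ stabilises $\bm{c}_l$ in $G$; its smallest positive power $\varepsilon = \varepsilon_0^k$ whose conjugate lies in $\Gamma_0(n)$ and acts trivially on the congruence class of $\nu$ then generates $\Gamma_{\bm{c}_l}$. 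The main obstacle I anticipate is showing that the exponent $k$ is independent of $l$, which is precisely what yields the common length $2\log\varepsilon$. This uniformity follows by identifying $\Gamma_{\bm{c}_l}$ intrinsically with the subgroup of units of $\mathbb{Z}[\sqrt{D}]$ that act trivially on a fixed quotient of $(\mathbb{Z}[\sqrt{D}]/n)^\times$ determined by $\nu$, a subgroup manifestly independent of the ideal class representing $\bm{c}_l$. Once this is in hand, assertions (i) and (ii) are simply the surjectivity (every $(m,\mu)$ is $\Gamma_0(n)$-equivalent to a representative with fixed-geodesic class $\bm{c}_l$ by the choice of $\bm{c}_l$) and injectivity (the stabiliser ambiguities precisely absorb $\mu\bmod m$ on the left and the geodesic parametrisation on the right) of the map constructed in the previous paragraph.
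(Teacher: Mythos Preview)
Your route via binary quadratic forms and explicit fractional-linear computations is equivalent to the paper's ideal-theoretic argument (propositions \ref{proposition:rootsideals}--\ref{proposition:geometricinterpretation}); your transformation formulas for $(m',\mu')$ are exactly the content of corollary \ref{corollary:rootsbqfs}, and your check that the conditions $n\mid m$, $\mu\equiv\nu\pmod n$ are $\Gamma_0(n)$-stable matches the paragraph following that corollary. The bijection between admissible pairs and double cosets is set up the same way in both.

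The genuine gap is in the equal-length claim. You correctly isolate the issue as whether the exponent $k$ is independent of $l$, but your proposed resolution --- identifying the $\Gamma_0(n)$-stabiliser of $\bm{c}_l$ with ``the subgroup of units acting trivially on a fixed quotient of $(\mathbb{Z}[\sqrt{D}]/n)^\times$ determined by $\nu$'' --- is asserted rather than proved, and it is not clear how to justify it without essentially carrying out the direct computation anyway. The condition that the conjugated automorph lie in $\Gamma_0(n)$ is a priori a condition depending on the chosen basis (hence on $l$), and showing it reduces to a congruence on the unit alone is precisely the nontrivial point. The paper's lemma \ref{lemma:lift} resolves this more sharply: it shows $k=1$ for every $l$. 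In your language, the automorph of the form $mx^2-2\mu xy+\tfrac{\mu^2-D}{m}y^2$ corresponding to a totally positive unit $t+u\sqrt{D}$ has lower-left entry $\pm mu$, hence lies in $\Gamma_0(m)\subset\Gamma_0(n)$ automatically. Thus every $\bm{c}_l$ has the same length $2\log\varepsilon_0$ in $\Gamma_0(n)\backslash\mathbb{H}$ as in $\mathrm{SL}(2,\mathbb{Z})\backslash\mathbb{H}$; your anticipated congruence obstruction on the unit group does not arise. (A minor point: $\varepsilon_0$ should be the fundamental \emph{totally positive} unit, as orientation must be preserved.)
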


We will see below that the number $h$ of the closed geodesics is equal to the narrow class number $h^+(D)$ of the quadratic number field $\mathbb{Q}(\sqrt{D})$. The narrow class group is defined as the quotient of the group of fractional ideals in $\mathbb{Q}(\sqrt{D})$ modulo the group of fractional ideals of the form $\xi\mathbb{Z}[\sqrt{D}]$ with $\xi\in\mathbb{Q}(\sqrt{D})$ totally positive; here $\mathbb{Z}[\sqrt{D}]$ is a maximal order by our assumption on $D$. Moreover, the stabiliser subgroups $\Gamma_{\bm{c}_l}$ are all conjugate in $G$ to the (projectively) cyclic subgroup generated by $\pm\begin{pmatrix} \varepsilon_0 & 0 \\ 0 & \varepsilon_0^{-1} \end{pmatrix}$, where $\varepsilon_0$ is the generator of the totally positive units of $\mathbb{Z}[\sqrt{D}]$, chosen so that $\varepsilon_0 > 1$.

The remainder of this section is dedicated to the proof of theorem \ref{theorem:topsandroots}. It is based on the amalgamation of two well-known correspondences.
The first is that given an integral binary quadratic form $F(X,Y)$ with positive discriminant, the geodesic in $\mathbb{H}$ connecting the two roots in $\mathbb{R}$ of $F(X,1)$ projects to a closed geodesic in $\mathrm{SL}(2,\mathbb{Z})\backslash \mathbb{H}$.  The second is that if $F(X,Y) = mX^2 - 2\mu XY + cY^2$ has discriminant $4D$, then $\mu^2 \equiv D \pmod m$, so then we see that the sum of the roots of $F(X,1)$ is $2\frac{\mu}{m}$, the difference of the roots is $2 \frac{\sqrt{D}}{m}$, and thus the top of the geodesic corresponding to $F(X,Y)$ is $\frac{\mu}{m} + \ii \frac{\sqrt{D}}{m}$.
In the remainder of this section we re-establish these connections in our own notation, but using ideals in $\mathbb{Z}[\sqrt{D}]$ instead of binary quadratic forms. We then examine them further in the case when $\mathrm{SL}(2,\mathbb{Z})$ is replaced by the congruence subgroup $\Gamma_0(n)$.

\subsection{Roots, ideals and binary quadratic forms}
\label{sec:roots2}

We start with the following proposition, connecting the roots of the congruence $\mu^2 \equiv D \pmod m$ with ideals in $\mathbb{Z}[\sqrt D]$.

\begin{proposition}
  \label{proposition:rootsideals}
  Let $D \in \mathbb{Z}$, $D \neq 1$, be square-free and $I \subset \mathbb{Z}[\sqrt{D}]$ be an ideal not divisible by any rational integers greater than $1$.
  Then there exists a $\mathbb{Z}$-basis of $I$, $\{\beta_1, \beta_2\}$, unique modulo the action of $\Gamma_\infty$, with the form
  \begin{equation}
    \label{eq:rootbasis}
    \begin{pmatrix}
      \beta_1 \\
      \beta_2
    \end{pmatrix}
    =
    \begin{pmatrix}
      1 & \mu \\
      0 & m
    \end{pmatrix}
    \begin{pmatrix}
      \sqrt{D} \\
      1
    \end{pmatrix}
    ,
  \end{equation}
  where $m > 0$ and $\mu^2 \equiv D \pmod {m}$.
  Conversely, given $m$ and $\mu \pmod m$ satisfying $\mu^2 \equiv D \pmod{m}$, the sublattice of $\mathbb{Z}[\sqrt{D}]$ with $\mathbb{Z}$-basis $\{\beta_1, \beta_2\}$ given by \eqref{eq:rootbasis} is an ideal of $\mathbb{Z}[\sqrt{D}]$.
\end{proposition}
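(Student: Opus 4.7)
The plan is to put $I$ in Hermite-type form with respect to the $\mathbb{Z}$-basis $\{1,\sqrt{D}\}$ of $\mathbb{Z}[\sqrt{D}]$, then exploit the ideal property $\sqrt{D}\cdot I \subset I$ both to force the $\sqrt{D}$-coefficient of the first basis vector to equal $1$ and to extract the congruence $\mu^{2} \equiv D \pmod m$. The converse is then a direct verification.

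First I would consider the $\mathbb{Z}$-module map $I \to \mathbb{Z}$ given by $a + b\sqrt{D} \mapsto b$, with image $d\mathbb{Z}$ ($d > 0$) and kernel $I \cap \mathbb{Z} = m\mathbb{Z}$, both nonzero since $I$ is a sublattice of full $\mathbb{Z}$-rank $2$ in $\mathbb{Z}[\sqrt{D}]$; choosing the sign of the generator fixes $m > 0$. Picking any $\mu \in \mathbb{Z}$ with $\mu + d\sqrt{D} \in I$ (unique modulo $m$), a $\mathbb{Z}$-basis of $I$ is then $\{m,\, \mu + d\sqrt{D}\}$, and a general element has $\mathbb{Z}$-coordinates $(am + b\mu,\, bd)$ as $a, b$ vary.

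Next I would invoke $\sqrt{D}\cdot I \subset I$. Expanding $\sqrt{D}\cdot m = m\sqrt{D}$ and $\sqrt{D}(\mu + d\sqrt{D}) = dD + \mu\sqrt{D}$ against the basis $\{m, \mu + d\sqrt{D}\}$ and comparing $\sqrt{D}$-coefficients forces $d \mid m$ and $d \mid \mu$. The hypothesis that $I$ is not contained in $n\mathbb{Z}[\sqrt{D}]$ for any $n > 1$ translates into $\gcd(m,\mu,d) = 1$, which combined with $d \mid m,\mu$ forces $d = 1$. Substituting $d=1$ and comparing rational parts in $D + \mu\sqrt{D} = (D-\mu^{2}) + \mu(\mu + \sqrt{D})$ puts $D - \mu^{2} \in I \cap \mathbb{Z} = m\mathbb{Z}$, i.e.\ $\mu^{2} \equiv D \pmod m$. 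Uniqueness modulo $\Gamma_\infty$ is then the observation that left multiplication of the defining matrix by $\pm n(k)$ shifts $\mu \mapsto \mu + km$, exactly matching the residual freedom in the choice of $\mu$ modulo $m$ (the sign $-I$ is absorbed by the normalisation $m > 0$).

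For the converse, given $m > 0$ and $\mu^{2} \equiv D \pmod m$, I would set $I := m\mathbb{Z} + (\mu + \sqrt{D})\mathbb{Z}$ and check closure under multiplication by $\sqrt{D}$ directly: $\sqrt{D}\cdot m = -\mu m + m(\mu + \sqrt{D}) \in I$, and $\sqrt{D}(\mu + \sqrt{D}) = (D - \mu^{2}) + \mu(\mu + \sqrt{D}) \in I$ using $m \mid D - \mu^{2}$. Closure under multiplication by $\mathbb{Z}$ being obvious, this makes $I$ an ideal. The only step requiring genuine thought is the passage from ``$I$ not divisible by any rational integer $>1$'' to $\gcd(m,\mu,d) = 1$ and the resulting conclusion $d = 1$; the rest is bookkeeping with the Hermite basis and the $\Gamma_\infty$-action.
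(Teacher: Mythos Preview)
Your proof is correct and follows essentially the same approach as the paper: reduce to a Hermite-type basis with respect to $\{1,\sqrt{D}\}$, then use $\sqrt{D}\cdot I\subset I$ to force the leading $\sqrt{D}$-coefficient to be $1$ (via the non-divisibility hypothesis) and to extract $\mu^2\equiv D\pmod m$. The only cosmetic difference is that you phrase the Hermite step via the image/kernel of the projection $a+b\sqrt{D}\mapsto b$, whereas the paper writes down the upper-triangular matrix directly and computes the conjugate of $\big(\begin{smallmatrix}0&D\\1&0\end{smallmatrix}\big)$; these are the same argument in different packaging.
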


\begin{proof}
  Reducing to Hermite normal form, any (rank $2$) lattice $I \subset \mathbb{Z}[\sqrt{D}]$ has a unique $\mathbb{Z}$-basis $\{\beta_1, \beta_2\}$ with the form
  \begin{equation}
    \label{eq:HNFbasis}
    \begin{pmatrix}
      \beta_1 \\
      \beta_2
    \end{pmatrix}
    =
    \begin{pmatrix}
      b_1 & b_2 \\
      0 & b_3
    \end{pmatrix}
    \begin{pmatrix}
      \sqrt{D} \\
      1
    \end{pmatrix}
    ,
  \end{equation}
  where $0 < b_1, b_3$ and $0 \leq b_2 < b_3$.
  We remark that the action of $\Gamma_\infty$ on the left of this basis is to simply add multiples of $b_3$ to $b_2$, and so one can consider only $b_2 \pmod {b_3}$ if one works modulo $\Gamma_\infty$.

  Since $\sqrt{D}$ generates the ring $\mathbb{Z}[\sqrt{D}]$, the sublattice $I$ being an ideal is equivalent to $\sqrt{D}I \subset I$, and since $\sqrt{D}I$ has basis
  \begin{equation}
    \label{eq:squarerootdbasis}
    \begin{pmatrix}
      \sqrt{D}\beta_1 \\
      \sqrt{D}\beta_2
    \end{pmatrix}
    =
    \begin{pmatrix}
      b_1 & b_2 \\
      0 & b_3
    \end{pmatrix}
    \begin{pmatrix}
      0 & D \\
      1 & 0 
    \end{pmatrix}
    \begin{pmatrix}
      b_1 & b_2 \\
      0 & b_3
    \end{pmatrix}
    ^{-1}
    \begin{pmatrix}
      \beta_1 \\
      \beta_2
    \end{pmatrix}
    ,
  \end{equation}
  $I$ being an ideal is equivalent to
  \begin{equation}
    \label{eq:integralmatrix}
    \frac{1}{b_1b_3}
    \begin{pmatrix}
      b_1 & b_2 \\
      0 & b_3
    \end{pmatrix}
    \begin{pmatrix}
      0 & D \\
      1 & 0 
    \end{pmatrix}
    \begin{pmatrix}
      b_3 & -b_2 \\
      0 & b_1
    \end{pmatrix}
    = \frac{1}{b_1b_3}
    \begin{pmatrix}
      b_2b_3 & -b_2^2 + Db_1^2 \\
      b_3^2 & -b_2b_3
    \end{pmatrix}
  \end{equation}
  having integer entries.
  From the first column and last row, we see that for $I$ to be an ideal, it is necessary that $b_1\mid b_2$ and $b_1 \mid b_3$.
  Writing $b_2 = \mu b_1$ and $b_3 = m b_1$, we see from the top right entry of \eqref{eq:integralmatrix} that $\mu^2 \equiv D \pmod {m}$ is necessary for $I$ to be an ideal.
  We note that, if $I$ is an ideal not divisible by rational integers greater than $1$, then $b_1 = 1$, and we obtain the first part of proposition \ref{proposition:rootsideals}.
  For the converse part, we simply note that if $\mu^2 \equiv D \pmod m$, then the above shows that the sublattice $I$ with basis given by \eqref{eq:rootbasis} satisfies $\sqrt{D}I \subset I$ as required. 
\end{proof}

We now produce different bases for ideals $I \subset \mathbb{Z}[\sqrt{D}]$ than those in \eqref{eq:rootbasis}. Any new basis is related to \eqref{eq:rootbasis} by an element of $\mathrm{SL}(2,\mathbb{Z})$.
We fix representatives $I_l$, $1\leq l \leq h^+(D)$, of the narrow class group of $\mathbb{Q}(\sqrt{D})$.
This implies that for any ideal $I \subset \mathbb{Z}[\sqrt{D}]$ there is a totally positive $\xi \in I_l^{-1}$ such that $I = \xi I_l$.
To obtain a basis for $I$ in this way, we first embed $\mathbb{Q}(\sqrt{D})$ in $\mathbb{R}^2$ via $a + b \sqrt{D} \mapsto (a + b\sqrt{D}, a - b\sqrt{D})$, so that $\mathbb{Z}[\sqrt{D}]$ is a lattice in $\mathbb{R}^2$.
(Here we have fixed $\sqrt{D}$ to be the positive square root of $D$.)
We adopt the notation that for $\xi \in \mathbb{Z}[\sqrt{D}]$ we use $\xi^{(1)}$ and $\xi^{(2)}$ to denote these coordinates.
With this notation, in \eqref{eq:rootbasis} we replace
\begin{equation}
  \label{eq:betareplace}
  \begin{pmatrix}
    \beta_1 \\
    \beta_2 
  \end{pmatrix}
  \gets
  \begin{pmatrix}
    \beta_1^{(1)} & \beta_1^{(2)} \\
    \beta_2^{(1)} & \beta_{2}^{(2)}
  \end{pmatrix}
  , \quad
  \begin{pmatrix}
    \sqrt{D} \\
    1
  \end{pmatrix}
  \gets
  \begin{pmatrix}
    \sqrt{D} & - \sqrt{D} \\
    1 & 1
  \end{pmatrix}
  .
\end{equation}
We fix $\mathbb{Z}$-bases $\{\beta_{l1}, \beta_{l2}\}$ for each of the $I_l$ so that
\begin{equation}
  \label{eq:signdetcondition}
  \mathfrak{B}_l = 
  \begin{pmatrix}
    \beta_{l1}^{(1)} & \beta_{l1}^{(2)} \\
    \beta_{l2}^{(1)} & \beta_{l2}^{(2)}
  \end{pmatrix}
\end{equation}
has positive determinant.
Then $\{\xi \beta_{l1}, \xi \beta_{l2}\}$ is a basis for $\xi I_l$, and the corresponding basis matrix is
\begin{equation}
  \label{eq:basismatrix}
  \begin{pmatrix}
    \xi^{(1)} \beta_{l1}^{(1)} & \xi^{(2)}\beta_{l1}^{(2)} \\
    \xi^{(1)} \beta_{l2}^{(1)} & \xi^{(2)} \beta_{l2}^{(2)}
  \end{pmatrix}
  = \mathfrak{B}_l
  \begin{pmatrix}
    \xi^{(1)} & 0 \\
    0 & \xi^{(2)}
  \end{pmatrix}
  .
\end{equation}

For $m > 0$ and $\mu \pmod m$ satisfying $\mu^2 \equiv D \pmod m$, we let $I \subset \mathbb{Z}[\sqrt{D}]$ be the corresponding ideal via proposition \ref{proposition:rootsideals}.
There is some $I_l$ and totally positive $\xi$ such that $I = \xi I_l$, and hence there is a $\gamma \in \mathrm{SL}(2,\mathbb{Z})$ such that
\begin{equation}
  \label{eq:basischange}
  \gamma \mathfrak{B}_l
  \begin{pmatrix}
    \xi^{(1)} & 0 \\
    0 & \xi^{(2)}
  \end{pmatrix}
  =
  \begin{pmatrix}
    1 & \mu \\
    0 & m
  \end{pmatrix}
  \begin{pmatrix}
    \sqrt{D} & - \sqrt{D} \\
    1 & 1
  \end{pmatrix}
  .
\end{equation}
We observe that considering $\mu$ as a residue class modulo $m$ is equivalent to considering the coset $\Gamma_\infty \gamma$.
More significantly, since $\xi$ is only determined up to a multiple of a totally positive unit, we also need only consider the coset $\gamma \Gamma_l$, where
\begin{equation}
  \label{eq:Gammaldef}
  \Gamma_l = \left\{ \pm \mathfrak{B}_l
    \begin{pmatrix}
      \varepsilon_0^k & 0 \\
      0 & \varepsilon_0^{-k}
    \end{pmatrix}
    \mathfrak{B}_l^{-1} : k \in \mathbb{Z} \right\}
\end{equation}
with $\varepsilon_0$, chosen to be greater than $1$, generating the group of totally positive units in $\mathbb{Z}[\sqrt{D}]$.
Thus equation \eqref{eq:basischange} defines a unique double coset $\Gamma_{\infty} \gamma \Gamma_{l}$. Furthermore $\Gamma_l$ satisfies
\begin{equation}
  \label{eq:Gammaldef1}
  \Gamma_l = \mathrm{SL}(2,\mathbb{Z}) \cap \left\{ \pm \mathfrak{B}_l
  \begin{pmatrix}
    t & 0 \\
    0 & t^{-1}
  \end{pmatrix}
  \mathfrak{B}_l^{-1} : t > 0 \right\}
\end{equation}
exactly because $\varepsilon I_l = I_l$ if and only if $\varepsilon$ is a unit in $\mathbb{Z}[\sqrt{D}]$.

Conversely, suppose that there is a totally positive $\xi$ (necessarily in $\mathbb{Q}(\sqrt{D})$), and $\gamma \in \mathrm{SL}(2,\mathbb{Z})$ such that
\begin{equation}
  \label{eq:basischangeconverse}
  \gamma \mathfrak{B}_l
  \begin{pmatrix}
    \xi^{(1)} & 0 \\
    0 & \xi^{(2)}
  \end{pmatrix}
  =
  \begin{pmatrix}
    1 & * \\
    0 & * 
  \end{pmatrix}
  \begin{pmatrix}
    \sqrt{D} & - \sqrt{D} \\
    1 & 1
  \end{pmatrix}
  .
\end{equation}
We claim that then we in fact have
\begin{equation}
  \label{eq:basischangeconverse2}
  \gamma \mathfrak{B}_l
  \begin{pmatrix}
    \xi^{(1)} & 0 \\
    0 & \xi^{(2)}
  \end{pmatrix}
  =
  \begin{pmatrix}
    1 & \mu \\
    0 & m
  \end{pmatrix}
  \begin{pmatrix}
    \sqrt{D} & - \sqrt{D} \\
    1 & 1
  \end{pmatrix}
  ,
\end{equation}
where $m > 0$ and $\mu^2 \equiv D \pmod m$.

To verify this, we first note that if $\xi \in I_l^{-1}$, then the left side of \eqref{eq:basischangeconverse} is the basis for an ideal $I \subset \mathbb{Z}[\sqrt{D}]$, so then \eqref{eq:basischangeconverse2} follows by an application of proposition \ref{proposition:rootsideals}.
Now to show $\xi \in I_l^{-1}$ follows from \eqref{eq:basischangeconverse}, we fix a basis $\{\overline{\beta}_{l1}, \overline{\beta}_{l2} \}$ of $I_l^{-1}$ and write $\xi = c_1 \overline{\beta}_{l1} + c_2 \overline{\beta}_{l2}$.
Our goal then is to show that $c_1$ and $c_2$ are in fact integers.
Since $I_l^{-1}I_l = \mathbb{Z}[\sqrt{D}]$, we can define integers $b_{lijk}$ by
\begin{equation}
  \label{eq:blijkdef}
  \overline{\beta}_{li} \beta_{lj} = b_{lij1} \sqrt{D} + b_{lij2},
\end{equation}
and we let
\begin{equation}
  \label{eq:Blidef}
  B_{li} =
  \begin{pmatrix}
    b_{li11} & b_{li12} \\
    b_{li21} & b_{li22}
  \end{pmatrix}
  .
\end{equation}
It follows that
\begin{equation}
  \label{eq:Bi}
  \mathfrak{B}_l
  \begin{pmatrix}
    \xi^{(1)} & 0 \\
    0 & \xi^{(2)}
  \end{pmatrix}
  = (c_1B_{l1} + c_2B_{l2})
  \begin{pmatrix}
    \sqrt{D} & - \sqrt{D} \\
    1 & 1
  \end{pmatrix}
  ,
\end{equation}
and so, under the assumption \eqref{eq:basischangeconverse}, if $\gamma = \begin{pmatrix} * & * \\ c & d \end{pmatrix}$, then
\begin{equation}
  \label{eq:Bleq}
  B_l
  \begin{pmatrix}
    c_1 \\
    c_2 
  \end{pmatrix}
  =
  \begin{pmatrix}
    d \\
    -c
  \end{pmatrix}
  ,
\end{equation}
where
\begin{equation}
  \label{eq:Bldef}
  B_l =
  \begin{pmatrix}
    b_{l111} & b_{l211} \\
    b_{l121} & b_{l221}
  \end{pmatrix}
  .
\end{equation}
The integrality of $c_1$ and $c_2$ is now an immediate consequence of the following lemma.

\begin{lemma}
  \label{lemma:Bl}
  For all $l$, we have $B_l \in \mathrm{GL}(2, \mathbb{Z})$. 
\end{lemma}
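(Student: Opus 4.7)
The plan is to show directly that $\det B_l = \pm 1$, which combined with the already-noted integrality of the entries gives $B_l \in \mathrm{GL}(2, \mathbb{Z})$. Integrality is immediate from the definition: the entries $b_{lij1}$ are the $\sqrt{D}$-coefficients of $\overline{\beta}_{li}\beta_{lj}$, and these products lie in $I_l^{-1} I_l = \mathbb{Z}[\sqrt{D}]$, so in particular their $\sqrt{D}$-coefficients are integers.

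For the determinant, I would factor $B_l$ into more transparent pieces using the embedding $\xi \mapsto (\xi^{(1)}, \xi^{(2)})$. Since the $\sqrt{D}$-coefficient of any $\xi \in \mathbb{Q}(\sqrt{D})$ equals $(\xi^{(1)} - \xi^{(2)})/(2\sqrt{D})$, one obtains
\begin{equation*}
  B_l = \frac{1}{2\sqrt{D}}
  \begin{pmatrix}
    \beta_{l1}^{(1)} & -\beta_{l1}^{(2)} \\
    \beta_{l2}^{(1)} & -\beta_{l2}^{(2)}
  \end{pmatrix}
  \begin{pmatrix}
    \overline{\beta}_{l1}^{(1)} & \overline{\beta}_{l2}^{(1)} \\
    \overline{\beta}_{l1}^{(2)} & \overline{\beta}_{l2}^{(2)}
  \end{pmatrix},
\end{equation*}
so that $\det B_l = -\det\mathfrak{B}_l \cdot \det\overline{\mathfrak{B}}_l \,/\, (4D)$, where $\overline{\mathfrak{B}}_l$ is the analogous basis matrix for $I_l^{-1}$.

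To evaluate this, I would use the standard fact that under the embedding into $\mathbb{R}^2$ the covolume of any fractional ideal $I$ of $\mathbb{Q}(\sqrt{D})$ equals $N(I) \cdot 2\sqrt{D}$, where $N$ denotes the ideal norm. This follows by verifying the identity directly for $\mathbb{Z}[\sqrt{D}]$ (whose basis matrix $\bigl(\begin{smallmatrix} \sqrt{D} & -\sqrt{D} \\ 1 & 1 \end{smallmatrix}\bigr)$ has determinant $2\sqrt{D}$) and extending by multiplicativity through principal ideals, where $N(\alpha \mathbb{Z}[\sqrt{D}]) = |N_{\mathbb{Q}(\sqrt{D})/\mathbb{Q}}(\alpha)|$. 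Thus $|\det\mathfrak{B}_l| = 2\sqrt{D}\, N(I_l)$ and $|\det\overline{\mathfrak{B}}_l| = 2\sqrt{D}\, N(I_l^{-1})$, and therefore
\begin{equation*}
  |\det B_l| = \frac{(2\sqrt{D})^2 \, N(I_l) N(I_l^{-1})}{4D} = N(I_l I_l^{-1}) = 1.
\end{equation*}
There is really no serious obstacle here; the only step that needs a little care is the covolume--norm identity for non-integral fractional ideals, but this is a standard consequence of multiplicativity of both sides and the principal-ideal case. The rest is a determinant computation.
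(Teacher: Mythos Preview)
Your proof is correct and takes a genuinely different route from the paper's. The paper argues indirectly: it first checks $\det B_l \neq 0$ by showing a nontrivial kernel vector would force $\xi' I_l \subset \mathbb{Z}$, and then shows that $B_l$ sends primitive integer vectors to primitive integer vectors by observing that for $\xi'' = c_1''\overline\beta_{l1}+c_2''\overline\beta_{l2}$ primitive the ideal $\xi'' I_l$ is not divisible by any rational integer, so its Hermite normal form has first column $(1,0)^\top$ by proposition~\ref{proposition:rootsideals}, and the gcd of the first column of $c_1''B_{l1}+c_2''B_{l2}$ (which is exactly $B_l(c_1'',c_2'')^\top$) is preserved under row reduction. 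Your argument instead factors $B_l$ explicitly through the two basis matrices and computes $|\det B_l|$ directly via the covolume--norm identity $|\det \mathfrak{B}| = 2\sqrt{D}\,N(I)$. Your approach is shorter and more transparent once one is willing to invoke this standard fact about lattices in real quadratic fields; the paper's approach stays entirely within the apparatus already built (Hermite normal form and proposition~\ref{proposition:rootsideals}) and avoids appealing to the norm--covolume relation, at the cost of a slightly more roundabout argument.
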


\begin{proof}
  We first observe that if $c_1', c_2'$ were such that
  \begin{equation}
    \label{eq:Blnull}
    B_l
    \begin{pmatrix}
      c_1' \\
      c_2'
    \end{pmatrix}
    =
    \begin{pmatrix}
      0 \\
      0
    \end{pmatrix}
    ,
  \end{equation}
  then, setting $\xi'= c_1'\overline{\beta}_{l1} + c_2'\overline{\beta}_{l2}$, we would have from \eqref{eq:Bi} that $\xi I_l \subset \mathbb{Z}$.
  This is a contradiction if $\xi' \neq 0$, and so we must have $\det B_l \neq 0$.

  Now suppose that $\xi'' = c_1'' \overline{\beta}_{l1} + c_2''\overline{\beta}_{l2}$ is a primitive vector in $I_l^{-1}$, meaning $a^{-1} \xi'' \notin I_l^{-1}$ for all rational integers $a > 1$, or equivalently $\gcd(c_1'', c_2'') = 1$.
  Then the ideal $\xi'' I_l \subset \mathbb{Z}[\sqrt{D}]$ is not divisible by any rational integer greater than $1$.
  Now by the construction of the $B_{li}$, see (\ref{eq:Bi}), the rows of
  \begin{equation}
    \label{eq:Blbasis}
    ( c_1''B_{l1} + c_2'' B_{l2})
    \begin{pmatrix}
      \sqrt{D} & - \sqrt{D} \\
      1 & 1
    \end{pmatrix}
  \end{equation}
  form a basis of $\xi'' I_l$.
  Putting $c_1'' B_{l1} + c_2'' B_{l2}$ into Hermite normal form, proposition \ref{proposition:rootsideals} implies that the entries in the first column of $c_1''B_{l1} + c_2'' B_{l2}$, i.e. the entries of $B_l \begin{pmatrix} c_1'' \\ c_2'' \end{pmatrix}$, are coprime.
  Hence $B_l$ maps pairs of coprime integers to coprime integers, and thus $B_l \in \mathrm{GL}(2, \mathbb{Z})$. 
\end{proof}

We record the above observations in the following proposition.

\begin{proposition}
  \label{proposition:rootsgeodesics}
  Let $m$ be a positive integer and $\mu \pmod m$ satisfy $\mu^2 \equiv D \pmod m$.
  Then there exists a unique $l$, $1\leq l \leq h^+(D)$, and a unique double coset $\Gamma_\infty \gamma \Gamma_{l} \in \Gamma_\infty \backslash \mathrm{SL}(2,\mathbb{Z}) / \Gamma_l$ such that
  \begin{equation}
    \label{eq:rootparametrization}
    \begin{pmatrix}
      1 & \frac{\mu}{m} \\
      0 & 1
    \end{pmatrix}
    \begin{pmatrix}
      1 & 0 \\
      0 & m
    \end{pmatrix}
    \begin{pmatrix}
      \sqrt{D} & -\sqrt{D} \\
      1 & 1
    \end{pmatrix}
    = \gamma \mathfrak{B}_l
    \begin{pmatrix}
      \xi^{(1)} & 0 \\
      0 & \xi^{(2)}
    \end{pmatrix}
  \end{equation}
  for some totally positive $\xi$.

  Conversely, if for $\Gamma_\infty \gamma \Gamma_{l} \in \Gamma_\infty \backslash \mathrm{SL}(2,\mathbb{Z}) / \Gamma_{l}$ there exists a totally positive $\xi$ such that
  \begin{equation}
    \label{eq:rootparametrization2}
    \gamma \mathfrak{B}_l
    \begin{pmatrix}
      \xi^{(1)} & 0 \\
      0 & \xi^{(2)}
    \end{pmatrix}
    =
    \begin{pmatrix}
      1 & * \\
      0 & *
    \end{pmatrix}
    \begin{pmatrix}
      \sqrt{D} & -\sqrt{D} \\
      1 & 1
    \end{pmatrix}
    ,
  \end{equation}
  then in fact
  \begin{equation}
    \label{eq:rootparametrization3}
    \gamma \mathfrak{B}_l
    \begin{pmatrix}
      \xi^{(1)} & 0 \\
      0 & \xi^{(2)}
    \end{pmatrix}
    =
    \begin{pmatrix}
      1 & \frac{\mu}{m} \\
      0 & 1
    \end{pmatrix}
    \begin{pmatrix}
      1 & 0 \\
      0 & m
    \end{pmatrix}    
    \begin{pmatrix}
      \sqrt{D} & -\sqrt{D} \\
      1 & 1
    \end{pmatrix}
  \end{equation}
  with $m$ a positive integer and $\mu \pmod m$ satisfying $\mu^2 \equiv D \pmod m$.
  Moreover such $m$ and $\mu \pmod m$, if they exist, are unique for a given $\Gamma_\infty \gamma \Gamma_l \in \Gamma_\infty \backslash \mathrm{SL}(2,\mathbb{Z}) / \Gamma_{l}$.   
\end{proposition}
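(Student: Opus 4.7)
The plan is to assemble the proposition directly from the preceding discussion and the two technical inputs established above: proposition \ref{proposition:rootsideals}, which identifies $(m,\mu)$ with an ideal $I\subset\ZZ[\sqrt D]$ carrying a distinguished Hermite basis, and lemma \ref{lemma:Bl}, which controls when the totally positive $\xi$ in (\ref{eq:rootparametrization2}) lies in $I_l^{-1}$.

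For the direct statement, I start with $m>0$ and $\mu\pmod m$ satisfying $\mu^2\equiv D\pmod m$. Proposition \ref{proposition:rootsideals} produces an ideal $I\subset\ZZ[\sqrt D]$ not divisible by any rational integer $>1$, with a $\ZZ$-basis whose matrix is (in the embedding into $\RR^2$)
\[
\begin{pmatrix} 1 & \mu \\ 0 & m \end{pmatrix}\begin{pmatrix} \sqrt D & -\sqrt D\\ 1 & 1\end{pmatrix},
\]
uniquely determined modulo the left action of $\Gamma_\infty$. Since the $I_l$ exhaust the narrow ideal classes, there is a unique $l$ and a totally positive $\xi\in I_l^{-1}$ (unique up to a totally positive unit) with $I=\xi I_l$. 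Then $\xi I_l$ has the basis matrix $\mathfrak B_l\,\mathrm{diag}(\xi^{(1)},\xi^{(2)})$, and both bases differ by some $\gamma\in\mathrm{GL}(2,\ZZ)$; the determinants of both sides of (\ref{eq:rootparametrization}) are positive (namely $2\sqrt D\, m$ on the left, and $\det\mathfrak B_l\cdot N(\xi)>0$ on the right since $\xi$ is totally positive and $\mathfrak B_l$ was normalised to have positive determinant), so in fact $\gamma\in\mathrm{SL}(2,\ZZ)$. Ambiguity in $\xi$ by totally positive units translates into right-multiplication of $\gamma$ by an element of $\Gamma_l$, by the definition (\ref{eq:Gammaldef}), while the $\Gamma_\infty$-ambiguity of the Hermite basis translates into left-multiplication of $\gamma$ by $\Gamma_\infty$. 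This gives a well-defined, unique double coset $\Gamma_\infty\gamma\Gamma_l$.

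For the converse, suppose $\gamma\in\mathrm{SL}(2,\ZZ)$ and totally positive $\xi\in\QQ(\sqrt D)$ satisfy (\ref{eq:rootparametrization2}). I first observe that the left-hand side is automatically the basis of some lattice in $\QQ(\sqrt D)$; what must be shown is that this lattice lies inside $\ZZ[\sqrt D]$, i.e.\ that $\xi\in I_l^{-1}$. Writing $\xi=c_1\overline\beta_{l1}+c_2\overline\beta_{l2}$ in a fixed basis of $I_l^{-1}$, the multiplication tables (\ref{eq:blijkdef})--(\ref{eq:Bi}) reduce the question to whether the integer coordinates forced on the left column of (\ref{eq:rootparametrization2}) by $\gamma\in\mathrm{SL}(2,\ZZ)$ constrain $(c_1,c_2)$ to be integers; this is exactly the content of (\ref{eq:Bleq}), and $B_l\in\mathrm{GL}(2,\ZZ)$ by lemma \ref{lemma:Bl}, so $c_1,c_2\in\ZZ$ and $\xi I_l\subset\ZZ[\sqrt D]$. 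Applying proposition \ref{proposition:rootsideals} to the resulting ideal $\xi I_l$ then yields (\ref{eq:rootparametrization3}) with $m>0$ and $\mu^2\equiv D\pmod m$. Uniqueness of $(m,\mu\bmod m)$ attached to the double coset is immediate: the left column of the matrix in (\ref{eq:rootparametrization3}) determines $m$ and $\mu\bmod m$, and modifying $\gamma$ within $\Gamma_\infty\gamma\Gamma_l$ only changes $\mu$ by a multiple of $m$ (left action) or modifies $\xi$ by a totally positive unit (right action), both of which leave $(m,\mu\bmod m)$ unchanged.

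The only step requiring genuine care is the bookkeeping of signs and of which ambiguities absorb into $\Gamma_\infty$ versus into $\Gamma_l$; the main non-formal input, namely $B_l\in\mathrm{GL}(2,\ZZ)$, has already been isolated in lemma \ref{lemma:Bl}, so I do not anticipate a further obstacle.
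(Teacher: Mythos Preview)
Your proposal is correct and follows essentially the same route as the paper: the proposition is presented there as a summary of the preceding discussion, and you have faithfully reassembled that discussion—using proposition~\ref{proposition:rootsideals} to pass between $(m,\mu)$ and an ideal, the narrow class group to obtain $I=\xi I_l$, and lemma~\ref{lemma:Bl} to force $\xi\in I_l^{-1}$ in the converse. Your explicit determinant check to pin down $\gamma\in\mathrm{SL}(2,\ZZ)$ and your bookkeeping of the $\Gamma_\infty$/$\Gamma_l$ ambiguities are exactly the points the paper takes for granted in the line ``and hence there is a $\gamma\in\mathrm{SL}(2,\ZZ)$'' and ``Thus equation \eqref{eq:basischange} defines a unique double coset''.
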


Using the identification of $G / \{\pm I\}$ with the unit tangent bundle of $\mathbb{H}$, we interpret proposition \ref{proposition:rootsgeodesics} geometrically in proposition \ref{proposition:geometricinterpretation} below.
We define oriented geodesics $\bm{c}_l$, $1\leq l \leq h^+(D)$ by
\begin{equation}
  \label{eq:cldef}
  \bm{c}_l = \left\{ (\det \mathfrak{B}_l)^{-\frac{1}{2}}\mathfrak{B}_l
    \begin{pmatrix}
      t & 0 \\
      0 & t^{-1}
    \end{pmatrix}
    : t > 0 \right\}. 
\end{equation}
In this case $\bm{c}_l$ defines a closed geodesic in $\mathrm{SL}(2,\mathbb{Z}) \backslash \mathbb{H}$ and $\Gamma_l$ is the stabilizer in $\mathrm{SL}(2,\mathbb{Z})$ of the geodesic $\bm{c}_l$, i.e. $\Gamma_l = \Gamma_{\bm{c}_l}$ in the general notation introduced in section \ref{sec:geodesic}. 

\begin{proposition}
  \label{proposition:geometricinterpretation}
  Let $m > 0$ and $\mu\pmod m$ satisfy $\mu^2 \equiv D \pmod m$.
  Then there is a unique $l$ and double coset $\Gamma_\infty \gamma \Gamma_{l} \in \Gamma_\infty \backslash \mathrm{SL}(2,\mathbb{Z}) / \Gamma_l$ such that
  \begin{equation}
    \label{eq:geometricinterpretation}
    z_{\gamma \bm{c}_l} \equiv \frac{\mu}{m} + \ii \frac{\sqrt{D}}{m}
  \end{equation}
  modulo $\Gamma_\infty$.
   
  Conversely, given $l$ and a double coset $\Gamma_\infty \gamma \Gamma_l$ such that $\gamma \bm{c}_l$ is positively oriented, there exist unique $m$ and $\mu \pmod m$ satisfying the above.
\end{proposition}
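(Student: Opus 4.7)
The plan is to deduce Proposition \ref{proposition:geometricinterpretation} from the algebraic Proposition \ref{proposition:rootsgeodesics} by reading off the Iwasawa decomposition of both sides of the matrix identity (\ref{eq:rootparametrization}) and matching against the characterization (\ref{eq:horizontal}) of the top of a positively oriented geodesic.

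First I would compute the Iwasawa decomposition of the left-hand side of (\ref{eq:rootparametrization}). Multiplying out the three factors gives a $2\times 2$ matrix whose rows are $(\sqrt{D}+\mu,\ \mu-\sqrt{D})$ and $(m,m)$; a short calculation pulling off $n(\mu/m)$ and $a(\sqrt{D}/m)$ on the left shows that this equals $\sqrt{2m\sqrt{D}}\,n(\mu/m)\,a(\sqrt{D}/m)\,k(\pi/2)$. Next I would rewrite the right-hand side: since $\det \mathfrak{B}_l = d_l > 0$ and $\xi$ is totally positive, both $\mathfrak{B}_l$ and $\mathrm{diag}(\xi^{(1)},\xi^{(2)})$ factor as a positive scalar times an element of $G$, yielding $\sqrt{d_l\,N(\xi)}\,\gamma g_l\,a(\xi^{(1)}/\xi^{(2)})$ with $g_l = d_l^{-1/2}\mathfrak{B}_l$ as in (\ref{eq:cldef}).

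The scalar factors must agree because (\ref{eq:rootparametrization}) is an equality in $\mathrm{GL}(2,\mathbb{R})$; taking determinants forces $d_l\,N(\xi) = 2m\sqrt{D}$, which is essentially the statement that $\xi I_l$ has index $m$ in $\mathbb{Z}[\sqrt{D}]$ with respect to the Minkowski embedding. Dividing through by this common scalar then reduces (\ref{eq:rootparametrization}) to the identity
\begin{equation}
n(\mu/m)\,a(\sqrt{D}/m)\,k(\pi/2) = \gamma g_l\,a(\xi^{(1)}/\xi^{(2)}) \quad \text{in } G.
\end{equation}

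Comparing this with (\ref{eq:horizontal}) and invoking the uniqueness of the Iwasawa decomposition immediately yields $z_{\gamma \bm{c}_l} = \mu/m + \ii\sqrt{D}/m$, together with $s = \xi^{(1)}/\xi^{(2)} > 0$, which in particular forces $\gamma \bm{c}_l$ to be positively oriented. Existence and uniqueness of the data $(l, \Gamma_\infty \gamma \Gamma_l)$, in both directions, then transfer directly from the corresponding statements in Proposition \ref{proposition:rootsgeodesics}, once one notes that left multiplication of $\gamma$ by $\Gamma_\infty$ shifts $\mathrm{Re}(z_{\gamma \bm{c}_l})$ by an integer (matching the ``$\bmod\,\Gamma_\infty$'' in the statement) and that right multiplication by $\Gamma_l = \Gamma_{\bm{c}_l}$ changes only the parameter $s$, not the top of $\gamma \bm{c}_l$. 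The Iwasawa matching and the determinant/norm identity are the only steps requiring any genuine care, and neither poses a real obstacle — the proof is essentially a translation exercise between the language of ideals in $\mathbb{Z}[\sqrt{D}]$ and the geometry of $G/\{\pm I\}$ acting on $\mathbb{H}$.
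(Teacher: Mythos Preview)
Your proposal is correct and follows essentially the same route as the paper: both scale the matrix identity (\ref{eq:rootparametrization}) into $G$, recognise the left side as $n(\mu/m)\,a(\sqrt{D}/m)\,k(\pi/2)$ and the right side as a point $\gamma g_l\,a(s)$ on the geodesic $\gamma\bm{c}_l$, and then read off the top via the Iwasawa decomposition before appealing to Proposition~\ref{proposition:rootsgeodesics}. Your write-up is in fact slightly more explicit than the paper's, in that you quote (\ref{eq:horizontal}) directly and track the scalar factor $\sqrt{2m\sqrt{D}}=\sqrt{d_l N(\xi)}$, whereas the paper re-derives the ``tangent pointing right'' characterisation inside the proof.
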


\begin{proof}
  We observe that in view of \eqref{eq:Iwasawa}, the matrices
  \begin{equation}
    \label{eq:Harrows}
     \begin{pmatrix}
      * & * \\
      0 & * 
    \end{pmatrix}
    \begin{pmatrix}
      \cos \frac{\pi}{4} & -\sin \frac{\pi}{4} \\
      \sin \frac{\pi}{4} & \cos \frac{\pi}{4}
    \end{pmatrix}
  \end{equation}
  are identified in the unit tangent bundle of $\mathbb{H}$ with points having tangent vectors pointing directly to the right, and so the intersection of the set of these matrices with the set of matrices having the form
  \begin{equation}
    \label{eq:geodesic}
    \gamma (\det \mathfrak{B}_l)^{-\frac{1}{2}} \mathfrak{B}_l
    \begin{pmatrix}
      * & 0 \\
      0 & *
    \end{pmatrix}
  \end{equation}
  is exactly the point on the geodesic $\gamma \bm{c}_l$ having horizontal tangent vector pointing to the right, if such a point exists, and otherwise this intersection is empty.
  We note that such a point on $\gamma \bm{c}_l$ exists exactly when $\gamma\bm{c}_l$ is positively oriented and in this case gives the point on $\gamma \bm{c}_l$ with largest imaginary part.
  Scaling and rewriting the right side of \eqref{eq:rootparametrization} as
  \begin{flalign}
    \label{eq:rootparametrizationrewrite}
    & (2\sqrt{D})^{-\frac{1}{2}}
    \begin{pmatrix}
      1 & \frac{\mu}{m} \\
      0 & 1
    \end{pmatrix}
    \begin{pmatrix}
      m^{-\frac{1}{2}} & 0 \\
      0 & m^{\frac{1}{2}}
    \end{pmatrix}
    \begin{pmatrix}
      \sqrt{D} & -\sqrt{D} \\
      1 & 1
    \end{pmatrix}
    \\ \nonumber
    & \quad =
    \begin{pmatrix}
      1 & \frac{\mu}{m} \\
      0 & 1
    \end{pmatrix}
    \begin{pmatrix}
      \left(\frac{\sqrt{D}}{m} \right)^{\frac{1}{2}} & 0 \\
      0 & \left(\frac{m}{\sqrt{D}}\right)^{\frac{1}{2}}
    \end{pmatrix}
    \begin{pmatrix}
      \cos \frac{\pi}{4} & - \sin \frac{\pi}{4} \\
      \sin \frac{\pi}{4} & \cos \frac{\pi}{4}
    \end{pmatrix}
    ,
  \end{flalign}
  we finish the proof by appealing to proposition \ref{proposition:rootsgeodesics}.
\end{proof}

\subsection{Congruence subgroups}
\label{sec:congruence}

So far we have only considered the orbit of geodesics under the action of $\mathrm{SL}(2,\mathbb{Z})$, however, as reflected in theorem \ref{theorem:topsandroots}, the orbits under $\Gamma_0(n)$ also give interesting arithmetic information.
Indeed, by picking cosets $\Gamma_0(n) \gamma_l \in \Gamma_0(n) \backslash \mathrm{SL}(2,\mathbb{Z})$ correctly, one can obtain all $m$ and roots $\mu\pmod m$ satisfying $m \equiv 0 \pmod n$ and $\mu \equiv \nu \pmod n$, for a fixed root $\nu \pmod n$, by considering the $\Gamma_0(n)$-orbits of the geodesics $\gamma_l \bm{c}_l$.

To establish this, we first show that if a root $\mu_1 \pmod {m_1}$ satisfies $m_1 \equiv 0 \pmod n$ and $\mu_1 \equiv \nu \pmod n$, and if $\mu_2 \pmod {m_2}$ is another root in the same $\Gamma_0(n)$-orbit as $\mu_1 \pmod{m_1}$, then $\mu_2 \pmod {m_2}$ also satisfies $m_2 \equiv 0 \pmod n$, $\mu_2 \equiv \nu \pmod n$.
We then show that if roots $\mu_1 \pmod {m_1}$ and $\mu_2 \pmod {m_2}$ in the same $\mathrm{SL}(2,\mathbb{Z})$-orbit satisfy $m_1 \equiv m_2 \equiv 0 \pmod n$ and $\mu_1 \equiv \mu_2 \equiv \nu \pmod n$, then in fact the roots are in the same $\Gamma_0(n)$-orbit.
Finally, for sake of completeness, we show that for every $1\leq l \leq h^+(D)$ there is $\gamma_l = \gamma_l(n, \nu)$ such that the top of $\gamma_l \bm{c}_l$ satisfies the congruence conditions. 

Although it may not be the quickest route to demonstrating the first point, we proceed by recording the following corollary of proposition \ref{proposition:rootsgeodesics}.

\begin{corollary}
  \label{corollary:rootsbqfs}
  As $\gamma = \begin{pmatrix} a & b \\ c & d \end{pmatrix}$ runs through representatives of the double cosets in $\Gamma_\infty \backslash \mathrm{SL}(2,\mathbb{Z}) / \Gamma_{l}$ such that $\gamma \bm{c}_l$ is positively oriented,
  \begin{equation}
    \label{eq:mmubqf}
    \begin{pmatrix}
      \mu \\
      m
    \end{pmatrix}
    = (\det B_l)
    \begin{pmatrix}
      a & b \\
      c & d
    \end{pmatrix}
    \begin{pmatrix}
      b_{l112}b_{l211} - b_{l212}b_{l111} & b_{l112}b_{l221} - b_{l212}b_{l121} \\
      b_{l122}b_{l211} - b_{l111}b_{l222} & b_{l122}b_{l221} - b_{l121}b_{l222}
    \end{pmatrix}
    \begin{pmatrix}
      c \\
      d
    \end{pmatrix}
    ,
  \end{equation}
  parametrise all $\mu \pmod {m}$ satisfying $m > 0$ and $\mu^2 \equiv D \pmod m$.
\end{corollary}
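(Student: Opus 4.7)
The plan is to derive the formula (\ref{eq:mmubqf}) by reading off the second column of the key matrix identity in proposition \ref{proposition:rootsgeodesics}, rewritten using the integer matrices $B_{li}$ introduced in Lemma \ref{lemma:Bl}.

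First, I would start from the identity (\ref{eq:rootparametrization}) in proposition \ref{proposition:rootsgeodesics}, which states that for every $\mu \pmod m$ with $\mu^2 \equiv D \pmod m$, $m > 0$, there is a unique double coset $\Gamma_\infty \gamma \Gamma_l$ with $\gamma \bm{c}_l$ positively oriented such that
\begin{equation*}
\begin{pmatrix} 1 & \mu \\ 0 & m \end{pmatrix} \begin{pmatrix} \sqrt{D} & -\sqrt{D} \\ 1 & 1 \end{pmatrix} = \gamma \mathfrak{B}_l \begin{pmatrix} \xi^{(1)} & 0 \\ 0 & \xi^{(2)} \end{pmatrix},
\end{equation*}
for some totally positive $\xi \in I_l^{-1}$. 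Writing $\xi = c_1 \overline{\beta}_{l1} + c_2 \overline{\beta}_{l2}$ and applying (\ref{eq:Bi}), the right-hand side equals $\gamma (c_1 B_{l1} + c_2 B_{l2}) \begin{pmatrix} \sqrt{D} & -\sqrt{D} \\ 1 & 1 \end{pmatrix}$. Cancelling the common invertible right factor yields the clean identity
\begin{equation*}
\begin{pmatrix} 1 & \mu \\ 0 & m \end{pmatrix} = \gamma (c_1 B_{l1} + c_2 B_{l2}).
\end{equation*}

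Next, I would extract $\mu$ and $m$ by reading off the second column, namely $\begin{pmatrix} \mu \\ m \end{pmatrix} = \gamma \begin{pmatrix} c_1 b_{l112} + c_2 b_{l212} \\ c_1 b_{l122} + c_2 b_{l222} \end{pmatrix}$, and then eliminate $c_1, c_2$ in favour of $c, d$ using (\ref{eq:Bleq}): since $B_l \begin{pmatrix} c_1 \\ c_2 \end{pmatrix} = \begin{pmatrix} d \\ -c \end{pmatrix}$ and $B_l \in \mathrm{GL}(2,\mathbb{Z})$ by Lemma \ref{lemma:Bl} (so $\det B_l = \pm 1$), inversion gives
\begin{equation*}
\begin{pmatrix} c_1 \\ c_2 \end{pmatrix} = (\det B_l) \begin{pmatrix} b_{l221} & -b_{l211} \\ -b_{l121} & b_{l111} \end{pmatrix} \begin{pmatrix} d \\ -c \end{pmatrix}.
\end{equation*}
Substituting into the expression for $\begin{pmatrix} \mu \\ m \end{pmatrix}$ and expanding gives precisely the matrix displayed in (\ref{eq:mmubqf}), with the overall scalar $(\det B_l)$ arising from this inversion. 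Since $\det B_l = \pm 1$, the scalar is a sign that is harmlessly absorbed into the convention $m > 0$ (and the corresponding sign of $\mu \pmod m$).

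Finally, for the parametrisation claim I would simply invoke the bijection in proposition \ref{proposition:rootsgeodesics}: as $\gamma$ ranges over representatives of double cosets $\Gamma_\infty \backslash \mathrm{SL}(2,\mathbb{Z}) / \Gamma_l$ with $\gamma \bm{c}_l$ positively oriented, and $l$ over $1,\dots, h^+(D)$, the pairs $(\mu \bmod m, m)$ with $m > 0$, $\mu^2 \equiv D \pmod m$ are enumerated uniquely. The only expected obstacle is bookkeeping: ensuring that the entries of the $2\times 2$ matrix obtained after substitution match the stated formula exactly, and verifying the sign conventions align with the requirement $m > 0$. Everything else is a direct unpacking of identities already proved earlier in the section.
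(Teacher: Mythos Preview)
Your proposal is correct and follows essentially the same route as the paper's own proof: both start from the identity $\begin{pmatrix} 1 & \mu \\ 0 & m \end{pmatrix} = \gamma(c_1 B_{l1} + c_2 B_{l2})$ obtained via (\ref{eq:Bi}), solve for $(c_1,c_2)$ using $B_l^{-1}\begin{pmatrix} d \\ -c \end{pmatrix}$, and read off the second column. One small remark: your closing comment about the sign $\det B_l$ being ``absorbed into the convention $m>0$'' is unnecessary---the formula (\ref{eq:mmubqf}) with the factor $(\det B_l)$ is exact as written, not merely up to sign.
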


This corollary recovers the classic parametrisation of roots of $\mu^2 \equiv D \pmod {m}$ in terms of binary quadratic forms with discriminant $4D$.
Indeed, \eqref{eq:mmubqf} gives $m$ as a binary quadratic form evaluated at $c, d$, and one can check that this form has discriminant $4D$.
This binary quadratic form automatically comes with a Bhargava cube, see \cite{Bhargava2004}, given by the matrices $B_{l1}, B_{l2}$, which is associated with the ideals $\mathbb{Z}[\sqrt{D}], I_l, I_l^{-1}$.
We also note that \eqref{eq:mmubqf} gives a criterion on $\gamma$ for the geodesic $\gamma \bm{c}_l$ to be positively oriented, namely that the $m$ given by \eqref{eq:mmubqf} is in fact positive.

\begin{proof}
  We recall that for $\xi = c_1 \overline{\beta}_{l1} + c_2 \overline{\beta}_{l2}$, 
  \begin{equation}
    \label{eq:Bi1}
    \gamma \mathfrak{B}_l
    \begin{pmatrix}
      \xi^{(1)} & 0 \\
      0 & \xi^{(2)}
    \end{pmatrix}
    = \gamma (c_1 B_{l1} + c_2 B_{l2})
    \begin{pmatrix}
      \sqrt{D} & - \sqrt{D} \\
      1 & 1
    \end{pmatrix}
    ,
  \end{equation}
  and so \eqref{eq:rootparametrization}, \eqref{eq:rootparametrization3} give
  \begin{equation}
    \label{eq:explicitroot}
    \begin{pmatrix}
      1 & \mu \\
      0 & m
    \end{pmatrix}
    = \gamma (c_1 B_{l1} + c_2 B_{l2}).
  \end{equation}
  Equating the first columns, we see that
  \begin{equation}
    \label{eq:c1c2}
    \begin{pmatrix}
      c_1 \\
      c_2
    \end{pmatrix}
    = B_{l}^{-1}
    \begin{pmatrix}
      d \\
      -c
    \end{pmatrix}
    ,
  \end{equation}
  from which \eqref{eq:mmubqf} follows from the second column of \eqref{eq:explicitroot}.
\end{proof}

Having \eqref{eq:mmubqf}, we immediately see that if $\gamma$ is replaced by $\gamma_1 \gamma$ for some $\gamma_1 \in \Gamma_0(n)$, $m \pmod n$ is replaced by a multiple, and, if $m \equiv 0 \pmod n$, $\mu \pmod n$ is unchanged.
Hence the conditions $m \equiv 0 \pmod n$ and $\mu \equiv \nu \pmod n$ are invariant under $\Gamma_0(n)$, thus finishing the first part of our examination of the $\Gamma_0(n)$ orbits.
The second part is completed by the following lemma.

\begin{lemma}
  \label{lemma:congruenceorbit}
  Let $m_1$, $m_2$ be positive integers and $\mu_1 \pmod {m_1}$, $\mu_2\pmod {m_2}$ satisfy $\mu_1^2 \equiv D \pmod {m_1}$, $\mu_2^2 \equiv D \pmod {m_2}$.
  Suppose that $m_1 \equiv m_2 \equiv 0 \pmod n$ and that $\mu_1 \equiv \mu_2 \equiv \nu \pmod n$ where $\nu\pmod n$ is a fixed root of $\nu^2 \equiv D \pmod n$.
  Then, if there exists $\gamma \in \mathrm{SL}(2,\mathbb{Z})$ such that
  \begin{equation}
    \label{eq:congruenceorbit}
    \begin{pmatrix}
      1 & \mu_1 \\
      0 & m_1
    \end{pmatrix}
    \begin{pmatrix}
      \sqrt{D} & -\sqrt{D} \\
      1 & 1 
    \end{pmatrix}
    \begin{pmatrix}
      \xi^{(1)} & 0 \\
      0 & \xi^{(2)}
    \end{pmatrix}
    = \gamma
    \begin{pmatrix}
      1 & \mu_2 \\
      0 & m_2
    \end{pmatrix}
    \begin{pmatrix}
      \sqrt{D} & -\sqrt{D} \\
      1 & 1 
    \end{pmatrix}
  \end{equation}
  for some totally positive $\xi$, then in fact $\gamma \in \Gamma_0(n)$.
\end{lemma}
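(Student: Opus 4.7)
The plan is to parametrise $\xi$ via a $\mathbb{Z}$-basis of the inverse fractional ideal $I_1^{-1}$, read off the entries of $\gamma$ explicitly in terms of this parametrisation, and then deduce $n \mid c_\gamma$ from the integrality of the remaining entries.

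Concretely, \eqref{eq:congruenceorbit} amounts to the ideal identity $\xi I_1 = I_2$ in $\mathbb{Z}[\sqrt{D}]$, where $I_i = (m_i, \sqrt{D}+\mu_i)$ is the ideal attached to $\mu_i \pmod{m_i}$ via Proposition \ref{proposition:rootsideals}. Integrality of $I_2$ forces $\xi \in I_1^{-1}$, and this inverse fractional ideal admits the $\mathbb{Z}$-basis $\{1,\,(\sqrt{D}-\mu_1)/m_1\}$ since $I_1^{-1} = \bar I_1/\mathrm{Nm}(I_1)$. Writing $\xi = u + v(\sqrt{D}-\mu_1)/m_1$ with $u,v \in \mathbb{Z}$ and equating coefficients of $1$ and $\sqrt{D}$ on both sides of \eqref{eq:congruenceorbit} yields
\begin{equation*}
a_\gamma = u,\qquad c_\gamma = v,\qquad d_\gamma = \frac{m_1 u - v(\mu_1+\mu_2)}{m_2},\qquad b_\gamma = \frac{u(\mu_1-\mu_2) - kv}{m_2},
\end{equation*}
where $k = (\mu_1^2 - D)/m_1 \in \mathbb{Z}$.

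Reducing the integrality of $d_\gamma$ and $b_\gamma$ modulo $n$ and using $n \mid m_1, m_2$ together with $\mu_1 \equiv \mu_2 \equiv \nu \pmod n$ gives the two congruences $2\nu v \equiv 0 \pmod n$ and $kv \equiv 0 \pmod n$. Provided $\gcd(n, 2\nu, k) = 1$, B\'ezout's identity then immediately yields $n \mid v = c_\gamma$, which is precisely the condition $\gamma \in \Gamma_0(n)$.

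The remaining and principal task is to verify $\gcd(n, 2\nu, k) = 1$, for which the hypotheses that $D$ is squarefree and $D \not\equiv 1 \pmod 4$ enter essentially. For an odd prime $p \mid n$ with $p \nmid \nu$ the factor $2\nu$ is already a $p$-adic unit. For an odd prime $p \mid \gcd(n,\nu)$, the congruence $\nu^2 \equiv D \pmod{p^{v_p(n)}}$ combined with $v_p(D) \leq 1$ (squarefree) forces $v_p(n) = v_p(m_1) = 1$, whence $v_p(\mu_1^2 - D) = v_p(D) = 1$ and therefore $v_p(k) = 0$. For $p = 2$, the hypothesis $D \not\equiv 1 \pmod 4$ rules out $v_2(n) \geq 2$ (as $\nu^2 \bmod 4 \in \{0,1\}$ while $D \equiv 2$ or $3 \pmod 4$), and the identical valuation count then yields $v_2(k) = 0$. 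Thus $\gcd(n, 2\nu, k)$ has no prime divisor, completing the argument.
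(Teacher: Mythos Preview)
Your proof is correct but follows a different route from the paper's.  The paper argues ideal-theoretically: writing $I$ for the ideal attached to $\nu\pmod n$, the factorisation
\[
\begin{pmatrix} 1 & \mu_1 \\ 0 & m_1 \end{pmatrix}
=
\begin{pmatrix} 1 & k \\ 0 & n \end{pmatrix}
\begin{pmatrix} 1 & \nu_1 \\ 0 & n_1 \end{pmatrix},
\qquad n_1 = m_1/n,
\]
identifies the second factor with a basis of the integral ideal $I_1 I^{-1}$.  Since $\xi I_1 I^{-1} = I_2 I^{-1} \subset \mathbb{Z}[\sqrt D]$, multiplication by $\xi$ is represented on these bases by an \emph{integer} matrix $A$, and unwinding gives
\[
\begin{pmatrix} 1 & k \\ 0 & n \end{pmatrix}^{-1}\gamma \begin{pmatrix} 1 & \mu_2 \\ 0 & m_2 \end{pmatrix} = A,
\]
whose $(2,1)$ entry is $c_\gamma/n$; integrality of $A$ then forces $n\mid c_\gamma$ in one stroke.

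By contrast, you compute the entries of $\gamma$ explicitly from a basis of $I_1^{-1}$, reduce the integrality of $b_\gamma,d_\gamma$ modulo $n$ to obtain $n\mid 2\nu c_\gamma$ and $n\mid k c_\gamma$, and then verify $\gcd(n,2\nu,k)=1$ prime by prime.  This last verification is where the squarefreeness of $D$ and the condition $D\not\equiv 1\pmod 4$ enter visibly; in the paper's argument the same hypotheses are used, but only implicitly, through the Dedekind property of $\mathbb{Z}[\sqrt D]$ (needed for $II^{-1}=\mathbb{Z}[\sqrt D]$ and for $I_1 I^{-1}$ to be integral and primitive).  Your approach is more hands-on and makes the role of the discriminant hypotheses transparent, at the cost of a case analysis; the paper's is slicker and avoids the analysis entirely by leveraging the ideal factorisation directly.
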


\begin{proof}
  Let $I_1$, $I_2$ be the ideals corresponding to $\mu_1 \pmod {m_1}$, $\mu_2 \pmod {m_2}$.
  Then since
  \begin{equation}
    \label{eq:idealcontain}
    \begin{pmatrix}
      1 & \mu_j \\
      0 & m_j
    \end{pmatrix}
    =
    \begin{pmatrix}
      1 & k_j \\
      0 & \frac{m_j}{n}
    \end{pmatrix}
    \begin{pmatrix}
      1 & \nu \\
      0 & n
    \end{pmatrix}
  \end{equation}
  for some integers $k_j$, we have $I_1, I_2 \subset I$, where $I$ is the ideal corresponding to $\nu \pmod {n}$.
  Similarly, we have
  \begin{equation}
    \label{eq:idealcontain1}
    \begin{pmatrix}
      1 & \mu_1 \\
      0 & m_1
    \end{pmatrix}
    =
    \begin{pmatrix}
      1 & k \\
      0 & n
    \end{pmatrix}
    \begin{pmatrix}
      1 & \nu_1 \\
      0 & n_1
    \end{pmatrix}
    ,
  \end{equation}
  where $n_1 = \frac{m_1}{n}$ and $\nu_1 \pmod {n_1}$ is the root corresponding to the ideal $I_1 I^{-1}$.
  Now \eqref{eq:congruenceorbit} implies that $\xi I_1 = I_2$, so $\xi I_1 I^{-1} = I_2 I^{-1} \subset \mathbb{Z}[\sqrt{D}]$, whence
  \begin{equation}
    \label{eq:Adef}
    \begin{pmatrix}
      1 & \nu_1 \\
      0 & n_1
    \end{pmatrix}
    \begin{pmatrix}
      \sqrt{D} & - \sqrt{D} \\
      1 & 1
    \end{pmatrix}
    \begin{pmatrix}
      \xi^{(1)} & 0 \\
      0 & \xi^{(2)}
    \end{pmatrix}
    = A
    \begin{pmatrix}
      \sqrt{D} & -\sqrt{D} \\
      1 & 1
    \end{pmatrix}
  \end{equation}
  for some integer matrix $A$.
  Rewriting the left side of \eqref{eq:Adef} as
  \begin{flalign}
    \label{eq:Adefrewrite}
    &
    \begin{pmatrix}
      1 & k \\
      0 & n
    \end{pmatrix}
    ^{-1}
    \begin{pmatrix}
      1 & \mu_1 \\
      0 & m_1
    \end{pmatrix}
    \begin{pmatrix}
      \sqrt{D} & - \sqrt{D} \\
      1 & 1
    \end{pmatrix}
    \begin{pmatrix}
      \xi^{(1)} & 0 \\
      0 & \xi^{(2)}
    \end{pmatrix}
    \\ \nonumber
    & \quad = 
    \begin{pmatrix}
      1 & k \\
      0 & n
    \end{pmatrix}
    ^{-1} \gamma
    \begin{pmatrix}
      1 & \mu_2 \\
      0 & m_2
    \end{pmatrix}
    \begin{pmatrix}
      \sqrt{D} & - \sqrt{D} \\
      1 & 1
    \end{pmatrix}
    ,
  \end{flalign}
  we obtain
  \begin{equation}
    \label{eq:gammaeq}
    \begin{pmatrix}
      1 & k \\
      0 & n
    \end{pmatrix}
    ^{-1} \gamma
    \begin{pmatrix}
      1 & \mu_2 \\
      0 & m_2
    \end{pmatrix}
    = A.
  \end{equation}
  Since $A$ has integer entries, an inspection of the $(2,1)$ entry of \eqref{eq:gammaeq} shows that $\gamma \in \Gamma_0(n)$. 
\end{proof}

\begin{lemma}
  \label{lemma:orbitexist}
  For each $1\leq l \leq h^+(D)$, there exists $\gamma_l = \gamma_l(n, \nu)$ such that
  \begin{equation}
    \label{eq:orbitexist}
    z_{\gamma_l \bm{c}_l} = \frac{\mu}{m} + \mathrm{i} \frac{\sqrt{D}}{m}
  \end{equation}
  with $m \equiv 0 \pmod n$ and $\mu \equiv \nu \pmod n$. 
\end{lemma}

\begin{proof}
  By proposition \ref{proposition:rootsideals}, there exists an ideal $I_0 \subset \mathbb{Z}[\sqrt{D}]$ corresponding to the root $\nu \pmod n$.
  As $I_1 \subset \mathbb{Z}[\sqrt{D}]$ ranges over the narrow ideal class of $I_lI_0^{-1}$, the integers $N(I_1)$ are the values of an integer binary quadratic form, see corollary \ref{corollary:rootsbqfs}, and so we may choose $I_1$ with $\gcd(n, N(I_1)) = 1$ and without rational integer divisors.
  We let $\mu_1 \mod m_1$ denote the root corresponding to $I_1$ via proposition \ref{proposition:rootsideals}.

  Letting $m_1 \overline{m}_1 + n \overline{n_1} = 1$ and setting $\mu = \nu m_1\overline{m}_1 + \mu_1 n \overline{n}$, we compute
  \begin{multline}
    \label{eq:I0I1}
    I_0 I_1 = (n, \nu + \sqrt{D})(m_1, \mu_1 + \sqrt{D}) = (m_1 n, m_1 \nu + m_1 \sqrt{D}, n \mu_1 + n \sqrt{D}, \mu_1 \nu + D + (\mu_1 + \nu)\sqrt{D}) \\
    = (m_1 n, \mu + \sqrt{D}, \mu_1 \nu + D + (\mu_1 + \nu) \sqrt{D}) = (m_1 n, \mu + \sqrt{D}),
  \end{multline}
  where the last equality follows from $\mu_1 \nu + D - (\mu_1 + \nu)\mu \equiv D - \mu^2 \equiv 0$ modulo both $n$ and $m_1$.
  The ideal $I = I_0 I_1$ therefore corresponds to the root $\mu \pmod m_1 n$ and is in the narrow ideal class of $I_l$.
  Equation (\ref{eq:orbitexist}) then follows by the machinery of section \ref{sec:roots2}. 
\end{proof}

Corollary \ref{corollary:rootsbqfs}, lemma \ref{lemma:congruenceorbit}, and lemma \ref{lemma:orbitexist} show that for a fixed $\nu \pmod n$ and each $l$, $1\leq l \leq h^+(D)$, there exists a unique coset $\Gamma_0(n) \gamma_l \in \Gamma_0(n) \backslash \mathrm{SL}(2,\mathbb{Z})$ such that roots $\mu \pmod m$ with $m \equiv 0 \pmod n$ and $\mu \equiv \nu \pmod n$ are found in the positively oriented tops of the geodesics $\gamma \gamma_l \bm{c}_l$ as $\gamma$ ranges over $\Gamma_0(n)$ as in proposition \ref{proposition:geometricinterpretation}.
This is the bulk of theorem \ref{theorem:topsandroots}; it only remains to show that the closed geodesics obtained from projecting $\gamma_l \bm{c}_l$ to $\Gamma_0(n) \backslash \mathbb{H}$ all have the same length.
In fact we show that the closed geodesic in $\Gamma_0(n) \backslash \mathbb{H}$ coming from $\gamma_l\bm{c}_l$ has the same length as the closed geodesic in $\mathrm{SL}(2,\mathbb{Z}) \backslash \mathbb{H}$ coming from $\bm{c}_l$.
This is not a trivial observation; in general the splitting behaviour of geodesics in finite covering surfaces is quite complicated, being analogous to the splitting of primes in number fields, see \cite{Sarnak1980}.

Understanding the length of $\gamma_l \bm{c}_l$ in $\Gamma_0(n) \backslash \mathbb{H}$ is equivalent to understanding the stabilizer of $\gamma_l\bm{c}_l$ in $\Gamma_0(n)$.
It is clear that there is some positive integer $k$ such that
\begin{equation}
  \label{eq:congruencestabilizer}
  \gamma_l \mathfrak{B}_l
  \begin{pmatrix}
    \varepsilon_0^{k} & 0 \\
    0 & \varepsilon_0^{-k}
  \end{pmatrix}
  \mathfrak{B}_l^{-1} \gamma_l^{-1} \in \Gamma_0(n),
\end{equation}
and the smallest such $k$ gives the generator of the stabilizer.
Moreover, the length of $\gamma \bm{c}_l$ in $\Gamma_0(n) \backslash \mathbb{H}$ has length this $k$ times the length of $\bm{c}_l$ in $\mathrm{SL}(2,\mathbb{Z}) \backslash \mathbb{H}$.
The following lemma, which is essentially a corollary of lemma \ref{lemma:congruenceorbit}, shows that in our setting we have in fact $k = 1$.

\begin{lemma}
  \label{lemma:lift}
  For all $l$, we have
  \begin{equation}
    \label{eq:lift}
    \gamma_l \mathfrak{B}_l
    \begin{pmatrix}
      \varepsilon_0 & 0 \\
      0 & \varepsilon_0^{-1} 
    \end{pmatrix}
    \mathfrak{B}_l^{-1} \gamma_l^{-1} \in \Gamma_0(n).
  \end{equation}
\end{lemma}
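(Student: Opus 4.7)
The plan is to compute the matrix $A := \gamma_l \mathfrak{B}_l \, \mathrm{diag}(\varepsilon_0, \varepsilon_0^{-1})\, \mathfrak{B}_l^{-1} \gamma_l^{-1}$ explicitly as a $2 \times 2$ integer matrix and verify that its bottom-left entry is divisible by $n$. The key observation is that $A$ is nothing but the matrix of multiplication by $\varepsilon_0$ on the ideal $I_l \subset \mathbb{Z}[\sqrt{D}]$ with respect to the integral basis of $I_l$ whose two embeddings into $\mathbb{R}^2$ form the rows of $\gamma_l \mathfrak{B}_l$: a row $v = (v^{(1)}, v^{(2)})$ representing an element of $\mathbb{Z}[\sqrt{D}]$ is sent by multiplication by $\varepsilon_0$ to $v \,\mathrm{diag}(\varepsilon_0, \varepsilon_0^{-1})$, using that $\varepsilon_0$ is totally positive and hence has norm $\varepsilon_0 \varepsilon_0^{-1} = +1$.

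To bring in the congruence hypothesis, I pair $\gamma_l$ with the root $\mu \pmod m$ given by proposition \ref{proposition:rootsgeodesics}: by the very choice of $\gamma_l$ this root satisfies $m \equiv 0 \pmod n$ and $\mu \equiv \nu \pmod n$. The defining identity \eqref{eq:basischange} then exhibits the Hermite-normal-form basis $\{\sqrt{D} + \mu,\, m\}$ of the ideal $I_\mu = (\sqrt{D} + \mu,\, m)$ as $\xi$ times the basis $\gamma_l \mathfrak{B}_l$ of $I_l$, for a totally positive $\xi$ satisfying $I_\mu = \xi I_l$. Since multiplication by $\xi$ commutes with multiplication by $\varepsilon_0$, the same matrix $A$ also represents multiplication by $\varepsilon_0$ on $I_\mu$ with respect to the basis $\{\sqrt{D} + \mu,\, m\}$.

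The last step is a direct computation. Writing $\varepsilon_0 = e_1 + e_2 \sqrt{D}$ with $e_1, e_2 \in \mathbb{Z}$ and $e_1^2 - e_2^2 D = +1$, and using $\mu^2 \equiv D \pmod m$ to guarantee integrality,
\begin{align*}
\varepsilon_0 (\sqrt{D} + \mu) &= (e_1 + e_2 \mu)(\sqrt{D} + \mu) \;-\; \tfrac{e_2(\mu^2 - D)}{m} \cdot m, \\
\varepsilon_0 \cdot m &= (m e_2)(\sqrt{D} + \mu) \;+\; (e_1 - e_2 \mu) \cdot m,
\end{align*}
which gives
\begin{equation*}
A \;=\; \begin{pmatrix} e_1 + e_2 \mu & -e_2(\mu^2 - D)/m \\ m e_2 & e_1 - e_2 \mu \end{pmatrix}.
\end{equation*}
The bottom-left entry is $m e_2$, divisible by $n$ since $n \mid m$, so $A \in \Gamma_0(n)$ as claimed (one may also verify $\det A = e_1^2 - e_2^2 D = 1$ as a sanity check). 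The main substance of the argument lies in the conceptual identification of $A$ with multiplication by $\varepsilon_0$ on $I_\mu$ expressed in the Hermite basis: this identification is precisely what converts the hypothesis $n \mid m$ into the desired congruence on $A$, in the same spirit in which the proof of lemma \ref{lemma:congruenceorbit} extracts divisibility by $n$ from the $(2,1)$ entry of an analogous integer matrix.
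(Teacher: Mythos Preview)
Your proof is correct and follows essentially the same approach as the paper's: both recognise the matrix in question as the conjugate of $\mathrm{diag}(\varepsilon_0,\varepsilon_0^{-1})$ by the Hermite basis matrix $\begin{pmatrix}1&\mu\\0&m\end{pmatrix}\begin{pmatrix}\sqrt{D}&-\sqrt{D}\\1&1\end{pmatrix}$ (using that diagonal matrices commute to pass from $\mathfrak{B}_l$ to this basis via $\xi$), and then read off that the $(2,1)$ entry is $m$ times an integer. The only cosmetic difference is that the paper leaves the matrix implicit and extracts the $(2,1)$ entry directly from the equation $\gamma_0\begin{pmatrix}1&\mu\\0&m\end{pmatrix}=\begin{pmatrix}1&\mu\\0&m\end{pmatrix}A$ with $A$ integral, whereas you compute the conjugate explicitly and obtain the same entry $me_2$.
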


\begin{proof}
  We let $\gamma_0$ be the matrix on the left side of \eqref{eq:lift}, so $\gamma_0$ satisfies
  \begin{equation}
    \label{eq:gamma0def}
    \gamma_0 \gamma_l \mathfrak{B}_l = \gamma_l \mathfrak{B}_l
    \begin{pmatrix}
      \varepsilon_0 & 0 \\
      0 & \varepsilon_0^{-1}
    \end{pmatrix}
    .
  \end{equation}
  From the way $\gamma_l$ was chosen, for some totally positive $\xi$ we have
  \begin{equation}
    \label{eq:gammaldef}
    \gamma_l \mathfrak{B}_l
    \begin{pmatrix}
      \xi^{(1)} & 0 \\
      0 & \xi^{(2)}
    \end{pmatrix}
    =
    \begin{pmatrix}
      1 & \mu \\
      0 & m
    \end{pmatrix}
    \begin{pmatrix}
      \sqrt{D} & -\sqrt{D} \\
      1 & 1
    \end{pmatrix}
  \end{equation}
  with $m \equiv 0 \pmod n$.
  Since the diagonal matrices commute, we put \eqref{eq:gammaldef} into \eqref{eq:gamma0def} to obtain
  \begin{equation}
    \label{eq:lift1}
    \gamma_0
    \begin{pmatrix}
      1 & \mu \\
      0 & m 
    \end{pmatrix}
    \begin{pmatrix}
      \sqrt{D} & -\sqrt{D} \\
      1 & 1
    \end{pmatrix}
    =
    \begin{pmatrix}
      1 & \mu \\
      0 & m
    \end{pmatrix}
    \begin{pmatrix}
      \sqrt{D} & -\sqrt{D} \\
      1 & 1
    \end{pmatrix}
    \begin{pmatrix}
      \varepsilon_0 & 0 \\
      0 & \varepsilon_0^{-1}
    \end{pmatrix}
    ,
  \end{equation}
  so lemma \ref{lemma:congruenceorbit} in the special case $m_1 = m_2 = m$ and $\mu_1 = \mu_2 = \mu$ gives $\gamma_0 \in \Gamma_0(n)$. 
\end{proof}

This completes the proof of theorem \ref{theorem:topsandroots}.

\subsection{Negative $D$}\label{sec:negative}

For completeness, let us now briefly discuss the case of negative $D$. Here the situation is similar as for positive $D$; in fact simpler, since the tops of geodesics are replaced by $\Gamma$-orbits of points in $\HH$, whose statistical distribution is well understood. The results mirroring those in section \ref{sec:geodesic} are proved in \cite{MarklofVinogradov2018}, and the pair correlation density is calculated in \cite{KelmerKontorovich2015,MarklofVinogradov2018}. For further related studies see also  \cite{BocaPasolPopaZaharescu2014,BocaPopaZaharescu2014,Lutsko2020,RisagerRudnick2009,RisagerSodergren2017} and references therein.

Following \cite{Bykovskii1984,DukeFriedlanderIwaniec1995}, the key observation is the following. Denote by $\Gamma_z$ the stabiliser in $\Gamma_0(n)$ of a point $z\in\HH$.

\begin{theorem}
  \label{theorem:pointsandroots}
  Let  $D < 0$ square-free and $D\not\equiv 1 \bmod 4$. Fix $n > 0$ and $\nu \pmod n$ such that $\nu^2 \equiv D \pmod n$.
  Then there exist $z_1,\ldots,z_h\in\HH$ with the following properties:
  \begin{enumerate}[{\rm (i)}]
  \item For any integer $m > 0$ and $\mu \pmod m$ satisfying $\mu^2 \equiv D \pmod m$, $m \equiv 0 \pmod n$, and $\mu \equiv \nu \pmod n$, there is a unique $l$ and double coset $\Gamma_\infty \gamma \Gamma_{z_l} \in \Gamma_\infty \backslash \Gamma_0(n) / \Gamma_{z_l}$ such that
    \begin{equation}
      \label{eq:pointssandroots}
      \gamma z_l \equiv \frac{\mu}{m} + \ii \frac{\sqrt{-D}}{m} \pmod {\Gamma_{\infty}}.
    \end{equation}
  \item Conversely, given $l$ and double coset $\Gamma_\infty \gamma \Gamma_{z_l} \in \Gamma_\infty \backslash \Gamma_0(n) / \Gamma_{z_l}$ , there exist unique $m > 0$ and $\mu \pmod m$ satisfying $\mu^2 \equiv D \pmod m$, $m \equiv 0\pmod n$, and $\mu \equiv \nu \pmod n$ such that \eqref{eq:topsandroots} holds. 
  \end{enumerate}
\end{theorem}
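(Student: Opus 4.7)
The plan is to mirror the proof of theorem \ref{theorem:topsandroots} from sections \ref{sec:roots2} and \ref{sec:congruence}, replacing the closed geodesics $\bm{c}_l$ by points $z_l\in\HH$ attached to representatives of the ideal class group of $\QQ(\sqrt{D})$. As before, the argument divides into three stages: an ideal-theoretic description of the roots, a geometric realisation via chosen representative ideals, and an extension to the congruence subgroup $\Gamma_0(n)$.

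To begin, proposition \ref{proposition:rootsideals} applies verbatim for negative $D$, as its proof uses only that $\sqrt{D}$ generates $\ZZ[\sqrt{D}]$ and makes no reference to the sign of $D$. Thus pairs $(m,\mu\bmod m)$ with $m>0$ and $\mu^2\equiv D\pmod m$ are in bijection with ideals $I\subset\ZZ[\sqrt{D}]$ not divisible by any rational integer greater than one, with canonical $\ZZ$-basis $\{\sqrt{D}+\mu,\,m\}$. I would then embed $\QQ(\sqrt{D})\hookrightarrow\CC$ via $\sqrt{D}\mapsto\ii\sqrt{-D}$, so that each nonzero ideal becomes a lattice in $\CC$. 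Fix representatives $I_1,\ldots,I_h$ of the ideal class group of $\QQ(\sqrt{D})$ (which for $D<0$ coincides with the narrow class group) and $\ZZ$-bases $\{\beta_{l1},\beta_{l2}\}$ chosen so that $z_l:=\beta_{l1}/\beta_{l2}\in\HH$. Any ideal $I$ has the form $\xi I_l$ for some $\xi\in \QQ(\sqrt{D})^*$, unique modulo units of $\ZZ[\sqrt{D}]$, and the $\ZZ$-basis $\{\xi\beta_{l1},\xi\beta_{l2}\}$ is related to the canonical basis by a unique $\gamma\in\mathrm{SL}(2,\ZZ)$. Since $(\sqrt{D}+\mu)/m=\mu/m+\ii\sqrt{-D}/m$ under our embedding, this yields $\gamma z_l\equiv \mu/m+\ii\sqrt{-D}/m\pmod{\Gamma_\infty}$. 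The ambiguity of $\xi$ by units becomes right multiplication of $\gamma$ by $\Gamma_{z_l}$, and the ambiguity of $\mu\bmod m$ becomes left multiplication by $\Gamma_\infty$, so the double coset $\Gamma_\infty\gamma\Gamma_{z_l}$ is well defined, proving (i) and (ii) for $n=1$.

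To incorporate the conditions $m\equiv 0\pmod n$ and $\mu\equiv\nu\pmod n$, I would adapt section \ref{sec:congruence}. The direct analog of corollary \ref{corollary:rootsbqfs} shows that both congruence conditions are invariant under left multiplication of $\gamma$ by $\Gamma_0(n)$, while the analog of lemma \ref{lemma:congruenceorbit}, derived along the same lines (with the auxiliary matrix being integer-valued by comparing the $(2,1)$-entry), shows conversely that any $\mathrm{SL}(2,\ZZ)$-equivalence of canonical bases preserving the two congruences actually lies in $\Gamma_0(n)$. Hence, within each $\mathrm{SL}(2,\ZZ)$-orbit, the roots with the prescribed residues form a single $\Gamma_0(n)$-orbit; replacing each $z_l$ by $\gamma_l z_l$ for a suitable $\gamma_l\in\mathrm{SL}(2,\ZZ)$ absorbs the coset representatives into the base points and yields the $z_l$ of the theorem.

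The main subtlety, as in the positive case, is bookkeeping the stabilizers $\Gamma_{z_l}$. In contrast to the positive $D$ setting, $\Gamma_{z_l}$ is finite: trivial in $\mathrm{PSL}(2,\ZZ)$ for $D<-1$, and of order two for $D=-1$, where the unit $\sqrt{-1}\in\ZZ[\sqrt{-1}]$ forces $z_l$ to be an elliptic point equivalent to $\ii$. One must verify, in the spirit of lemma \ref{lemma:Bl}, that the assignment $\xi I_l\mapsto \Gamma_\infty\gamma\Gamma_{z_l}$ descends to genuine bijections in both directions modulo these finite ambiguities. This is the only place where the proof genuinely departs from the positive $D$ case, and the finiteness of the unit group actually simplifies matters. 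Once the correspondence with orbits of points is in place, the fine-scale distribution of the roots reduces to the known results for real parts of hyperbolic lattice orbits in $\HH$, as developed in \cite{MarklofVinogradov2018,RisagerRudnick2009}.
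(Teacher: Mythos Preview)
Your proposal is correct and follows essentially the same route as the paper's proof, which is itself only a brief sketch indicating that the argument for theorem \ref{theorem:topsandroots} carries over. The one presentational difference is that the paper keeps the two-embedding formalism from the real-quadratic case (so $\xi^{(1)},\xi^{(2)}$ are now complex conjugates) and then multiplies by the matrix $\begin{pmatrix}-\tfrac{\ii}{2}&\tfrac12\\ \tfrac{\ii}{2}&\tfrac12\end{pmatrix}$ to convert the diagonal action into a rotation and read off $z_l=\beta_{l1}^{(1)}/\beta_{l2}^{(1)}$, whereas you work directly with a single complex embedding from the start; the two are equivalent, and your version is arguably the more natural one for $D<0$.
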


Here $h$ is the class number $h(D)$ of the imaginary number field $\mathbb{Q}(\sqrt{D})$, and $\mathbb{Z}[\sqrt{D}]$ is its maximal order.
We also note that $\Gamma_{z_l} = \{\pm I\}$ unless $D = -1$, in which case $h(-1) = 1$ and when $n=1$ we may choose $z_1 = \ii$, so
\begin{equation}
  \label{eq:istabilizer}
  \Gamma_{\ii} = \left\{
    \begin{pmatrix}
      1 & 0 \\
      0 & 1
    \end{pmatrix}
    ,
    \begin{pmatrix}
      0 & -1 \\
      1 & 0
    \end{pmatrix}
    ,
    \begin{pmatrix}
      -1 & 0 \\
      0 & -1 
    \end{pmatrix}
    ,
    \begin{pmatrix}
      0 & 1 \\
      -1 & 0
    \end{pmatrix}
    \right\}.
\end{equation}

The proof of the above statement is analogous to that of theorem \ref{theorem:topsandroots}. The algebraic manipulations leading to \eqref{eq:basischange} are identical, although now over the complex numbers. In order to turn \eqref{eq:basischange} into a real equation, we multiply it from the right by the matrix $\begin{pmatrix}
    -\frac{\ii}{2} & \frac{1}{2} \\
    \frac{\ii}{2} &  \frac{1}{2}
  \end{pmatrix}$
to obtain, writing $\sqrt D=\ii\sqrt{-D}$ and $\xi^{(1)} = a + \ii b$ and $\xi^{(2)} = a - \ii b$,
\begin{equation}
  \label{eq:basischangeCC}
  \gamma \widetilde{\mathfrak{B}}_l
  \begin{pmatrix}
    a & -b \\
    b & a
  \end{pmatrix}
  =
  \begin{pmatrix}
    1 & \mu \\
    0 & m
  \end{pmatrix}
  \begin{pmatrix}
    \sqrt{-D} & 0 \\
    0 & 1
  \end{pmatrix}
\end{equation}
and
\begin{equation}
\widetilde{\mathfrak{B}}_l = \mathfrak{B}_l \begin{pmatrix}
    -\frac{\ii}{2} & \frac{1}{2} \\
    \frac{\ii}{2} &  \frac{1}{2}
  \end{pmatrix}
  = \begin{pmatrix}
    \beta_{l1}^{(1)} & \beta_{l1}^{(2)} \\
    \beta_{l2}^{(1)} & \beta_{l2}^{(2)}
  \end{pmatrix}
\begin{pmatrix}
    -\frac{\ii}{2} & \frac{1}{2} \\
    \frac{\ii}{2} &  \frac{1}{2}
  \end{pmatrix}
= \begin{pmatrix}
    \Im \beta_{l1}^{(1)} & \Re \beta_{l1}^{(1)} \\
    \Im \beta_{l2}^{(1)} & \Re \beta_{l2}^{(1)}
  \end{pmatrix} .
\end{equation}
Applying both sides of \eqref{eq:basischangeCC} to $\ii$ (as fractional linear transformations) yields 
\begin{equation}
  \label{eq:basischangeCC2}
  \gamma z_l  =
  \frac{\mu}{m} +\ii \frac{\sqrt{-D}}{m} , \qquad \text{with}\; z_l=\frac{\beta_{l1}^{(1)}}{\beta_{l2}^{(1)}},
\end{equation}
as required.

Theorem \ref{theorem:pointsandroots} shows that when $D < 0$, the normalised roots $\frac{\mu}{m}$, with the congruence conditions $m \equiv 0 \pmod m$ and $\mu \equiv \nu\pmod n$ appear as the real parts of points in a finite number of $\Gamma_0(n)$-orbits.
The results of \cite{MarklofVinogradov2018} in the `cuspidal observer' setting then immediately apply to give the analogues of theorems \ref{theorem:convergence} and \ref{theorem:moments}.
As pointed out in \cite{MarklofVinogradov2018}, these results are closely related to previous work on the distribution of angles in hyperbolic lattices, see \cite{BocaPasolPopaZaharescu2014,BocaPopaZaharescu2014,Lutsko2020,RisagerRudnick2009,RisagerSodergren2017} and the references therein.
The calculation of the pair correlation density in this setting is carried out in \cite{KelmerKontorovich2015} and \cite{MarklofVinogradov2018}, and thus give the analogue of our theorem \ref{thm1} when $D < 0$ with the expression for $w_D$ having a similar form to our (\ref{eq:paircorrelation2}), (\ref{eq:F+qvdef}) and (\ref{eq:F-qvdef}).

\bibliographystyle{plain}
\bibliography{references}

\end{document}